\newcommand{\Cat}{\mathrm{Cat}}
\newcommand{\DblCat}{\mathrm{DblCat}}
\newcommand{\DblInfH}{\mathrm{DblCat}^h_\infty}
\newcommand{\DblInfV}{\mathrm{DblCat}^v_\infty}
\newcommand{\TwoCSS}{\mathrm{2CSS}}
\newcommand{\TwoCat}{2\mathrm{Cat}}
\newcommand{\Set}{\mathrm{Set}}
\newcommand{\sSet}{\mathrm{sSet}}
\newcommand{\Map}{\mathrm{Map}}
\newcommand{\op}{{\mathrm{op}}}
\newcommand{\ps}{{\mathrm{ps}}}
\newcommand{\id}{{\mathrm{id}}}
\newcommand{\threeD}{{\Delta\times \Delta\times \Delta}}
\newcommand{\twoDop}{{\Delta^{\op}\times \Delta^{\op}}}
\newcommand{\threeDop}{{(\Delta^{\op})^{\times 3}}}
\newcommand{\ND}{\mathbb N}
\newcommand{\CD}{\mathbb C}
\newcommand{\XD}{\mathbb X}
\newcommand{\CC}{\mathcal C}
\newcommand{\cC}{\mathcal C_2}
\newcommand{\cX}{\mathcal X_2}
\newcommand{\cN}{\mathcal N_2}
\newcommand{\bbH}{\mathbb H}
\newcommand{\bI}{\mathbb I}
\newcommand{\bbHsim}{\mathbb H^{\simeq}}
\newcommand{\Lsim}{L^{\simeq}}
\newcommand{\bbV}{\mathbb V}
\newcommand{\bfH}{\mathbf H}
\newcommand{\bfV}{\mathbf V}
\newcommand{\cV}{\mathcal V}
\newcommand{\bA}{\mathbb A}
\newcommand{\bB}{\mathbb B}
\newcommand{\cA}{\mathcal A}
\newcommand{\cB}{\mathcal B}
\newcommand{\ON}[1]{\widetilde{O_2({#1})}}
\newcommand{\OM}[1]{O_2^\sim ({#1})}
\newcommand{\VK}[1]{\mathbb V O_2^\sim{({#1})}}
\newcommand{\Eadj}{E_\mathrm{adj}}
\newcommand{\Reph}[1]{F^h[{#1}]}
\newcommand{\Sph}[1]{G^h[{#1}]}
\newcommand{\Repv}[1]{F^v[{#1}]}
\newcommand{\Spv}[1]{G^v[{#1}]}
\newcommand{\Reps}[1]{\Delta[{#1}]}
\newcommand{\gh}[1]{g_{#1}^h}
\newcommand{\gv}[1]{g_{#1}^v}
\newcommand{\eh}{e^h}
\newcommand{\ev}{e^v}
\newcommand{\iotah}[1]{\iota^h_{#1}}
\newcommand{\iotav}[1]{\iota^v_{#1}}
\newcommand{\iotas}[1]{\iota^s_{#1}}
\newcommand{\Nh}{N^h}
\newcommand{\Nv}{N^v}
\newcommand{\vcong}{\rotatebox{270}{$\cong$}}
\newcommand{\lsimeq}{\rotatebox{270}{$\simeq$}}
\newcommand{\rsimeq}{\rotatebox{90}{$\simeq$}}
\newenvironment{tz}{\begin{center}\begin{tikzpicture}}{\end{tikzpicture}\end{center}}
\newenvironment{tzsmall}{\begin{center}\begin{tikzpicture}[node distance=1.2cm]}{\end{tikzpicture}\end{center}}
\tikzstyle{d}=[double distance=.3ex]
\tikzset{%
node distance=1.5cm, la/.style={scale=0.8}, rr/.style={xshift=1.5cm},
space/.style={xshift=.5cm}, over/.style={auto=false,fill=white,inner sep=1.5pt, minimum size=0, outer sep=0},
    symbol/.style={%
        draw=none,
        every to/.append style={%
            edge node={node [sloped, allow upside down, auto=false]{$#1$}}},
            
    }, pro/.style={postaction={decorate,decoration={
        markings,
        mark=at position .5 with {\node at (0,0) {$\bullet$};}
      }},
      inner sep=.9ex,
      },
  n/.style={double equal sign distance, -implies}, t/.style={double distance=2.5pt, -implies, postaction={draw,-}},
}
\newcommand{\arrowdot}{
\ensuremath{\begin{tikzpicture}
\node (A) at (0,-.4) {};
\node (B) at (.4,-.4) {};
\draw[->, line width=.1ex] (0,-.6) -- (.4,-.6);
\node[shape=circle, fill=black, scale=0.35] (A) at  (.17,-.6) {};
\end{tikzpicture}
}}
\newcommand{\sq}[5]{{#1}\colon({#4} \; ^{{#2}}_{\substack{{#3}}} \; {#5})}
\newcommand{\pushout}[4]{({#1}\,\square_{{#4}}\, {#2})\,\square_{{#4}}\,{#3}}
\def\cellslide{0.5}
\def\celllength{.2cm}
\NewDocumentCommand{\cell}{ O{} O{n} O{\cellslide} O{\celllength} m m m }{
  \coordinate (mid) at ($({#5})!{#3}!({#6})$);
  \coordinate (start) at ($(mid)!{#4}!({#5})$);
  \coordinate (end) at ($(mid)!{#4}!({#6})$);
  \draw[#2] (start) to node
  [inner sep=4pt,outer sep=0,minimum size=0,#1]{{#7}} (end);
}
\newlist{rome}{enumerate}{7}
\setlist[rome]{label=(\roman*)}
\newtheorem{theorem}{Theorem}[section]
\newtheorem{theoremA}{Theorem}
\newtheorem{cor}[theorem]{Corollary}
\newtheorem{prop}[theorem]{Proposition}
\newtheorem{lemme}[theorem]{Lemma}
\theoremstyle{definition}
\newtheorem{defn}[theorem]{Definition}
\newtheorem{ex}[theorem]{Example}
\newtheorem{notation}[theorem]{Notation}
\newtheorem{descr}[theorem]{Description}
\theoremstyle{remark}
\newtheorem{rem}[theorem]{Remark}
\crefname{theorem}{Theorem}{Theorems}
\crefname{theoremA}{Theorem}{Theorems}
\crefname{cor}{Corollary}{Corollaries}
\crefname{prop}{Proposition}{Propositions}
\crefname{lemme}{Lemma}{Lemmas}
\crefname{defn}{Definition}{Definitions}
\crefname{ex}{Example}{Examples}
\crefname{notation}{Notation}{Notations}
\crefname{descr}{Description}{Descriptions}
\crefname{rem}{Remark}{Remarks}
\title{A double \texorpdfstring{$(\infty,1)$}{(infinity,1)}-categorical nerve for double categories}
\author[L.\ Moser]{Lyne Moser}
\address{Fakultät für Mathematik, Universität Regensburg, 93040 Regensburg, Germany}
\email{lyne.moser@ur.de}
\subjclass[2020]{18N10, 18N65, 55U35}
\keywords{Nerve, 2-categories, double categories, 2-fold complete Segal spaces, double $\infty$-categories}
\begin{document}

\maketitle

\begin{abstract}
We construct a nerve from double categories into double $(\infty,1)$-categories and show that it gives a right Quillen and homotopically fully faithful functor between the model structure for weakly horizontally invariant double categories and the model structure on bisimplicial spaces for double $(\infty,1)$-categories seen as double Segal objects in spaces complete in the horizontal direction. We then restrict the nerve along a homotopical horizontal embedding of $2$-categories into double categories, and show that it gives a right Quillen and homotopically fully faithful functor between Lack's model structure for $2$-categories and the model structure for $2$-fold complete Segal spaces. We further show that Lack's model structure is right-induced along this nerve from the model structure for $2$-fold complete Segal spaces.
\end{abstract}

\section{Introduction}
Higher category theory aims to study more structured objects than categories. While categories consist of objects and morphisms between objects, higher categories also have higher morphisms. In this perspective, a $2$-category is obtained by also adding $2$-mor\-phisms between the morphisms. A $2$-category can actually be seen as a category enriched in categories -- its morphisms and $2$-morphisms between any pair of objects form a category. Another type of $2$-dimensional categories is given by internal categories to categories, called \emph{double categories}. Such a structure has two types of morphisms between objects -- \emph{horizontal} and \emph{vertical} morphisms -- and its $2$-morphisms are \emph{squares}. In particular, a $2$-category~$\cA$ can be seen as a horizontal double category $\bbH\cA$ in which every vertical morphism is trivial; or equivalently, as an internal category to categories whose category of objects is discrete.

Many aspects of $2$-category theory benefit from a passage to double categories. For example, a good notion of limit for $2$-categories is that of a \emph{$2$-limit}, where the universal property is expressed by an isomorphism between hom-categories, rather than hom-sets. As clingman and the author show in \cite{clingmanMoser}, a $2$-limit cannot be characterized as a $2$-terminal object in the $2$-category of cones, but a passage to double categories allows such a characterization by results of Grandis and Par\'e \cite{Grandis,GrandisPare}. Indeed, they show that the $2$-limit of a $2$-functor $F$ is double terminal in the double category of cones over the corresponding double functor~$\bbH F$. This result also holds in the more homotopical case of \emph{bi-limits}, where the universal property is expressed by an \emph{equivalence} of hom-categories, as clingman and the author show in \cite{cM2}.

These notions of categories, $2$-categories, and double categories are often too strict to accommodate many examples that appear in nature. In the perspective of generalizing categories, an $(\infty,1)$-category is interpreted as a categorical structure that admits morphisms in all dimensions with all $k$-morphisms invertible for $k>1$, where compositions are only associative and unital up to higher invertible morphisms. Such a higher structure should be thought of as a homotopical version of a category. Similarly to the strict case, we can then interpret an $(\infty,2)$-category, as a ``category enriched in $(\infty,1)$-categories'', and a double $(\infty,1)$-categories, as an ``internal category to $(\infty,1)$-categories''. A natural expectation is that $(\infty,2)$-categories also admit a ``horizontal embedding'' into double $(\infty,1)$-categories and that $2$-categories and double categories embed into their more homotopical versions, in such a way that the following diagram commutes (maybe only up to ``homotopy''). 
\begin{tz}
\node[](1) {\{$2$-categories\}};
\node[below of=1,yshift=-.5cm](2) {\{double categories\}};
\node[right of=1,xshift=4cm](3) {\{$(\infty,2)$-categories\}};
\node[below of=3,yshift=-.5cm](4) {\{double $(\infty,1)$-categories\}};
\draw[right hook->] (1) to (3);
\draw[right hook->] (1) to node[left,la] {$\bbH$} (2);
\draw[right hook->] (2) to (4);
\draw[right hook->] (3) to (4);
\end{tz}

The existence of such a commutative diagram would show that aspects of the theory of $(\infty,2)$-categories would also benefit from a passage to double $(\infty,1)$-categories. With this idea in mind, the author, Rasekh, and Rovelli develop in \cite{MRR3} a notion of $(\infty,2)$-limits by defining a limit of an $(\infty,2)$-functor as a terminal object in the double $(\infty,1)$-category of cones over the induced ``horizontal'' double $(\infty,1)$-functor. 

To make these $\infty$-notions precise, the machinery used is often that of \emph{model categories}, introduced by Quillen in \cite{Qui}, and these $\infty$-notions are then defined as the fibrant objects of a given model structure. This is the approach we will be taking here. As a model for $(\infty,1)$-category, we consider complete Segal spaces, due to Rezk \cite{Rezk} and defined as the Segal objects in spaces such that the space of objects is equivalent to the space of equivalences, i.e., invertible morphisms up to higher morphisms. This last condition is called the \emph{completeness condition} and ensures that no extra data has been added by considering a space of objects instead of a set of objects. There are many other models of $(\infty,1)$-categories, but the choice we make here is motivated by the fact that models of $(\infty,2)$-categories and double $(\infty,1)$-categories have been developed as ``internal categories'' to complete Segal spaces, where the complete Segal space of objects is required to be discrete in the case of $(\infty,2)$-categories. More precisely, these are given by $2$-fold complete Segal spaces defined by Barwick in \cite{Bar} as the complete Segal objects in complete Segal spaces, and by double $(\infty,1)$-categories defined by Haugseng in \cite{HaugsengThesis} as the Segal objects in complete Segal spaces. Haugseng's definition of double $(\infty,1)$-categories requires the completeness condition in the vertical direction, i.e., that the space of objects is equivalent to the space of vertical equivalences. Since we want our double $(\infty,1)$-categories to be compatible with the horizontal embedding of $2$-categories into double categories, we require instead horizontal completeness, i.e., that the space of objects is equivalent to the space of horizontal equivalences. However, these two models of double $(\infty,1)$-categories are equivalent via a transpose functor. Furthermore, there are model structures $\TwoCSS$ and $\DblInfH$ on bisimplicial spaces whose fibrant objects are the $2$-fold complete Segal spaces and the horizontally complete double $(\infty,1)$-categories, respectively. We can obtain $\TwoCSS$ as localization of $\DblInfH$, and this implies that the identity functor $\id\colon \TwoCSS\to \DblInfH$ is a right Quillen functor, which we interpret as the horizontal embedding of $(\infty,2)$-categories into double $(\infty,1)$-categories. 

To define an embedding -- called \emph{nerve} -- of $2$-categories and double categories into their $\infty$-analogues, we also need model structures in this stricter setting. In \cite{Lack2Cat,LackBicat}, Lack endows the category $\TwoCat$ of $2$-categories and $2$-functors with a model structure in which the weak equivalences are the biequivalences, the trivial fibrations are the $2$-functors which are surjective on objects, full on morphisms, and fully faithful on $2$-morphisms, and all $2$-categories are fibrant. 

In the double categorical case, several model structures for double categories are constructed by Fiore, Paoli, and Pronk in \cite{FPP}, but the horizontal embedding of $2$-categories into double categories does not induce a Quillen pair between Lack’s model structure and any of these model structures. Therefore, in \cite{MSVfirst}, the author, Sarazola, and Verdugo construct a model structure on the category $\DblCat$ of double categories and double functors, obtained as a right-induced model structure from two copies of Lack's model structure on $\TwoCat$, which is such that the horizontal embedding $\bbH\colon \TwoCat\to \DblCat$ is as well-behaved as possible: it is both left and right Quillen, and homotopically fully faithful, and it preserves and reflects the whole homotopical structure. However, in this model structure, all double categories are fibrant, and the trivial fibrations are only surjective on vertical morphisms, rather than full. These are both obstructions for the nerve being right Quillen, as the cofibrations in $\DblInfH$ are the monomorphisms and the nerve of a double category is fibrant precisely when the double category is \emph{weakly horizontally invariant} (see \cref{def:weakhorinv}), as shown in \cref{thm:fibrantnerve}.

To remedy this issue, the author, Sarazola, and Verdugo construct in \cite{MSVsecond} another model structure on $\DblCat$ whose trivial fibrations are the double functors which are surjective on objects, full on horizontal \emph{and} vertical morphisms, and fully faithful on squares, and the fibrant objects are the weakly horizontally invariant double categories. The existence of this model structure was independently noticed by Campbell \cite{CampMos}. Since the horizontal double category $\bbH\cA$ associated to a $2$-category~$\cA$ is not weakly horizontally invariant in general, the horizontal embedding $\bbH\colon \TwoCat\to \DblCat$ is not right Quillen anymore. Instead, we need to consider a fibrant replacement of $\bbH$ given by a more homotopical version $\bbHsim \colon \TwoCat\to \DblCat$ of $\bbH$, which sends a $2$-category $\cA$ to the double category $\bbHsim\cA$ whose underlying horizontal $2$-category is still~$\cA$, but whose vertical morphisms are given by the adjoint equivalences of $\cA$. This gives a right Quillen and homotopically fully faithful functor $\bbHsim\colon \TwoCat\to \DblCat$, where $\DblCat$ is endowed with the model structure for weakly horizontally invariant double categories. 

In this paper, we construct a nerve functor $\ND\colon \DblCat\to \sSet^{\twoDop}$, and we show in \cref{thm:CD-ND-Quillen,thm:counit-CD-ND} that $\ND$ is a right Quillen and homotopically fully faithful functor from $\DblCat$ to $\DblInfH$.

\begin{theoremA} \label{thmA:doublenerve} 
The nerve functor $\ND\colon \DblCat\to \DblInfH$ is right Quillen, and homotopically fully faithful from the model structure on $\DblCat$ for weakly horizontally invariant double categories to the model structure on $\sSet^{\twoDop}$ for horizontally complete double $(\infty,1)$-categories. 

Moreover, the nerve $\ND\bA$ of a double category $\bA$ is fibrant if and only if $\bA$ is weakly horizontally invariant.
\end{theoremA}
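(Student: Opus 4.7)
The plan is to split Theorem A into its three claims and treat each through the adjunction $\CD \dashv \ND$, where $\CD \colon \sSet^{\twoDop} \to \DblCat$ is the left Kan extension along the Yoneda embedding of a bisimplicial double category $C_{m,n}$ resolving $[m]\times[n]$. For right Quillenness, since every bisimplicial space is cofibrant, it suffices to check that $\CD$ sends the generating (trivial) cofibrations of $\DblInfH$ to (trivial) cofibrations in the weakly horizontally invariant model structure on $\DblCat$. Equivalently, one can argue directly that $\ND f$ is a Reedy trivial fibration iff it is a levelwise trivial Kan fibration, and this unwinds through the definition of $\ND$ to the condition that $f$ is surjective on objects, full on horizontal and vertical morphisms, and fully faithful on squares -- precisely the characterization of trivial fibrations in $\DblCat$ from \cite{MSVsecond}. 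The corresponding preservation of fibrations can then be reduced, via the fibrancy characterization of the theorem, to preservation of fibrations between fibrant objects.

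For homotopical full faithfulness, I would show that the counit $\CD \ND \bA \to \bA$ is a weak equivalence for every weakly horizontally invariant $\bA$; as every bisimplicial space is cofibrant, this ordinary counit is already the derived counit. Writing $\CD \ND \bA$ as the coend-style colimit of the $C_{m,n}$ indexed by the bisimplices of $\ND\bA$, one expects the counit to be an isomorphism on objects and on horizontal and vertical morphisms, while on hom-categories of squares it should be controlled directly by the universal property of $\CD$. The weak horizontal invariance of $\bA$ should enter precisely when one needs to replace formal composites appearing in $\CD\ND\bA$ by strict composites in $\bA$ modulo vertically compatible horizontal equivalences.

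The main obstacle is the fibrancy criterion relating fibrancy of $\ND\bA$ to the weakly horizontally invariant condition on $\bA$. Since $\ND\bA$ arises from a strict double category, Reedy fibrancy is automatic, and both Segal conditions hold as genuine isomorphisms because horizontal and vertical compositions in $\bA$ are determined uniquely on the nose by the shapes $C_{m,n}$. The only remaining condition is the horizontal completeness of the underlying Segal object $(\ND\bA)_{-,0}$: the map from the space of objects to the space of horizontal equivalences must be a weak equivalence. I expect the bulk of the proof to lie in translating the notion of a ``horizontal equivalence point'' in this Segal space into an adjoint equivalence datum inside the horizontal $2$-category of $\bA$, and then showing that the contracting paths witnessing completeness exist exactly when every horizontal equivalence in $\bA$ can be transported along a vertical morphism -- which is the content of \cref{def:weakhorinv}. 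Once this correspondence is established, both directions of the ``iff'' follow.
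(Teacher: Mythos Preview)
Your decomposition into three claims and the use of the adjunction $\CD\dashv\ND$ are the right starting point, and your treatment of the right Quillen part is close to the paper's: the paper also verifies that $\CD$ sends generating (trivial) cofibrations to (trivial) cofibrations, first for the Reedy/injective structure and then for the localization $\DblInfH$ via a standard localization criterion. However, the remainder of your plan rests on two misidentifications that would make the argument fail.

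First, your analysis of the fibrancy criterion is inverted. You assert that Reedy fibrancy of $\ND\bA$ is automatic, that the Segal maps are isomorphisms, and that the only condition in doubt is horizontal completeness. None of these hold. The nerve is built from the ``thick'' shapes $\OM{m}$, $\VK{k}$, and $\ON{n}$ (with invertible $2$-cells and adjoint equivalences baked in), so the Segal maps for $\ND\bA$ are genuine trivial fibrations, not isomorphisms, and Reedy fibrancy requires actual work. In fact the paper shows (\cref{prop:properties-nerve}) that for \emph{every} double category $\bA$ the nerve already satisfies horizontal Reedy fibrancy, both Segal conditions, \emph{and} horizontal completeness; the unique condition that can fail is Reedy fibrancy in the \emph{vertical} direction. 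Unwinding the lifting problem for $(\iota^F_1)^*\colon (\ND\bA)_{0,1}\to(\ND\bA)_{0,0}\times(\ND\bA)_{0,0}$ against $\Delta[0]\to\Delta[1]$ yields exactly the data of \cref{def:weakhorinv}: given a vertical morphism $v$ and a pair of horizontal adjoint equivalences $(f,f')$, produce a vertical morphism $u$ and a weakly horizontally invertible filling square. So weak horizontal invariance governs vertical Reedy fibrancy, not completeness, and your proposed translation of ``horizontal equivalence points'' into completeness data would not reach the correct condition.

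Second, you expect weak horizontal invariance of $\bA$ to enter in the counit argument, to ``replace formal composites by strict composites modulo horizontal equivalences''. It does not: the paper proves (\cref{thm:counit-CD-ND}) that $\epsilon_\bA\colon\CD\ND\bA\to\bA$ is a trivial fibration for \emph{every} double category $\bA$, fibrant or not, by an explicit computation of $\CD\ND\bA$ as a free double category on generators indexed by the cells and equivalences of $\bA$. No transport along equivalences is required. Conversely, your suggestion to reduce preservation of fibrations to fibrations between fibrant objects via the fibrancy criterion is circular in the order you propose, since that criterion is part of what Theorem~A asserts.
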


We then restrict the nerve functor $\ND$ along the homotopical horizontal embedding $\bbHsim\colon \TwoCat\to \DblCat$ and show in \cref{thm:Quillen-NDH,thm:counit-NDH} that this gives a right Quillen and homotopically fully faithful functor from $\TwoCat$ to $\TwoCSS$. Furthermore, the homotopy theory of $2$-categories is completely determined from that of $2$-fold complete Segal spaces through its image under $\ND\bbHsim$, as $\TwoCat$ is right-induced from $\TwoCSS$ along $\ND\bbHsim$ as shown in \cref{thm:2catRI2css}.

\begin{theoremA} \label{thmB:2nerve}
The nerve functor $\ND\bbHsim\colon \TwoCat\to \TwoCSS$ is right Quillen, and homotopically fully faithful from Lack's model structure on $\TwoCat$ to the model structure on $\sSet^{\twoDop}$ for $2$-fold complete Segal spaces, i.e., $(\infty,2)$-categories. Furthermore, Lack's model structure on $\TwoCat$ is right-induced from $\TwoCSS$ along $\ND\bbHsim$.
\end{theoremA}

While several nerves that fully embed the homotopy theory of $2$-categories into the one of $(\infty,2)$-categories have already been constructed: into saturated $2$-precomplicial sets by Ozornova and Rovelli in \cite{OR2}, into $2$-quasi-categories by Campbell in \cite{Camp}, and into $\infty$-bicategories by Gagna, Harpaz, and Lanari in \cite{GHL}, the nerve presented in the above theorem is, to our knowledge, the first nerve to be constructed with good homotopical properties into the model of $2$-fold complete Segal spaces. In a subsequent paper \cite{MOR}, the author, Ozornova, and Rovelli demonstrate that these different nerve constructions coincide up to a change of models, establishing their equivalence at the $\infty$-categorical level.

\cref{thmA:doublenerve,thmB:2nerve} then yield a commutative diagram of right Quillen, and homotopically fully faithful functors as desired.
\begin{tz}
\node[](1) {$\TwoCat$};
\node[below of=1](2) {$\DblCat$};
\node[right of=1,rr](3) {$\TwoCSS$};
\node[below of=3](4) {$\DblInfH$};
\draw[->] (1) to node[above,la]{$\ND\bbHsim$} (3);
\draw[->] (1) to node[left,la] {$\bbHsim$} (2);
\draw[->] (2) to node[below,la] {$\ND$} (4);
\draw[->] (3) to node[right,la]{$\id$} (4);
\end{tz}

However, we were hoping to find a nerve that is compatible with the horizontal embedding functor $\bbH$, but the nerve $\ND\bbH\cA$ of a horizontal double category $\bbH\cA$ associated to a $2$-category is not generally a double $(\infty,1)$-category nor a $2$-fold complete Segal space (see \cref{rem:NHnotfibrant}). We show in \cref{thm:homotopy} that $\ND\bbHsim\cA$ gives a fibrant replacement of $\ND\bbH\cA$ in $\TwoCSS$ (or in $\DblInfH$).

\begin{theoremA}
There is a level-wise homotopy equivalence ${\ND\bbH\cA\to \ND\bbHsim\cA}$, which exhibits $\ND\bbHsim\cA$ as a fibrant replacement of $\ND\bbH\cA$ in $\TwoCSS$ (or in $\DblInfH$), for every $2$-category~$\cA$.
\end{theoremA}

In particular, it follows from this result that we have a diagram of right Quillen and homotopically fully faithful functors
\begin{tz}
\node[](1) {$\TwoCat$};
\node[below of=1](2) {$\DblCat$};
\node[right of=2,rr](5) {$\DblCat_\mathrm{whi}$};
\node[right of=5,rr](4) {$\DblInfH$};
\node[above of=4](3) {$\TwoCSS$};
\draw[->] (1) to node[above,la]{$\ND\bbHsim$} (3);
\draw[->] (1) to node[left,la] {$\bbH$} (2);
\draw[->] (5) to node[below,la] {$\id$} (2);
\draw[->] (5) to node[below,la] {$\ND$} (4);
\draw[->] (3) to node[right,la]{$\id$} (4);
\coordinate(a) at ($(2)+(2cm,0)$);
\coordinate(b) at ($(3)-(2cm,0)$);
\cell[right,la,xshift=3pt]{a}{b}{$\simeq$};
\end{tz}
filled with a natural transformation which is level-wise a weak equivalence. This gives the expected compatibility of the nerve $\ND$ with the horizontal embedding $\bbH$. 

\subsection{Outline}

In \cref{sec:prelim}, we first recall the basic terminology for $2$-categories and double categories, and describe several functors between the categories $\TwoCat$ and $\DblCat$. We then introduce notions of \emph{horizontal equivalences} and \emph{weakly horizontally invertible squares} in a double category, which allows us to define \emph{weakly horizontally invariant} double categories. In \cref{sec:MS-2Cat-DblCat}, we recall the main features of Lack's model structure on $\TwoCat$ and of the model structure of \cite{MSVsecond} on $\DblCat$. Then, in \cref{sec:MS-infinity}, we describe the model structures $\DblInfH$ and $\TwoCSS$ for horizontally complete double $(\infty,1)$-categories and $2$-fold complete Segal spaces. Finally, in \cref{sec:nerveDblCat}, we construct a nerve functor $\ND\colon \DblCat\to \DblInfH$ and show that it is right Quillen and homotopically fully faithful. By restricting $\ND$ along the homotopical horizontal embedding $\bbHsim\colon \TwoCat\to \DblCat$, we show in \cref{sec:nerve2Cat} that the nerve functor $\ND\bbHsim\colon \TwoCat\to \TwoCSS$ is also right Quillen and homotopically fully faithful. Furthermore, we prove that Lack's model structure on $\TwoCat$ is right-induced from $\TwoCSS$ along the nerve $\ND\bbHsim$. We then construct a level-wise homotopy equivalence $\ND\bbH\cA\to \ND\bbHsim\cA$ for every $2$-category $\cA$, which exhibits $\ND\bbHsim\cA$ as a fibrant replacement of~$\ND\bbH \cA$.

The aim of \cref{app:weakhorinv} is to prove some technical results about weakly horizontally invertible squares, which were recently introduced independently by the author, Sarazola, and Verdugo in \cite{MSVfirst}, and by Grandis and Par\'e in \cite{GraPar19}. In particular, we show that a horizontal pseudo-natural transformation is an equivalence if and only if each of its square components are weakly horizontally invertible squares. The aim of \cref{app:nerveinlow} is to describe the lower simplices of the nerves $\ND\bA$, $\ND\bbHsim\cA$, and $\ND\bbH\cA$ in order to give intuition for the nerve construction of a double category or a $2$-category. In particular, this allows us to better understand the difference between the nerves $\ND\bbHsim\cA$, and $\ND\bbH\cA$ and provides intuition on why the latter is not fibrant.

\subsection*{Acknowledgments}

I am grateful to my advisor, Jérôme Scherer, for the many fruitful conversations on the content of this paper and for a close reading of several drafts. I would also like to thank Martina Rovelli, Viktoriya Ozornova, Nima Rasekh, and Maru Sarazola for their helpful answers to many of my questions. In particular, Martina Rovelli suggested using \cref{thm:Quillen-loc} and Maru Sarazola suggested a nice trick for the proof of \cref{lem:whiiffvi}. I am also grateful to the anonymous referee whose insightful comments improved greatly the content of this paper.  

During the realization of this work, the author was supported by the Swiss National Science Foundation under the project P1ELP2\_188039, and by the Max Planck Institute for Mathematics. Revisions were taken while the author was a member of the Collaborative Research Centre ``SFB 1085: Higher
Invariants'' funded by the Deutsche Forschungsgemeinschaft (DFG).

\section{Preliminaries on \texorpdfstring{$2$}{2}-dimensional categories} \label{sec:prelim}

In this paper, we consider two kinds of strict $2$-dimensional categories: $2$-categories and double categories. Every $2$-category $\cA$ can be seen as a horizontal double category~$\bbH\cA$ with only trivial vertical morphisms, and this yields a functor $\bbH\colon \TwoCat\to \DblCat$ which admits both adjoints. In particular, its right adjoint extracts from a double category $\bA$ its underlying horizontal $2$-category $\bfH\bA$. The horizontal embedding $\bbH$ is however not homotopically well-behaved, and therefore we also need to consider its more homotopical version $\bbHsim\colon \TwoCat\to \DblCat$, which sends a $2$-category $\cA$ to the double category $\bbHsim\cA$ whose underlying horizontal $2$-category is still $\cA$ itself, but its vertical morphisms are given by the adjoint equivalences of $\cA$. We first recall these notions in \cref{subsec:doublecat}.

Then, in \cref{subsec:Gray}, we recall the closed symmetric monoidal structure on $\TwoCat$ given by the Gray tensor product, introduced by Gray in \cite{Gray}, which can be interpreted as a pseudo-version of the cartesian product. Similarly, the category $\DblCat$ also admits a Gray tensor product, introduced by B\"ohm in \cite{Bohm}, which restricts along $\bbH$ to a tensoring functor of $\DblCat$ over $\TwoCat$. This provides a $\TwoCat$-enrichment on $\DblCat$, whose internal homs are described more explicitly in \cref{subsec:ps-equ}. 

Finally, in \cref{subseq:equivDblCat}, we define notions of weak horizontal invertibility in a double category $\bA$ for horizontal morphisms and squares. We then introduce \emph{weakly horizontally invariant} double categories, which are the fibrant objects of the model structure on $\DblCat$ we consider. In particular, they are precisely the double categories whose nerve is fibrant.

\subsection{\texorpdfstring{$2$}{2}-categories, double categories, and their relations} \label{subsec:doublecat}

Recall that a $2$-category~$\cA$ consists of objects, morphisms $f\colon A\to B$ between objects, and $2$-morphisms $\alpha\colon f\Rightarrow g$ between parallel morphisms, together with a horizontal composition law for morphisms and $2$-morphisms along common objects, and a vertical composition law for $2$-morphisms along common morphisms, which are associative, unital, and satisfy the interchange law. A $2$-functor $F\colon \cA\to \cB$ consists of assignments on objects, on morphisms, and on $2$-morphisms which preserve the $2$-categorical structures strictly. 

\begin{notation}
We denote by $\TwoCat$ the category of $2$-categories and $2$-functors. 
\end{notation}

Since $2$-categories have not only morphisms, but also $2$-morphisms, a good notion of invertibility for a morphism in a $2$-category is given by requiring that it has an inverse up to invertible $2$-morphism, rather than strictly. 

\begin{defn}
An \textbf{equivalence} $f\colon A\xrightarrow{\simeq} B$ in a $2$-category $\cA$ is a tuple $(f,g,\eta,\epsilon)$ consisting of morphisms $f\colon A\to B$ and $g\colon B\to A$ and invertible $2$-morphisms $\eta\colon \id_A\xRightarrow{\cong} gf$ and $\epsilon\colon fg\xRightarrow{\cong}\id_B$ in $\cA$. An equivalence $(f,g,\eta,\epsilon)$ is an \textbf{adjoint equivalence} if the invertible $2$-morphisms $\eta$ and $\epsilon$ further satisfy the triangle identities. 

We often denote the whole data $(f,g,\eta,\epsilon)$ by just $f$.
\end{defn}

\begin{rem}\label{lem:promoteadjoint}
Every equivalence in a $2$-category can be promoted to an adjoint equivalence; see, for example, \cite[Lemma 2.1.12]{RiehlVerity}.
\end{rem}

We are now ready to introduce the other type of $2$-dimensional categories of interest in this paper: the \emph{double categories}. While $2$-categories are categories enriched over the category $\Cat$ of categories and functors, double categories are internal categories to $\Cat$. 

\begin{defn}
A double category $\bA$ consists of the following data: 
\begin{rome}
\item objects $A,B,\ldots$,
\item horizontal morphisms $f\colon A\to B$ with a horizontal identity $\id_A\colon A\to A$ for each object $A$,
\item vertical morphisms $u\colon A\arrowdot A'$ with a vertical identity $e_A\colon A\arrowdot A$ for each object~$A$, 
\item squares $\sq{\alpha}{f}{f'}{u}{v}$ of the form
\begin{tz}
    \node[](1) {$A$};
    \node[right of=1](2) {$B$};
    \node[below of=1](3) {$A'$};
    \node[right of=3](4) {$B'$};
    \draw[->] (1) to node[above,la] {$f$} (2);
    \draw[->] (3) to node[below,la] {$f'$} (4);
    \draw[->,pro] (1) to node[left,la] {$u$} (3);
    \draw[->,pro] (2) to node[right,la] {$v$} (4);
    
    \node[la] at ($(1)!0.5!(4)$) {$\alpha$};
\end{tz}
with a vertical identity $\sq{e_f}{f}{f}{e_A}{e_B}$ for each horizontal morphism $f\colon A\to B$ and a horizontal identity $\sq{\id_u}{\id_A}{\id_{A'}}{u}{u}$ for each vertical morphism $u\colon A\arrowdot A'$, 
\item an associative and unital horizontal composition law for horizontal morphisms, and squares along their vertical boundaries, 
\item an associative and unital vertical composition law for vertical morphisms, and squares along their horizontal boundaries,
\end{rome}
such that horizontal and vertical compositions of squares satisfy the interchange law. 
\end{defn}

\begin{defn}
A double functor $F\colon \bA\to \bB$ consists of assignments on objects, on horizontal morphisms, on vertical morphisms, and on squares, which are compatible with domains and codomains and preserve all compositions and identities strictly.
\end{defn}

\begin{notation}
We denote by $\DblCat$ the category of double categories and double functors. 
\end{notation}

In particular, a $2$-category can be seen as an internal category to $\Cat$ where the category of objects is discrete. This gives an embedding of $\TwoCat$ into $\DblCat$ which associates to a $2$-category its corresponding horizontal double category. 

\begin{defn}
We define the \textbf{horizontal embedding functor} $\bbH\colon \TwoCat\to \DblCat$. It sends a $2$-category $\cA$ to the double category $\bbH\cA$ with the same objects as $\cA$, the morphisms of $\cA$ as its horizontal morphisms, only trivial vertical morphisms, and the $2$-morphisms of $\cA$ as its squares. Compositions in $\bbH\cA$ are induced by the ones in $\cA$. A $2$-functor $F\colon \cA\to \cB$ is sent to the double functor $\bbH F\colon \bbH\cA\to \bbH\cB$ which acts as $F$ does on the corresponding data. 
\end{defn}

The functor $\bbH$ admits both adjoints. Its right adjoint is given by the following functor (see \cite[Proposition 2.5]{FPP}).

\begin{defn}
The functor $\bbH\colon \TwoCat\to \DblCat$ admits a right adjoint given by the functor $\bfH\colon \DblCat\to \TwoCat$. It sends a double category~$\bA$ to its \textbf{underlying horizontal $2$-category} $\bfH\bA$ with the same objects as $\bA$, whose morphisms are the horizontal morphisms of $\bA$, and whose $2$-morphisms $\alpha\colon f\Rightarrow f'$ are the squares in $\bA$ of the form
\begin{tz}
    \node[](A) {$A$};
    \node[right of=A](B) {$B$};
    \node[below of=A](A') {$A$};
    \node[right of=A'](B') {$B$};
\node at ($(B'.east)-(0,4pt)$) {.};
    \draw[->] (A) to node(a)[above,la] {$f$} (B);
    \draw[->] (A') to node(c)[below,la] {$f'$} (B');
    \draw[d,pro] (A) to (A');
    \draw[d,pro] (B) to (B');
    
    \node[la] at ($(A)!0.5!(B')$) {$\alpha$};
\end{tz}
\end{defn}

\begin{rem}
The functor $\bbH\colon \TwoCat\to \DblCat$ also admits a left adjoint, denoted by $L\colon \DblCat\to \TwoCat$, which sends a double category $\bA$ to a $2$-category $L\bA$ whose objects are equivalence classes of objects in $\bA$ under the following relation: two objects are identified if and only if they are related by a zig-zag of vertical morphisms. The morphisms of $L\bA$ are then generated by the horizontal morphisms of $\bA$, and the $2$-morphisms of $L\bA$ are generated by the squares of $\bA$.
\end{rem}

Since $2$-categories can also be embedded vertically into double categories, there are analogous functors for the vertical direction. However, in this paper, a $2$-category is always seen as a horizontal double category, unless specified otherwise.  

\begin{rem}
Similarly, there is a functor $\bbV\colon \TwoCat\to \DblCat$ sending a $2$-category $\cA$ to the double category $\bbV\cA$ with the same objects as $\cA$, only trivial horizontal morphisms, the morphisms of $\cA$ as its vertical morphisms, and the $2$-morphisms of $\cA$ as its squares. This functor also admits both adjoints, and its right adjoint $\bfV\colon \DblCat\to \TwoCat$ sends a double category to its \textbf{underlying vertical $2$-category}. 
\end{rem}

As we will see in \cref{sec:MS-2Cat-DblCat}, the horizontal embedding $\bbH$ is not homotopically well-behaved, and we therefore introduce another functor $\bbHsim\colon \TwoCat\to \DblCat$, which provides the correct homotopical replacement of $\bbH$.

\begin{defn}
We define the functor $\bbHsim\colon \TwoCat\to \DblCat$. It sends a $2$-category~$\cA$ to the double category $\bbHsim\cA$ with the same objects as $\cA$, whose horizontal morphisms are the morphisms of $\cA$, whose vertical morphisms are the adjoint equivalences $(u,u',\eta_u,\epsilon_u)$ of $\cA$, and whose squares 
\begin{tz}
    \node[](A) {$A$};
    \node[right of=A](B) {$B$};
    \node[below of=A](A') {$A'$};
    \node[right of=A'](B') {$B'$};
    \draw[->] (A) to node[above,la] {$f$} (B);
    \draw[->] (A') to node[below,la] {$f'$} (B');
    \draw[->] (A) to node[left,la]{$\underline{u}=(u,u',\eta_u,\epsilon_u)$} node[right,la]{$\lsimeq$} (A');
    \draw[->] (B) to node[right,la]{$\underline{v}=(v,v',\eta_v,\epsilon_v)$} node[left,la]{$\rsimeq$} (B');
    
    \cell[la,above,xshift=-.2cm]{B}{A'}{$\alpha$};
\end{tz}
are given by the $2$-morphisms $\alpha\colon vf\Rightarrow f'u$ in $\cA$. Compositions in $\bbHsim\cA$ are induced by the ones in $\cA$. A $2$-functor $F\colon \cA\to \cB$ is sent to the double functor $\bbHsim F\colon \bbHsim \cA\to \bbHsim\cB$ which acts as $F$ does on the corresponding data.
\end{defn}

The functor $\bbHsim$ is not a left adjoint, since it does not preserve colimits; see \cite[Remark 2.17]{MSVsecond}. However, it admits a left adjoint, which we describe below.

\begin{rem}
By \cite[Proposition 2.15]{MSVsecond}, the functor $\bbHsim$ admits a left adjoint, denoted by $\Lsim\colon \DblCat\to \TwoCat$. It sends a double category $\bA$ to the $2$-category $\Lsim\bA$ with the same objects as $\bA$, and whose morphisms are generated by a morphism for each horizontal morphism in $\bA$ and by an adjoint equivalence for each vertical morphism in $\bA$. Its $2$-morphisms are further generated by the squares of $\bA$. See \cite[Remark 2.16]{MSVsecond} for a precise description.
\end{rem}

\subsection{Gray tensor products and \texorpdfstring{$\TwoCat$}{2Cat}-enrichment} \label{subsec:Gray}

The category $\TwoCat$ admits a closed symmetric monoidal structure introduced by Gray in \cite{Gray}.

\begin{defn} \label{def:otimes2}
Let $\mathcal I$ and $\cA$ be $2$-categories. We denote by $[\mathcal I,\cA]_{2,\ps}$ the \textbf{pseudo-hom $2$-category} of $2$-functors $\mathcal I\to \cA$, pseudo-natural transformations, and modifications. For a definition of these notions, see \cite[Definition 4.2.1 and 4.4.1]{JohYau}. 

Then the \textbf{Gray tensor product} $\otimes_2\colon \TwoCat\times \TwoCat\to \TwoCat$ endows the category $\TwoCat$ with a closed symmetric monoidal structure with respect to these pseudo-homs. More explicitly, for all $2$-categories $\mathcal I$, $\cA$, and $\cB$, we have a bijection 
\[ \TwoCat(\mathcal I\otimes_2 \cB,\cA)\cong \TwoCat(\cB, [\mathcal I,\cA]_{2,\ps}) \]
natural in $\mathcal I$, $\cA$, and $\cB$.
\end{defn}

\begin{notation} \label{not:pushprod2}
Let $i\colon \mathcal I\to \cA$ and $i'\colon \mathcal I'\to \cA'$ be $2$-functors. We denote by $i\,\square_{\otimes_2}\, i'$ their pushout-product 
\[ i\,\square_{\otimes_2}\, i'\colon \cA\otimes_2 \mathcal I'\bigsqcup_{\mathcal I\otimes_2 \mathcal I'}\mathcal I\otimes_2 \cA'\to \cA\otimes_2 \cA'. \]
\end{notation}

Similarly, the category $\DblCat$ also admits a closed symmetric monoidal structure given by B\"ohm's Gray tensor product \cite{Bohm}, whose internal homs have horizontal (resp.~vertical) \emph{pseudo}-natural transformations as its horizontal (resp.~vertical) morphisms. These transformations consist of the same data as the horizontal (resp.~vertical) transformations of double functors with additional vertically (resp.~horizontally) invertible squares giving the pseudo-naturality conditions with respect to horizontal (resp.~vertical) morphisms.

\begin{defn} \label{def:Graytensordbl}
Let $\bI$ and $\bA$ be double categories. We define the \textbf{pseudo-hom double category} $[\bI,\bA]_\ps$ to be the double category of double functors $\bI\to \bA$, horizontal pseudo-natural transformations, vertical pseudo-natural transformations, and modifications. See \cite[\S 2.2]{Bohm} or \cite[\S 3.8]{Grandis} for more details. 

By \cite[\S 3]{Bohm}, the \textbf{Gray tensor product} $\otimes_G\colon \DblCat\times \DblCat\to \DblCat$ endows the category $\DblCat$ with a closed symmetric monoidal structure with respect to these pseudo-homs. More explicitly, for all double categories $\bI$, $\bA$, and $\bB$, we have a bijection 
\[ \DblCat(\bI\otimes_G \bB,\bA)\cong \DblCat(\bB, [\bI,\bA]_{\ps}) \]
natural in $\bI$, $\bA$, and $\bB$.
\end{defn}

In this paper, we are interested in the underlying horizontal $2$-categories of these pseudo-hom double categories. This gives a tensored and cotensored $\TwoCat$-enrichment on $\DblCat$ with tensoring functor obtained by restricting the Gray tensor product for double categories defined above along the horizontal embedding $\bbH$ in one of the variables. 

\begin{defn} \label{def:tensor}
Let $\bI$ and $\bA$ be double categories. We define the \textbf{pseudo-hom $2$-category} $\bfH[\bI,\bA]_\ps$ to be the $2$-category of double functors $\bI\to \bA$, horizontal pseudo-natural transformations, and modifications; see \cref{def:pseudohor,def:modif}.

Then the Gray tensor product $\otimes_G\colon \DblCat\times \DblCat\to \DblCat$ restricts to a \textbf{tensoring functor} 
\[ \otimes \coloneqq \DblCat\times \TwoCat \xrightarrow{\id\times \bbH}\DblCat\times \DblCat\xrightarrow{\otimes_G} \DblCat \]
with respect to these pseudo-homs. More explicitly, for every pair of double categories $\bI$ and $\bA$, and every $2$-category $\cB$, we have a bijection 
\[ \DblCat(\bI\otimes \cB,\bA)\cong \TwoCat(\cB, \bfH[\bI,\bA]_\ps) \]
natural in $\bI$, $\bA$, and $\cB$. See \cite[Proposition 7.5]{MSVfirst}.
\end{defn}

\begin{notation} \label{not:pushproddbl}
Given a double functor $I\colon \bI\to \bA$ in $\DblCat$ and a $2$-functor $i\colon \mathcal I\to \cA$ in $\TwoCat$, we denote by $I\,\square_{\otimes}\, i$ their pushout-product 
\[ I\,\square_{\otimes}\, i\colon \bA\otimes \mathcal I\bigsqcup_{\bI\otimes \mathcal I} \bI\otimes \cA\to \bA\otimes \cA. \]
\end{notation}

\subsection{Weak horizontal invertibility in a double category} \label{subseq:equivDblCat}

As for $2$-categories, a good notion of invertibility for a horizontal morphism in a double category is not given by that of an isomorphism, but rather by a weaker notion. Indeed, a double category has an underlying horizontal $2$-category which contains all horizontal morphisms, and which we can use to define the notion of \emph{horizontal equivalences}. Let us fix a double category $\bA$. 

\begin{defn} 
A horizontal morphism $f\colon A\to B$ in $\bA$ is a \textbf{horizontal equivalence} if $f$ is an equivalence in the underlying horizontal $2$-category $\bfH\bA$, i.e., if we have a tuple $(f,g,\eta,\epsilon)$ of horizontal morphisms $f\colon A\to B$ and $g\colon B\to A$ in $\bA$ and vertically invertible squares $\eta$ and $\epsilon$ in $\bA$ as depicted below. 
\begin{tz}
\node[](A) {$A$};
\node[right of=A,rr](C) {$A$};
\node[below of=A](A') {$A$};
\node[right of=A'](B') {$B$};
\node[right of=B'](C') {$A$};
\draw[->] (A') to node[below,la] {$f$} (B');
\draw[->] (B') to node[below,la] {$g$} (C');
\draw[d] (A) to (C);
\draw[d,pro] (A) to node(a)[]{} (A');
\draw[d,pro] (C) to node(b)[]{} (C');

\node[la] at ($(a)!0.45!(b)$) {$\eta$};
\node[la] at ($(a)!0.55!(b)$) {$\vcong$};

\node[right of=C,space](A) {$B$};
\node[right of=A](B) {$A$};
\node[right of=B](C) {$B$};
\node[below of=A](A') {$B$};
\node[right of=A',rr](C') {$B$};
\draw[->] (A) to node[above,la] {$g$} (B);
\draw[->] (B) to node[above,la] {$f$} (C);
\draw[d] (A') to (C');
\draw[d,pro] (A) to node(a)[]{} (A');
\draw[d,pro] (C) to node(b)[]{} (C');

\node[la] at ($(a)!0.45!(b)$) {$\epsilon$};
\node[la] at ($(a)!0.55!(b)$) {$\vcong$};
\end{tz}
 
The horizontal morphism $f\colon A\to B$ is a \textbf{horizontal adjoint equivalence} if $f$ is an adjoint equivalence in the underlying horizontal $2$-category $\bfH\bA$, i.e., if the vertically invertible squares $\eta$ and $\epsilon$ further satisfy the triangle identities which require the following pastings to be the vertical identity squares at $f$ and $g$, respectively. 
\begin{tz}
\node[](A) {$A$};
\node[right of=A,rr](C) {$A$};
\node[below of=A](A') {$A$};
\node[right of=A'](B') {$B$};
\node[right of=B'](C') {$A$};
\node[right of=C](D) {$B$};
\node[right of=C'](D') {$B$};
\draw[->] (A') to node[over,la] {$f$} (B');
\draw[->] (B') to node[over,la] {$g$} (C');
\draw[->] (C) to node[above,la] {$f$} (D);
\draw[->] (C') to node[over,la] {$f$} (D');
\draw[d] (A) to (C);
\draw[d,pro] (A) to node(a)[]{} (A');
\draw[d,pro] (C) to node(b)[]{} (C');
\draw[d,pro] (D) to (D');

\node[la] at ($(a)!0.45!(b)$) {$\eta$};
\node[la] at ($(a)!0.55!(b)$) {$\vcong$};
\node[la] at ($(C)!0.5!(D')$) {$e_f$};

\node[below of=A'](A'') {$A$};
\node[below of=B'](B'') {$B$};
\node[below of=D'](D'') {$B$};
\draw[->] (A'') to node[below,la] {$f$} (B'');
\draw[d] (B'') to (D'');
\draw[d,pro] (A') to (A'');
\draw[d,pro] (B') to node(b)[]{} (B'');
\draw[d,pro] (D') to node(c)[]{} (D'');

\node[la] at ($(A')!0.5!(B'')$) {$e_f$};
\node[la] at ($(b)!0.45!(c)$) {$\epsilon$};
\node[la] at ($(b)!0.55!(c)$) {$\vcong$};

\node[right of=D,xshift=2cm](A) {$A$};
\node[right of=A,rr](C) {$A$};
\node[below of=A](A') {$A$};
\node[right of=A'](B') {$B$};
\node[right of=B'](C') {$A$};
\node[left of=A](Z) {$B$};
\node[below of=Z](Z') {$B$};
\draw[->] (A') to node[over,la] {$f$} (B');
\draw[->] (B') to node[over,la] {$g$} (C');
\draw[->] (Z) to node[above, la] {$g$} (A);
\draw[->] (Z') to node[over, la] {$g$} (A');
\draw[d] (A) to (C);
\draw[d,pro] (Z) to (Z');
\draw[d,pro] (A) to node(a)[]{} (A');
\draw[d,pro] (C) to node(b)[]{} (C');

\node[la] at ($(Z)!0.5!(A')$) {$e_g$};
\node[la] at ($(a)!0.45!(b)$) {$\eta$};
\node[la] at ($(a)!0.55!(b)$) {$\vcong$};

\node[below of=Z'](Z'') {$B$};
\node[below of=B'](B'') {$B$};
\node[below of=C'](C'') {$A$};
\draw[d] (Z'') to (B'');
\draw[->] (B'') to node[below, la] {$g$} (C'');
\draw[d,pro] (Z') to node(z)[]{} (Z'');
\draw[d,pro] (B') to node(a)[]{} (B'');
\draw[d,pro] (C') to (C'');

\node[la] at ($(z)!0.45!(a)$) {$\epsilon$};
\node[la] at ($(z)!0.55!(a)$) {$\vcong$};
\node[la] at ($(B')!0.5!(C'')$) {$e_g$};
\end{tz}
\end{defn}

\begin{rem}
By applying \cref{lem:promoteadjoint} to the equivalences of the underlying horizontal $2$-category $\bfH\bA$, we can see that every horizontal equivalence in a double category $\bA$ can be promoted to a horizontal adjoint equivalence.
\end{rem}

Before introducing the notion of weak horizontal invertibility, we first settle the following notations. 

\begin{notation}
We denote by $[n]$ the category given by the poset $\{0<1<\ldots <n\}$, for $n\geq 0$. In other words, it is the free category on $n$ composable morphisms. In particular, the category $[0]$ is the terminal category, and the category $[1]$ is the free category $\{0\to 1\}$ on a morphism.
\end{notation}

We can extract from a double category $\bA$ a $2$-category $\cV\bA\coloneqq \bfH[\bbV[1],\bA]_\ps$ whose objects are the vertical morphisms of $\bA$, and whose morphisms are the squares of~$\bA$. By considering the equivalences in this $2$-category, we get a notion of weak horizontal invertibility for squares. Technical, useful results about weakly horizontally invertible squares are proven in \cref{app:weakhorinv}.

\begin{defn} 
A square $\sq{\alpha}{f}{f'}{u}{v}$ in $\bA$ is \textbf{weakly horizontally invertible} if $\alpha$ is an equivalence in the $2$-category $\bfH[\bbV[1],\bA]_\ps$, i.e., if we have the data of a square $\sq{\beta}{g}{g'}{u}{v}$ in $\bA$ together with vertically invertible squares $\eta$, $\eta'$, $\epsilon$, and $\epsilon'$ satisfying the following pasting equalities. 
\begin{tz}
\node[](A) {$A$};
\node[right of=A,rr](C) {$A$};
\node[below of=A](A') {$A$};
\node[right of=A'](B') {$B$};
\node[right of=B'](C') {$A$};
\draw[->] (A') to node[over,la] {$f$} (B');
\draw[->] (B') to node[over,la] {$g$} (C');
\draw[d] (A) to (C);
\draw[d,pro] (A) to node(a)[]{} (A');
\draw[d,pro] (C) to node(b)[]{} (C');

\node[la] at ($(a)!0.45!(b)$) {$\eta$};
\node[la] at ($(a)!0.55!(b)$) {$\vcong$};

\node[below of=A'](A'') {$A'$};
\node[right of=A''](B'') {$B'$};
\node[right of=B''](C'') {$A'$};
\draw[->] (A'') to node[below,la] {$f'$} (B'');
\draw[->] (B'') to node[below,la] {$g'$} (C'');
\draw[->,pro] (A') to node[left,la] {$u$} (A'');
\draw[->,pro] (B') to node[right,la] {$v$} (B'');
\draw[->,pro] (C') to node[right,la] {$u$} (C'');

\node[la] at ($(A')!0.5!(B'')$) {$\alpha$};
\node[la] at ($(B')!0.55!(C'')$) {$\beta$};

\node[right of=C,xshift=.5cm](X) {$A$};
\node[right of=X,rr](Z) {$A$};
\node[below of=X](A) {$A'$};
\node[right of=A,rr](C) {$A'$};
\node[la] at ($(C')!0.5!(A)$) {$=$};
\draw[d] (X) to (Z);
\draw[d] (A) to (C);
\draw[->,pro] (X) to node[left,la] {$u$} (A);
\draw[->,pro] (Z) to node[right,la] {$u$} (C);

\node[la] at ($(X)!0.5!(C)$) {$\id_u$};

\node[below of=A](A') {$A'$};
\node[right of=A'](B') {$B'$};
\node[right of=B'](C') {$A'$};
\draw[->] (A') to node[below,la] {$f'$} (B');
\draw[->] (B') to node[below,la] {$g'$} (C');
\draw[d,pro] (A) to node(a)[]{} (A');
\draw[d,pro] (C) to node(b)[]{} (C');

\node[la] at ($(a)!0.45!(b)$) {$\eta'$};
\node[la] at ($(a)!0.55!(b)$) {$\vcong$};
\end{tz}

\begin{tz}
\node[](A) {$B$};
\node[right of=A](B) {$A$};
\node[right of=B](C) {$B$};
\node[below of=A](A') {$B$};
\node[right of=A',rr](C') {$B$};
\draw[->] (A) to node[above,la] {$g$} (B);
\draw[->] (B) to node[above,la] {$f$} (C);
\draw[d] (A') to (C');
\draw[d,pro] (A) to node(a)[]{} (A');
\draw[d,pro] (C) to node(b)[]{} (C');

\node[la] at ($(a)!0.45!(b)$) {$\epsilon$};
\node[la] at ($(a)!0.55!(b)$) {$\vcong$};

\node[below of=A'](A'') {$B'$};
\node[below of=C'](C'') {$B'$};
\draw[d] (A'') to (C'');
\draw[->,pro] (A') to node[left,la] {$v$} (A'');
\draw[->,pro] (C') to node[right,la] {$v$} (C'');

\node[la] at ($(A')!0.5!(C'')$) {$\id_v$};

\node[right of=C,xshift=.5cm](X) {$B$};
\node[right of=X](Y) {$A$};
\node[right of=Y](Z) {$B$};
\node[below of=X](A) {$B'$};
\node[la] at ($(C')!0.5!(A)$) {$=$};
\node[right of=A](B) {$A'$};
\node[right of=B](C) {$B'$};
\draw[->] (X) to node[above,la] {$g$} (Y);
\draw[->] (Y) to node[above,la] {$f$} (Z);
\draw[->] (A) to node[over,la] {$g'$} (B);
\draw[->] (B) to node[over,la] {$f'$} (C);
\draw[->,pro] (X) to node[left,la] {$v$} (A);
\draw[->,pro] (Y) to node[right,la] {$u$} (B);
\draw[->,pro] (Z) to node[right,la] {$v$} (C);

\node[la] at ($(X)!0.5!(B)$) {$\beta$};
\node[la] at ($(Y)!0.55!(C)$) {$\alpha$};

\node[below of=A](A') {$B'$};
\node[right of=A',rr](C') {$B'$};
\draw[d] (A') to (C');
\draw[d,pro] (A) to node(a)[]{} (A');
\draw[d,pro] (C) to node(b)[]{} (C');

\node[la] at ($(a)!0.45!(b)$) {$\epsilon'$};
\node[la] at ($(a)!0.55!(b)$) {$\vcong$};
\end{tz}
Note that the data $(f,g,\eta,\epsilon)$ and $(f',g',\eta',\epsilon')$ are horizontal equivalences in $\bA$, and we call $\beta$ a \textbf{weak inverse} of $\alpha$ with respect to the horizontal equivalence data $(f,g,\eta,\epsilon)$ and $(f',g',\eta',\epsilon')$.
\end{defn}

\begin{rem}
By applying \cref{lem:promoteadjoint} to the $2$-category $\bfH[\bbV[1],\bA]_\ps$, we can see that every weakly horizontally invertible square in a double category $\bA$ can be promoted to a weakly horizontally invertible square whose horizontal equivalence data are horizontal adjoint equivalences. Indeed, a square is an adjoint equivalence in the $2$-category $\bfH[\bbV[1],\bA]_\ps$ if and only if its horizontal equivalence data are horizontal adjoint equivalences.
\end{rem}

\begin{rem}
If the horizontal equivalence data of a weakly horizontally invertible square are horizontal adjoint equivalences, we call them \emph{horizontal adjoint equivalence data}.
\end{rem}

With this terminology settled, we are now ready to introduce the fibrant double categories of the considered model structure on $\DblCat$.

\begin{defn} \label{def:weakhorinv}
A double category $\bA$ is \textbf{weakly horizontally invariant} if for every pair of horizontal equivalences $f\colon A\xrightarrow{\simeq} B$ and $f'\colon A'\xrightarrow{\simeq} B'$ and every vertical morphism $v\colon B\arrowdot B'$ in $\bA$, there is a vertical morphism $u\colon A\arrowdot A'$ together with a weakly horizontally invertible square $\alpha$ in $\bA$ as depicted below. 
\begin{tz}
    \node[](1) {$A$};
    \node[right of=1](2) {$B$};
    \node[below of=1](3) {$A'$};
    \node[right of=3](4) {$B'$};
    \draw[->] (1) to node[above,la] {$f$} node[below,la] {$\simeq$} (2);
    \draw[->] (3) to node[below,la] {$f'$} node[above,la] {$\simeq$} (4);
    \draw[->,pro] (1) to node[left,la] {$u$} (3);
    \draw[->,pro] (2) to node[right,la] {$v$} (4);
    
    \node[la] at ($(1)!0.5!(4)-(5pt,0)$) {$\alpha$};
    \node[la] at ($(1)!0.5!(4)+(5pt,0)$) {$\simeq$};
\end{tz}
\end{defn}

\section{Model structures on \texorpdfstring{$\TwoCat$}{2Cat} and \texorpdfstring{$\DblCat$}{DblCat}} \label{sec:MS-2Cat-DblCat}

Lack constructs in \cite{Lack2Cat,LackBicat} a model structure on the category $\TwoCat$ in which the trivial fibrations are the $2$-functors which are surjective on objects, full on morphisms, and fully faithful on $2$-morphisms, and every $2$-category is fibrant. In particular, the weak equivalences in this model structure are the biequivalences. Since $2$-categories can be horizontally embedded into double categories, we then expect that the category $\DblCat$ can be endowed with a model structure which is compatible with that of $2$-categories through this horizontal embedding. 

The first positive answer is given in \cite{MSVfirst}, in which we construct a model structure on~$\DblCat$ right-induced along the functor $(\bfH,\cV)\colon \DblCat\to \TwoCat\times \TwoCat$. With respect to this model structure, the horizontal embedding $\bbH\colon \TwoCat\to \DblCat$ is as well-behaved as possible: it is both left and right Quillen, and homotopically fully faithful. However, the trivial fibrations are only surjective on vertical morphisms, rather than full, and all double categories are fibrant, which are both obstructions for the nerve from double categories to double $(\infty,1)$-categories to be right Quillen. 

Therefore, in \cite{MSVsecond}, we construct another model structure on $\DblCat$ in which the trivial fibrations are the double functors which are surjective on objects, full on horizontal \emph{and} vertical morphisms, and fully faithful on squares, and the fibrant objects are given by the weakly horizontally invariant double categories, which are precisely those double categories whose nerve is fibrant (see \cref{thm:fibrantnerve}). While the horizontal embedding $\bbH$ is still left Quillen and homotopically fully faithful, it is not right Quillen anymore. Instead, its more homotopical version $\bbHsim$ fulfills this role, and actually provides a level-wise fibrant replacement of $\bbH$. 

We recall in \cref{subsec:Lack-MS} the main features of Lack's model structure on $\TwoCat$, and in \cref{subsec:MSV-MS} those of the model structure on $\DblCat$ of \cite{MSVsecond}. In particular, we characterize the cofibrations of these model structures since these descriptions will be used to prove that the left adjoint of the double $(\infty,1)$-categorical nerve is left Quillen. 

\subsection{Lack's model structure for  \texorpdfstring{$2$}{2}-categories}\label{subsec:Lack-MS}

Let us first recall the definition of a biequivalence between $2$-categories, and give generating sets of cofibrations and trivial cofibrations for Lack's model structure on $\TwoCat$. 

\begin{defn}
A $2$-functor is a \textbf{biequivalence} if it is surjective on objects up to equivalence, full on morphisms up to invertible $2$-morphism, and fully faithful on $2$-morphisms.
\end{defn}

\begin{rem}
    Through the canonical inclusion $\Cat\hookrightarrow 2\Cat$, we regard any category as a $2$-category without further specification. Moreover, we denote by $\Sigma\colon \Cat\to 2\Cat$ the suspension functor sending a category $\CC$ to the $2$-category $\Sigma \CC$ with two objects $0$ and $1$, and hom-categories 
    \[ \Sigma\CC(0,0)=\Sigma\CC(1,1)=[0],\quad \Sigma\CC(1,0)=\emptyset, \quad \text{and}\quad \Sigma\CC(0,1)=\CC. \]
\end{rem}

\begin{notation} \label{not:cofin2cat}
We denote by $\mathcal I_2$ the set containing the following $2$-functors:
\begin{rome}
\item the unique map $i_1\colon \emptyset \to [0]$,
\item the inclusion $i_2\colon [0]\sqcup [0]\to [1]$ of the two end points into the free-living morphism,
\item the inclusion $i_3\colon \partial\Sigma[1]\to \Sigma[1]$ of the two parallel morphisms into the free-living $2$-morphism, where $\partial\Sigma[1]\coloneqq \Sigma([0]\sqcup [0])$,
\item the $2$-functor $i_4\colon \Sigma [1]_2\to \Sigma[1]$ sending the two non-trivial parallel $2$-morphisms of $\Sigma [1]_2$ to the non-trivial $2$-morphism of $\Sigma[1]$, where $[1]_2\coloneqq [1]\sqcup_{[0]\sqcup [0]} [1]$ is the free category on two parallel morphisms.
\end{rome}
We denote by $\mathcal J_2$ the set containing the following $2$-functors:
\begin{rome}
\item the inclusion $j_1\colon [0]\to \Eadj$, where the $2$-category $\Eadj$ is the ``free-living adjoint equivalence'', 
\item the inclusion $j_2\colon [1]\to \Sigma I$, where the category $I=\{ x\cong y\}$ is the ``free-living isomorphism''. 
\end{rome}
\end{notation}

\begin{theorem} \label{thm:Lack-MS}
There is a cofibrantly generated model structure on $\TwoCat$, in which the weak equivalences are the biequivalences, generating sets of cofibrations and trivial cofibrations are given by the sets $\mathcal I_2$ and $\mathcal J_2$, respectively, and every $2$-category is fibrant. 

Moreover, this model structure is monoidal with respect to the Gray tensor product~$\otimes_2$.
\end{theorem}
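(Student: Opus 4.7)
The existence of this model structure is due to Lack \cite{Lack2Cat, LackBicat}, so the plan is to follow his argument and apply a standard recognition theorem for cofibrantly generated model structures, taking $\mathcal I_2$ and $\mathcal J_2$ as the proposed generating (trivial) cofibrations and biequivalences as the proposed weak equivalences. First I would verify that a $2$-functor has the right lifting property against $\mathcal I_2$ if and only if it is surjective on objects (from $i_1$), full on morphisms (from $i_2$), full on $2$-cells (from $i_3$), and faithful on $2$-cells (from $i_4$); this is a direct inspection of each generator. Similarly, the right lifting property against $\mathcal J_2$ translates into an ``equivalence extension'' property: $j_1$ encodes lifting of adjoint equivalences and $j_2$ encodes lifting of invertible $2$-cells. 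Since both $\Eadj\to [0]$ and $C_\mathrm{inv}\to [1]$ admit retractions, the terminal $2$-functor $\cA\to [0]$ satisfies this right lifting property for every $\cA$, so every $2$-category will be fibrant.

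The key step is to identify the resulting class of weak equivalences with the biequivalences. My plan is to show the two containments separately. Trivial fibrations (maps with the right lifting property against $\mathcal I_2$) are biequivalences by direct translation of the lifting properties into the essential-surjectivity, fullness, and faithfulness conditions. Next, relative $\mathcal J_2$-cell complexes are biequivalences, using that both generators in $\mathcal J_2$ are biequivalences together with stability of biequivalences under pushouts and transfinite compositions in $\TwoCat$. Conversely, using the small object argument applied to $\mathcal J_2$ one factors an arbitrary $2$-functor as a relative $\mathcal J_2$-cell complex followed by a fibration, and then a $2$-out-of-$3$ and retract argument shows that a biequivalence is a weak equivalence in the sense of the model structure. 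Closure of biequivalences under $2$-out-of-$3$ and retracts is immediate from the definition.

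For the monoidal part, the plan is to verify the pushout-product axiom on the generating sets. Since the unit $[0]$ is cofibrant, it suffices to show that $i\,\square_{\otimes_2}\, i'$ is a cofibration for all $i,i'\in\mathcal I_2$, and that $i\,\square_{\otimes_2}\, j$ is a trivial cofibration for $i\in\mathcal I_2$ and $j\in\mathcal J_2$. I would proceed by an explicit case-by-case analysis of the Gray tensor products of the $2$-categories $[0], [0]\sqcup[0], [1], \delta C, C, C_2, \Eadj, C_\mathrm{inv}$; each pushout-product admits a finite cellular presentation built from $\mathcal I_2$-cells. The main obstacle will be the cases involving $i_3$ and $i_4$ tensored with $j_1$ or $j_2$: here one must analyze the interaction between the non-trivial pseudo-squares produced by $\otimes_2$ and the invertible data in $\Eadj$ and $C_\mathrm{inv}$ to exhibit the relevant pushout-product as a trivial cofibration, which is the delicate combinatorial computation carried out in \cite{Lack2Cat, LackBicat}.
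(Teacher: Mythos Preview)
The paper does not reprove this result at all: its proof consists solely of citations to \cite{Lack2Cat,LackBicat} for the model structure, the generating sets, and the monoidality. Your proposal instead sketches the content of those references, so you are doing strictly more than the paper. As a summary of Lack's argument your outline is broadly accurate, and it is fine to present it this way provided you make clear you are recounting Lack's proof rather than giving a new one.

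One point does need correction. You write that relative $\mathcal J_2$-cell complexes are biequivalences ``using \ldots\ stability of biequivalences under pushouts and transfinite compositions in $\TwoCat$''. Biequivalences are \emph{not} stable under pushout in $\TwoCat$ in general; this is precisely why the argument in \cite{Lack2Cat} is delicate. What Lack shows is that pushouts of the \emph{specific} maps $j_1$ and $j_2$ are biequivalences, by analysing those pushouts explicitly, and then that transfinite composites of such pushouts remain biequivalences. If you want to keep your sketch, rephrase that step accordingly; as stated it invokes a false closure property.
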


\begin{proof}
The existence of the model structure is given in \cite[Theorem 4]{LackBicat} (which is a slightly modified version of \cite[Theorem 3.3]{Lack2Cat}). The sets of generating (trivial) cofibrations are described at the beginning of \cite[\S 3]{Lack2Cat}, and the monoidality is the content of \cite[Theorem~7.5]{Lack2Cat}.
\end{proof}

\begin{rem} \label{rem:pushprod2Cat}
In particular, the model structure on $\TwoCat$ being monoidal with respect to $\otimes_2$ means that the pushout-product $i\,\square_{\otimes_2}\,i'$ (see \cref{not:pushprod2}) of two cofibrations $i$ and $i'$ in $\TwoCat$ is a cofibration in $\TwoCat$, which is trivial if $i$ or $i'$ is a biequivalence. 
\end{rem}

The following results provide characterizations of cofibrations and of cofibrant objects in $\TwoCat$. We denote by $U\colon \TwoCat\to \Cat$ the functor sending a $2$-category to its underlying category. 

\begin{prop} \label{prop:cofin2Cat}
A $2$-functor $F\colon \cA\to \cB$ is a cofibration in $\TwoCat$ if and only if
\begin{rome}
\item it is injective on objects and faithful on morphisms, and
\item the underlying category $U\cB$ is a retract of a category obtained from the image of $U\cA$ under $UF$ by freely adjoining objects and then morphisms between objects.
\end{rome}
\end{prop}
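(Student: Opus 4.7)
The approach is to leverage the structure of the generating set $\mathcal{I}_2$ from \cref{not:cofin2cat}, whose elements split into two groups with disjoint effects on the underlying category: the maps $i_1$ and $i_2$ freely adjoin an object and a morphism between two existing objects respectively, while $i_3$ and $i_4$ are bijective on objects and on $1$-morphisms, modifying only $2$-cell data. Condition (i) encodes exactly the behaviour of $i_1, i_2$ on objects and morphisms, while condition (ii) records how the underlying category of the codomain is built from that of the domain via successive pushouts of $U i_1$ and $U i_2$ in $\Cat$.

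For the forward direction, let $\mathcal{C}$ denote the class of $2$-functors satisfying (i) and (ii). I would first check directly that every generator $i_k \in \mathcal{I}_2$ lies in $\mathcal{C}$, and then verify that $\mathcal{C}$ is closed under pushout, transfinite composition, and retract: pushouts of $i_1$ or $i_2$ freely adjoin new underlying data, matching the recipe in (ii); pushouts of $i_3$ or $i_4$ leave the underlying category unchanged, so (i) and (ii) are inherited trivially; transfinite composition is handled by the fact that $U\colon \TwoCat \to \Cat$ preserves filtered colimits, and the free-adjoining operation is stable under such colimits; finally, injectivity on objects and faithfulness on morphisms are preserved by retracts in the arrow category via a standard diagram chase, and the retract condition in (ii) is manifestly closed under further retracts.

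For the converse, given $F\colon \cA \to \cB$ satisfying (i) and (ii), I would build an explicit $\mathcal{I}_2$-cell complex $F'\colon \cA \to \cB'$ of which $F$ is a retract. Invoking (ii), there is a category $\cB_0$ built from $UF(U\cA)$ by successive pushouts of $U i_1$ (adjoining new objects) and then $U i_2$ (adjoining morphisms between existing pairs of objects), with $U\cB$ a retract of $\cB_0$. This sequence lifts through $U$ to a chain of pushouts of $i_1$ and $i_2$ in $\TwoCat$, producing a $2$-functor $\cA \to \cA_1$ whose underlying category is $\cB_0$. I would then attach each $2$-cell of $\cB$ via a pushout of $i_3$ along its boundary in $\cA_1$, and impose all relations between the resulting $2$-cells (composition, interchange, naturality) via pushouts of $i_4$, producing $F'\colon \cA \to \cB'$. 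The underlying category of $\cB'$ is $\cB_0$, and the retract data on underlying categories provided by (ii) would then extend to a retract $\cB \to \cB' \to \cB$ exhibiting $F$ as a retract of $F'$.

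The main obstacle is precisely this last step: promoting the retract $U\cB \hookrightarrow \cB_0 \twoheadrightarrow U\cB$ on underlying categories to a retract of $2$-functors compatible with $F$. This requires care in the choice of which $2$-cells to attach at stage three and which relations to impose at stage four, and it uses condition (i) --- the injectivity on objects and faithfulness on morphisms of $F$ --- to guarantee that the sectional and retraction maps can be defined coherently on $2$-cells without forcing unwanted identifications that would destroy the retract property.
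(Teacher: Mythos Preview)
The paper gives no argument of its own here; it defers entirely to \cite[Lemma~4.1 and Corollary~4.12]{Lack2Cat}. Your outline is the standard cell-complex analysis, which is essentially what Lack carries out, so there is no alternative approach to compare against.

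Your sketch is sound, and you correctly locate the only delicate point. It resolves along the lines you indicate: having built $\cA_1$ with $U\cA_1=\cB_0$, attach a copy of $i_3$ for each $2$-cell $\alpha\colon f\Rightarrow g$ of $\cB$ along the pair $(s(f),s(g))$, where $s\colon U\cB\to\cB_0$ is the section furnished by (ii), and then impose via $i_4$ every equation among $2$-cells that holds in $\cB$---in particular, for each $2$-cell $\beta$ of $\cA$, the equation identifying the freshly attached copy of $F\beta$ with the $2$-cell $\beta$ already present in $\cA\subseteq\cB'$. The retraction $\cB'\to\cB$ is then determined by $r\colon\cB_0\to U\cB$ on underlying data and by sending each attached cell to the $2$-cell of $\cB$ it names; the section $\cB\to\cB'$ uses $s$ on underlying data and the tautological assignment on $2$-cells, which is a $2$-functor precisely because every relation in $\cB$ has been imposed. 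Condition~(i) is what guarantees that the image of $UF$ sits inside $\cB_0$ as a faithful copy of $U\cA$ fixed by both $s$ and $r$, so that the section and retraction lie under $\cA$ and exhibit $F$ as a retract of the cell complex $F'$.
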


\begin{proof}
This follows from \cite[Lemma 4.1 and Corollary 4.12]{Lack2Cat}.
\end{proof}

\begin{cor}\label{prop:cofibrantTwoCat}
A $2$-category $\cA$ is cofibrant in $\TwoCat$ if and only if its underlying category $U\cA$ is free.
\end{cor}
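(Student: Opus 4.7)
The plan is to apply Proposition \ref{prop:cofin2Cat} to the unique $2$-functor $\emptyset\to \cA$, whose ``cofibration'' property is precisely the condition that $\cA$ be cofibrant. Condition (i) of the proposition is vacuously satisfied, since the domain is the empty $2$-category. So the content reduces to condition (ii): $U\cA$ must be a retract of a category obtained from the empty category by freely adjoining objects and then morphisms between them. But a category obtained from $\emptyset$ by freely adjoining objects and then morphisms is exactly a free category (in the sense that it is the path category of some directed graph).

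Hence the statement ``$\cA$ is cofibrant'' is equivalent to ``$U\cA$ is a retract of a free category in $\Cat$''. To close the proof I would invoke the classical fact that the free categories form a class closed under retracts in $\Cat$: a retract of a free category is again free. This yields the forward direction, namely that $U\cA$ is free whenever $\cA$ is cofibrant.

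For the reverse direction, if $U\cA$ is free, then $U\cA$ itself is of the form prescribed in condition (ii) (with the trivial retract given by the identity on $U\cA$), so $\emptyset\to\cA$ is a cofibration and $\cA$ is cofibrant.

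The only non-routine ingredient is the closure of free categories under retracts in $\Cat$, which I would either cite as a standard fact (it is used implicitly in Lack's treatment) or note that it follows from the observation that the indecomposable morphisms of a free category form a generating graph, and this structure passes to retracts. Everything else is a direct translation of Proposition \ref{prop:cofin2Cat} to the initial-object case.
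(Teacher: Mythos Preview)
Your argument is correct. The paper itself does not give a proof here at all: it simply cites \cite[Theorem~4.8]{Lack2Cat} directly, just as it cited \cite{Lack2Cat} for \cref{prop:cofin2Cat}. So rather than treating the corollary as an independent result imported from Lack, you actually \emph{derive} it from \cref{prop:cofin2Cat} by specialising to the initial $2$-functor $\emptyset\to\cA$. This is a genuine, if small, difference in approach: you trade one external citation for another (the closure of free categories under retracts in $\Cat$), but you make explicit how the corollary follows from the proposition already stated in the paper. The retract fact you invoke is indeed standard---it is equivalent to saying that free categories are the cofibrant objects of the folk model structure on $\Cat$, or can be checked directly via indecomposable morphisms as you sketch---and is precisely what Lack uses to pass from his Lemma~4.1 to Theorem~4.8. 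So your route and the cited result ultimately rest on the same ingredient; you have just unpacked one step that the paper leaves bundled in the citation.
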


\begin{proof}
This is given by \cite[Theorem 4.8]{Lack2Cat}.
\end{proof}

\subsection{Model structure for weakly horizontally invariant double categories} \label{subsec:MSV-MS}

While the weak equivalences in the model structure on $\DblCat$ for weakly horizontally invariant double categories constructed in \cite{MSVsecond} do not admit an explicit description, they contain the \emph{double biequivalences}. These correspond to the weak equivalences of the model structure on $\DblCat$ of \cite{MSVfirst}, and were first introduced in \cite[Definition 3.6]{MSVfirst}. 

\begin{defn}
A double functor is a \textbf{double biequivalence} if it is surjective on objects up to horizontal equivalence, full on horizontal morphisms up to vertically invertible square with trivial vertical boundaries, surjective on vertical morphisms up to weakly horizontally invertible square, and fully faithful on squares.
\end{defn}

We now introduce a set of generating cofibrations for the model structure on $\DblCat$ of \cite{MSVsecond}.

\begin{notation} \label{not:IJ}
We denote by $\mathcal I$ the set containing the following double functors:
\begin{rome}
\item the unique map $I_1\colon \emptyset \to [0]$,
\item the inclusion $I_2\colon [0]\sqcup [0]\to \bbH[1]$,
\item the inclusion $I_3\colon [0]\sqcup [0]\to \bbV[1]$,
\item the inclusion $I_4\colon \partial\mathbb S\to \mathbb S$, where $\mathbb S\coloneqq \bbH[1]\times \bbV[1]$ is the free double category on a square, and $\partial\mathbb S$ is its sub-double category containing the boundary of the square, i.e. it is free on two horizontal morphisms and two vertical morphisms sharing some boundaries,
\item the $2$-functor $I_5\colon \mathbb S_2\to \mathbb S$ sending the two non-trivial squares of $\mathbb S_2$ to the non-trivial square of $\mathbb S$, where $\mathbb S_2$ is the free double category on two parallel squares.
\end{rome}
\end{notation}

\begin{theorem} \label{thm:WHI-MS}
There is a model structure on $\DblCat$ in which the cofibrations are generated by the set $\mathcal I$ and the fibrant objects are the weakly horizontally invariant double categories. The class of weak equivalences contains the double biequivalences.

Moreover, the model structure on $\DblCat$ is monoidal with respect to the Gray tensor product $\otimes_G$, and it is enriched over Lack's model structure on $\TwoCat$ with respect to the $\TwoCat$-enrichment $\bfH[-,-]_\ps$.
\end{theorem}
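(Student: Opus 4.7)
The strategy is to apply a combinatorial recognition theorem (Smith's, or equivalently a cofibrant generation argument via Quillen's small object argument) to the locally presentable category $\DblCat$, taking $\mathcal I$ as the set of generating cofibrations. The first step is to identify the $\mathcal I$-injectives: a direct lifting analysis against each generator shows that $I_1$ forces surjectivity on objects, $I_2$ and $I_3$ force fullness on horizontal and vertical morphisms respectively, $I_4$ forces fullness on squares, and $I_5$ forces faithfulness on squares. Hence $\mathcal I$-injectives are precisely the double functors described in the introduction as the prospective trivial fibrations of this model structure.

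Next, I would build a set $\mathcal J$ of pseudo-generating trivial cofibrations that \emph{forces} weak horizontal invariance at fibrant objects. Concretely, $\mathcal J$ should contain the inclusion of the free double category on two horizontal adjoint equivalences $f,f'$ together with a vertical morphism $v$ between their codomains, into the free double category obtained from it by adjoining a weakly horizontally invertible square $\alpha$ with horizontal boundary $(f,f')$ and right vertical boundary $v$, along with its horizontal weak inverse data. One then reads directly from the shape of these inclusions that a double category $\bA$ has the right lifting property against $\mathcal J$ if and only if $\bA$ is weakly horizontally invariant in the sense of \cref{def:weakhorinv}. To define $\mathcal W$, the cleanest path is to first construct a fibrant replacement functor $R\colon \DblCat\to \DblCat$ via transfinite $\mathcal J$-cell composition with $\mathcal I\cup\mathcal J$-small object factorizations, and then declare $F\in\mathcal W$ if and only if $RF$ is a double biequivalence. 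This class is accessible, closed under retracts and 2-out-of-3, contains every $\mathcal J$-cell (for a $\mathcal J$-cell between weakly horizontally invariant objects is itself a double biequivalence by construction), and contains every $\mathcal I$-injective (since such a map is a double biequivalence and thus, after fibrant replacement, still one). Smith's theorem then yields the model structure, and the fibrant objects are by construction the weakly horizontally invariant double categories. The containment of double biequivalences in $\mathcal W$ follows from a retract-of-trivial-fibration argument: any double biequivalence factors, via the small object argument for $\mathcal I$, as an $\mathcal I$-cell followed by an $\mathcal I$-injective, and one can lift against the first factor up to the desired data using the invertibility hypotheses of \cref{subseq:equivDblCat}.

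For the monoidal structure with respect to $\otimes_G$ and the $\TwoCat$-enrichment via $\bfH[-,-]_\ps$, the pushout-product axiom reduces to finite combinatorial checks on the generators: one verifies that $I_k\,\square_{\otimes_G}\,I_\ell$ lies in $\mathcal I\text{-cof}$ for every pair, and that $I_k\,\square_{\otimes_G}\,J$ lies in $\mathcal J\text{-cof}\subseteq\mathcal W\cap\mathcal I\text{-cof}$ for every $J\in\mathcal J$. These checks are of the same flavour as the corresponding verification in \cref{thm:Lack-MS} for $\otimes_2$, and they transfer to the enrichment via the restriction $\otimes=\otimes_G\circ(\id\times\bbH)$ once one knows that $\bbH$ sends the generators of $\mathcal I_2$ to elements of $\mathcal I$. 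The main obstacle is neither the lifting analysis (which is mechanical) nor the monoidality check (which is combinatorial), but the precise identification of the class $\mathcal W$ and the verification that $\mathcal J$-cells are weak equivalences: one needs $\mathcal J$ to be rich enough that fibrant objects are exactly the weakly horizontally invariant double categories, yet small and ``acyclic'' enough that each pushout along a map in $\mathcal J$ does not introduce new homotopical content beyond adjoining already-inverted data. Once $\mathcal J$ and $\mathcal W$ are calibrated so that these two requirements are simultaneously met, the remainder of the argument is bookkeeping.
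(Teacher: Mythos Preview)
The paper does not actually prove this theorem: its proof consists entirely of citations to \cite{MSVsecond}, where the model structure is constructed and the monoidality and enrichment are verified. So there is no in-paper argument to compare against; your proposal is an attempt to reconstruct what happens in that external reference.

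Your outline via Smith's theorem is the right general shape, and the identification of $\mathcal I$-injectives is correct. But two steps are genuinely underdeveloped. First, your argument that double biequivalences lie in $\mathcal W$ is not sound as written: factoring a double biequivalence $F$ as an $\mathcal I$-cell followed by an $\mathcal I$-injective and then ``lifting against the first factor'' does not show $F$ is a retract of the $\mathcal I$-injective part unless you already know the $\mathcal I$-cell part is a weak equivalence admitting a section up to homotopy, which is precisely what is in question. The actual argument (carried out in \cite{MSVsecond}) goes through a more careful analysis showing that double biequivalences between weakly horizontally invariant double categories coincide with the weak equivalences there, and then uses that fibrant replacement preserves double biequivalences. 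Second, your description of $\mathcal J$ is too vague to support the claim that $\mathcal J$-cells are weak equivalences: you need not just the obvious ``fill a weakly horizontally invertible square'' map but also maps ensuring lifting of horizontal equivalences and of vertically invertible squares in a controlled way, and the acyclicity of pushouts along these maps requires a nontrivial argument about how freely adjoining such data interacts with the double biequivalence conditions. You correctly flag this as the main obstacle, but the proposal does not indicate how it is overcome.
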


\begin{proof}
The existence of the model structure is given in \cite[Theorem 3.26]{MSVsecond}. The monoidality and enrichment are the content of \cite[Theorem 7.8 and Remark 7.9]{MSVsecond}.
\end{proof}

\begin{rem} \label{rem:pushprodDblCat}
In particular, the model structure on $\DblCat$ being enriched over $\TwoCat$ with respect to $\bfH[-,-]_\ps$ means that the pushout-product $I\,\square_{\otimes}\,i$ (see \cref{not:pushproddbl}) of a cofibration $I$ in $\DblCat$ and a cofibration $i$ in $\TwoCat$ is a cofibration in $\DblCat$, which is trivial if $I$ is a double biequivalence or $i$ is a biequivalence. 
\end{rem}

\begin{rem}
The weak equivalences of the model structure on $\DblCat$ of \cref{thm:WHI-MS} can be described as those double functors which induce a double biequivalence between fibrant replacements. 
\end{rem}

The following results state characterizations of cofibrations and cofibrant objects in $\DblCat$.

\begin{prop} \label{prop:cofinDblCat'}
A double functor $F\colon \bA\to \bB$ is a cofibration in $\DblCat$ if and only if
\begin{rome}
\item it is injective on objects and faithful on horizontal and vertical morphisms,
\item the horizontal (resp.~vertical) underlying category $U\bfH\bB$ (resp.~$U\bfV\bB$) is a retract of a category obtained from the image of $U\bfH\bA$ (resp.~$U\bfV\bA$) under $U\bfH F$ (resp.~$U\bfV F$) by freely adjoining objects and then morphisms between objects.
\end{rome}
\end{prop}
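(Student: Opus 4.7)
The plan is to mirror the strategy used to prove \cref{prop:cofin2Cat} for Lack's model structure on $\TwoCat$, adapted to the double categorical setting using the five generating cofibrations in $\mathcal I$ from \cref{not:IJ}. Since the model structure on $\DblCat$ is cofibrantly generated by $\mathcal I$ (by \cref{thm:WHI-MS}), every cofibration is a retract of a transfinite composition of pushouts of maps in $\mathcal I$, and the strategy is to translate conditions (i) and (ii) into statements on the underlying horizontal and vertical categories via the functors $U\bfH$ and $U\bfV$ and then invoke \cref{prop:cofin2Cat}.

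For the forward implication, I would first verify that every map in $\mathcal I$ satisfies (i) and (ii). The maps $I_1$, $I_2$, $I_3$ are inclusions of sub-double categories, visibly injective on objects and vacuously faithful on morphisms, and applying $U\bfH$ or $U\bfV$ produces either an isomorphism or a ``free adjunction'' step in $\Cat$ (attaching a new object, a new horizontal morphism, or a new vertical morphism). The maps $I_4$ and $I_5$ only affect squares with non-identity vertical and horizontal boundaries, so $U\bfH I_4$, $U\bfV I_4$, $U\bfH I_5$, $U\bfV I_5$ are isomorphisms and conditions (i) and (ii) hold trivially.

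Next I would check that the class of double functors satisfying (i) and (ii) is closed under pushouts along maps in $\mathcal I$, transfinite compositions, and retracts. Injectivity on objects and faithfulness on horizontal and vertical morphisms are preserved by a direct inspection, since pushouts along the generating cofibrations are constructed by freely attaching new objects, morphisms, or squares, without merging existing data. For (ii), the key is that $U\bfH$ (resp. $U\bfV$) transforms such pushouts into pushouts in $\Cat$ along the generating cofibrations describing free extensions of categories, and the analogous closure property of the class defined by (ii) in $\Cat$ (which is established in the course of proving \cref{prop:cofin2Cat}) applies. Combining these yields that every relative $\mathcal I$-cell complex satisfies (i) and (ii), and retracts preserve both conditions, concluding the forward direction.

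For the converse, given $F\colon\bA\to\bB$ satisfying (i) and (ii), I would construct a factorization $\bA \to \bB' \to \bB$ with $\bA\to\bB'$ a relative $\mathcal I$-cell complex, and show that the section data from (ii) lift to give a retract of this factorization. Concretely, I attach first pushouts of $I_1$ for the objects of $\bB$ not in the image of $F$, then pushouts of $I_2$ and $I_3$ indexed by the freely adjoined horizontal and vertical morphisms witnessing (ii), then pushouts of $I_4$ for each square of $\bB$ to generate a square with the correct boundary, and finally pushouts of $I_5$ to identify parallel squares so that the induced map $\bB'\to \bB$ is faithful on squares. By construction $\bA\to\bB'$ is a relative $\mathcal I$-cell complex equal to $F$ after composing with $\bB'\to\bB$, and the sections in (ii) extend to a double functor $\bB\to\bB'$ exhibiting $F$ as a retract. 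The main obstacle is the bookkeeping in this last step: one must coordinate the horizontal and vertical retract data from (ii) to define a single section double functor $\bB\to\bB'$, and must attach and identify squares in a way compatible with this section, which requires fixing a careful order of cell attachments (objects, then horizontal morphisms in parallel with vertical morphisms, then squares, then identifications).
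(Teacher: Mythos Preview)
Your approach is correct and is essentially a detailed reconstruction of what the paper defers to external references for: the paper's proof consists entirely of the citation ``This follows from \cite[Lemma 4.1]{Lack2Cat} and \cite[Theorem 3.6]{MSVsecond}'', where the latter is the double-categorical analogue of Lack's characterization and the former supplies the underlying-category argument you invoke. So you are unpacking precisely the content of those references rather than taking a different route.

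One point worth tightening: in the forward direction you assert that $U\bfH$ and $U\bfV$ carry pushouts along the generating cofibrations to the expected pushouts in $\Cat$. Since $\bfH$ and $\bfV$ are right adjoints they do not preserve arbitrary colimits, so this really is a case-by-case check on $I_1,\dots,I_5$ (as you indicate). For $I_4$ and $I_5$ the underlying horizontal and vertical categories are unchanged, and for $I_1,I_2,I_3$ the effect is a free cell attachment in $\Cat$; stating this explicitly would make the argument airtight. In the converse, your acknowledged ``main obstacle''---coordinating the horizontal and vertical retract data into a single section $\bB\to\bB'$---is genuine but manageable: injectivity on objects from (i) lets you fix the object-level section uniformly, after which the horizontal and vertical morphism sections from (ii) can be chosen independently since they concern disjoint generating data.
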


\begin{proof}
This follows from \cite[Lemma 4.1]{Lack2Cat} and \cite[Theorem 3.11]{MSVsecond}.
\end{proof}

\begin{cor}\label{prop:cofibrantDblCat'}
A double category $\bA$ is cofibrant in $\DblCat$ if and only if its underlying horizontal and vertical categories $U\bfH \bA$ and $U\bfV\bA$ are free.
\end{cor}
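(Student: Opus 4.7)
The plan is to obtain this as an immediate specialisation of \cref{prop:cofinDblCat'} to the unique double functor $F\colon \emptyset\to \bA$, since $\bA$ is cofibrant in $\DblCat$ precisely when this initial map is a cofibration. Condition (i) of \cref{prop:cofinDblCat'}---injectivity on objects together with faithfulness on horizontal and vertical morphisms---is vacuously satisfied by $F$, so the entire content of cofibrancy of $\bA$ collapses to condition (ii) applied to $F$.

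Next, I would unpack condition (ii) in this special case: it asks that $U\bfH\bA$ and $U\bfV\bA$ each be a retract of a category obtained from the empty category by freely adjoining first objects and then morphisms between objects. Such a category is by definition a free category on some directed graph, so the condition is exactly that $U\bfH\bA$ and $U\bfV\bA$ are retracts of free categories.

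The final step is to identify ``retract of a free category'' with ``free category''. This is the only non-formal input and is precisely the same fact invoked in the proof of \cref{prop:cofibrantTwoCat} through \cite[Theorem 4.8]{Lack2Cat}: a retract of a free category in $\Cat$ is itself free. Applied twice—once to $U\bfH\bA$ and once to $U\bfV\bA$—this yields the forward implication. The converse is trivial since any category is a retract of itself. The main (and really only) obstacle is remembering that we do not need to reprove anything about retracts of free categories: this is already extracted from Lack's work and is what powers the analogous 2-categorical corollary, so quoting it keeps the proof to essentially two lines.
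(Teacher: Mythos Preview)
Your argument is correct: specialising \cref{prop:cofinDblCat'} to the initial double functor $\emptyset\to\bA$ makes condition (i) vacuous and reduces condition (ii) to ``$U\bfH\bA$ and $U\bfV\bA$ are retracts of free categories'', after which the fact that retracts of free categories are free (as used in \cite[Theorem~4.8]{Lack2Cat}) finishes the job.

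The paper itself does not argue this way; its entire proof is a citation to \cite[Corollary~3.8]{MSVsecond}. Your approach has the advantage of actually exhibiting the corollary as a consequence of the preceding proposition, which is presumably why it is labelled a corollary in the first place. The only thing to be slightly careful about is that the retract-of-free-is-free statement is a genuine input rather than a triviality, but you have identified this and pointed to the same source the paper relies on for the analogous $2$-categorical statement.
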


\begin{proof}
This is \cite[Corollary 3.13]{MSVsecond}.
\end{proof}

The horizontal embedding functor $\bbH$ is not right Quillen with respect to Lack's model structure on $\TwoCat$ and the model structure on $\DblCat$ of \cref{thm:WHI-MS} since the horizontal double category $\bbH\cA$ associated to a $2$-category $\cA$ is not always weakly horizontally invariant; see \cite[Remark 6.4]{MSVsecond}. However, its better suited homotopical version $\bbHsim$ is such a right Quillen functor and it gives a homotopically full embedding of $2$-categories into double categories. 

\begin{theorem} \label{thm:Hsim-rightQUillen}
The adjunction 
\begin{tz}
\node[](A) {$\TwoCat$};
\node[right of=A,xshift=1.2cm](B) {$\DblCat$};
\draw[->] ($(B.west)+(0,.25cm)$) to [bend right=25] node[above,la]{$\Lsim$} ($(A.east)+(0,.25cm)$);
\draw[->] ($(A.east)-(0,.25cm)$) to [bend right=25] node[below,la]{$\bbHsim$} ($(B.west)+(0,-.25cm)$);
\node[la] at ($(A.east)!0.5!(B.west)$) {$\bot$};
\end{tz}
is a Quillen pair between Lack's model structure on $\TwoCat$ and the model structure on $\DblCat$ for weakly horizontally invariant double categories. Moreover, the derived counit of this adjunction is level-wise a biequivalence in $\TwoCat$. 
\end{theorem}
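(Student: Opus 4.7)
The plan has three steps: checking fibrancy of $\bbHsim\cA$, verifying that $\Lsim$ is left Quillen, and computing the derived counit.

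\emph{Fibrancy.} Given horizontal equivalences $f\colon A\xrightarrow{\simeq}B$ and $f'\colon A'\xrightarrow{\simeq}B'$ in $\bbHsim\cA$---equivalently, equivalences of $\cA$, promotable to adjoint equivalences $\underline f,\underline{f'}$ by \cref{lem:promoteadjoint}---and a vertical morphism $\underline v$ of $\bbHsim\cA$ (an adjoint equivalence of $\cA$), one sets $\underline u\coloneqq\underline{f'}^{-1}\cdot\underline v\cdot\underline f$, which is again an adjoint equivalence of $\cA$. Whiskering $vf$ with the inverse of the counit of $\underline{f'}$ yields a 2-cell $vf\Rightarrow f'u$ furnishing the required weakly horizontally invertible square, so $\bbHsim\cA$ is weakly horizontally invariant, hence fibrant in $\DblCat$.

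\emph{Quillen pair.} Preservation of generating cofibrations by $\Lsim$ is a direct computation: since $\Lsim\bbH\cA=\cA$ for any 2-category $\cA$ and $\Lsim\bbV[1]=\Eadj$, one reads off $\Lsim I_1=i_1$, $\Lsim I_2=i_2$, $\Lsim I_3=[0]\sqcup[0]\to\Eadj$, and $\Lsim I_4,\Lsim I_5$ are handled analogously; each is a cofibration in $\TwoCat$ by \cref{prop:cofin2Cat}. The harder half---preservation of trivial cofibrations, since $\DblCat$ admits no explicit generating set for them---leverages the $\TwoCat$-enrichment of \cref{thm:WHI-MS}: by \cref{rem:pushprodDblCat}, the pushout-products $I\,\square_\otimes\,j$ for $I\in\mathcal I$ and $j\in\mathcal J_2$ are trivial cofibrations in $\DblCat$. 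The enriched adjunction $\Lsim\dashv\bbHsim$ yields a natural isomorphism of pseudo-homs $\bfH[\bA,\bbHsim(-)]_\ps\cong[\Lsim\bA,-]_{2,\ps}$, from which $\Lsim$ commutes with the $\TwoCat$-tensoring, so $\Lsim(I\,\square_\otimes\,j)=\Lsim(I)\,\square_{\otimes_2}\,j$ is a trivial cofibration in $\TwoCat$ by \cref{rem:pushprod2Cat}. A standard small-object argument then extends preservation to all trivial cofibrations.

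\emph{Derived counit.} Since every 2-category is fibrant and $\bbHsim\cA$ is fibrant by the first step, the derived counit at $\cA$ is represented by $\Lsim Q\bbHsim\cA\to\cA$ for a cofibrant replacement $Q\bbHsim\cA\to\bbHsim\cA$ in $\DblCat$. The strict counit $\Lsim\bbHsim\cA\to\cA$ is verified to be a biequivalence via the generators-and-relations description of $\Lsim\bbHsim\cA$: it is the identity on objects, surjective on morphisms (every morphism of $\cA$ is the image of its generating lift), and fully faithful on 2-cells by construction. A careful choice of cofibrant replacement together with Ken Brown's lemma applied to the (now established) left Quillen $\Lsim$ then transfers the biequivalence property to the derived counit. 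The main obstacle throughout is the preservation of trivial cofibrations, and the enrichment-based argument through the pushout-products $I\,\square_\otimes\,j$ is the crucial technical device resolving this difficulty.
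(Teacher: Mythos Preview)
The paper does not prove this statement itself; it simply cites \cite[Theorem~6.6]{MSVsecond}. Your attempt is therefore a genuine reconstruction, and the fibrancy step and the shape of the derived-counit argument are reasonable. The gap is in the preservation of trivial cofibrations.

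Your claim that the adjunction $\Lsim\dashv\bbHsim$ is $\TwoCat$-enriched, and hence that $\Lsim(I\,\square_\otimes\,j)\cong\Lsim(I)\,\square_{\otimes_2}\,j$, is false. This is precisely the content of \cref{rem:noninv}: $\Lsim$ does \emph{not} preserve tensors, since $\Lsim(\bbV[1]\otimes[1])$ is generated by a non-invertible $2$-cell whereas $\Lsim(\bbV[1])\otimes_2[1]=\Eadj\otimes_2[1]$ is generated by an invertible one. An enriched left adjoint necessarily preserves tensors, so no isomorphism $\bfH[\bA,\bbHsim(-)]_\ps\cong[\Lsim\bA,-]_{2,\ps}$ can exist, and your pushout-product formula already fails for the target of $I_3\,\square_\otimes\,i_2$.

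Independently of that, the maps $I\,\square_\otimes\,j$ for $I\in\mathcal I$ and $j\in\mathcal J_2$ are not asserted anywhere to generate the trivial cofibrations of the model structure of \cref{thm:WHI-MS}; that model structure is produced in \cite{MSVsecond} by methods that do not hand you this as a generating set, so the ``standard small-object argument'' you invoke has nothing to act on. A workable route is to argue on the right adjoint instead: show directly that $\bbHsim$ preserves trivial fibrations (these are the double functors surjective on objects, full on horizontal and vertical morphisms, and fully faithful on squares) and fibrations, using the explicit description of (trivial) fibrations available in \cite{MSVsecond}. Your derived-counit paragraph also leans on Ken Brown's lemma for $\Lsim$, which in turn requires the Quillen pair to be established first, so that step inherits the same dependency.
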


\begin{proof}
This is \cite[Theorem 6.6]{MSVsecond}.
\end{proof}

\begin{rem}\label{prop:fibreplHA}
By \cite[Theorem 6.5]{MSVsecond}, the inclusion $\bbH \cA\to \bbHsim \cA$ is a double biequivalence and hence exhibits $\bbHsim \cA$ as a fibrant replacement of $\bbH \cA$ in the model structure on $\DblCat$ for weakly horizontally invariant double categories. 
\end{rem}

\section{Model structures for \texorpdfstring{$(\infty,2)$}{(infinity,2)}-categories and double \texorpdfstring{$(\infty,1)$}{(infinity,1)}-categories} \label{sec:MS-infinity}

The model for $(\infty,1)$-categories we are considering here is that of complete Segal spaces, due to Rezk \cite{Rezk}. An $(\infty,2)$-category can then be defined as a complete Segal object in complete Segal spaces; this is the notion of $2$-fold complete Segal space, due to Barwick \cite{Bar}. Haugseng then defines double $(\infty,1)$-categories as the Segal objects in complete Segal spaces in \cite{HaugsengThesis}, where the completeness condition is consequently in the vertical direction. However, the model of double $(\infty,1)$-categories we use here requires completeness in the horizontal direction instead, so that the embedding of $(\infty,2)$-categories into double $(\infty,1)$-categories is compatible with the homotopical horizontal embedding of $2$-categories into double categories after applying the nerves. Nevertheless, these two models of double $(\infty,1)$-categories are Quillen equivalent through a transpose functor.  

In \cref{subsec:doubleinf}, we introduce horizontally complete double $(\infty,1)$-categories and show that they are the fibrant objects in a model structure on bisimplicial spaces. Then, in \cref{subsec:2CSS}, we recall the definition of a $2$-fold complete Segal space and show how to obtain the model structure on bisimplicial spaces in which they are the fibrant objects as a localization of the model structure for horizontally complete double $(\infty,1)$-categories. The construction of these two model structures are inspired from constructions given by Bergner and Rezk in \cite{BR2}. In particular, the model structure for $2$-fold complete Segal spaces is precisely the model structure of \cite[Corollary~7.2]{BR2} for $n=2$ and $i=1$.

\subsection{Model structures for double \texorpdfstring{$(\infty,1)$}{(infinity,1)}-categories} \label{subsec:doubleinf}

Let us denote by $\Delta$ the simplex category and by $\sSet=\Set^{\Delta^\op}$ the category of simplicial sets. We endow the category $\sSet$ with the Quillen model structure, constructed in \cite{Qui}, and consider the Reedy or injective model structure on $\sSet^{\twoDop}$, which coincide; see, for example, \cite[Proposition~3.10]{BergnerRezk}. This allows us to describe both the (trivial) cofibrations and the fibrant objects of this model structure. 

The objects of study here are bisimplicial spaces, i.e., trisimplicial sets, and we introduce notations for the representables in each of the three copies of $\Delta^\op$. 

\begin{notation}
We denote by $\Reph{m}$, $\Repv{k}$, and $\Reps{n}$ the representable bisimplicial spaces in the first, second, and third variable, respectively. We refer to the first direction as the \emph{horizontal} direction, the second as the \emph{vertical} direction, and the third as the \emph{space} direction. We also denote by $\iotah{m}\colon \partial \Reph{m}\to \Reph{m}$, $\iotav{k}\colon \partial \Repv{k}\to \Repv{k}$, and $\iotas{n}\colon \partial \Reps{n}\to \Reps{n}$ the boundary inclusions, and by $\ell^s_{n,t}\colon \Lambda^t[n]\to \Reps{n}$ the $(n,t)$-horn inclusion in $\Reps{n}$.
\end{notation}

\begin{notation}
Given two maps $f\colon X\to Y$ and $f'\colon X'\to Y'$ in $\sSet^{\twoDop}$, we denote by $f\,\square_\times\, f'$ their pushout-product 
\[ f\,\square_\times\, f'\colon Y\times X'\bigsqcup_{X\times X'} X\times Y'\to Y\times Y'. \]
\end{notation}

\begin{rem} \label{rem:gencofReedy}
A set of generating cofibrations for the Reedy/injective model structure on $\sSet^{\twoDop}$ is given by the collection of maps
\[ \pushout{(\iotah{m}\colon \partial \Reph{m}\to \Reph{m})}{(\iotav{k}\colon \partial \Repv{k}\to \Repv{k})}{(\iotas{n}\colon \partial \Reps{n}\to \Reps{n})}{\times}\]
for $m,k,n\geq 0$, and a set of generating trivial cofibrations by the collection of maps
\[ \pushout{(\iotah{m}\colon \partial \Reph{m}\to \Reph{m})}{(\iotav{k}\colon \partial \Repv{k}\to \Repv{k})}{(\ell^s_{n,t}\colon \Lambda^t[n]\to \Reps{n})}{\times} \]
for $m,k\geq 0$, $n\geq 1$, and $0\leq t \leq n$. In particular, the cofibrations are precisely the monomorphisms. 
\end{rem}

\begin{defn} \label{rem:Reedyfibrant}
A bisimplicial space $X\colon \twoDop\to \sSet$ is \textbf{Reedy fibrant} if the map
\[ X_{m,k}\cong \Map(\Reph{m}\times \Repv{k},X)\to \Map(\partial \Reph{m}\times \Repv{k}\bigsqcup_{\partial \Reph{m}\times \partial \Repv{k}} \Reph{m}\times \partial \Repv{k},X) \]
induced by $\iotah{m}\,\square_\times\,\iotav{k}$ is a Kan fibration in $\sSet$, for all $m,k\geq 0$, where $\Map(-,-)$ denotes the mapping simplicial set in $\sSet^{\twoDop}$. In other words, this says that the bisimplicial space $X$ has the right lifting property against all generating trivial cofibrations $\pushout{\iotah{m}}{\iotav{k}}{\ell^s_{n,t}}{\times}$ of \cref{rem:gencofReedy}.
\end{defn}

We also introduce the following notation. 

\begin{notation}
We denote by $\Nh\colon \Cat\to \Set^{\threeDop}$ the discrete nerve constant in the vertical and space directions. It is given by $(\Nh\CC)_{m,k,n}=\Cat([m],\CC)$ at a category $\CC$.
\end{notation}

\begin{ex} \label{ex:NRI}
Let $I=\{ x\cong y\}\in \Cat$ be the ``free-living isomorphism''. Its discrete nerve is given by $(\Nh I)_{m,k,n}=\Cat([m],I)$. In particular, a functor $[m]\to I$ can be described as a word of $m+1$ letters in $\{x,y\}$. For example, when $m=0$, we have that $(\Nh I)_{0,k,n}=\{x,y\}$; and, when $m=1$, $(\Nh I)_{1,k,n}=\{xx,xy,yx,yy\}$ where $xx$ and $yy$ are degenerate and represent the identities at $x$ and $y$, and $xy$ and $yx$ represent the two inverse morphisms between $x$ and $y$. In particular, for $X\in \sSet^{\twoDop}$ and $k\geq 0$ such that $X_{-,k}\in \sSet^{\Delta^{\op}}$ is a Segal space, then 
\[ \Map(\Nh I\times \Repv{k}, X)\cong (X_{1,k})^\mathrm{heq} \]
is the space of \emph{homotopy equivalences} in $X_{1,k}$, as described in \cite[\S 5.7]{Rezk}.
\end{ex}

We now present the $\infty$-version of double categories of use in this paper. 

\begin{notation}
For $k\geq 0$, we write 
\[ \gv{k}\colon \Spv{k}\coloneqq \Repv{1}\sqcup_{\Repv{0}}\ldots \sqcup_{\Repv{0}} \Repv{1}\longrightarrow \Repv{k} \]
for the spine inclusion of $\Repv{k}$ induced by the maps $\{i,i+1\}\colon [1]\to [k]$ of $\Delta$, for all $0\leq i\leq k-1$. Similarly, for $m\geq 0$, we write
\[ \gh{m}\colon \Sph{m}\coloneqq \Reph{1}\sqcup_{\Reph{0}}\ldots \sqcup_{\Reph{0}} \Reph{1}\longrightarrow \Reph{m}, \]
for the spine inclusion of $\Reph{m}$ induced by the maps $\{j,j+1\}\colon [1]\to [m]$ of $\Delta$, for all $0\leq j\leq m-1$. Finally, we write
\[ \eh\colon \Reph{0}\to \Nh I \]
for the inclusion induced by the functor $x\colon [0]\to I=\{ x\cong y\}$, where $I$ is the ``free-living isomorphism''.
\end{notation}

\begin{defn} \label{def:doubleinfinitycat}
A \textbf{horizontally complete double $(\infty,1)$-category} is a bisimplicial space $X\colon \twoDop\to \sSet$ such that 
\begin{rome}
\item $X$ is Reedy/injective fibrant,
\item $X_{m,-}\colon \Delta^\op\to \sSet$ is a Segal space, for all $m\geq 0$, i.e., the Segal maps 
\[ \Map(\Reph{m}\times \Repv{k},X)\cong X_{m,k}\stackrel{\simeq}{\longrightarrow} X_{m,1}\times_{X_{m,0}}\ldots\times_{X_{m,0}} X_{m,1}\cong \Map(\Reph{m}\times \Spv{k},X)  \]
induced by $\id_{\Reph{m}}\times \gv{k}$ are weak equivalences in $\sSet$, for all $m,k\geq 0$,
\item $X_{-,k}\colon \Delta^\op\to \sSet$ is a Segal space, for all $k\geq 0$, i.e., the Segal maps
\[ \Map(\Reph{m}\times \Repv{k}, X)\cong X_{m,k}\stackrel{\simeq}{\longrightarrow} X_{1,k} \times_{X_{0,k}}\ldots\times_{X_{0,k}} X_{1,k}\cong \Map(\Sph{m}\times \Repv{k}, X)  \]
induced by $\gh{m}\times \id_{\Repv{k}}$ are weak equivalences in $\sSet$, for all $m\geq 0$, 
\item the Segal space $X_{-,k}\colon \Delta^\op\to \sSet$ is complete, i.e., the map \[ \Map(\Nh I\times \Repv{k},X)\cong (X_{1,k})^\mathrm{heq}\stackrel{\simeq}{\longrightarrow} X_{0,k}\cong \Map(\Reph{0}\times \Repv{k},X) \] 
induced by $\eh\times \id_{\Repv{k}}$ is a weak equivalence in $\sSet$, for all $k\geq 0$. 
\end{rome}
\end{defn}

We obtain a model structure on $\sSet^{\twoDop}$ for horizontally complete double $(\infty,1)$-category by localizing the Reedy/injective model structure with respect to monomorphisms, i.e., cofibrations, with respect to which being local corresponds precisely to satisfying conditions (ii) and (iii) of the above definition. 

\begin{theorem} \label{thm:MSdoubleInf}
There is a model structure on $\sSet^{\twoDop}$, denoted by $\DblInfH$, obtained as a left Bousfield localization of the Reedy/injective model structure in which the fibrant objects are precisely the horizontally complete double $(\infty,1)$-categories. 
\end{theorem}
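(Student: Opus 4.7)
The plan is to realize $\DblInfH$ as a left Bousfield localization of the Reedy/injective model structure on $\sSet^{\twoDop}$ at an explicit set $S$ of monomorphisms, and then to match $S$-local Reedy fibrant objects with the horizontally complete double $(\infty,1)$-categories of \cref{def:doubleinfinitycat}.

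First, I would take $S$ to be the union of three families of maps, each chosen to mirror one of the conditions in \cref{def:doubleinfinitycat}: the vertical Segal maps $\id_{R[m]}\times g_k^F$ for $m\geq 0$ and $k\geq 2$, the horizontal Segal maps $q_m^R\times \id_{F[k]}$ for $m\geq 2$ and $k\geq 0$, and the completeness maps $e^R\times \id_{F[k]}$ for $k\geq 0$. All these maps are monomorphisms, hence cofibrations in the Reedy/injective model structure.

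Next, I would invoke Hirschhorn's existence theorem for left Bousfield localizations at a set of cofibrations. Its hypotheses require the ambient model structure to be combinatorial (or cellular) and left proper: the Reedy/injective model structure on $\sSet^{\twoDop}$ is combinatorial by the explicit generating (trivial) cofibrations of \cref{rem:gencofReedy}, and it is left proper because every object is cofibrant. This produces the desired localization $\DblInfH$, whose cofibrations are again the monomorphisms and whose fibrant objects are the Reedy-fibrant, $S$-local bisimplicial spaces.

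Finally, I would unpack $S$-locality on a Reedy fibrant $X$ to recover conditions (ii) and (iii) of \cref{def:doubleinfinitycat}, using the identifications $\Map(R[m]\times F[k], X)\cong X_{m,k}$, $\Map(R[m]\times G[k], X)\cong X_{m,1}\times_{X_{m,0}}\ldots\times_{X_{m,0}}X_{m,1}$, and $\Map(Q[m]\times F[k],X)\cong X_{1,k}\times_{X_{0,k}}\ldots\times_{X_{0,k}}X_{1,k}$. The one mildly delicate point is that the third family encodes the completeness condition only after the identification $\Map(N^R I\times F[k],X)\cong (X_{1,k})^\mathrm{heq}$ of \cref{ex:NRI}, which presupposes the horizontal Segal condition; but this is ensured simultaneously by locality with respect to the second family. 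I do not expect any serious obstacle beyond this bookkeeping, once the combinatorial and left proper hypotheses for Hirschhorn's theorem have been verified.
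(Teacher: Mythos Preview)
Your proposal is correct and follows essentially the same approach as the paper: localize the Reedy/injective model structure at the three families $\id_{R[m]}\times g_k^F$, $q_m^R\times\id_{F[k]}$, and $e^R\times\id_{F[k]}$, invoke Hirschhorn's existence theorem, and identify local Reedy-fibrant objects with horizontally complete double $(\infty,1)$-categories. You add a bit more detail than the paper (verifying left properness and combinatoriality, and noting the dependence of the completeness identification on the horizontal Segal condition), and your harmless restriction to $k\geq 2$, $m\geq 2$ just drops the identity maps from the localizing set.
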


\begin{proof}
We localize the Reedy/injective model structure with respect to the cofibrations
\begin{itemize}
\item $\id_{\Reph{m}}\times \gv{k}\colon \Reph{m}\times \Spv{k}\to \Reph{m}\times \Repv{k}$, for all $m,k\geq 0$,
\item $\gh{m}\times \id_{\Repv{k}}\colon \Sph{m}\times \Repv{k}\to \Reph{m}\times \Repv{k}$, for all $m,k\geq 0$,
\item $\eh\times \id_{\Repv{k}}\colon \Repv{k}\cong \Reph{0}\times \Repv{k}\to \Nh I\times \Repv{k}$, for all $k\geq 0$.
\end{itemize}
The existence of this model structure is given by \cite[Theorem 4.1.1]{Hirschhorn}. Moreover, a Reedy/injective fibrant bisimplicial set is local with respect to this collection of maps if and only if it is a horizontally complete double $(\infty,1)$-category.
\end{proof}

\begin{rem}
We could also have defined a notion of double $(\infty,1)$-category, where the completeness is in the vertical direction. These correspond to the Segal objects in complete Segal spaces defined by Haugseng in \cite[Definition 2.2.2.1]{HaugsengThesis}. Let us denote by $\DblInfV$ the model structure for these vertically complete double $(\infty,1)$-category. Then the functor 
\[ t\colon \twoDop \to \twoDop, \ \ ([m],[k])\mapsto ([k],[m]) \]
swapping the two copies of $\Delta^\op$ induces a functor $t^*\colon \sSet^{\twoDop}\to \sSet^{\twoDop}$, and we get a Quillen equivalence
\begin{tz}
\node[](A) {$\DblInfH$};
\node[right of=A,xshift=1.8cm](B) {$\DblInfV$};
\draw[->] ($(B.west)+(0,.25cm)$) to [bend right=25] node[above,la]{$t^*$} ($(A.east)+(0,.25cm)$);
\draw[->] ($(A.east)-(0,.25cm)$) to [bend right=25] node[below,la]{$t^*$} ($(B.west)+(0,-.25cm)$);
\node[la] at ($(A.east)!0.5!(B.west)$) {$\bot$};
\end{tz}
between the two model structures for double $(\infty,1)$-categories. This functor $t^*$ can be thought of as a \emph{transpose functor}. 
\end{rem}

\subsection{Model structure for \texorpdfstring{$2$}{2}-fold complete Segal spaces} \label{subsec:2CSS}

We now recall the definition of a $2$-fold complete Segal space. 

\begin{notation}
We denote by $\Nv\colon \Cat\to \Set^{\threeDop}$ the discrete nerve constant in the horizontal and space directions. It is given by $(\Nv\CC)_{m,k,n}=\Cat([k],\CC)$ at a category $\CC$.
\end{notation}

\begin{notation}
We write $\ev\colon \Repv{0}\to \Nv I$ for the inclusion induced by the functor $x\colon [0]\to I=\{ x\cong y\}$, where $I$ is the ``free-living isomorphism'', and $c^v_k\colon \Repv{0}\to \Repv{k}$ for the inclusion induced by the map $0\colon [0]\to [k]$ of $\Delta$, for $k\geq 0$.
\end{notation}

\begin{defn}
A \textbf{$2$-fold complete Segal space} (or \textbf{$(\infty,2)$-category}) is a bisimplicial space $X\colon \twoDop\to \sSet$ such that 
\begin{rome}
\item $X$ is Reedy/injective fibrant,
\item $X_{m,-}\colon \Delta^\op\to \sSet$ is a complete Segal space, for all $m\geq 0$, i.e., we have the Segal condition as in \cref{def:doubleinfinitycat} (ii), and the map
\[ \Map(\Reph{m}\times \Nv I,X)\cong (X_{m,1})^\mathrm{heq}\stackrel{\simeq}{\longrightarrow} X_{m,0}\cong \Map(\Reph{m}\times \Repv{0},X) \]
induced by $\id_{\Reph{m}}\times \ev$ is a weak equivalence in $\sSet$, for all $m\geq 0$,
\item $X_{-,k}\colon \Delta^\op\to \sSet$ is a complete Segal space, for every $k\geq 0$, 
\item $X_{0,-}\colon \Delta^\op\to \sSet$ is essentially constant, for all $k\geq 0$, i.e., the map 
\[ \Map(\Repv{k}, X)\cong X_{0,k}\stackrel{\simeq}{\longrightarrow} X_{0,0}\cong \Map(\Repv{0},X) \]
induced by $c^v_k$ is a weak equivalence in $\sSet$, for all $k\geq 0$.
\end{rome}
\end{defn}
 
We obtain a model structure for $2$-fold complete Segal spaces as a left Bousfield localization of the model structure for horizontally complete double $(\infty,1)$-categories. 

\begin{theorem} \label{thm:MS2CSS}
There is a model structure on $\sSet^{\twoDop}$, denoted by $\TwoCSS$, obtained as a left Bousfield localization of the model structure $\DblInfH$ for horizontally complete double categories in which the fibrant objects are precisely the $2$-fold complete Segal spaces, i.e., the $(\infty,2)$-categories. 
\end{theorem}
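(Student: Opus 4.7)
The proof will follow the same template as that of \cref{thm:MSdoubleInf}: we apply \cite[Theorem~4.1.1]{Hirschhorn} to left-Bousfield-localize the (left proper, combinatorial) model structure $\DblInfH$ at a small set $S$ of monomorphisms between cofibrant objects chosen so that the fibrant objects of the resulting localization are precisely the $2$-fold complete Segal spaces. Concretely, the plan is to take $S$ to consist of those maps which, when tested against a Reedy-fibrant bisimplicial space already satisfying conditions (ii) and (iii) of \cref{def:doubleinfinitycat}, enforce the two additional requirements distinguishing a $2$-fold complete Segal space from a horizontally complete double $(\infty,1)$-category: the \emph{vertical} completeness at each horizontal level $m$, and the essentially constant condition for the bottom row $X_{0,-}$.

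Explicitly, I would localize $\DblInfH$ at the set $S$ containing the maps
\[ \id_{R[m]}\times e^F\colon R[m]\times F[0]\longrightarrow R[m]\times N^F I \qquad (m\geq 0) \]
and
\[ \id_{R[0]}\times c_k\colon R[0]\times F[0]\longrightarrow R[0]\times F[k] \qquad (k\geq 0). \]
Each of these is a monomorphism in $\sSet^{\twoDop}$, hence a cofibration in $\DblInfH$, so \cite[Theorem~4.1.1]{Hirschhorn} produces the desired localized model structure $\TwoCSS$ on $\sSet^{\twoDop}$, with the same cofibrations as $\DblInfH$ (equivalently, as the Reedy/injective model structure) and with more weak equivalences.

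It then remains to identify the fibrant objects. A bisimplicial space $X$ is fibrant in $\TwoCSS$ if and only if it is fibrant in $\DblInfH$ and $S$-local, i.e.~$X$ is Reedy fibrant, satisfies the Segal conditions of \cref{def:doubleinfinitycat}(ii)--(iii) together with horizontal completeness, and the maps
\[ \Map(R[m]\times N^F I, X)\to \Map(R[m]\times F[0], X),\qquad \Map(R[0]\times F[k],X)\to \Map(R[0]\times F[0],X) \]
induced by the maps of $S$ are weak equivalences of simplicial sets. The first family is precisely the vertical completeness condition appearing in (ii) of the $2$-fold complete Segal space definition, and the second is the essentially constant condition (iv). Combined with (iii) (which is already guaranteed by being fibrant in $\DblInfH$) and the vertical Segal condition (already included in (ii) via \cref{def:doubleinfinitycat}(ii)), this shows that the $S$-local objects among the fibrant objects of $\DblInfH$ are exactly the $2$-fold complete Segal spaces.

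There is no substantial obstacle here: the delicate point is simply to check that the two families chosen in $S$, once added on top of the localizing maps already used to obtain $\DblInfH$, produce exactly the list (i)--(iv) of conditions in the definition of a $2$-fold complete Segal space; this is a direct comparison using the Yoneda-style identifications $\Map(R[m]\times F[k],X)\cong X_{m,k}$ and $\Map(R[m]\times N^F I,X)\cong (X_{m,1})^{\mathrm{heq}}$, exactly as in \cref{ex:NRI}. The resulting model structure then coincides with the one constructed in \cite[Corollary~7.2]{BergnerRezk} for $n=2$, $i=1$.
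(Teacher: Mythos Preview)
Your proposal is correct and matches the paper's proof essentially verbatim: the paper localizes $\DblInfH$ at the cofibrations $\id_{R[m]}\times e^F$ for $m\geq 0$ and $c_k\colon F[0]\to F[k]$ for $k\geq 0$ (your map $\id_{R[0]}\times c_k$ is just $c_k$ since $R[0]$ is terminal), invokes \cite[Theorem~4.1.1]{Hirschhorn}, and then observes that locality with respect to these maps is exactly the pair of conditions distinguishing a $2$-fold complete Segal space from a horizontally complete double $(\infty,1)$-category.
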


\begin{proof}
We localize the model structure $\DblInfH$ with respect to the cofibrations 
\begin{itemize}
    \item $\id_{\Reph{m}}\times \ev\colon \Reph{m}\cong \Reph{m}\times \Repv{0}\to \Reph{m}\times \Nv I$, for all $m\geq 0$, 
    \item $c^v_k\colon \Repv{0}\to \Repv{k}$, for all $k\geq 0$.
\end{itemize}
The existence of this model structure is given by \cite[Theorem 4.1.1]{Hirschhorn}. Moreover, a horizontally complete double $(\infty,1)$-category is local with respect to this collection of maps if and only if it is a $2$-fold complete Segal space. 
\end{proof}

The following result is obtained as a direct consequence of the fact that $\TwoCSS$ is a localization of $\DblInfH$, and tells us that the identity functor $\id\colon \TwoCSS\to \DblInfH$ can be interpreted as the $\infty$-version of the horizontal embedding. 

\begin{cor}
The identity adjunction on $\sSet^{\twoDop}$ is a Quillen pair
\begin{tz}
\node[](A) {$\TwoCSS$};
\node[right of=A,xshift=1.8cm](B) {$\DblInfH$};
\node at ($(B.east)-(0,4pt)$) {.};
\draw[->] ($(B.west)+(0,.25cm)$) to [bend right=25] node[above,la]{$\id$} ($(A.east)+(0,.25cm)$);
\draw[->] ($(A.east)-(0,.25cm)$) to [bend right=25] node[below,la]{$\id$} ($(B.west)+(0,-.25cm)$);
\node[la] at ($(A.east)!0.5!(B.west)$) {$\bot$};
\end{tz}
Moreover, the derived counit is level-wise a weak equivalence. In particular, this gives a homotopically full embedding of $\TwoCSS$ into $\DblInfH$. 
\end{cor}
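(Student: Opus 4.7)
The plan is to deduce both assertions from the fact, recorded in \cref{thm:MS2CSS}, that $\TwoCSS$ is obtained from $\DblInfH$ by a left Bousfield localization. It is a general feature of such localizations that the two model structures share the same underlying category and the same cofibrations, while every weak equivalence of $\DblInfH$ remains a weak equivalence of $\TwoCSS$. Consequently the identity $\id\colon \DblInfH\to \TwoCSS$ sends cofibrations of $\DblInfH$ to cofibrations of $\TwoCSS$ and trivial cofibrations to trivial cofibrations, so it is left Quillen; its adjoint $\id\colon \TwoCSS\to \DblInfH$ is therefore right Quillen, which is the content of the first statement.

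To compute the derived counit, I would exploit two features of our model structures. First, by \cref{rem:gencofReedy} the cofibrations of the Reedy/injective model structure on $\sSet^{\twoDop}$ are exactly the monomorphisms, and since both $\DblInfH$ and $\TwoCSS$ share these cofibrations, every bisimplicial space is cofibrant in each of them. Second, every $2$-fold complete Segal space is in particular a horizontally complete double $(\infty,1)$-category, so every fibrant object of $\TwoCSS$ is already fibrant in $\DblInfH$. Hence for any fibrant $Y\in \TwoCSS$, no cofibrant or fibrant replacement is needed when computing $\mathbb{L}\id\circ\mathbb{R}\id(Y)$, and the derived counit at $Y$ is literally the identity of $Y$. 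In particular, at each bi-degree $(m,k)$ it is the identity on the simplicial set $Y_{m,k}$, hence a weak equivalence of simplicial sets, yielding the level-wise claim.

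The homotopically full embedding of $\TwoCSS$ into $\DblInfH$ is then the standard consequence that a right Quillen functor whose derived counit is a weak equivalence induces a fully faithful functor between the underlying homotopy categories. There is no genuine obstacle in this argument; the entire content is packaged in the preceding theorem exhibiting $\TwoCSS$ as a left Bousfield localization of $\DblInfH$, together with the observation that the Reedy/injective structure has only monomorphisms as cofibrations, which together trivialize both the cofibrant and the fibrant replacements needed to form the derived functors.
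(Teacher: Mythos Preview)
Your proposal is correct and follows exactly the reasoning the paper has in mind: the corollary is stated immediately after \cref{thm:MS2CSS} as ``a direct consequence of the fact that $\TwoCSS$ is a localization of $\DblInfH$'', with no further proof given, and you have simply unpacked that sentence. Your observations that all objects are cofibrant and that fibrant objects of $\TwoCSS$ are already fibrant in $\DblInfH$ are precisely what makes the derived counit reduce to the identity.
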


\section{Nerve of double categories} \label{sec:nerveDblCat}

We now give the construction of a nerve functor from double categories to bisimplicial spaces. In \cref{subsec:defnerve}, we define the nerve and its left adjoint, and  in \cref{subsec:ND-Quillen}, we show that they form a Quillen pair between the model structure on $\DblCat$ for weakly horizontally invariant double categories and the model structure $\DblInfH$ for horizontally complete double $(\infty,1)$-categories. Once this fact is established, we prove in \cref{subsec:ND-homotopicff} that the nerve functor is homotopically fully faithful, by showing that the derived counit of the adjunction is level-wise a weak equivalence in $\DblCat$. Finally, in \cref{subsec:fibrantnerve}, we show that the nerve of a double category is almost fibrant; namely, it satisfies all conditions of a horizontally complete double $(\infty,1)$-category except for the Reedy/injective fibrancy condition in the vertical direction. We show that the latter condition is satisfied by the nerve if and only if the double category considered is weakly horizontally invariant.

\subsection{Definition of the nerve} \label{subsec:defnerve}

To define the nerve we make use of truncated versions of the $n$-orientals $O(n)$, introduced by Street in \cite{Street}. More precisely: 

\begin{defn}
For $n\geq 0$, we define the \textbf{$2$-truncated $n$-oriental} $O_2(n)$ to be the $2$-category described by the following data.
\begin{rome}
\item Its set of objects is given by $\{0,\ldots,n\}$, 
\item For $0\leq x,x'\leq n$, its hom-category $O_2(n)(x,x')$ is given by the poset 
\begin{align*}
    O_2(n)(x,x') =  \begin{cases} 
    \{ I\subseteq [x,x']\mid x,x'\in I\} & \text{if } x'\leq x, \\
    \emptyset & \text{if } x>x',
    \end{cases} 
\end{align*}
where $[x,x']=\{ y\in \{0,\ldots,n\}\mid x\leq y\leq x'\}$. 
\end{rome}

We also define $\OM{n}$ as the $2$-category obtained from $O_2(n)$ by formally inverting every $2$-morphism, and we define $\ON{n}$ as the $2$-category obtained from $\OM{n}$ by formally making every morphism into an adjoint equivalence. The $2$-categories $\OM{n}$ and $\ON{n}$ can be obtained as the following pushouts, respectively.
\begin{tz}
\node[](1) {$\underset{0\leq x<x'<x''\leq n}{\bigsqcup} \Sigma[1]$};
\node[right of=1,xshift=1.5cm,yshift=5pt](2) {$O_2(n)$};
\node[below of=1](3) {$\underset{0\leq x<x'<x''\leq n}{\bigsqcup} \Sigma I$};
\node[below of=2](4) {$\OM{n}$};
\draw[->] ($(1.east)+(0,5pt)$) to (2);
\draw[->] ($(3.east)+(0,5pt)$) to (4);
\draw[->] (1) to (3);
\draw[->] (2) to (4);

\node at ($(4)-(15pt,-15pt)$) {$\ulcorner$};

\node[right of=2,xshift=2cm,yshift=-5pt](1) {$\underset{0\leq x<x'\leq n}{\bigsqcup} [1]$};
\node[right of=1,xshift=1.5cm,yshift=5pt](2) {$\OM{n}$};
\node[below of=1](3) {$\underset{0\leq x<x'\leq n}{\bigsqcup} \Eadj$};
\node[below of=2](4) {$\ON{n}$};
\draw[->] ($(1.east)+(0,5pt)$) to (2);
\draw[->] ($(3.east)+(0,5pt)$) to (4);
\draw[->] (1) to (3);
\draw[->] (2) to (4);

\node at ($(4)-(15pt,-15pt)$) {$\ulcorner$};
\end{tz}
\end{defn}

In order to have a better sense of what these $2$-categories look like, we describe the lower cases.

\begin{ex} \label{ex:O2N}
For $n=0$, the $2$-categories $O_2(0)$, $\OM{0}$, and $\ON{0}$ are all given by the terminal ($2$-)category $[0]$. 

For $n=1$, the $2$-categories $O_2(1)$ and $\OM{1}$ are both given by the free ($2$-)category $[1]$ on a morphism, while the $2$-category $\ON{1}$ is the ``free-living adjoint equivalence'' $\Eadj$. 

For $n=2$, the $2$-categories $O_2(2)$, $\OM{2}$, and $\ON{2}$ are generated, respectively, by the following data,
\begin{tz}
\node[](1) {$0$};
\node[above right of=1](2) {$1$};
\node[below right of=2](3) {$2$};
\draw[->] (1) to (2);
\draw[->] (2) to (3);
\draw[->] (1) to (3);
\coordinate(a) at ($(1)!0.5!(3)$);
\cell[la,right][n][0.37]{a}{2}{};

\node[right of=3,xshift=.5cm](1) {$0$};
\node[above right of=1](2) {$1$};
\node[below right of=2](3) {$2$};
\draw[->] (1) to (2);
\draw[->] (2) to (3);
\draw[->] (1) to (3);
\coordinate(a) at ($(1)!0.5!(3)$);
\cell[la,right][n][0.37]{a}{2}{$\cong$};

\node[right of=3,xshift=.5cm](1) {$0$};
\node[above right of=1](2) {$1$};
\node[below right of=2](3) {$2$};
\draw[->] (1) to node[la,left]{$\simeq$} (2);
\draw[->] (2) to node[la,right]{$\simeq$} (3);
\draw[->] (1) to node[la,below]{$\simeq$} (3);
\coordinate(a) at ($(1)!0.5!(3)$);
\cell[la,right][n][0.37]{a}{2}{$\cong$};
\end{tz}
where $\xrightarrow{\simeq}$ denotes the data of an adjoint equivalence. 

For $n=3$, the $2$-category $O_2(3)$ is generated by the following data
\begin{tz}
\node[](1) {$0$};
\node[above of=1](2) {$1$};
\node[right of=2](3) {$2$};
\node[below of=3](4) {$3$};
\draw[->] (1) to (2);
\draw[->] (2) to (3);
\draw[->] (3) to (4);
\draw[->] (1) to (4);
\draw[->] (1) to (3);
\coordinate(a) at ($(1)!0.5!(3)$);
\cell[la,right,yshift=7pt][n][0.4]{a}{2}{};
\coordinate(a) at ($(1)!0.7!(4)$);
\coordinate(b) at ($(a)+(0,1cm)$);
\cell[la,left][n][0.37]{a}{b}{};

\node[right of=4](1) {$0$};
\node[la] at ($(3)!0.5!(1)$) {$=$};
\node[above of=1](2) {$1$};
\node[right of=2](3) {$2$};
\node[below of=3](4) {$3$};
\draw[->] (1) to (2);
\draw[->] (2) to (3);
\draw[->] (3) to (4);
\draw[->] (1) to (4);
\draw[->] (2) to (4);
\coordinate(a) at ($(2)!0.5!(4)$);
\cell[la,right,yshift=-7pt][n][0.37]{a}{3}{};
\coordinate(a) at ($(1)!0.3!(4)$);
\coordinate(b) at ($(a)+(0,1cm)$);
\cell[la,right][n][0.4]{a}{b}{};
\end{tz}
and the $2$-category $\OM{3}$ is generated by the corresponding $2$-category with all $2$-mor\-phisms invertible, while the $2$-category $\ON{3}$ is generated by the corresponding $2$-category with all morphisms being adjoint equivalences and all $2$-morphisms being invertible. 
\end{ex}

The nerve functor is then defined as the right adjoint of the left Kan extension of the following tricosimplicial double category along the Yoneda embedding. Recall the tensoring functor $\otimes\colon \DblCat\times \TwoCat\to \DblCat$ introduced in \cref{def:tensor}.

\begin{defn}
We define the tricosimplicial double category
\[  \XD\colon \threeD\to \DblCat, \quad
    ([m],[k],[n]) \mapsto \XD_{m,k,n} \coloneqq (\VK{k}\otimes \OM{m})\otimes \ON{n}, \]
where the cosimplicial maps are induced by the ones of the cosimplicial objects
 \[ \Delta \to \DblCat, \quad [k] \mapsto \VK{k}, \hspace{1.2cm} \Delta \to \TwoCat, \quad [m]\mapsto \OM{m}, \ \ \text{and} \ \ [n]\mapsto \ON{n}. \]
\end{defn}

\begin{prop}
The tricosimplicial double category $\XD$ induces an adjunction
\begin{tz}
\node[](1) {$\threeD$};
\node[below of=1](2) {$\Set^{\threeDop}$};
\node[right of=1,rr](3) {$\DblCat$};
\node at ($(3.east)-(0,4pt)$) {,};
\draw[->] (1) to node[above,la]{$\XD$} (3);
\draw[->] (1) to (2);
\draw[->] (2.north east) to node(c)[left,la,yshift=5pt] {$\CD$} (3.south west);
\coordinate(a) at ($(3.south west)+(.4cm,0)$);
\draw[->,bend left=40] (a) to node(n)[right,la,yshift=-5pt] {$\ND$} (2.east);
\node[la] at ($(c)!0.5!(n)$) {$\rotatebox{220}{$\top$}$};
\end{tz}
where $\CD$ is the left Kan extension of $\XD$ along the Yoneda embedding, and we have that
\[ (\ND\bA)_{m,k,n}\cong \DblCat((\VK{k}\otimes \OM{m})\otimes \ON{n},\bA), \]
for all $\bA\in \DblCat$ and all $m,k,n\geq 0$,
\end{prop}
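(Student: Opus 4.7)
The statement is an instance of the standard nerve-realization paradigm: given any small category $\mathcal{C}$, any cocomplete category $\mathcal{D}$, and any functor $F\colon \mathcal{C}\to \mathcal{D}$, the left Kan extension $\mathrm{Lan}_Y F$ of $F$ along the Yoneda embedding $Y\colon \mathcal{C}\to \Set^{\mathcal{C}^\op}$ exists, is cocontinuous, and admits a right adjoint $N\colon \mathcal{D}\to \Set^{\mathcal{C}^\op}$ sending an object $d$ to the presheaf $\mathcal{D}(F(-),d)$. My plan is simply to apply this paradigm with $\mathcal{C}=\threeD$, $\mathcal{D}=\DblCat$, and $F=\XD$, which will immediately yield both the adjunction and the displayed formula for $\ND$.

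There are only two hypotheses to verify. First, $\DblCat$ is cocomplete: it is the category of internal categories in $\Cat$ and, as such, small colimits exist and can be constructed in a standard way (alternatively, $\DblCat$ is locally presentable). Second, $\XD$ must be a genuine functor $\threeD\to \DblCat$, which holds by construction: the assignments $[k]\mapsto \VK{k}$, $[m]\mapsto \OM{m}$, and $[n]\mapsto \ON{n}$ are cosimplicial objects by hypothesis, and the tensoring functor $\otimes$ of \cref{def:tensor} is functorial in both variables, so iterated tensoring is tricosimplicial.

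With these in hand, set $\CD\coloneqq \mathrm{Lan}_Y \XD$ and $(\ND\bA)_{m,k,n}\coloneqq \DblCat(\XD_{m,k,n},\bA)$, functorial in $(m,k,n)$ via the tricosimplicial structure of $\XD$. The adjunction $\CD\dashv \ND$ and the displayed formula then follow from the co-Yoneda lemma together with cocontinuity of $\CD$: writing $X\cong \int^{(m,k,n)} X_{m,k,n}\cdot \threeD(-,(m,k,n))$ and applying $\CD$ yields $\CD X\cong \int^{(m,k,n)} X_{m,k,n}\cdot \XD_{m,k,n}$, after which
\[ \DblCat(\CD X,\bA)\cong \int_{(m,k,n)} \Set\bigl(X_{m,k,n},\DblCat(\XD_{m,k,n},\bA)\bigr)\cong \Set^{\threeDop}(X,\ND\bA). \]
No substantive obstacle is anticipated, as the entire content of the proposition is a formal consequence of the universal property of $\Set^{\threeDop}$ as the free cocompletion of $\threeD$.
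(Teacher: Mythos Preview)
Your proposal is correct and takes essentially the same approach as the paper: the paper dispatches the proposition in one line by citing \cite[Theorem 1.1.10]{Cisinski}, which is precisely the general nerve--realization statement you spell out, and your verification of the two hypotheses (cocompleteness of $\DblCat$ and functoriality of $\XD$) together with the co-Yoneda computation is exactly what that citation encodes.
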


\begin{proof}
This is a direct application of \cite[Theorem 1.1.10]{Cisinski}.
\end{proof}

\begin{rem}
As expected from a nerve construction, the $0$-simplices of the simplicial set~$(\ND\bA)_{0,0}$ are given by the objects of $\bA$, the ones of $(\ND\bA)_{1,0}$ by the horizontal morphisms of $\bA$, the ones of $(\ND\bA)_{0,1}$ by the vertical morphisms of $\bA$, and the ones of $(\ND\bA)_{1,1}$ by the squares of $\bA$. These can therefore be thought of as the \emph{spaces of objects, horizontal morphisms, vertical morphisms, and squares}. For a description of the $1$- and $2$-simplices of these simplicial sets, we refer the reader to \cref{subsec:descr-ND}. For $m\geq 2$ or $k\geq 2$, the simplicial sets $(\ND\bA)_{m,k}$ witness ``compositions'' in~$\bA$ of the above data.
\end{rem}

\begin{rem} \label{rem:CD-representables}
Since $\CD$ is the left Kan extension of $\XD$ along the Yoneda embedding, it is given on representables by $\CD(\Repv{k}\times \Reph{m}\times \Reps{n})=\XD_{m,k,n}$. In particular, we have that 
\[ \CD(\Repv{k})=\VK{k}, \quad \CD(\Reph{m})=\bbH\OM{m} \ \ \text{and} \ \ \CD(\Reps{n})=\bbH\ON{n}. \]
\end{rem}

We also introduce a functor $\overline{\CD}$, which takes values in $2$-categories and coincides with $\CD$ in the horizontal and space directions.

\begin{notation}
We denote by $\overline{\XD}\colon \threeD\to \TwoCat$ the tricosimplicial $2$-category given by $\overline{\XD}_{m,k,n}\coloneqq \OM{m}\otimes_2\ON{n}$, and by $\overline{\CD}\colon \Set^{\threeDop}\to \TwoCat$ the left Kan extension of $\overline{\XD}$ along the Yoneda embedding, where $\otimes_2\colon \TwoCat\times \TwoCat\to \TwoCat$ is the Gray tensor product; see \cref{def:otimes2}. 
\end{notation}

\begin{rem} \label{rem:overlineCD}
Note that $\XD_{m,0,n}= \bbH\overline{\XD}_{m,0,n}$. Therefore, if $X\in \Set^{\threeDop}$ is constant in the vertical direction, then $\CD X=\bbH \overline{\CD} X$. In particular, we have that $\CD(\Reph{m})=\bbH\overline{\CD}(\Reph{m})$ and $\CD(\Reps{n})=\bbH\overline{\CD}(\Reps{n})$, where $\overline{\CD}(\Reph{m})=\OM{m}$ and $\overline{\CD}(\Reps{n})=\ON{n}$.
\end{rem}

Finally, we comment on why we choose to define the nerve in this specific way instead of using the more direct inclusion of double categories into bisimplicial spaces. 

\begin{rem}
    Using simplices instead of their fattening given by the orientals, one can define a nerve $N\colon \DblCat\to \Set^{\threeDop}$given at a double category $\bA$ and $m,k,n\geq 0$ by
    \[ (N\bA)_{m,k,n}\cong \DblCat(\bbH[m]\times \bbV[k]\times \bbH I[n], \bA), \]
    where $I[n]$ is the contractible groupoid on $n+1$ points. While this defines a right adjoint at the point-set level, it will not have the required homotopical properties: it does not define a right Quillen functor from the model structure on $\DblCat$ for weakly horizontally invariant double categories to the model structure on $\sSet^{\twoDop}$ for horizontally complete double $(\infty,1)$-categories.
\end{rem}

\subsection{The nerve \texorpdfstring{$\ND$}{N} is right Quillen} \label{subsec:ND-Quillen}

We now want to prove that the adjunction $\CD\dashv \ND$ is a Quillen pair between $\DblCat$ and $\DblInfH$. To prove this result, we make use of the following theorem. 

\begin{theorem} \label{thm:Quillen-loc}
Let $\mathcal M$ and $\mathcal N$ be model categories and suppose that 
\begin{tz}
\node[](A) {$\mathcal N$};
\node[right of=A,xshift=.5cm](B) {$\mathcal M$};
\draw[->] ($(B.west)+(0,.25cm)$) to [bend right=25] node[above,la]{$F$} ($(A.east)+(0,.25cm)$);
\draw[->] ($(A.east)-(0,.25cm)$) to [bend right=25] node[below,la]{$U$} ($(B.west)+(0,-.25cm)$);
\node[la] at ($(A.east)!0.5!(B.west)$) {$\bot$};
\end{tz}
is a Quillen pair. Let $\CC$ be a set of cofibrations in $\mathcal M$ such that the left Bousfield localization $L_\CC \mathcal M$ of $\mathcal M$ with respect to $\CC$ exists. If $F$ sends every morphism in~$\CC$ to a weak equivalence in $\mathcal N$, then the adjunction 
\begin{tz}
\node[](A) {$\mathcal N$};
\node[right of=A,xshift=.8cm](B) {$L_\CC \mathcal M$};
\draw[->] ($(B.west)+(0,.25cm)$) to [bend right=25] node[above,la]{$F$} ($(A.east)+(0,.25cm)$);
\draw[->] ($(A.east)-(0,.25cm)$) to [bend right=25] node[below,la]{$U$} ($(B.west)+(0,-.25cm)$);
\node[la] at ($(A.east)!0.5!(B.west)$) {$\bot$};
\end{tz}
is also a Quillen pair. 
\end{theorem}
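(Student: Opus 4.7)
The plan is to verify the defining conditions of a Quillen pair directly for the adjunction $F \dashv U\colon L_\CC \mathcal M \rightleftarrows \mathcal N$. The key structural observation is that the left Bousfield localization $L_\CC \mathcal M$ shares its underlying category, its cofibrations, and (consequently, by lifting properties) its trivial fibrations with $\mathcal M$; only the class of weak equivalences is enlarged to include the $\CC$-local weak equivalences. Since $F$ is left Quillen from $\mathcal M$ by hypothesis, it preserves the common class of cofibrations, which gives preservation of cofibrations viewed from $L_\CC \mathcal M$ for free, and dually $U$ preserves trivial fibrations for free.

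The remaining task is to show that $F$ preserves trivial cofibrations of $L_\CC \mathcal M$, equivalently that $U$ preserves fibrations between fibrant objects of the localization. The crucial step is to verify that $U$ sends fibrant objects of $\mathcal N$ to fibrant objects of $L_\CC \mathcal M$, i.e., to $\CC$-local fibrant objects of $\mathcal M$. Given a fibrant $Z$ in $\mathcal N$, the object $UZ$ is fibrant in $\mathcal M$ because $F \dashv U$ is Quillen on $\mathcal M$. For $\CC$-locality, one invokes the derived adjunction to identify, for every $c\colon A \to B$ in $\CC$, the homotopy mapping space map $c^*\colon \Map^h(B, UZ) \to \Map^h(A, UZ)$ with $F(c)^*\colon \Map^h(FB, Z) \to \Map^h(FA, Z)$; the latter is a weak equivalence since $F(c)$ is (by hypothesis) a weak equivalence, and in fact a trivial cofibration between cofibrant objects of $\mathcal N$, because $\CC$ consists of cofibrations in $\mathcal M$ and $F$ preserves cofibrations.

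One then concludes via the standard recognition criterion for Quillen adjunctions interacting with left Bousfield localizations --- essentially Hirschhorn's Theorem 3.3.20 --- which ensures that the above conditions are sufficient to promote $F \dashv U$ to a Quillen pair on $L_\CC \mathcal M$. The main obstacle is bridging from the derived-homotopical statement ``$U$ sends fibrant objects to $\CC$-local fibrant objects'' to the strict Quillen condition ``$F$ preserves trivial cofibrations,'' as the trivial cofibrations of $L_\CC \mathcal M$ are not a priori easy to describe. This gap is typically closed by exhibiting a set of generating trivial cofibrations of $L_\CC \mathcal M$ as the union of the generating trivial cofibrations of $\mathcal M$ with suitable ``horn-filling'' maps built from $\CC$, and then observing that $F$ sends both types to trivial cofibrations in $\mathcal N$: the former because $F$ is left Quillen on $\mathcal M$, and the latter because $F$ inverts $\CC$ by hypothesis.
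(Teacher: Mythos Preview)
Your proposal is correct and, like the paper, ultimately rests on Hirschhorn's Theorem~3.3.20. The difference is in framing and economy. The paper's proof is a one-liner: Hirschhorn's result produces a Quillen pair $L_\CC\mathcal M \rightleftarrows L_{F\CC}\mathcal N$, and since $F\CC$ consists of weak equivalences in $\mathcal N$ the localization $L_{F\CC}\mathcal N$ is just $\mathcal N$ again. You instead unpack the locality condition by hand, showing via the derived adjunction on mapping spaces that $U$ carries fibrant objects of $\mathcal N$ to $\CC$-local objects of $\mathcal M$, and then invoke the recognition criterion. This is the same content viewed from the other side of the adjunction, but it costs you a paragraph of argument that the paper's formulation avoids entirely. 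Your final paragraph, sketching a direct attack via generating trivial cofibrations of $L_\CC\mathcal M$, is unnecessary here and also the least robust part of the proposal: describing those generators explicitly requires further hypotheses (e.g., left properness and cellularity as in Hirschhorn's construction), whereas the mapping-space/locality argument and the paper's trivial-localization trick do not.
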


\begin{proof}
This is a direct consequence of \cite[Theorem 3.3.20]{Hirschhorn}, since the localization of $\mathcal N$ with respect to maps in $F\CC$ is $\mathcal N$ itself as maps in $F\CC$ are already weak equivalences in~$\mathcal N$. 
\end{proof}

To apply this theorem, we first show that $\CD\dashv \ND$ is a Quillen pair between the model structure on $\DblCat$ and the Reedy/injective model structure on $\sSet^{\twoDop}$. 

\begin{prop} \label{prop:CD-ND-Quillen-Reedy}
The adjunction 
\begin{tz}
\node[](A) {$\DblCat$};
\node[right of=A,xshift=1.8cm](B) {$\sSet^{\twoDop}$};
\draw[->] ($(B.west)+(0,.25cm)$) to [bend right=25] node[above,la]{$\CD$} ($(A.east)+(0,.25cm)$);
\draw[->] ($(A.east)-(0,.25cm)$) to [bend right=25] node[below,la]{$\ND$} ($(B.west)+(0,-.25cm)$);
\node[la] at ($(A.east)!0.5!(B.west)$) {$\bot$};
\end{tz}
is a Quillen pair between the model structure on $\DblCat$ of \cref{thm:WHI-MS} and the Reedy/injective model structure on $\sSet^{\twoDop}$.
\end{prop}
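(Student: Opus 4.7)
The plan is to verify that $\CD$ sends each generating (trivial) cofibration of the Reedy/injective model structure on $\sSet^{\twoDop}$ (recalled in \cref{rem:gencofReedy}) to a (trivial) cofibration in the model structure of \cref{thm:WHI-MS}. Since the generators are threefold pushout-products of boundary (and horn) inclusions and $\CD$ preserves colimits, the problem naturally decomposes along the tensoring structure of \cref{def:tensor}.

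First, I would establish a pushout-product decomposition for $\CD$. On representables, the definition of $\XD$ combined with \cref{rem:CD-representables,rem:overlineCD} gives
\[ \CD(R[m] \times F[k] \times \Delta[n]) = (\CD(F[k]) \otimes \overline{\CD}(R[m])) \otimes \overline{\CD}(\Delta[n]), \]
with $\CD(F[k]) = \VK{k}$, $\overline{\CD}(R[m]) = \OM{m}$, and $\overline{\CD}(\Delta[n]) = \ON{n}$. Because $\CD$ and $\overline{\CD}$ are left adjoints and $\otimes$ is cocontinuous in both variables (by the pseudo-hom adjunction of \cref{def:tensor}), this formula propagates to general bisimplicial spaces and yields
\[ \CD(g \,\square_\times\, f \,\square_\times\, h) \cong \bigl(\CD(f) \,\square_\otimes\, \overline{\CD}(g)\bigr) \,\square_\otimes\, \overline{\CD}(h) \]
for maps $g$, $f$, and $h$ in the $R$-, $F$-, and $\Delta$-direction respectively. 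By the monoidality of $\TwoCat$ (\cref{rem:pushprod2Cat}) and the $\TwoCat$-enrichment of $\DblCat$ (\cref{rem:pushprodDblCat}), iterated pushout-products of cofibrations along $\otimes$ are cofibrations in $\DblCat$, and are trivial whenever at least one factor is trivial in its respective category.

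It then suffices to establish: (i) $\CD(\iota_k^F)\colon \CD(\delta F[k]) \to \VK{k}$ is a cofibration in $\DblCat$; (ii) $\overline{\CD}(\iota_m^R)\colon \overline{\CD}(\delta R[m]) \to \OM{m}$ is a cofibration in $\TwoCat$; (iii) $\overline{\CD}(\iota_n^\Delta)\colon \overline{\CD}(\delta \Delta[n]) \to \ON{n}$ is a cofibration in $\TwoCat$; and (iv) $\overline{\CD}(\ell^\Delta_{n,t})\colon \overline{\CD}(\Lambda^t[n]) \to \ON{n}$ is a trivial cofibration in $\TwoCat$. For (i)--(iii), the domains decompose as colimits of lower-dimensional $\VK{j}$, $\OM{j}$, or $\ON{j}$ glued along their faces, and the characterizations of cofibrations in \cref{prop:cofinDblCat',prop:cofin2Cat} reduce the verification to injectivity on objects, faithfulness on morphisms, and freeness of the underlying horizontal/vertical categories. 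The last follows because $\VK{k}$, $\OM{m}$, and $\ON{n}$ have underlying categories freely generated by the composable strings of oriental edges (together with their formal adjoint inverses, for $\VK{k}$ and $\ON{n}$), while the additional $2$-cell and invertibility data are invisible to these characterizations.

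The hardest part will be (iv). Its cofibration aspect is handled as in (iii), but the biequivalence aspect requires showing that $\overline{\CD}(\Lambda^t[n]) \to \ON{n}$ is essentially surjective up to equivalence, full on morphisms up to invertible $2$-cell, and fully faithful on $2$-cells. Since every morphism of $\ON{n}$ is an adjoint equivalence and every $2$-cell is invertible, the missing $t$-th face of the horn can always be reconstructed: for an inner horn it is a composite of existing faces, while for an outer horn one additionally uses adjoint-equivalence inverses to reverse an edge before composing. I plan to organize this as an induction on $n$, realizing $\overline{\CD}(\ell^\Delta_{n,t})$ as a transfinite composition of pushouts of the generators in $\mathcal J_2$ (\cref{not:cofin2cat})---using $j_1\colon [0] \to \Eadj$ to freely insert the missing adjoint-equivalence edge and $j_2\colon [1] \to C_\mathrm{inv}$ to freely insert the missing invertible $2$-cell---thereby exhibiting it as a trivial cofibration in $\TwoCat$.
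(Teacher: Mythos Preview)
Your overall strategy matches the paper's exactly: decompose $\CD$ of the generating (trivial) cofibrations as iterated pushout-products along $\otimes$, then invoke the $\TwoCat$-enrichment of $\DblCat$ (\cref{rem:pushprodDblCat}) to reduce to the four claims (i)--(iv). Your treatment of (i)--(iii) via the cofibration characterizations in \cref{prop:cofinDblCat',prop:cofin2Cat} is also what the paper does, though the paper is more explicit that for $k,m,n\geq 4$ the boundary inclusions are \emph{identities} (a feature of $2$-truncation), so only finitely many cases need checking.

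There is, however, a genuine gap in your plan for (iv). You propose to realize $\overline{\CD}(\ell^\Delta_{n,t})\colon \Lambda^t\ON{n}\to\ON{n}$ as a transfinite composite of pushouts of $j_1$ and $j_2$, using $j_1\colon[0]\to\Eadj$ to ``freely insert the missing adjoint-equivalence edge''. But a pushout of $j_1$ adjoins a \emph{new object} together with an adjoint equivalence to it; it cannot adjoin an adjoint equivalence between two objects that already exist in the horn, which is precisely what is needed for $n=2$ (and the missing edge for $n=3$ is already present, so $j_1$ is irrelevant there). The paper avoids this issue entirely: since $\Lambda^t\ON{n}\to\ON{n}$ is the identity for $n\geq 4$, only $n=1,2,3$ require attention, and for each of these the paper directly verifies that the inclusion is a cofibration (via \cref{prop:cofin2Cat}, checking the effect on underlying categories) and a biequivalence (bijective on objects; essentially full on morphisms because every morphism of $\ON{n}$ is an adjoint equivalence so the missing edge is equivalent to a composite already in the horn; fully faithful on $2$-cells because all $2$-cells are invertible and the missing one is forced by the others). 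No induction or cell-attachment argument is needed.
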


\begin{proof}
It is enough to show that $\CD$ sends generating cofibrations and generating trivial cofibrations in $\sSet^{\twoDop}$ to cofibrations and trivial cofibrations in $\DblCat$, respectively. Recall from \cref{rem:gencofReedy} that generating cofibrations and generating trivial cofibrations are given by pushout-product of maps $\pushout{\iotav{k}}{\iotah{m}}{\iotas{n}}{\times}$ and $\pushout{\iotav{k}}{\iotah{m}}{\ell^s_{n,t}}{\times}$, respectively. Note that the map $\iotav{k}$ is constant in the horizontal and space directions, the map $\iotah{m}$ is constant in the vertical and space directions, and the maps $\iotas{n}$ and $\ell^s_{n,t}$ are constant in the horizontal and vertical directions. Therefore, since the functor $\CD$ preserves colimits and by \cref{rem:overlineCD}, we have that
\[ \CD(\pushout{\iotav{k}}{\iotah{m}}{\iotas{n}}{\times})\cong \pushout{\CD \iotav{k}}{\CD \iotah{m}}{\CD \iotas{n}}{\otimes_\mathrm{G}}\cong \pushout{\CD \iotav{k}}{\overline{\CD} \iotah{m}}{\overline{\CD} \iotas{n}}{\otimes}, \]
and similarly for $\ell^s_{n,t}$ in place of $\iotas{n}$. Since the model structure $\DblCat$ is enriched over $\TwoCat$, pushout-products of cofibrations with respect to $\otimes$ are cofibrations, which are trivial if one of the morphisms involved is a weak equivalence, by \cref{rem:pushprodDblCat}. Therefore, it is enough to show that $\CD \iotav{k}$ is a cofibration in $\DblCat$, for all $k\geq 0$, that $\overline{\CD} \iotah{m}$ and $\overline{\CD} \iotas{n}$ are cofibrations in $\TwoCat$, for all $m,n\geq 0$, and that $\overline{\CD} \ell^s_{n,t}$ is a trivial cofibration in $\TwoCat$, for all $n\geq 1$, $0\leq t\leq n$. These statements are verified in \cref{lem:iotaF,lem:iotaRDelta,lem:ellDelta} below. 
\end{proof}

To prove that the boundary and horn inclusions mentioned above are sent to cofibrations in $\TwoCat$ and $\DblCat$, we introduce the following definitions of the boundary and  $(n,t)$-horn of $O_2(n)$, which are used to describe the images under $\CD$ of the boundary and horn inclusions. 

\begin{defn} \label{def:boundary}
For $n\geq 0$, we define the \textbf{boundary $2$-category} $\partial O_2(n)$ as the coequalizer in $\TwoCat$
\begin{tz}
\node[](1) {$\underset{0\leq i<j\leq n}{\bigsqcup} O_2(n-2)$};
\node[right of=1,xshift=2.5cm](2) {$\underset{0\leq i\leq n}{\bigsqcup} O_2(n-1)$}; 
\node[right of=2,xshift=1.5cm,yshift=4pt](3) {$\partial O_2(n)$}; 
\node at ($(3.east)-(0,4pt)$) {,};
\draw[->] ($(1.east)+(0,8pt)$) to ($(2.west)+(0,8pt)$);
\draw[->] ($(1.east)+(0,2pt)$) to ($(2.west)+(0,2pt)$);
\draw[->] ($(2.east)+(0,5pt)$) to ($(3.west)+(0,1pt)$);
\end{tz}
where the maps in the $(i,j)$-copy are induced by the cosimplicial identities ${d^i d^j=d^{j-1} d^i}$, where $d^r\colon O_2(n-2) \to O_2(n-1)$ and $d^s\colon O_2(n-1)\to O_2(n)$ denote the face maps for $r=i,j$ and $s=i,j-1$. In particular, there is an inclusion $\partial O_2(n)\to O_2(n)$ induced by the face maps $d^i\colon O_2(n-1)\to O_2(n)$ for $0\leq i\leq n$. More explicitly, these $2$-categories are given by the following: 
\begin{itemize}
    \item for $n=0$, $\partial O_2(0)=\emptyset$ with $\partial O_2(0)=\emptyset\to O_2(0)=[0]$ given by the unique map,
    \item for $n=1$, $\partial O_2(1)=[0]\sqcup [0]$ with $\partial O_2(1)=[0]\sqcup [0]\to O_2(1)=[1]$ given by including the two copies of $[0]$ as the two endpoints of the morphism in $[1]$, 
    \item for $n=2$, $\partial O_2(2)$ is the sub-$2$-category of $O_2(2)$ where the $2$-morphism is missing and the inclusion $\partial O_2(2)\to O_2(2)$ is given by the following $2$-functor,
\begin{tz}
\node[](1) {$0$};
\node[above right of=1](2) {$1$};
\node[below right of=2](3) {$2$};
\draw[->] (1) to (2);
\draw[->] (2) to (3);
\draw[->] (1) to (3);

\node[right of=3,xshift=.5cm](1) {$0$};
\node at ($(3)!0.5!(1)+(0,.5cm)$) {$\longrightarrow$};
\node[above right of=1](2) {$1$};
\node[below right of=2](3) {$2$};
\draw[->] (1) to (2);
\draw[->] (2) to (3);
\draw[->] (1) to (3);
\coordinate(a) at ($(1)!0.5!(3)$);
\cell[la,right][n][0.37]{a}{2}{};
\end{tz}
\item for $n=3$, $\partial O_2(3)$ is the sub-$2$-category of $O_2(3)$ where only the equality between the two pasting diagrams in $O_2(3)$ -- as depicted in \cref{ex:O2N} -- is missing,
\item for $n\geq 4$, $\partial O_2(n)=O_2(n)$. 
\end{itemize}

Similarly, we define the boundary $2$-categories $\partial \OM{n}$ and $\partial \ON{n}$. 
\end{defn}

\begin{defn} \label{def:horn}
For $n\geq 1$ and $0\leq t\leq n$, we define the \textbf{$(n,t)$-horn $2$-category} $\Lambda^t O_2(n)$ as the coequalizer in $\TwoCat$
\begin{tz}
\node[](1) {$\underset{\substack{0\leq i<j\leq n\\
i\neq t, j\neq t}}{\bigsqcup} O_2(n-2)$};
\node[right of=1,xshift=2.5cm](2) {$\underset{\substack{0\leq i\leq n\\ i\neq t}}{\bigsqcup} O_2(n-1)$}; 
\node[right of=2,xshift=1.5cm,yshift=8pt](3) {$\Lambda^t O_2(n)$}; 
\node at ($(3.east)-(0,4pt)$) {,};
\draw[->] ($(1.east)+(0,12pt)$) to ($(2.west)+(0,12pt)$);
\draw[->] ($(1.east)+(0,6pt)$) to ($(2.west)+(0,6pt)$);
\draw[->] ($(2.east)+(0,9pt)$) to ($(3.west)+(0,1pt)$);
\end{tz}
where the maps in the $(i,j)$-copy are induced by the cosimplicial identities ${d^i d^j=d^{j-1} d^i}$, where $d^r\colon O_2(n-2) \to O_2(n-1)$ and $d^s\colon O_2(n-1)\to O_2(n)$ denote the face maps for $r=i,j$ and $s=i,j-1$. In particular, there is an inclusion $\Lambda^t O_2(n)\to O_2(n)$ induced by the face maps $d^i\colon O_2(n-1)\to O_2(n)$ for $0\leq i\leq n$, $i\neq t$. More explicitly, these $2$-categories are given by the following: 
\begin{itemize}
    \item for $n=1$, $\Lambda^t O_2(1)=[0]$ with $\Lambda^t O_2(1)=[0]\to O_2(1)=[1]$ given by the inclusion of $[0]$ at the source of the morphism in $[1]$ if $t=1$ and at the target if $t=0$, 
    \item for $n=2$, $\Lambda^2 O_2(2)$, $\Lambda^1 O_2(2)$, and $\Lambda^0 O_2(2)$ are generated, respectively, by the following data
\begin{tz}
\node[](1) {$0$};
\node[above right of=1](2) {$1$};
\node[below right of=2](3) {$2$};
\draw[->] (2) to (3);
\draw[->] (1) to (3);

\node[right of=3,xshift=.5cm](1) {$0$};
\node[above right of=1](2) {$1$};
\node[below right of=2](3) {$2$};
\draw[->] (1) to (2);
\draw[->] (2) to (3);

\node[right of=3,xshift=.5cm](1) {$0$};
\node[above right of=1](2) {$1$};
\node[below right of=2](3) {$2$};
\draw[->] (1) to (2);
\draw[->] (1) to (3);
\end{tz}
with the obvious inclusions into $O_2(2)$, 
\item for $n=3$ and $0\leq t\leq 3$, $\Lambda^t O_2(3)$ is the sub-$2$-category where the equality between the two pasting diagrams in $O_2(3)$ and the $2$-morphism opposite to the object~$t$ are missing. For example, when $t=0$, the inclusion $\Lambda^0 O_2(3)\to O_2(3)$ is given by the following. 
\begin{tz}
\node[](1) {$0$};
\node[above of=1](2) {$1$};
\node[right of=2](3) {$2$};
\node[below of=3](4) {$3$};
\draw[->] (1) to (2);
\draw[->] (2) to (3);
\draw[->] (3) to (4);
\draw[->] (1) to (4);
\draw[->] (1) to (3);
\coordinate(a) at ($(1)!0.5!(3)$);
\cell[la,right,yshift=7pt][n][0.4]{a}{2}{};
\coordinate(a) at ($(1)!0.7!(4)$);
\coordinate(b) at ($(a)+(0,1cm)$);
\cell[la,left][n][0.37]{a}{b}{};

\node[right of=4](1) {$0$};
\node[above of=1](2) {$1$};
\node[right of=2](3) {$2$};
\node[below of=3](4) {$3$};
\draw[->] (1) to (2);
\draw[->] (2) to (3);
\draw[->] (3) to (4);
\draw[->] (1) to (4);
\draw[->] (2) to (4);
\coordinate(a) at ($(1)!0.3!(4)$);
\coordinate(b) at ($(a)+(0,1cm)$);
\cell[la,right][n][0.4]{a}{b}{};

\node[right of=4,xshift=.5cm](1) {$0$};
\node at ($(3)!0.5!(1)$) {$\longrightarrow$};
\node[above of=1](2) {$1$};
\node[right of=2](3) {$2$};
\node[below of=3](4) {$3$};
\draw[->] (1) to (2);
\draw[->] (2) to (3);
\draw[->] (3) to (4);
\draw[->] (1) to (4);
\draw[->] (1) to (3);
\coordinate(a) at ($(1)!0.5!(3)$);
\cell[la,right,yshift=7pt][n][0.4]{a}{2}{};
\coordinate(a) at ($(1)!0.7!(4)$);
\coordinate(b) at ($(a)+(0,1cm)$);
\cell[la,left][n][0.37]{a}{b}{};

\node[right of=4](1) {$0$};
\node[la] at ($(3)!0.5!(1)$) {$=$};
\node[above of=1](2) {$1$};
\node[right of=2](3) {$2$};
\node[below of=3](4) {$3$};
\draw[->] (1) to (2);
\draw[->] (2) to (3);
\draw[->] (3) to (4);
\draw[->] (1) to (4);
\draw[->] (2) to (4);
\coordinate(a) at ($(2)!0.5!(4)$);
\cell[la,right,yshift=-7pt][n][0.37]{a}{3}{};
\coordinate(a) at ($(1)!0.3!(4)$);
\coordinate(b) at ($(a)+(0,1cm)$);
\cell[la,right][n][0.4]{a}{b}{};
\end{tz}
\item for $n\geq 4$ and $0\leq t\leq n$, $\Lambda^t O_2(n)= O_2(n)$.
\end{itemize}

Similarly, we define the $(n,t)$-horn $2$-categories $\Lambda^t \OM{n}$ and $\Lambda^t \ON{n}$. 
\end{defn}

We are now ready to prove the promised lemmas which complete the proof of \cref{prop:CD-ND-Quillen-Reedy}. 

\begin{lemme} \label{lem:iotaF}
For all $k\geq 0$, the double functor $\CD(\iotav{k})\colon \CD(\partial \Repv{k})\to \CD(\Repv{k})$ is a cofibration in $\DblCat$.
\end{lemme}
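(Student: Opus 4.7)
The plan is to reduce the claim to a cofibration condition on underlying $2$-categories and then apply the characterization of cofibrations in $\DblCat$ provided by \cref{prop:cofinDblCat'}. Since $\CD$ is a left adjoint, it preserves the standard coequalizer presentation of $\delta F[k]$, so one obtains $\CD(\delta F[k])$ as a colimit of copies of $\VK{k-1}$ and $\VK{k-2}$ glued along the face maps, with the induced map to $\VK{k} = \CD(F[k])$ coming from the cosimplicial face maps $d^i\colon \VK{k-1} \to \VK{k}$.

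Both $F[k]$ and $\delta F[k]$ are constant in the horizontal and space directions, so I would argue that $\VK{k}$ and $\CD(\delta F[k])$ have trivial horizontal underlying categories, consisting only of the objects $\{0, 1, \ldots, k\}$ with identity horizontal morphisms. Under this identification the horizontal conditions of \cref{prop:cofinDblCat'} are automatically satisfied: injectivity on objects, horizontal faithfulness, and the freeness condition on underlying horizontal categories all hold vacuously.

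For the vertical direction, I would identify the underlying vertical $2$-category of $\VK{k}$ with $\OM{k}$ (the additional adjoint equivalence structure on vertical morphisms, if any, does not affect the underlying $1$-category) and that of $\CD(\delta F[k])$ with the boundary $\delta \OM{k}$ as in \cref{def:boundary}. It then suffices to verify that the inclusion $\delta \OM{k} \to \OM{k}$ is a cofibration in $\TwoCat$, which I would check case by case: for $k = 0, 1$ it is among the generating cofibrations $i_1, i_2$ of \cref{not:cofin2cat}; for $k = 2$ it is a pushout of the boundary inclusion $i_3$ along the inclusion of two parallel morphisms; for $k = 3$ it is a pushout of $i_4$; and for $k \geq 4$ it is the identity. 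Applying \cref{prop:cofin2Cat} to this $2$-categorical cofibration then yields the vertical injectivity on objects, vertical faithfulness, and vertical freeness conditions required by \cref{prop:cofinDblCat'}.

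The hard part will be the explicit identification of the underlying vertical $2$-categories of $\VK{k}$ and $\CD(\delta F[k])$ with $\OM{k}$ and $\delta \OM{k}$, respectively, as this requires unwinding the left Kan extension defining $\CD$ and the $\TwoCat$-tensoring inside $\XD$ at $m = n = 0$. Once this identification is in place, the cofibration property follows mechanically from the $2$-categorical analysis above.
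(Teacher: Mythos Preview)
Your approach is essentially the paper's: both identify $\CD(F[k])=\VK{k}$ and $\CD(\delta F[k])=\bbV\,\delta\OM{k}$, then verify the cofibration conditions case by case via \cref{prop:cofinDblCat'}. Your ``hard part'' is in fact immediate once you note that $\bbV\colon\TwoCat\to\DblCat$ is a left adjoint and therefore carries the coequalizer defining $\delta\OM{k}$ (\cref{def:boundary}) to the coequalizer computing $\CD(\delta F[k])$.

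There is one inaccuracy in your case analysis. For $k=2$, the inclusion $\delta\OM{2}\to\OM{2}$ is \emph{not} a single pushout of $i_3$: the $2$-cell in $\OM{2}$ is invertible, whereas a pushout of $i_3$ only adjoins a non-invertible $2$-cell, yielding $O_2(2)$ rather than $\OM{2}$. You would need additional pushouts of $i_3$ and $i_4$ to build the inverse and impose the invertibility relations. The paper avoids this by observing directly that for $k=2,3$ the map $\bbV\,\delta\OM{k}\to\VK{k}$ is the identity on underlying horizontal and vertical categories, so \cref{prop:cofinDblCat'} applies with no pushout bookkeeping. Your detour through $\TwoCat$-cofibrations and \cref{prop:cofin2Cat} is valid once the $k=2$ step is repaired, but the paper's route is shorter and sidesteps the invertibility issue entirely.
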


\begin{proof}
The boundary $\partial \Repv{k}$ of the representable $\Repv{k}$ can be computed as the following coequalizer in $\sSet^{\twoDop}$ 
\begin{tz}
\node[](1) {$\underset{0\leq i<j\leq k}{\bigsqcup} F[k-2]$};
\node[right of=1,xshift=2cm](2) {$\underset{0\leq i\leq k}{\bigsqcup} F[k-1]$}; 
\node[right of=2,xshift=1cm,yshift=4pt](3) {$\partial \Repv{k}$}; 
\node at ($(3.east)-(0,4pt)$) {,};
\draw[->] ($(1.east)+(0,8pt)$) to ($(2.west)+(0,8pt)$);
\draw[->] ($(1.east)+(0,2pt)$) to ($(2.west)+(0,2pt)$);
\draw[->] ($(2.east)+(0,5pt)$) to ($(3.west)+(0,1pt)$);
\end{tz}
where the maps in the $(i,j)$-copy are induced by the cosimplicial identities ${d^i d^j=d^{j-1} d^i}$. By construction of $\partial \OM{k}$ (see \cref{def:boundary}), by \cref{rem:CD-representables}, and since $\CD$ preserves colimits, we find that
\[ \CD(\partial \Repv{k})=\bbV \partial \OM{k} \ \ \text{and} \ \ \CD(\Repv{k})=\VK{k}, \]
for all $k\geq 0$. Therefore, the double functors $\CD(\iotav{k})$ are given by 
\begin{itemize}
    \item for $k=0$, the generating cofibration $I_1\colon \emptyset\to [0]$,
    \item for $k=1$, the generating cofibration $I_3\colon [0]\sqcup [0]\to \bbV[1]$,
    \item for $k=2$, the inclusion 
\begin{tzsmall}
\node[](1) {$0$};
\node[below of=1](2) {$1$};
\node[below of=2](3) {$2$};
\node[left of=1](4) {$0$};
\node[left of=3](5) {$2$};
\draw[->,pro] (1) to (2);
\draw[->,pro] (2) to (3);
\draw[->,pro] (4) to (5);
\draw[d] (1) to (4);
\draw[d] (3) to (5);

\node[right of=1,xshift=2cm](1) {$0$};
\node at ($(5)!0.5!(1)$) {$\longrightarrow$};
\node[below of=1](2) {$1$};
\node[below of=2](3) {$2$};
\node[left of=1](4) {$0$};
\node[left of=3](5) {$2$};
\node at ($(3.east)-(0,4pt)$) {,};
\draw[->,pro] (1) to (2);
\draw[->,pro] (2) to (3);
\draw[->,pro] (4) to (5);
\draw[d] (1) to (4);
\draw[d] (3) to (5);
\node[la] at ($(1)!0.5!(5)$) {$\cong$};
\end{tzsmall}
which is a cofibration by \cref{prop:cofinDblCat'} since it is the identity on underlying horizontal and vertical categories, 
 \item for $k=3$, the inclusion $\bbV\partial \OM{3}\to \VK{3}$, which is a cofibration by \cref{prop:cofinDblCat'} since it is the identity on underlying horizontal and vertical categories, 
 \item for $k\geq 4$, the identity.
\end{itemize}
This shows that the double functor $\CD(\iotav{k})$ is a cofibration in $\DblCat$, for all $k\geq 0$.
\end{proof}

\begin{lemme} \label{lem:iotaRDelta}
For all $m,n\geq 0$, the $2$-functors $\overline{\CD}(\iotah{m})\colon \overline{\CD}(\partial \Reph{m})\to \overline{\CD}(\Reph{m})$ and  $\overline{\CD}(\iotas{n})\colon \overline{\CD}(\partial \Reps{n})\to \overline{\CD}(\Reps{n})$ are cofibrations in $\TwoCat$.
\end{lemme}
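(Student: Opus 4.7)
The plan is to apply the characterization of cofibrations in Lack's model structure given in \cref{prop:cofin2Cat}, after first identifying the maps explicitly. Since $\overline{\CD}$ is a left Kan extension it preserves colimits, so applying $\overline{\CD}$ to the coequalizer diagrams that define $\delta R[m]$ and $\delta \Delta[n]$ (as bisimplicial/trisimplicial sets concentrated in the horizontal and space directions respectively) yields the coequalizer descriptions of $\delta \OM{m}$ and $\delta \ON{n}$ from \cref{def:boundary}. Thus the two 2-functors in question are simply the canonical inclusions $\delta \OM{m} \hookrightarrow \OM{m}$ and $\delta \ON{n} \hookrightarrow \ON{n}$.

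I would then proceed by cases, using the explicit descriptions following \cref{def:boundary}. For $m=0$ and $n=0$ the map is $\emptyset \to [0]$, which is the generating cofibration $i_1 \in \mathcal I_2$. For $m=1$, the map is $[0]\sqcup[0]\to[1]=i_2 \in \mathcal I_2$. For $n=1$, the map $[0]\sqcup[0]\to \Eadj$ is not itself a generating cofibration, so I would verify \cref{prop:cofin2Cat} by hand: it is clearly injective on objects and vacuously faithful on morphisms, and the underlying category $U\Eadj$ is the free category on the two generating morphisms $u$ and $u'$ between the two objects (the unit and counit are 2-cells and the triangle identities do not collapse 1-cells), so $U\Eadj$ is freely obtained from $U([0]\sqcup[0])$ by adjoining two morphisms.

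For $m\geq 2$ and $n\geq 2$, the main observation is that the inclusions $\delta \OM{m}\to \OM{m}$ and $\delta \ON{n}\to \ON{n}$ are bijective on objects and on 1-morphisms, and only differ at the level of 2-cells. This is because the missing data in $\delta\OM{m}$ (respectively $\delta\ON{n}$) consist either of a single 2-cell (when $m=2$, $n=2$), of a relation between two pasted 2-cells (when $m=3$, $n=3$), or of nothing at all (when $m,n\geq 4$); in all cases the generating 1-cells and their compositions are the same on both sides. Consequently $U\delta\OM{m}\to U\OM{m}$ and $U\delta\ON{n}\to U\ON{n}$ are isomorphisms, so condition (ii) of \cref{prop:cofin2Cat} is vacuous and condition~(i) is immediate.

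The only genuine work is in the $n=1$ case and in verifying the claim that, for $m,n\geq 2$, the boundary inclusions are bijective on underlying 1-categories. The latter is the main obstacle in the sense that it requires carefully unpacking the coequalizer of \cref{def:boundary} in $\TwoCat$ and tracking which 1-morphisms the glued face maps produce; once this is done the cofibration property drops out of \cref{prop:cofin2Cat} immediately.
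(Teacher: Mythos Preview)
Your proposal is correct and follows essentially the same approach as the paper: identify the maps as the boundary inclusions $\delta\OM{m}\to\OM{m}$ and $\delta\ON{n}\to\ON{n}$ via colimit preservation, then check cases using \cref{prop:cofin2Cat}. The one minor difference is that the paper handles the $\iota^\Delta_n$ family uniformly by remarking that the analysis is identical to the $\iota^R_m$ case ``but with all morphisms adjoint equivalences,'' whereas you single out $n=1$ and verify explicitly that $U\Eadj$ is free on the two generators $u,u'$; your treatment here is actually a bit more careful than the paper's, but the underlying argument is the same.
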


\begin{proof}
We first prove the statement for $\overline{\CD}(\iotah{m})$. As in the proof of \cref{lem:iotaF} and by \cref{rem:overlineCD}, we find that 
\[ \overline{\CD}(\partial \Reph{m})=\partial \OM{m} \ \ \text{and} \ \ \overline{\CD}(\Reph{m})=\OM{m}, \]
for all $m\geq 0$. Therefore, the $2$-functors $\overline{\CD}(\iotah{m})$ are given by 
\begin{itemize}
    \item for $m=0$, the generating cofibration $i_1\colon \emptyset\to [0]$,
    \item for $m=1$, the generating cofibration $i_2\colon [0]\sqcup [0]\to [1]$,
    \item for $m=2$, the inclusion $\partial \OM{2}\to \OM{2}$,
 which is a cofibration by \cref{prop:cofin2Cat} since it is the identity on underlying categories,
    \item for $m=3$, the inclusion $\partial \OM{3}\to \OM{3}$, which is a cofibration by \cref{prop:cofin2Cat} since it is the identity on underlying categories,
    \item for $m\geq 4$, the identity. 
\end{itemize}
Therefore, the $2$-functor $\overline{\CD}(\iotah{m})$ is a cofibration in $\TwoCat$, for all $m\geq 0$.

We now prove the statement for $\overline{\CD}(\iotas{n})$. As above, we find that 
\[ \overline{\CD}(\partial \Reps{n})=\partial \ON{n} \ \ \text{and} \ \ \overline{\CD}(\Reps{n})=\ON{n}, \]
for all $n\geq 0$. Therefore the $2$-functors $\overline{\CD}(\iotas{n})\colon \partial \ON{n}\to \ON{n}$ can be described as the $2$-functors $\overline{\CD}(\iotah{m})$ above, but where all the morphisms of the $2$-categories in play are adjoint equivalences. In particular, the $2$-functor $\overline{\CD}(\iotas{n})$ is also a cofibration in $\TwoCat$, for all $n\geq 0$.
\end{proof}

\begin{lemme} \label{lem:ellDelta}
For all $n\geq 1$ and $0\leq t\leq n$, the $2$-functor $\overline{\CD}(\ell^s_{n,t})\colon \overline{\CD}(\Lambda^t[n])\to \overline{\CD}(\Reps{n})$ is a trivial cofibration in $\TwoCat$. 
\end{lemme}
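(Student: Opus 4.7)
The plan is to mirror the boundary computation of \cref{lem:iotaRDelta}, and then split the claim into a cofibrancy check and a biequivalence check.

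First, by applying $\overline{\CD}$ to the coequalizer description of $\Lambda^t[n]$ (analogous to that of $\delta \Delta[n]$), and comparing with the coequalizer defining $\Lambda^t \ON{n}$ in \cref{def:horn}, one identifies
\[ \overline{\CD}(\Lambda^t[n]) = \Lambda^t \ON{n}, \qquad \overline{\CD}(\Delta[n]) = \ON{n}, \]
so that the $2$-functor in question is the canonical inclusion $\Lambda^t \ON{n} \hookrightarrow \ON{n}$. Cofibrancy then follows from \cref{prop:cofin2Cat}: the inclusion is injective on objects and faithful on morphisms, and the underlying category of $\ON{n}$ is obtained from that of $\Lambda^t \ON{n}$ by freely adjoining the missing face morphisms (together with their formal inverses coming from the adjoint equivalence structure). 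The endpoint cases $n = 1$ (giving $j_1 \in \mathcal{J}_2$) and $n \geq 4$ (identity) are immediate.

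For the biequivalence, I would proceed case by case on $n$. When $n = 1$, the inclusion is exactly the generating trivial cofibration $j_1 \colon [0] \to \Eadj \in \mathcal{J}_2$. When $n \geq 4$, the inclusion is the identity. For the remaining cases $n = 2$ and $n = 3$, I would verify directly the three conditions of a biequivalence. Surjectivity on objects is clear since the horn already contains every vertex. For fullness on morphisms up to invertible $2$-cell, I would observe that any morphism of $\ON{n}$ which does not lie in $\Lambda^t \ON{n}$ is connected, via an invertible $2$-cell coming from the $2$-cells already present in the horn, to a composite of morphisms (and their adjoint inverses) drawn from $\Lambda^t \ON{n}$; this uses crucially that every $1$-morphism of $\ON{n}$ is an adjoint equivalence, so composing with adjoint inverses produces a replacement for any missing face morphism. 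Fully faithfulness on $2$-cells then follows because the only additional $2$-cell datum of $\ON{n}$ beyond $\Lambda^t \ON{n}$ (namely, either the filler of the missing face when $n = 2$, or the equality of pasting diagrams when $n = 3$) is recoverable from the horn data after translating along the adjoint-equivalence invertible $2$-cells already in $\Lambda^t \ON{n}$.

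The main obstacle I expect is the explicit verification in the outer horn cases for $n = 2$ and $n = 3$ (e.g.\ $t = 0$ or $t = n$), where the missing face connects vertices not joined by a single horn morphism; the key point to make rigorous is that, because every $1$-morphism of $\ON{n}$ is part of an adjoint equivalence, composites with adjoint inverses always produce a morphism in $\Lambda^t \ON{n}$ which is related to the missing one by an invertible $2$-cell, so the outer horns behave as well as the inner ones in this fully adjoint-equivalence context. Once these case analyses are carried out, combining cofibrancy with biequivalence yields the desired trivial cofibration.
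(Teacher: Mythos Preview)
Your proposal is correct and follows essentially the same approach as the paper: the same coequalizer identification $\overline{\CD}(\Lambda^t[n])=\Lambda^t\ON{n}$, the same case split $n=1,2,3,\geq 4$, cofibrancy via \cref{prop:cofin2Cat}, and a direct biequivalence check exploiting that all $1$-morphisms in $\ON{n}$ are adjoint equivalences. One minor sharpening: for $n=3$ the inclusion is already the identity on underlying categories (the missing face is a $2$-cell, not a morphism), so no morphisms need to be adjoined and fullness is automatic.
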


\begin{proof}
The $(n,t)$-horn $\Lambda^t[n]$ of the representable $\Reps{n}$ can be computed as the following coequalizer in $\sSet^{\twoDop}$ 
\begin{tz}
\node[](1) {$\underset{\substack{0\leq i<j\leq n\\
i\neq t, j\neq t}}{\bigsqcup} \Delta[n-2]$};
\node[right of=1,xshift=2cm](2) {$\underset{\substack{0\leq i\leq n\\ i\neq t}}{\bigsqcup} \Delta[n-1]$}; 
\node[right of=2,xshift=1cm,yshift=8pt](3) {$\Lambda^t [n]$}; 
\node at ($(3.east)-(0,4pt)$) {,};
\draw[->] ($(1.east)+(0,12pt)$) to ($(2.west)+(0,12pt)$);
\draw[->] ($(1.east)+(0,6pt)$) to ($(2.west)+(0,6pt)$);
\draw[->] ($(2.east)+(0,9pt)$) to ($(3.west)+(0,1pt)$);
\end{tz}
where the maps in the $(i,j)$-copy are induced by the cosimplicial identities ${d^i d^j=d^{j-1} d^i}$. By construction of $\Lambda^t \ON{n}$ (see \cref{def:horn}), by \cref{rem:overlineCD}, and since $\overline{\CD}$ preserves colimits, we find that
\[ \overline{\CD}(\Lambda^t[n])=\Lambda^t \ON{n} \ \ \text{and} \ \ \overline{\CD}(\Reps{n})=\ON{n}, \]
for all $n\geq 1$ and $0\leq t\leq n$. Therefore, the $2$-functors $\overline{\CD}(\ell^s_{n,t})\colon \Lambda^t \ON{n}\to \ON{n}$ are given by
\begin{itemize}
    \item for $n=1$ and $0\leq t\leq 1$, the generating trivial cofibration $j_1\colon [0]\to \ON{1}=\Eadj$, including $[0]$ as one of the two end points, 
    \item for $n=2$ and $0\leq t\leq 2$, the inclusion $\Lambda^t \ON{2}\to \ON{2}$, which is a cofibration by \cref{prop:cofin2Cat} since it is given by adding two morphisms $x\to y$ and $y\to x$ freely between objects $x<y\in \{0,1,2\}\setminus \{t\}$ on underlying categories. Moreover, it is a biequivalence, since it is bijective on objects, essentially full on morphisms, and fully faithful on $2$-morphisms, where essential fullness on morphisms can be shown using the fact that all the morphisms are adjoint equivalences. 
\item for $n=3$ and $0\leq t \leq 3$, the inclusion $\Lambda^t \ON{3}\to \ON{3}$, which is a cofibration by \cref{prop:cofin2Cat} since it is the identity on underlying categories. Moreover, it is a biequivalence, since it is bijective on objects and morphisms, and it is fully faithful on $2$-morphisms, where fully faithfulness follows from the fact that there is a unique invertible $2$-morphism filling the triangle of the missing invertible $2$-morphism and it is given by the obvious composite of the three other invertible $2$-morphisms. 
\item for $n\geq 4$ and $0\leq t\leq n$, the identity. 
\end{itemize}
Therefore, the $2$-functor $\overline{\CD}(\ell^s_{n,t})$ is a trivial cofibration in $\TwoCat$, for all $n\geq 1$, $0\leq t \leq n$. 
\end{proof}

We now show that the nerve functor is right Quillen from $\DblCat$ to $\DblInfH$. 

\begin{theorem} \label{thm:CD-ND-Quillen}
The adjunction 
\begin{tz}
\node[](A) {$\DblCat$};
\node[right of=A,xshift=1.8cm](B) {$\DblInfH$};
\draw[->] ($(B.west)+(0,.25cm)$) to [bend right=25] node[above,la]{$\CD$} ($(A.east)+(0,.25cm)$);
\draw[->] ($(A.east)-(0,.25cm)$) to [bend right=25] node[below,la]{$\ND$} ($(B.west)+(0,-.25cm)$);
\node[la] at ($(A.east)!0.5!(B.west)$) {$\bot$};
\end{tz}
is a Quillen pair between the model structure on $\DblCat$ for weakly horizontally invariant double categories and the model structure on $\sSet^{\twoDop}$ for horizontally complete double $(\infty,1)$-categories.
\end{theorem}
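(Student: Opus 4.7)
The plan is to combine \cref{prop:CD-ND-Quillen-Reedy}, which provides a Quillen pair between $\DblCat$ and the Reedy/injective model structure on $\sSet^{\twoDop}$, with \cref{thm:Quillen-loc} applied to the localization presentation of $\DblInfH$ from \cref{thm:MSdoubleInf}. This reduces the statement to showing that $\CD$ carries each of the three families of localizing cofibrations
\[
\id_{R[m]}\times g_k^F,\qquad q_m^R\times \id_{F[k]},\qquad e^R\times \id_{F[k]},
\]
to a weak equivalence in the model structure on $\DblCat$ for weakly horizontally invariant double categories.

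Next, I would use that $\CD$ preserves colimits, together with \cref{rem:CD-representables,rem:overlineCD}, to rewrite each image as a tensor product in the $\TwoCat$-enrichment from \cref{def:tensor}. Specifically, the three families become $\CD(g_k^F)\otimes \OM{m}$ with $\CD(g_k^F)\colon \bbV[k]\to \VK{k}$ induced by the canonical inclusion $[k]\hookrightarrow \OM{k}$; $\VK{k}\otimes \overline{\CD}(q_m^R)$ with $\overline{\CD}(q_m^R)\colon [m]\hookrightarrow \OM{m}$; and $\VK{k}\otimes \overline{\CD}(e^R)$ with $\overline{\CD}(e^R)\colon [0]\to \overline{\CD}(N^R I)$. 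Since $\OM{m}$, $\bbV[k]$, and $\VK{k}$ are cofibrant by \cref{prop:cofibrantTwoCat,prop:cofibrantDblCat'}, the enrichment of $\DblCat$ over $\TwoCat$ (\cref{thm:WHI-MS,rem:pushprodDblCat}) together with Ken Brown's lemma reduce the problem to verifying that each of the three building-block maps $\CD(g_k^F)$, $\overline{\CD}(q_m^R)$, and $\overline{\CD}(e^R)$ is itself a weak equivalence, in $\DblCat$ or $\TwoCat$ as appropriate.

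For the two Segal maps, I would argue by inspecting the hom-categories of $\OM{k}$: for $i\leq j$, the hom-category $O_2(k)(i,j)$ is the poset of subsets of $[i,j]$ containing $\{i,j\}$, which has a maximum, so its groupoid reflection $\OM{k}(i,j)$ is contractible. This already yields that $[m]\hookrightarrow \OM{m}$ is a biequivalence in Lack's model structure, and a direct check of the double biequivalence conditions gives the same for $\bbV[k]\to \VK{k}$.

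The main obstacle is the completeness map. The key step will be identifying the fat realization $\overline{\CD}(N^R I)$ -- defined as the colimit of the cosimplicial $2$-category $\OM{}$ over the simplex category of the nerve $N^R I$ of the free-living isomorphism -- with a $2$-category biequivalent to the free-living adjoint equivalence $\Eadj$. Once this identification is in place, $\overline{\CD}(e^R)\colon [0]\to \overline{\CD}(N^R I)$ becomes biequivalent to the generating trivial cofibration $j_1\colon [0]\to \Eadj$ of \cref{not:cofin2cat}, hence a biequivalence in Lack's model structure, completing the proof.
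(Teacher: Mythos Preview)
Your proposal is correct and follows essentially the same approach as the paper: reduce via \cref{thm:Quillen-loc} and \cref{prop:CD-ND-Quillen-Reedy} to the three building-block maps, then verify these are weak equivalences (the paper's \cref{lem:IF,lem:IR}). One minor sharpening: the paper computes $\overline{\CD}(N^R I)$ to be \emph{exactly} $\Eadj$, so that $\overline{\CD}(e^R)=j_1$ on the nose, rather than only up to biequivalence.
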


\begin{proof}
By \cref{thm:Quillen-loc} and \cref{prop:CD-ND-Quillen-Reedy}, it is enough to show that the cofibrations $\gv{k}\times \id_{\Reph{m}}$, $\id_{\Repv{k}}\times \gh{m}$, and $\id_{\Repv{k}}\times \eh$, with respect to which we localize the Reedy/injective model structure on $\sSet^{\twoDop}$ in order to obtain the model structure $\DblInfH$ of \cref{thm:MSdoubleInf}, are sent by $\CD$ to weak equivalences in $\DblCat$. By definition of $\CD$ and by \cref{rem:overlineCD}, we have that 
\[ \CD(\gv{k}\times \id_{\Reph{m}})\cong \CD(\gv{k})\otimes \id_{\overline{\CD} \Reph{m}} =\CD(\gv{k})\,\square_{\otimes}\,(\emptyset\to \overline{\CD} \Reph{m}),
\]
and similarly that
\[ \CD(\id_{\Repv{k}}\times \gh{m}) \cong (\emptyset\to \CD \Repv{k})\,\square_{\otimes}\,\overline{\CD}(\gh{m}), \quad \CD(\id_{\Repv{k}}\times \eh) \cong (\emptyset\to \CD \Repv{k})\,\square_{\otimes}\, \overline{\CD}(\eh). \]
Since $\CD$ is left Quillen from the Reedy/injective model structure on $\sSet^{\twoDop}$ in which every object is cofibrant, the unique maps $\emptyset\to \overline{\CD} \Reph{m}$ and $\emptyset\to \CD \Repv{k}$ are cofibrations in $\TwoCat$ and $\DblCat$, respectively. Moreover, the maps $\CD(\gv{k})$, $\overline{\CD}(\gh{m})$ and $\overline{\CD}(\eh)$ are cofibrations in $\DblCat$ and $\TwoCat$, since they are images of monomorphisms in $\sSet^{\twoDop}$. As the model structure on $\DblCat$ is $\TwoCat$-enriched, it is enough to show that $\CD(\gv{k})$ is a weak equivalence in $\DblCat$ and that $\overline{\CD}(\gh{m})$, and $\overline{\CD}(\eh)$ are biequivalences by \cref{rem:pushprodDblCat}. These statements are the content of \cref{lem:IF,lem:IR}, respectively.
\end{proof}

The following two lemmas complete the proof of \cref{thm:CD-ND-Quillen}. 

\begin{lemme} \label{lem:IF}
For all $k\geq 0$, the double functor $\CD(\gv{k})\colon \CD(\Spv{k})\to \CD(\Repv{k})$ is a double biequivalence in $\DblCat$. In particular, it is a weak equivalence in $\DblCat$.
\end{lemme}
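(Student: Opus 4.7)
The plan is to prove the statement directly by describing both double categories explicitly and then verifying each clause in the definition of a double biequivalence. Since $\CD$ preserves colimits and $\CD(F[1]) = \VK{1} = \mathbb V[1]$, the domain $\CD(G[k])$ is the pushout $\mathbb V[1] \sqcup_{[0]} \cdots \sqcup_{[0]} \mathbb V[1]$ of $k$ copies of $\mathbb V[1]$ glued at $[0]$. Using that $\mathbb V \colon \TwoCat \to \DblCat$ preserves colimits (it has both adjoints) and that $[1] \sqcup_{[0]} \cdots \sqcup_{[0]} [1] \cong [k]$ in $\Cat$, this identifies $\CD(G[k])$ with $\mathbb V[k]$. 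Under this identification, the map $\CD(g_k^F) \colon \mathbb V[k] \to \VK{k} = \mathbb V\OM{k}$ is obtained by applying $\mathbb V$ to the canonical $2$-functor $[k] \to \OM{k}$ sending the unique morphism $i \to j$ in $[k]$ to the ``maximal'' subset $\{i, i+1, \ldots, j\} \in O_2(k)(i,j)$.

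With this description in hand, I verify the four conditions for a double biequivalence. The map is bijective on objects, and it is trivially full on horizontal morphisms up to vertically invertible square since both double categories have only identity horizontal morphisms. Fully faithfulness on squares follows from the fact that in $\mathbb V[k]$ the only square between parallel vertical morphisms $u,v$ is the identity at $u=v$ (the category $[k]$ has at most one morphism between any pair of objects, and only identity $2$-cells), while in $\mathbb V\OM{k}$ the only $2$-cell from $\{i,\ldots,j\}$ to itself is again the identity, since the hom $O_2(k)(i,j)$ is a poset.

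The remaining clause, surjectivity on vertical morphisms up to weakly horizontally invertible square, is the main one. Given a vertical morphism $U \in O_2(k)(i,j)$ in $\mathbb V\OM{k}$, I take the vertical morphism $i \to j$ in $\mathbb V[k]$, whose image is $\{i, i+1, \ldots, j\}$, and use the inclusion $U \subseteq \{i, i+1, \ldots, j\}$ in the poset $O_2(k)(i,j)$ to produce a $2$-cell between these morphisms in $O_2(k)$, which is invertible in $\OM{k}$ by construction and hence yields a square in $\mathbb V\OM{k}$ with trivial horizontal boundary. The step I expect to require the most care is verifying that such a vertically invertible square with identity horizontal boundaries is automatically weakly horizontally invertible: this should follow by taking the identity horizontal adjoint equivalence data on both sides and observing that the pasting conditions defining weak horizontal invertibility collapse to ordinary $2$-categorical invertibility, though one may prefer to invoke the technical results on weakly horizontally invertible squares in \cref{app:weakhorinv}. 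Once this is in place, the lemma follows, and the final sentence is immediate from the fact that double biequivalences are weak equivalences in the model structure on $\DblCat$ of \cref{thm:WHI-MS}.
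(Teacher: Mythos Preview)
Your proof is correct and follows essentially the same route as the paper's: identify $\CD(g_k^F)$ with $\bbV[k]\to\bbV\OM{k}$ and verify the clauses of a double biequivalence directly. One terminological slip is worth correcting. The square in $\bbV\OM{k}$ arising from the invertible $2$-cell $U\cong\{i,i+1,\ldots,j\}$ has trivial \emph{horizontal} boundaries and non-trivial vertical ones; it is therefore \emph{horizontally} invertible (the inverse $2$-cell supplies the horizontal inverse), not ``vertically invertible'' as you write. A strictly horizontally invertible square is immediately weakly horizontally invertible with identity adjoint equivalence data, so the step you flag as delicate is in fact trivial, and the appeal to \cref{app:weakhorinv} is unnecessary (\cref{lem:whiiffvi} treats the dual situation of trivial vertical boundaries and does not apply here). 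The paper argues identically, simply observing that every vertical morphism of $\VK{k}$ is related to the image composite by a horizontally invertible square.
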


\begin{proof}
Since $\CD$ preserve colimits and $[k]=[1]\sqcup_{[0]}\ldots \sqcup_{[0]} [1]$, we have that 
\[ \CD(\Spv{k})=\bbV[k] \ \ \text{and} \ \ \CD(\Repv{k})=\VK{k}, \]
for all $k\geq 0$. First note that, when $k=0,1$, the double functor $\CD(\gv{k})$ is an identity. For $k\geq 2$, let us give an example. When $k=2$, the double functor $\CD(\gv{2})$ is given by the inclusion
\begin{tzsmall}
\node[](1) {$0$};
\node[below of=1](2) {$1$};
\node[below of=2](3') {$2$};
\draw[->,pro] (1) to (2);
\draw[->,pro] (2) to (3');

\node[right of=1,xshift=2cm](1) {$0$};
\node[below of=1](2) {$1$};
\node[below of=2](3) {$2$};
\node at ($(3.east)-(0,4pt)$) {.};
\node[left of=1](4) {$0$};
\node at ($(3')!0.5!(4)$) {$\longrightarrow$};
\node[left of=3](5) {$2$};
\draw[->,pro] (1) to (2);
\draw[->,pro] (2) to (3);
\draw[->,pro] (4) to (5);
\draw[d] (1) to (4);
\draw[d] (3) to (5);
\node[la] at ($(1)!0.5!(5)$) {$\cong$};
\end{tzsmall}
Having this example in mind, we can see that, for all $k\geq 0$, $\CD(\gv{k})\colon\bbV[k]\to \VK{k}$ is the identity on objects and horizontal morphisms, and it is fully faithful on squares, since all squares in $\bbV[k]$ are trivial. The double functor $\CD(\gv{k})$ is also injective on vertical morphisms. Moreover, since every vertical morphism $i\arrowdot j$ in $\VK{k}$ is related by a horizontally invertible square to the composite $i\arrowdot i+1\arrowdot \ldots \arrowdot j$, then $\CD(\gv{k})$ is essentially full on vertical morphisms. This shows that the double functor $\CD(\gv{k})$ is a double biequivalence and hence a weak equivalence in $\DblCat$, for all $k\geq 0$.
\end{proof}

\begin{lemme} \label{lem:IR}
For all $m\geq 0$, the $2$-functors \[ \overline{\CD}(\gh{m})\colon \overline{\CD}(\Sph{m})\to \overline{\CD}(\Reph{m}) \ \ \text{and} \ \ \overline{\CD}(\eh)\colon \overline{\CD}(\Reph{0})\to \overline{\CD}(\Nh I) \]
are biequivalences in $\TwoCat$. 
\end{lemme}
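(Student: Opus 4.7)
For the first $2$-functor I would compute both sides explicitly. Since $\overline{\CD}$ preserves colimits and $\overline{\CD}(R[k])=\OM{k}$ by \cref{rem:overlineCD}, with $\OM{0}=[0]$ and $\OM{1}=[1]$, applying $\overline{\CD}$ to the pushout defining $Q[m]$ yields $\overline{\CD}(Q[m])\cong [m]$, so that $\overline{\CD}(q_m^R)$ is the canonical inclusion $[m]\hookrightarrow \OM{m}$. This map is bijective on objects, and for $i\leq j$ the hom-category $\OM{m}(i,j)$ is the $2$-categorical localization of the poset $\{I\subseteq [i,j]\mid i,j\in I\}$, which has $[i,j]$ as terminal element; thus $\OM{m}(i,j)$ is a contractible groupoid in which $[i,j]$ represents the unique isomorphism class. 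Since $[i,j]$ is the image of the unique morphism $i\to j$, the functor $[m](i,j)\to \OM{m}(i,j)$ is an equivalence of categories, giving both essential surjectivity on morphisms and fully faithfulness on $2$-cells at once.

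For the second $2$-functor, write $\mathcal{K}=\overline{\CD}(N^R I)$. The non-degenerate $1$-simplices $xy, yx$ of $N^R I$ produce $1$-morphisms $f\colon x\to y$ and $g\colon y\to x$ in $\mathcal{K}$, and the non-degenerate $2$-simplices $xyx, yxy$, whose $d_1$-faces are the degenerate edges $xx, yy$, contribute via the invertible $2$-cell of $\OM{2}$ invertible $2$-cells $\id_x\cong gf$ and $\id_y\cong fg$ in $\mathcal{K}$. This immediately gives essential surjectivity of $\overline{\CD}(e^R)\colon [0]\to \mathcal{K}$ and, by iterated reduction, shows that every $1$-morphism in each hom-category of $\mathcal{K}$ is isomorphic to a composite of $\id_x, \id_y, f, g$. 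To formalize the rest of the verification I would introduce an auxiliary $2$-functor $\mathcal{K}\to I$, where $I=\{x\cong y\}$ is viewed as a locally discrete $2$-category: since $I$ has only identity $2$-cells, $2$-functors $\OM{m}\to I$ coincide with functors $[m]\to I$, so the identity simplicial map $N^R I\to N^R I$ induces, via the colimit presentation of $\mathcal{K}$, a canonical $2$-functor $\mathcal{K}\to I$. The composite $[0]\xrightarrow{\overline{\CD}(e^R)} \mathcal{K}\to I$ is then the familiar biequivalence picking out $x$, so by the $2$-out-of-$3$ property for biequivalences it suffices to show that $\mathcal{K}\to I$ is itself a biequivalence. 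It is bijective on objects and locally essentially surjective by the argument above, so what remains is local fully faithfulness on $2$-cells, i.e., that each hom-category of $\mathcal{K}$ is $2$-discrete.

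The main obstacle is precisely this last $2$-discreteness, and my plan is to prove it by induction on the skeletal filtration of $N^R I$. Gluing $\OM{n}$ along each non-degenerate alternating $(n+1)$-simplex imposes coherence relations on $2$-cells: in particular the equation between the two parallel pasting diagrams in $\OM{3}$ (see \cref{ex:O2N}), coming from the $3$-simplex $xyxy$, identifies the two natural constructions of a given $2$-cell built from the $\OM{2}$ generators attached to $xyx$ and $yxy$, and the analogous relations contributed by higher $\OM{n}$ applied to the alternating words of growing length inductively collapse any two parallel $2$-cells in $\mathcal{K}$. Once this bookkeeping is carried out, the two-out-of-three reduction finishes the proof.
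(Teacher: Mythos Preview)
Your treatment of $\overline{\CD}(q_m^R)$ is correct and essentially matches the paper, which simply refers back to the analogous computation in \cref{lem:IF}; your explicit argument via contractibility of the groupoidified hom-posets is a clean way to fill in the details the paper leaves implicit.

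For $\overline{\CD}(e^R)$ the paper is more direct. It computes $\overline{\CD}(N^R I)$ from the simplices of $N^R I$: the $0$- and $1$-simplices give objects $x,y$ and morphisms $f,g$; the $2$-simplices $xyx,yxy$ give invertible $2$-cells $\eta\colon\id_x\cong gf$ and $\epsilon\colon fg\cong\id_y$; the $3$-simplices $xyxy,yxyx$ impose the two triangle identities; and, crucially, for $n\geq 4$ one has $\delta\OM{n}=\OM{n}$ (see \cref{def:boundary}), so higher simplices contribute no new relations. This identifies $\overline{\CD}(N^R I)$ with $\Eadj$ on the nose, whence $\overline{\CD}(e^R)=j_1$ is a generating trivial cofibration and in particular a biequivalence.

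Your route through the auxiliary $2$-functor $\mathcal{K}\to I$ and $2$-out-of-$3$ is legitimate in principle, but the final step---uniqueness of parallel $2$-cells in $\mathcal{K}$---is where all the content lies, and your sketch there has a genuine gap. You propose that relations from $\OM{n}$ for growing $n$ ``inductively collapse any two parallel $2$-cells'', but simplices of dimension $\geq 4$ impose no relations whatsoever, so there is no induction to run: everything must already follow from the triangle identities at level~$3$. Verifying that those alone force uniqueness of parallel $2$-cells is exactly the statement that $\mathcal{K}\cong\Eadj$ has contractible hom-groupoids, so your detour does not sidestep the paper's computation but merely defers it under the label ``bookkeeping''. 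The direct identification is both shorter and more transparent.
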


\begin{proof}
We first show the result for $\overline{\CD}(\gh{m})$. As in the proof of \cref{lem:IF} and by \cref{rem:overlineCD}, we have that 
\[ \overline{\CD}(\Sph{m})=[m] \ \ \text{and} \ \ \overline{\CD}(\Reph{m})=\OM{m}, \]
for all $m\geq 0$. One can prove that the $2$-functor $\overline{\CD}(\gh{m})$ is the identity on objects, essentially full on morphisms, and fully faithful on squares as in the proof of \cref{lem:IF}. Hence the $2$-functor $\overline{\CD}(\gh{m})$ is a biequivalence, for all $m\geq 0$.

It remains to show that $\overline{\CD}(\eh)$ is a biequivalence. We have that $\overline{\CD}(\Reph{0})=[0]$, and we compute $\overline{\CD}(\Nh I)$. Recall from \cref{ex:NRI} that $m$-simplices of the bisimplicial space $\Nh I$ constant in the vertical and space directions are given by words of $m$ letters in $\{x,y\}$. Since $\overline{\CD}(\Nh I)$ is obtained by gluing a copy of $\OM{m}$ for each $m$-simplex of~$\Nh I$, we have that $\overline{\CD}(\Nh I)$ has 
\begin{itemize}
    \item two objects $0$ and $1$, given by the $0$-simplices $x$ and $y$, 
    \item two non-trivial morphisms $f\colon 0\to 1$ and $g\colon 1\to 0$, given by the $1$-simplices $xy$ and $yx$, 
    \item two non-trivial invertible $2$-morphisms $\eta\colon \id_x\cong gf$ and $\epsilon\colon \id_y \cong fg$, given by the $2$-simplices $xyx$ and $yxy$,
\end{itemize}
such that $\eta$ and $\epsilon$ satisfy the triangle identities, expressed by the $3$-simplices $yxyx$ and $xyxy$. Higher simplices of $\Nh I$ do not add any relations. Therefore, the $2$-category $\overline{\CD}(\Nh I)=\Eadj$ is the ``free-living adjoint equivalence'', and $\overline{\CD}(\eh)=j_1\colon [0]\to \Eadj$ is a generating trivial cofibration in $\TwoCat$. 
\end{proof}

\subsection{The nerve \texorpdfstring{$\ND$}{N} is homotopically fully faithful} \label{subsec:ND-homotopicff}

We now show that the nerve functor is homotopically fully faithful. For this, we show that the derived counit of the adjunction $\CD\dashv \ND$ is a weak equivalence in $\DblCat$. More precisely, we show that it is a trivial fibration, i.e., a double functor which is surjective on objects, full on horizontal and vertical morphisms, and fully faithful on squares. Note that, since all objects are cofibrant in $\DblInfH$, the derived counit coincides with the counit. 

\begin{theorem} \label{thm:counit-CD-ND}
The components $\epsilon_{\bA}\colon \CD\ND\bA\to \bA$ of the (derived) counit are trivial fibrations in $\DblCat$, for all (fibrant) double categories $\bA$. In particular, these are weak equivalences in $\DblCat$ and therefore the nerve functor $\ND\colon \DblCat\to \DblInfH$ is homotopically fully faithful. 
\end{theorem}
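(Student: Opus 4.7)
The plan is to verify directly that $\epsilon_\bA$ satisfies the four conditions defining a trivial fibration in the model structure of \cref{thm:WHI-MS}: surjectivity on objects, fullness on horizontal morphisms, fullness on vertical morphisms, and full faithfulness on squares. Since every object of $\sSet^{\twoDop}$ is cofibrant in $\DblInfH$, the derived counit coincides with the counit, so this will suffice to establish homotopical full faithfulness of $\ND$.

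The starting point is the colimit formula for the left Kan extension of $\XD$ along the Yoneda embedding,
\[ \CD\ND\bA \;\cong\; \underset{(\phi\colon\XD_{m,k,n}\to \bA)\,\in\,\mathrm{el}(\ND\bA)}{\mathrm{colim}}\, \XD_{m,k,n}, \]
with $\epsilon_\bA$ induced by the cone $\{\phi\}$. Cells of $\CD\ND\bA$ are thus represented by pairs $(x,\phi)$ with $x$ a cell of some $\XD_{m,k,n}$, identified via the morphisms in the category of elements of $\ND\bA$. I would verify the four conditions by exhibiting canonical representatives in each dimension: any $A\in \bA$ by $(\ast,\phi_A)$ with $\phi_A\colon [0]=\XD_{0,0,0}\to \bA$ classifying~$A$; any horizontal morphism $f$ by $(0\to 1,\phi_f)$ with $\phi_f\colon \XD_{1,0,0}=\bbH[1]\to \bA$; any vertical morphism by the analogous pair coming from $\XD_{0,1,0}=\bbV[1]$; and any square $\alpha$ by $(\sigma,\phi_\alpha)$, where $\sigma$ is the non-trivial square of $\XD_{1,1,0}=\VK{1}\otimes\OM{1}$ and $\phi_\alpha$ classifies $\alpha$. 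In each case, an arbitrary representative $(x,\phi)$ reduces to the canonical one via the map in $\threeD$ picking out $x$, which provides a morphism in the category of elements from the canonical datum to $(x,\phi)$.

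This reduction immediately yields bijectivity on objects, from which fullness on horizontal and vertical morphisms follows together with the existence of the canonical lifts, and surjectivity on squares is equally direct. Faithfulness on squares then follows from the uniqueness of the canonical representative.

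The hard part will be the combinatorial fact underpinning these canonical presentations for squares: every square of $\XD_{m,k,n}=(\VK{k}\otimes\OM{m})\otimes\ON{n}$ must lie in the image of $\sigma\in\XD_{1,1,0}$ under a map induced by $(1,1,0)\to(m,k,n)$ in $\threeD$, and analogously for the lower-dimensional cells. This requires a direct inspection of the structure of the iterated Gray tensor products, with some care needed for squares that arise as composites in the Gray tensor rather than as primitive cells; such composites are themselves witnessed in higher $\XD_{m',k',n'}$, and the corresponding coherences in the category of elements identify them with composites of canonical presentations in the colimit. Once this combinatorial step is in hand, the trivial fibration statement, and hence the homotopical full faithfulness, follow at once.
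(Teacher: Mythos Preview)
Your overall strategy is the same as the paper's---compute $\CD\ND\bA$ via the coend and check the trivial-fibration conditions directly---but the ``combinatorial fact'' you isolate as the hard step is actually false, and this breaks the argument for squares.

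The claim is that every square of $\XD_{m,k,n}$ lies in the image of $\sigma\in\XD_{1,1,0}$ under a cosimplicial map, and analogously for lower cells. This ignores the \emph{space} direction. The double category $\XD_{0,0,1}=\bbH\ON{1}=\bbH\Eadj$ already has non-trivial horizontal morphisms (the adjoint equivalence and its inverse) and non-trivial squares (the unit and counit), and none of these are hit by any map from $\XD_{1,0,0}$ or $\XD_{1,1,0}$: any $(1,0,0)\to(0,0,1)$ or $(1,1,0)\to(0,0,1)$ in $\threeD$ must collapse the first two coordinates, so the induced double functor is constant on some object of $\Eadj$. Consequently $\CD\ND\bA$ has more horizontal-morphism generators than just one $\overline f$ per horizontal morphism of $\bA$: for every horizontal adjoint equivalence $(f,g,\eta,\epsilon)$ in $\bA$ there are additional generators $\widetilde f,\widetilde g$, and correspondingly many additional square generators coming from $\XD_{0,0,1}$, $\XD_{1,0,1}$, $\XD_{0,1,1}$, $\XD_{2,0,0}$, $\XD_{0,2,0}$, etc.

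This does not affect your argument for bijectivity on objects or for fullness on horizontal and vertical morphisms, but it does break both halves of full faithfulness on squares. For fullness, a boundary in $\CD\ND\bA$ may involve composites of $\widetilde f$-type morphisms, and your lift through $\XD_{1,1,0}$ produces only squares whose boundaries are single $\overline f$-type generators. For faithfulness, the ``uniqueness of the canonical representative'' fails for the same reason: there are square generators not accounted for in your list. The paper's proof deals with this by writing down \emph{all} the generators of $\CD\ND\bA$ explicitly (including those coming from the space direction) and then arguing that, for any fixed boundary in $\CD\ND\bA$, the squares with that boundary are in bijection with the squares in $\bA$ on the image boundary, using that the higher $\XD_{m,k,n}$ impose exactly the relations that hold in $\bA$.
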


\begin{proof}
Let $\bA$ be a double category. We first compute the double category $\CD\ND\bA$. By a formula for left Kan extensions, we have that 
\[ \CD\ND\bA=\mathrm{colim}(\mathcal Y\downarrow \ND\bA\longrightarrow \threeD\stackrel{\XD}{\longrightarrow} \DblCat), \]
where $\mathcal Y\colon \threeD\to \Set^{\threeDop}$ denotes the Yoneda embedding and $\mathcal Y\downarrow \ND\bA$ is the slice category over $\ND\bA$. An object in $\mathcal Y\downarrow \ND\bA$ is a map $\Reph{m}\times \Repv{k}\times \Reps{n}\to \ND\bA$, or equivalently a double functor $(\VK{k}\otimes \OM{m})\otimes \ON{n}\to \bA$, by the adjunction $\CD\dashv \ND$. Therefore, for each double functor $(\VK{k}\otimes \OM{m})\otimes \ON{n}\to \bA$, we glue a copy of $\XD_{m,k,n}=(\VK{k}\otimes \OM{m})\otimes \ON{n}$ in $\CD\ND\bA$.

The double category $\CD\ND\bA$ is cofibrant, since every object in $\DblInfH$ is cofibrant and $\CD$ is left Quillen. Therefore its underlying horizontal and vertical categories are free by \cref{prop:cofibrantDblCat'} and it is enough to describe the generating morphisms. First note that $\CD\ND\bA$ has the same objects as~$\bA$. Its horizontal morphisms are freely generated by
\begin{itemize}
    \item a horizontal morphism $\overline f\colon A\to B$, for each horizontal morphism $f$ of $\bA$, 
    \item a horizontal morphism $\widetilde f_{(f,g,\eta,\epsilon)}\colon A\to B$ together with a horizontal morphism $\widetilde g_{(f,g,\eta,\epsilon)}\colon B\to A$, for each horizontal adjoint equivalence $(f,g,\eta,\epsilon)$ in $\bA$,
\end{itemize}
where $\overline{\id_A}$, $\widetilde f_{(\id_A,\id_A,\id_{\id_A},\id_{\id_A})}$, and $\widetilde g_{(\id_A,\id_A,\id_{\id_A},\id_{\id_A})}$ are identified with the identity $\id_A$ at the object $A$ of $\CD\ND\bA$. The vertical morphisms in $\CD\ND\bA$ are freely generated by a vertical morphism $\overline u\colon A\arrowdot A'$, for each vertical morphism $u$ of $\bA$, where $\overline{e_A}$ is identified with the identity $e_A$ at the object $A$ of $\CD\ND\bA$. Finally, the squares of $\CD\ND\bA$ are generated by:
\begin{itemize}
    \item vertically invertible squares $\sq{\widetilde \eta_{(f,g,\eta,\epsilon)}}{\id_A}{\widetilde g\,\widetilde f}{e_A}{e_A}$ and $\sq{\widetilde \epsilon_{(f,g,\eta,\epsilon)}}{\widetilde f\,\widetilde g}{\id_B}{e_B}{e_B}$ satisfying the triangle identities, for each horizontal adjoint equivalence $(f,g,\eta,\epsilon)$ in~$\bA$, 
    \item a square $\sq{\overline \alpha}{\overline f}{\overline f'}{\overline u}{\overline v}$, for each square $\alpha$ in $\bA$, 
    \item a square $\sq{\widetilde \alpha}{\widetilde f}{\widetilde f'}{\overline u}{\overline v}$, for each square $\alpha$ in $\bA$ whose horizontal boundaries are horizontal adjoint equivalences $(f,g,\eta,\epsilon)$ and $(f',g',\eta',\epsilon')$, 
    \item a vertically invertible square $\sq{\overline \theta_{f,k,g,h}}{\widetilde g\, \overline f}{\overline k\,\widetilde h}{e_A}{e_C}$, for each vertically invertible square $\theta$ in $\bA$ as depicted below,
    \begin{tz}
    \node[](1) {$A$};
    \node[below of=1](2) {$A$}; 
    \node[right of=2](3) {$B'$};
    \node[right of=3](4) {$C$};
    \node[right of=1](6) {$B$};
    \node[above of=4](5) {$C$};
    \draw[d,pro] (1) to node(a)[]{} (2);
    \draw[d,pro] (5) to node(b)[]{} (4);
    \draw[->] (1) to node[above,la] {$f$} (6);
    \draw[->] (6) to node[above,la] {$g$} node[below,la]{$\simeq$} (5);
    \draw[->] (2) to node[above,la] {$\simeq$} node[below,la] {$h$} (3);
    \draw[->] (3) to node[below,la] {$k$} (4);
    \node[la] at ($(a)!0.45!(b)$) {$\theta$};
    \node[la] at ($(a)!0.55!(b)$) {$\vcong$};
    \end{tz}
    where $g$ and $h$ are horizontal adjoint equivalences,
    \item a vertically invertible square $\sq{\overline  \varphi_{f,g,h}}{\overline h}{\overline g\overline f}{e_A}{e_C}$, for each vertically invertible square~$\varphi$ in $\bA$ as depicted below,
    \begin{tz}
    \node[](1) {$A$};
    \node[below of=1](2) {$A$}; 
    \node[right of=2](3) {$B$};
    \node[right of=3](4) {$C$};
    \node[above of=4](5) {$C$};
    \draw[d,pro] (1) to node(a)[]{} (2);
    \draw[d,pro] (5) to node(b)[]{} (4);
    \draw[->] (1) to node[above,la] {$h$} (5);
    \draw[->] (2) to node[below,la] {$f$} (3);
    \draw[->] (3) to node[below,la] {$g$} (4);
    \node[la] at ($(a)!0.45!(b)$) {$\varphi$};
    \node[la] at ($(a)!0.55!(b)$) {$\vcong$};
    \end{tz}
    \item a vertically invertible square $\sq{\widetilde  \varphi_{f,g,h}}{\widetilde h}{\widetilde g\,\widetilde f}{e_A}{e_C}$, for each vertically invertible square $\varphi$ in $\bA$ as above, but where the morphisms $f$, $g$, and $h$ are all horizontal adjoint equivalences,
    \item a horizontally invertible square $\sq{\overline \psi_{u,v,w}}{\id_A}{\id_{A''}}{\overline w}{\overline v\,\overline u}$, for each horizontally invertible square $\sq{\psi}{\id_A}{\id_{A''}}{w}{vu}$ in $\bA$.
\end{itemize}
Furthermore, these squares are submitted to relations represented by double functors $(\VK{k}\otimes \OM{m})\otimes \ON{n}\to \bA$, where $k+m+n\geq 3$. In particular, these relations hold for the squares that represent them in $\bA$. 

Then the double functor $\epsilon_\bA\colon \CD\ND\bA\to \bA$ is given by the identity on objects and by sending each horizontal morphism, vertical morphism, and square in $\CD\ND\bA$ to the horizontal morphism, vertical morphism, and square in $\bA$ representing it. This defines a double functor since the underlying horizontal and vertical categories are free, and the relations on squares in $\CD\ND\bA$ are satisfied by the squares representing them in $\bA$. Moreover, it is straightforward to see that this double functor is surjective on objects, full on horizontal morphisms, full on vertical morphisms, and full on squares. Faithfulness on squares follows from the fact that, given a boundary in $\CD\ND\bA$, for each square in $\bA$ in the representing boundary, we added a unique square in $\CD\ND\bA$ with that boundary, and the fact that the relations satisfied for squares in $\bA$ are also satisfied in $\CD\ND\bA$. 
\end{proof}

\begin{rem}
    Note that, since the functor $\epsilon_\bA\colon \CD\ND\bA\to \bA$ is fully faithful on squares, the relations imposed on the generating squares in $\CD\ND\bA$ are completely determined by their image in $\bA$ under the double functor $\epsilon_\bA$. 
\end{rem}

\begin{rem}
Since $\DblInfH$ is obtained as a localization of the Reedy/injective model structure on $\sSet^{\twoDop}$, all objects are cofibrant in $\DblInfH$, and hence the functor $\CD\colon \DblInfH\to \DblCat$ preserves weak equivalences by Ken Brown's Lemma (see \cite[Lemma 1.1.12]{Hovey}). Therefore, since the components $\epsilon_\bA\colon \CD\ND\bA\to \bA$ of the counit are weak equivalences by \cref{thm:counit-CD-ND}, for all $\bA\in \DblCat$, the nerve $\ND\colon \DblCat\to \DblInfH$ reflects weak equivalences by $2$-out-of-$3$. 
\end{rem}

\subsection{Level of fibrancy of nerves of double categories} \label{subsec:fibrantnerve}

The nerve of any double category is almost fibrant in the model structure $\DblInfH$ of \cref{thm:MSdoubleInf}. Indeed, aside from the vertical Reedy/injective fibrancy condition, the nerve of a double category satisfies the conditions of a horizontally complete double $(\infty,1)$-category.

\begin{theorem} \label{prop:properties-nerve}
The nerve of a double category $\bA$ is such that
\begin{rome}
\item $(\ND\bA)_{-,k}\colon \Delta^\op\to \sSet$ is Reedy/injective fibrant, for all $k\geq 0$, 
\item $(\ND\bA)_{m,-}\colon \Delta^\op\to \sSet$ satisfies the Segal condition, for all $m\geq 0$,
\item $(\ND\bA)_{-,k}\colon \Delta^\op\to \sSet$ is a complete Segal space, for all $k\geq 0$.
\end{rome}
\end{theorem}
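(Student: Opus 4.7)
Using $\CD \dashv \ND$ combined with the tensor-hom adjunctions of \cref{def:tensor} and $\bbH \dashv \bfH$, I identify
\[ (\ND\bA)_{m,k,n} \cong \TwoCat\bigl(\OM{m} \otimes_2 \ON{n}, \mathcal M_k\bigr), \qquad \mathcal M_k \coloneqq \bfH[\VK{k}, \bA]_\ps, \]
so that $(\ND\bA)_{-,k,-}$ is exhibited as the bi-nerve of the $2$-category $\mathcal M_k$ along the cosimplicial $2$-categories $\OM{\bullet}$ (in the horizontal $m$-direction) and $\ON{\bullet}$ (in the space $n$-direction). Crucially, $\mathcal M_k$ is always fibrant in Lack's model structure by \cref{thm:Lack-MS}, regardless of whether $\bA$ itself is fibrant.

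\textbf{Parts (i) and (iii).} For (i), Reedy/injective fibrancy of $(\ND\bA)_{-,k}$ reduces, via the identification above together with the adjunction, to the right lifting of $\mathcal M_k$ in $\TwoCat$ against the pushout-product $\overline{\CD}\iota_m^R \square_{\otimes_2} \overline{\CD}\ell_{n,t}^\Delta$. By \cref{lem:iotaRDelta,lem:ellDelta} and the monoidality of Lack's model structure (\cref{rem:pushprod2Cat}), this pushout-product is a trivial cofibration in $\TwoCat$, so the lift exists by fibrancy of $\mathcal M_k$. For (iii), further tensor-hom rewriting recasts the horizontal Segal and completeness conditions as asking that the restrictions $[\OM{m}, \mathcal M_k]_{2,\ps} \to [{[m]}, \mathcal M_k]_{2,\ps}$ and $[\Eadj, \mathcal M_k]_{2,\ps} \to [{[0]}, \mathcal M_k]_{2,\ps}$ be biequivalences in $\TwoCat$; these follow automatically from the biequivalences $[m] \to \OM{m}$ and $[0] \to \Eadj$ of \cref{lem:IR} by the monoidality of Lack's model structure together with fibrancy of $\mathcal M_k$. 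Applying the classifying-space nerve $\TwoCat(\ON{\bullet}, -)$ (which sends biequivalences of $2$-categories to weak equivalences of Kan complexes, since $\ON{\bullet}$ is a cosimplicial resolution of the point) then produces the required weak equivalences of simplicial sets in $n$.

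\textbf{Part (ii) and the main obstacle.} The vertical Segal condition amounts to the restriction $2$-functor $\mathcal M_k \to \bfH[\bbV[k], \bA]_\ps$ induced by $\CD g_k^F \colon \bbV[k] \hookrightarrow \VK{k}$ being a biequivalence in $\TwoCat$. Although $\CD g_k^F$ is a weak equivalence in $\DblCat_{\mathrm{whi}}$ by \cref{lem:IF}, this biequivalence does \emph{not} follow from purely model-categorical considerations since $\bA$ is not assumed fibrant and the functor $\bfH[-, \bA]_\ps$ need not preserve weak equivalences; hence the main obstacle is to establish it directly by an explicit extension argument. The argument exploits the free-composite presentation of $\VK{k}$ over $\bbV[k]$, in which non-consecutive vertical morphisms $i \arrowdot j$ are adjoined together with horizontally invertible comparison squares to the composites: one constructs canonical extensions of double functors $\bbV[k] \to \bA$ to $\VK{k} \to \bA$ by sending each non-consecutive $i \arrowdot j$ to the actual composite with identity comparison square (essential surjectivity), extends horizontal pseudo-natural transformations along the new vertical morphisms (fullness), and verifies that modifications are determined by their restrictions (faithfulness). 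Ultimately, this reduces the biequivalence to formal properties of horizontal equivalences and horizontally invertible squares in $\bA$ developed in \cref{subseq:equivDblCat} and \cref{app:weakhorinv}.
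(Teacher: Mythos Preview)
Your proposal is correct and follows essentially the same route as the paper. The paper likewise rewrites mapping spaces via the tensor--hom adjunction as $\cN(\bfH[\CD(-),\bA]_\ps)$ (\cref{lem:cNbfH}), establishes that $\cN\colon\TwoCat\to\sSet$ is right Quillen (\cref{prop:QuillenC2N2}), and then reduces (i) and (iii) to the monoidality of Lack's model structure exactly as you do (\cref{lem:bfHIR}); for (ii) it gives the same explicit surjectivity/fullness/faithfulness argument for $\bfH[\CD(g_k^F),\bB]_\ps$ that you outline (\cref{lem:bfHiotaF}), including the section built from the projection $\VK{k}\to\bbV[k]$ sending each non-consecutive vertical morphism to the corresponding composite.

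One small point of precision: for part~(ii) you say the vertical Segal condition ``amounts to'' the biequivalence $\mathcal M_k\to\bfH[\bbV[k],\bA]_\ps$, but strictly speaking one still needs to push this biequivalence through $[\OM{m},-]_{2,\ps}$ before applying $\cN$, since the Segal map at level $(m,k)$ involves the $m$-direction; this is harmless because $\OM{m}$ is cofibrant and all $2$-categories are fibrant, but it is worth making explicit (the paper handles this by taking $\bB=[\bbH\OM{m},\bA]_\ps$ in its version of the lemma).
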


To show this theorem we will need several technical results. The first piece is a Quillen pair between $\TwoCat$ and $\sSet$ whose left adjoint is given by the restriction of the functor $\overline{\CD}\colon \sSet^{\twoDop}\to \TwoCat$ to its space component. 

\begin{defn}
We define the cosimplicial $2$-category 
\[ \cX\colon \Delta \to \TwoCat, \quad
[n]\mapsto \ON{n}. \]
\end{defn}

\begin{prop} \label{prop:cCcNadj}
The cosimplicial $2$-category $\cX$ induces an adjunction
\begin{tz}
\node[](1) {$\Delta$};
\node[below of=1](2) {$\Set^{\Delta^\op}$};
\node[right of=1,xshift=1cm](3) {$\TwoCat$};
\node at ($(3.east)-(0,4pt)$) {,};
\draw[->] (1) to node[above,la]{$\cX$} (3);
\draw[->] (1) to (2);
\draw[->] (2.north east) to node(c)[left,la,yshift=8pt] {$\cC$} (3.south west);
\coordinate(a) at ($(3.south west)+(.4cm,0)$);
\draw[->,bend left=40] (a) to node(n)[right,la,yshift=-8pt] {$\cN$} (2.east);
\node[la] at ($(c)!0.5!(n)$) {$\rotatebox{220}{$\top$}$};
\end{tz}
where $\cC$ is the left Kan extension of $\cX$ along the Yoneda embedding, and we have that 
\[ (\cN \cA)_n\cong \TwoCat(\ON{n}, \cA), \]
for all $\cA\in \TwoCat$ and all $n\geq 0$.
\end{prop}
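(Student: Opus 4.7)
The plan is to apply the general nerve--realization machinery, exactly as was done for the tricosimplicial double category $\XD$ earlier in the paper. Since $\TwoCat$ is locally small and cocomplete, any functor $F\colon \Delta \to \TwoCat$ extends uniquely (up to isomorphism) along the Yoneda embedding $\mathcal Y\colon \Delta \to \Set^{\Delta^\op}$ to a colimit-preserving functor $\mathrm{Lan}_{\mathcal Y} F\colon \Set^{\Delta^\op}\to \TwoCat$, which admits a right adjoint $N_F$ given by $(N_F \cA)_n = \TwoCat(F[n],\cA)$. This is precisely the content of \cite[Theorem 1.1.10]{Cisinski}.

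Concretely, I would first note that $\TwoCat$ is cocomplete; this is standard, since $\TwoCat$ is the category of algebras for a finitary monad on a cocomplete category (alternatively, $\TwoCat = \Cat$-$\Cat$ has all small (co)limits computed enrichedly). Then I would instantiate Cisinski's theorem with $F = \cX$, which produces the adjunction
\[ \cC \coloneqq \mathrm{Lan}_{\mathcal Y}\cX \;\dashv\; \cN, \]
with $\cC$ uniquely characterized by being colimit-preserving and satisfying $\cC(\Delta[n]) = \cX([n]) = \ON{n}$.

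Finally, to identify the right adjoint, I would apply the defining adjunction isomorphism to a representable: for any $2$-category $\cA$ and any $n\geq 0$,
\[ (\cN\cA)_n \cong \sSet(\Delta[n], \cN\cA) \cong \TwoCat(\cC\Delta[n],\cA) \cong \TwoCat(\ON{n},\cA), \]
using the Yoneda lemma for the first isomorphism, the adjunction $\cC \dashv \cN$ for the second, and the identification $\cC\Delta[n] = \ON{n}$ for the third. This yields the stated formula for $\cN$ and completes the proof.

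There is really no obstacle here, as the construction is entirely formal: all content lies in the existence of small colimits in $\TwoCat$, which is well-known, and in the cited theorem of Cisinski. The proof is essentially a one-line invocation, parallel to the proof of the analogous statement for the tricosimplicial double category $\XD$.
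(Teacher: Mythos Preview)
Your proposal is correct and takes essentially the same approach as the paper, which simply states that this is a direct application of \cite[Theorem 1.1.10]{Cisinski}. Your additional unwinding of the formula for $\cN$ via Yoneda and the adjunction is accurate but not strictly necessary, as it is already part of the content of the cited theorem.
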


\begin{proof}
This is a direct application of \cite[Theorem 1.1.10]{Cisinski}.
\end{proof}

\begin{prop} \label{prop:QuillenC2N2}
The adjunction 
\begin{tz}
\node[](A) {$\TwoCat$};
\node[right of=A,xshift=1cm](B) {$\sSet$};
\draw[->] ($(B.west)+(0,.25cm)$) to [bend right=25] node[above,la]{$\cC$} ($(A.east)+(0,.25cm)$);
\draw[->] ($(A.east)-(0,.25cm)$) to [bend right=25] node[below,la]{$\cN$} ($(B.west)+(0,-.25cm)$);
\node[la] at ($(A.east)!0.5!(B.west)$) {$\bot$};
\end{tz}
is a Quillen pair between Lack's model structure on $\TwoCat$ and the Quillen model structure on $\sSet$.
\end{prop}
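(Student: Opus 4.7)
The plan is to verify the Quillen pair condition by checking that the left adjoint $\cC$ sends a set of generating cofibrations and generating trivial cofibrations of the Quillen model structure on $\sSet$ to cofibrations and trivial cofibrations in Lack's model structure on $\TwoCat$, respectively. Since $\cC$ is a left adjoint and Lack's model structure is cofibrantly generated (\cref{thm:Lack-MS}), this suffices.

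The first step is to identify $\cC$ with a restriction of the previously-studied functor $\overline{\CD}\colon \Set^{\threeDop}\to \TwoCat$. Consider the inclusion $\sSet\hookrightarrow \Set^{\threeDop}$ as trisimplicial sets constant in the horizontal and vertical directions. I claim that $\cC$ agrees with the composite of this inclusion with $\overline{\CD}$. Indeed, both functors preserve colimits, and they agree on representables: by \cref{rem:overlineCD}, $\overline{\CD}(\Delta[n])=\ON{n}=\cX([n])=\cC(\Delta[n])$. In particular, $\cC$ applied to the boundary inclusion $\partial\Delta[n]\to \Delta[n]$ coincides with $\overline{\CD}(\iota_n^\Delta)$, and similarly $\cC$ applied to the horn inclusion $\Lambda^t[n]\to \Delta[n]$ coincides with $\overline{\CD}(\ell_{n,t}^\Delta)$.

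The second step is simply to invoke the already-established technical lemmas. By \cref{lem:iotaRDelta}, the $2$-functor $\overline{\CD}(\iota_n^\Delta)\colon \delta\ON{n}\to \ON{n}$ is a cofibration in $\TwoCat$ for all $n\geq 0$. By \cref{lem:ellDelta}, the $2$-functor $\overline{\CD}(\ell_{n,t}^\Delta)\colon \Lambda^t\ON{n}\to \ON{n}$ is a trivial cofibration in $\TwoCat$ for all $n\geq 1$ and $0\leq t\leq n$. Combined with the identification in the first step, this shows that $\cC$ sends the generating cofibrations and generating trivial cofibrations of $\sSet$ to cofibrations and trivial cofibrations in $\TwoCat$, respectively, establishing the Quillen pair.

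I do not anticipate any substantial obstacle here: the proof essentially repackages the computations in \cref{lem:iotaRDelta,lem:ellDelta} for the cosimplicial $2$-category $\cX=\ON{-}$, which were the space-direction content of those lemmas. The only conceptual point worth writing carefully is the identification of $\cC$ with the restriction of $\overline{\CD}$, which is a standard consequence of uniqueness of colimit-preserving extensions from representables.
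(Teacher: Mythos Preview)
Your proposal is correct and follows essentially the same approach as the paper: both check that $\cC$ sends generating (trivial) cofibrations of $\sSet$ to (trivial) cofibrations in $\TwoCat$ by identifying $\cC(\iota_n^\Delta)=\overline{\CD}(\iota_n^\Delta)$ and $\cC(\ell_{n,t}^\Delta)=\overline{\CD}(\ell_{n,t}^\Delta)$ and then invoking \cref{lem:iotaRDelta,lem:ellDelta}. Your write-up just makes the identification step slightly more explicit than the paper does.
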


\begin{proof}
It is enough to show that $\cC$ sends generating cofibrations and generating trivial cofibrations in $\sSet$ to cofibrations and trivial cofibrations in $\TwoCat$, respectively. Recall that generating cofibrations and generating trivial cofibrations in $\sSet$ are given by the maps $\iotas{n}\colon \partial \Reps{n}\to \Reps{n}$, for $n\geq 0$, and $\ell^s_{n,t}\colon \Lambda^t[n]\to \Reps{n}$, for $n\geq 1$ and $0\leq t\leq n$, respectively. Note that we have $\cC(\iotas{n})=\overline{\CD}(\iotas{n})$ and $\cC(\ell^s_{n,t})=\overline{\CD}(\ell^s_{n,t})$. Therefore, by \cref{lem:iotaRDelta,lem:ellDelta}, we see that these are cofibrations and trivial cofibrations in $\TwoCat$, respectively. 
\end{proof}

We now reformulate conditions (i-iii) of \cref{prop:properties-nerve}, which are for now given in terms of weak equivalences between mapping spaces, using the right Quillen functor $\cN$ of the above proposition. This can be done by applying the following lemma. 

\begin{lemme} \label{lem:cNbfH}
Let $X\in \sSet^\twoDop$ be a bisimplicial space which is constant in the space direction. Then, for every double category $\bA$, we have an isomorphism of simplicial sets
\[ \Map(X,\ND\bA)\cong \cN(\bfH[\CD(X),\bA]_\ps) \]
natural in $X$ and $\bA$.
\end{lemme}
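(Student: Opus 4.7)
The plan is to establish the natural isomorphism by reducing to the case of bi-representables $X = R[m] \times F[k]$ (regarded as constant-in-space bisimplicial spaces via $R[m] \times F[k] \times \Delta[0]$), and then to check the isomorphism directly on these using the defining adjunctions for $\ND$, $\cN$, and the tensoring $\otimes$.

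First, I would verify that both sides of the claimed isomorphism are contravariant functors from the category of constant-in-space bisimplicial spaces to $\sSet$ which send colimits to limits. For the left-hand side this is immediate from the fact that $\Map(-, \ND\bA)$ is representable. For the right-hand side, this is a composition of three such functors: $\CD$ preserves colimits as a left adjoint; $\bfH[-,\bA]_\ps \colon \DblCat^\op \to \TwoCat$ sends colimits to limits because the tensoring adjunction of \cref{def:tensor} says that $\bI \mapsto \bI \otimes \cB$ is a left adjoint for every $2$-category $\cB$; and $\cN$ is a right adjoint by \cref{prop:cCcNadj}. Since every $X$ constant in the space direction is a canonical colimit (in $\sSet^{\twoDop}$) of representables $R[m] \times F[k]$, the claim reduces to the case $X = R[m] \times F[k]$.

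Next, for $X = R[m] \times F[k]$, I would compute $\CD(X)$ using that $R[m] \times F[k] \cong R[m] \times F[k] \times \Delta[0]$ together with \cref{rem:CD-representables}: this gives
\[ \CD(R[m] \times F[k]) \cong \XD_{m,k,0} = (\VK{k} \otimes \OM{m}) \otimes \ON{0} \cong \VK{k} \otimes \OM{m}, \]
since $\ON{0} = [0]$ is terminal and tensoring with it is the identity. Then on $n$-simplices the left side evaluates, by definition of $\ND$ and the tensoring adjunction of \cref{def:tensor}, as
\[ \Map(R[m]\times F[k], \ND\bA)_n \cong (\ND\bA)_{m,k,n} \cong \DblCat((\VK{k}\otimes \OM{m})\otimes \ON{n}, \bA) \cong \TwoCat(\ON{n}, \bfH[\VK{k}\otimes \OM{m}, \bA]_\ps), \]
while the right side evaluates, by \cref{prop:cCcNadj}, as
\[ \cN(\bfH[\VK{k}\otimes \OM{m},\bA]_\ps)_n \cong \TwoCat(\ON{n}, \bfH[\VK{k}\otimes \OM{m},\bA]_\ps). \]
These agree, and chasing through the definitions shows that the bijection is natural in $[n]$, in $(R[m] \times F[k])$, and in $\bA$.

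I do not expect a serious obstacle; the only subtle point is the bookkeeping for the reduction step, namely that constant-in-space bisimplicial spaces are generated under colimits by the representables $R[m] \times F[k]$ and that these colimits are preserved by the embedding into $\sSet^{\twoDop}$. Once this is in place, the isomorphism on representables extends uniquely to a natural isomorphism on all such $X$ by the standard argument that two limit-preserving functors out of a presheaf category agreeing on representables are naturally isomorphic.
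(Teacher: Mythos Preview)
Your proof is correct, but it is organized differently from the paper's. The paper avoids the reduction to representables entirely: it directly computes, for arbitrary $X$ constant in the space direction and every $n\geq 0$, a chain of natural bijections
\[
\Map(X,\ND\bA)_n \cong \sSet^{\twoDop}(X\times\Delta[n],\ND\bA) \cong \DblCat(\CD(X\times\Delta[n]),\bA) \cong \DblCat(\CD(X)\otimes\ON{n},\bA) \cong \TwoCat(\ON{n},\bfH[\CD(X),\bA]_\ps) \cong \cN(\bfH[\CD(X),\bA]_\ps)_n,
\]
where the only step requiring $X$ to be constant in the space direction is the identification $\CD(X\times\Delta[n])\cong\CD(X)\otimes\ON{n}$. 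Your approach instead checks that both sides send colimits in $X$ to limits and then verifies the isomorphism on bi-representables $R[m]\times F[k]$.

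The two arguments are really the same computation packaged differently: the paper's step $\CD(X\times\Delta[n])\cong\CD(X)\otimes\ON{n}$ is itself justified (implicitly) by writing $X$ as a colimit of bi-representables and using that $\CD$ and $-\otimes\ON{n}$ preserve colimits, which is exactly the limit-preservation bookkeeping you spell out. The paper's presentation is a bit more economical since it only invokes the colimit reduction once, inside a single step, rather than establishing limit-preservation for each of the three functors on the right-hand side separately; your version, on the other hand, makes the role of the tensoring adjunction of \cref{def:tensor} in that limit-preservation more explicit.
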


\begin{proof}
For all $n\geq 0$, we have natural isomorphisms of sets
\begin{align*}
    \Map(X,\ND\bA)_n &\cong \sSet^{\twoDop}(X\times \Reps{n},\ND\bA) \cong \DblCat(\CD(X\times \Reps{n}),\bA) \\
    & \cong \DblCat(\CD(X)\otimes \ON{n}, \bA) \cong \TwoCat(\ON{n}, \bfH[\CD(X),\bA]_\ps) \\
    & \cong \cN(\bfH[\CD(X),\bA]_\ps)_n, 
\end{align*}
where the first isomorphism holds by definition of the mapping space, the second by the adjunction $\CD\dashv \ND$, the third by definition of $\CD$ and the fact that $X$ is constant in the space direction, the fourth by the universal property of $\otimes$ (see \cref{def:tensor}), and the last isomorphism by definition of $\cN$. These isomorphisms of sets assemble into an isomorphism $\Map(X,\ND\bA)\cong \cN(\bfH[\CD(X),\bA]_\ps)$ of simplicial sets, natural in $X$ and $\bA$.
\end{proof}

We now prove \cref{prop:properties-nerve} assuming \cref{lem:bfHIR,lem:bfHiotaF} below.

\begin{proof}[Proof of \cref{prop:properties-nerve}]
Let $\bA$ be a double category. By \cref{lem:bfHIR,lem:bfHiotaF} below, for all $m,k\geq 0$, the $2$-functor $\bfH[\CD(\id_{\Repv{k}}\times \iotah{m}),\bA]_\ps$ is a fibration in $\TwoCat$, and the $2$-functors $\bfH[\CD(\id_{\Repv{k}}\times \gh{m}),\bA]_\ps$, $\bfH[\CD(\id_{\Repv{k}}\times \eh),\bA]_\ps$, and $\bfH[\CD(\gv{k}\times \id_{\Reph{m}}),\bA]_\ps$ are trivial fibrations in $\TwoCat$. As $\cN\colon \TwoCat\to \sSet$ is right Quillen by \cref{prop:QuillenC2N2}, these are sent by $\cN$ to fibrations and trivial fibrations in $\sSet$, respectively. As the map $\id_{\Repv{k}}\times \iotah{m}$ is constant in the space direction, by \cref{lem:cNbfH}, we have that 
\[ \Map(\id_{\Repv{k}}\times \iotah{m},\ND\bA)\cong \cN(\bfH[\CD(\id_{\Repv{k}}\times \iotah{m}),\bA]_\ps). \]
By the above arguments, this is a fibration in $\sSet$, for all $m,k\geq 0$, which shows (i) saying that $(\ND\bA)_{-,k}$ is Reedy/injective fibrant. Similarly, we have that
\[ \Map(\id_{\Repv{k}}\times \gh{m},\ND\bA)\cong \cN(\bfH[\CD(\id_{\Repv{k}}\times \gh{m}),\bA]_\ps), \]
\[ \Map(\id_{\Repv{k}}\times \eh,\ND\bA)\cong \cN(\bfH[\CD(\id_{\Repv{k}}\times \eh),\bA]_\ps), \]
\[ \Map(\gv{k}\times \id_{\Reph{m}},\ND\bA)\cong \cN(\bfH[\CD(\gv{k}\times \id_{\Reph{m}}),\bA]_\ps), \]
and these are trivial fibrations in $\sSet$ by the above arguments, for all $m,k\geq 0$. The fact that $\Map(\id_{\Repv{k}}\times \gh{m},\ND\bA)$ and $ \Map(\id_{\Repv{k}}\times \eh,\ND\bA)$ are weak equivalences in $\sSet$ shows that (iii) holds, i.e., we have the Segal and completeness conditions for $(\ND\bA)_{-,k}$, and the fact that $\Map(\gv{k}\times \id_{\Reph{m}},\ND\bA)$ is a weak equivalence in~$\sSet$ gives (ii), i.e., the Segal condition for $(\ND\bA)_{m,-}$. 
\end{proof}

\begin{lemme} \label{lem:bfHIR}
Let $\bA$ be a double category. The $2$-functor $\bfH[\CD(\id_{\Repv{k}}\times \iotah{m}),\bA]_\ps$ is a fibration in $\TwoCat$, and the $2$-functors $\bfH[\CD(\id_{\Repv{k}}\times \gh{m}),\bA]_\ps$ and $\bfH[\CD(\id_{\Repv{k}}\times \eh),\bA]_\ps$ are trivial fibrations in $\TwoCat$, for all $m,k\geq 0$.
\end{lemme}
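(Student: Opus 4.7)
The plan is to reduce each of the three assertions to the monoidality of Lack's model structure on $\TwoCat$, via a natural ``exchange'' isomorphism relating the $\TwoCat$-enrichment of $\DblCat$ to the Gray closed structure on $\TwoCat$ itself.

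First, I would observe that each of the bisimplicial maps $\iota_m^R$, $q_m^R$, and $e^R$ is constant in the vertical and space directions. Since $\CD$ preserves colimits, this together with \cref{rem:overlineCD} yields, for each $f\in\{\iota_m^R,\, q_m^R,\, e^R\}$, an identification
\[ \CD(\id_{F[k]}\times f) \;\cong\; \VK{k}\otimes \overline{\CD}(f), \]
in the spirit of the decomposition already used in the proof of \cref{thm:CD-ND-Quillen}. Next, I would establish the natural isomorphism of $2$-categories
\[ \bfH[\VK{k}\otimes \cB,\bA]_\ps \;\cong\; [\cB,\bfH[\VK{k},\bA]_\ps]_{2,\ps} \]
for every $\cB\in\TwoCat$ and $\bA\in\DblCat$. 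By testing against an arbitrary $\cD\in\TwoCat$, this reduces via the universal properties of $\otimes$ (\cref{def:tensor}) and of $[-,-]_{2,\ps}$ (\cref{def:otimes2}) to the identification $(\VK{k}\otimes\cB)\otimes\cD\cong \VK{k}\otimes(\cB\otimes_2\cD)$, which in turn follows from $\bbH\colon\TwoCat\to\DblCat$ being strong monoidal for the Gray tensor products, combined with the definition $\bI\otimes\cB=\bI\otimes_G\bbH\cB$.

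With these preliminaries in hand, \cref{lem:iotaRDelta} gives that $\overline{\CD}(\iota_m^R)$ is a cofibration in $\TwoCat$, while \cref{lem:IR} together with the fact that $\overline{\CD}$ sends monomorphisms to cofibrations (as used in the proof of \cref{thm:CD-ND-Quillen}) gives that $\overline{\CD}(q_m^R)$ and $\overline{\CD}(e^R)$ are trivial cofibrations in $\TwoCat$. The monoidality of Lack's model structure with respect to $\otimes_2$ (\cref{thm:Lack-MS}), combined with the fact that every $2$-category is fibrant, then implies that $[i,\cC]_{2,\ps}$ is a fibration in $\TwoCat$ for every cofibration $i$ in $\TwoCat$ and every $2$-category $\cC$, and is trivial whenever $i$ is a trivial cofibration. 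Applying this to $\cC=\bfH[\VK{k},\bA]_\ps$ and $i=\overline{\CD}(f)$ for each $f\in\{\iota_m^R,\,q_m^R,\,e^R\}$ yields, via the exchange isomorphism, the three desired assertions.

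The main obstacle I anticipate is the exchange isomorphism of the second paragraph. Morally it is a formal consequence of $\otimes$ being a restriction of $\otimes_G$ along $\bbH$, but it does hinge on $\bbH$ being strong monoidal for the Gray tensor products, which may require a direct verification from B\"ohm's construction of $\otimes_G$ in \cite{Bohm}. Everything else is a routine unfolding of the cited enriched and monoidal structures.
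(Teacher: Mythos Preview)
Your proposal is correct and follows essentially the same approach as the paper: both reduce to the exchange isomorphism $\bfH[\VK{k}\otimes\cB,\bA]_\ps\cong[\cB,\bfH[\VK{k},\bA]_\ps]_{2,\ps}$ and then invoke the monoidality of Lack's model structure together with \cref{lem:iotaRDelta} and \cref{lem:IR}. The only cosmetic difference is that the paper obtains the exchange isomorphism directly by ``promoting'' the defining hom-set bijection of $\otimes$ in \cref{def:tensor} to an isomorphism of $2$-categories, rather than passing through associativity and strong monoidality of $\bbH$ as you do.
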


\begin{proof}
By promoting the bijections in \cref{def:tensor} of the tensor $\otimes$, we get isomorphisms of $2$-categories as in the following commutative square. 
\begin{tz}
\node[](1) {$\bfH[\VK{k}\otimes \OM{m},\bA]_\ps$};
\node[right of=1,xshift=6.5cm](2) {$\bfH[\VK{k}\otimes \partial \OM{m},\bA]_\ps$};
\node[below of=1](3) {$[\OM{m},\bfH[\VK{k},\bA]_\ps]_{2,\ps}$};
\node[below of=2](4) {$[\partial \OM{m},\bfH[\VK{k},\bA]_\ps]_{2,\ps}$};
\draw[->] (1) to node[above,la] {$\bfH[\CD(\id_{\Repv{k}}\times \iotah{m}),\bA]_\ps$} (2);
\draw[->] (3) to node[below,la,yshift=-4pt,xshift=4pt] {$[\overline{\CD}(\iotah{m}),\bfH[\VK{k},\bA]_\ps]_{2,\ps}$} (4);
\draw[->] (1) to node[left,la] {$\cong$} (3);
\draw[->] (2) to node[right,la] {$\cong$} (4);
\end{tz}
As every $2$-category is fibrant and $\overline{\CD}(\iotah{m})$ is a cofibration in $\TwoCat$ by \cref{lem:iotaRDelta}, the $2$-functor $[\overline{\CD}(\iotah{m}),\bfH[\VK{k},\bA]_\ps]_{2,\ps}$ is a fibration in $\TwoCat$ by monoidality of Lack's model structure. Hence $\bfH[\CD(\id_{\Repv{k}}\times \iotah{m}),\bA]_\ps$ is also a fibration in $\TwoCat$. 

Similarly, we have isomorphisms $\bfH[\CD(\id_{\Repv{k}}\times \gh{m}),\bA]_\ps\cong [\overline{\CD}(\gh{m}),\bfH[\VK{k},\bA]_\ps]_{2,\ps}$ and $\bfH[\CD(\id_{\Repv{k}}\times \eh),\bA]_\ps\cong [\overline{\CD}(\eh),\bfH[\VK{k},\bA]_\ps]_{2,\ps}$. By \cref{lem:IR} and since $\overline{\CD}$ preserves cofibrations, the $2$-functors $\overline{\CD}(\gh{m})$ and $\overline{\CD}(\eh)$ are trivial cofibrations in $\TwoCat$. Therefore, by monoidality of Lack's model structure, the $2$-functors 
\[ [\overline{\CD}(\gh{m}),\bfH[\VK{k},\bA]_\ps]_{2,\ps} \ \ \text{and} \ \ [\overline{\CD}(\eh),\bfH[\VK{k},\bA]_\ps]_{2,\ps} \] are trivial fibrations in $\TwoCat$, which shows the second part of the statement.
\end{proof}

The last piece for the proof of \cref{prop:properties-nerve} makes use of the data in the $2$-category $\bfH[-,-]_\ps$ of double functors, horizontal pseudo-natural transformations, and modifications, whose definitions can be found in \cref{subsec:ps-equ}.

\begin{lemme} \label{lem:bfHiotaF}
Let $\bA$ be a double category. The $2$-functor $\bfH[\CD(\gv{k}\times \id_{\Reph{m}}),\bA]_\ps$ is a trivial fibration in $\TwoCat$, for all $m,k\geq 0$.
\end{lemme}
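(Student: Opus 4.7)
The strategy mirrors the proof of \cref{lem:bfHIR}: decompose via the tensor, strip off the $\OM{m}$-factor using the $\TwoCat$-enriched adjunction, and reduce to the case $m=0$, which is verified directly. As in the proof of \cref{thm:CD-ND-Quillen}, since $\CD$ preserves colimits and $R[m]$ is constant in the vertical and space directions, we have $\CD(g_k^F\times \id_{R[m]})\cong \CD(g_k^F)\otimes \overline{\CD}(R[m])=\CD(g_k^F)\otimes \OM{m}$. Promoting the universal property of the tensor of \cref{def:tensor} to its $\TwoCat$-enriched form yields a canonical isomorphism of $2$-functors
\[ \bfH[\CD(g_k^F)\otimes \OM{m},\bA]_\ps\cong [\OM{m},\bfH[\CD(g_k^F),\bA]_\ps]_{2,\ps}. \]

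The $2$-category $\OM{m}$ is cofibrant in $\TwoCat$ by \cref{prop:cofibrantTwoCat}: its underlying category is the free category on the graph with vertices $\{0,\ldots,m\}$ and edges $(i,j)$ for $i<j$, because composition in $O_2(m)$ is given by union of subsets. By monoidality of Lack's model structure (\cref{thm:Lack-MS}), the functor $[\OM{m},-]_{2,\ps}$ preserves trivial fibrations in $\TwoCat$, so it suffices to show that $\bfH[\CD(g_k^F),\bA]_\ps$ is a trivial fibration in $\TwoCat$ for every $k\ge 0$.

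Unwinding definitions, $\CD(g_k^F)=\bbV\phi$ for the canonical $2$-functor $\phi\colon [k]\to \OM{k}$ induced by the cosimplicial structure, so $\bfH[\CD(g_k^F),\bA]_\ps$ is restriction along $\bbV\phi$. I will verify it is surjective on objects, full on morphisms, and fully faithful on $2$-cells by constructing canonical extensions. Given a double functor $\bbV[k]\to \bA$ (equivalently, a chain $u_1,\ldots,u_k$ of composable vertical morphisms in $\bA$), the extension to $\bbV\OM{k}\to \bA$ sends each morphism $\{i_0<\cdots<i_\ell\}\colon i_0\to i_\ell$ of $\OM{k}$ to the vertical composite $u_{i_\ell}\cdots u_{i_0+1}$ along its fine refinement, and every $2$-cell of $\OM{k}$ to an identity square; this is well-defined precisely because the fine refinement depends only on the source and target. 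A similar propagation along fine refinements extends horizontal pseudo-natural transformations and modifications, using that modifications have components only at objects (which are common to $\bbV[k]$ and $\bbV\OM{k}$) and that pseudo-natural transformations are determined by their components and naturality squares at the generating vertical morphisms of $\OM{k}$.

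The main obstacle is the coherence bookkeeping in this last paragraph: one must check that the proposed extension of a horizontal pseudo-natural transformation is itself pseudo-natural, which amounts to verifying that the naturality squares associated to different morphisms of $\OM{k}$ with the same source and target are compatible under the canonical invertible $2$-cells of $\OM{k}$. The key structural fact is that each poset $O_2(k)(i,j)$ is connected (with maximum the fine refinement $\{i,i+1,\ldots,j\}$), so after inverting $2$-cells, any two parallel morphisms of $\OM{k}$ are related by a unique invertible $2$-cell; this forces all extensions to be consistent and all relevant diagrams to commute.
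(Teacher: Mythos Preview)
Your proof is correct and very close to the paper's. Both arguments reduce to showing that $\bfH[\CD(g_k^F),\bB]_\ps$ is a trivial fibration for an arbitrary double category $\bB$, and the direct verification you sketch (extending objects via the projection $\bbV\pi\colon\VK{k}\to\bbV[k]$, extending horizontal pseudo-natural transformations by conjugating with the canonical invertible squares to the fine refinement, and noting that modification components live only at objects) is exactly what the paper does, only spelled out in more detail there.

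The one genuine difference is in how the reduction is packaged. The paper strips off the $\OM{m}$-factor using the closed monoidal structure of $\otimes_G$ on $\DblCat$, obtaining $\bfH[\VK{k}\otimes\OM{m},\bA]_\ps\cong\bfH[\VK{k},[\bbH\OM{m},\bA]_\ps]_\ps$ and then taking $\bB=[\bbH\OM{m},\bA]_\ps$. You instead use the $\TwoCat$-enriched adjunction of \cref{def:tensor} (precisely as in \cref{lem:bfHIR}), landing on $[\OM{m},\bfH[\CD(g_k^F),\bA]_\ps]_{2,\ps}$ and then invoking cofibrancy of $\OM{m}$ and monoidality of Lack's model structure. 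Your route has the small advantage of reusing the mechanism of \cref{lem:bfHIR} verbatim; the paper's route avoids appealing to cofibrancy of $\OM{m}$. Both are straightforward and lead to the same place.
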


\begin{proof}
By promoting the bijections in \cref{def:Graytensordbl} of the Gray tensor $\otimes_G$, we get isomorphisms of $2$-categories as in the following commutative square.
\begin{tz}
\node[](1) {$\bfH[\VK{k}\otimes \OM{m},\bA]_\ps$};
\node[right of=1,xshift=6.5cm](2) {$\bfH[\bbV[k]\otimes \OM{m},\bA]_\ps$};
\node[below of=1](3) {$\bfH[\VK{k},[\bbH\OM{m},\bA]_\ps]_\ps$};
\node[below of=2](4) {$\bfH[\bbV[k],[\bbH\OM{m},\bA]_\ps]_\ps$};
\draw[->] (1) to node[above,la] {$\bfH[\CD(\gv{k}\times \id_{\Reph{m}}),\bA]_\ps$} (2);
\draw[->] (3) to node[below,la,yshift=-4pt,xshift=4pt] {$\bfH[\CD(\gv{k}),[\bbH\OM{m},\bA]_\ps]_\ps$} (4);
\draw[->] (1) to node[left,la] {$\cong$} (3);
\draw[->] (2) to node[right,la] {$\cong$} (4);
\end{tz}
Hence, to see that the $2$-functor $\bfH[\CD(\gv{k}\times \id_{\Reph{m}}),\bA]_\ps$ is a trivial fibration in $\TwoCat$, it is enough to show that the $2$-functor $\bfH[\CD(\gv{k}),\bB]_\ps\colon \bfH[\VK{k},\bB]_\ps\to \bfH[\bbV[k],\bB]_\ps$ is a trivial fibration in $\TwoCat$, for any $\bB\in \DblCat$.

We first describe the double functor $\CD(\gv{k})\colon \bbV[k]\to \VK{k}$ on objects and vertical morphisms. Since the horizontal morphisms and squares of $\bbV[k]$ are all trivial, this describes the image of $\CD(\gv{k})$ completely. We denote by ${u_i\colon i\arrowdot i+1}$, for $0\leq i<k$, the generating vertical morphisms of $\bbV[k]$. Then the double functor $\CD(\gv{k})$ is the identity on objects and sends a generating vertical morphism $u_i\colon i\arrowdot i+1$ of $\bbV[k]$ to the vertical morphism $i\arrowdot i+1$ of $\VK{k}$ represented by $\{i,i+1\}$. 

Now let $\bB$ be a double category. We show that the $2$-functor $\bfH[\CD(\gv{k}),\bB]_\ps$ is a trivial fibration in $\TwoCat$, by verifying that it is surjective on objects, full on morphisms, and fully faithful on $2$-morphisms.

Given a double functor $F\colon \bbV[k]\to \bB$, consider the composite 
\[ \VK{k}\stackrel{\bbV\pi}{\longrightarrow} \bbV[k]\stackrel{F}{\longrightarrow} \bB, \] 
where $\pi\colon \OM{k}\to [k]$ is the identity on objects and acts on hom-categories as the unique functor $\OM{k}(i,j)\to [k](i,j)=[0]$. The composite above is a double functor in $\bfH[\VK{k},\bB]$ such that $F\circ \bbV\pi\circ \CD(\gv{k})=F$, which proves surjectivity on objects.

Let $F,G\colon \VK{k}\to \bB$ be double functors, and $\varphi\colon F\CD(\gv{k})\Rightarrow G\CD(\gv{k})$ be a horizontal pseudo-natural transformation in $\bfH[\bbV[k],\bB]_\ps$. We want to define a horizontal pseudo-natural transformation $\overline{\varphi}\colon F\Rightarrow G$ in $\bfH[\VK{k},\bB]_\ps$ such that $\overline{\varphi}\CD(\gv{k})=\varphi$. It is enough to define $\overline{\varphi}$ on the generating vertical morphisms of $\VK{k}$ which are represented by $\{i,j\}$ for $i<j$. When $j=i+1$, we set $\overline{\varphi}_{\{i,i+1\}}\coloneqq \varphi_{u_i}$. For $j>i+1$, let $\theta$ denote the unique horizontally invertible square in $\VK{k}$ from the vertical morphism represented by $\{i,j\}$ to the vertical composite of morphisms represented by $[i,j]=\{l\mid i\leq l\leq j\}$. Then there is a unique way of defining $\overline{\varphi}_{\{i,j\}}$ so that $\overline{\varphi}$ is natural; namely as follows. 
\begin{tz}
\node[](1) {$Fi$}; 
\node[below of=1](2) {$Fj$}; 
\node[right of=1](3) {$Gi$}; 
\node[below of=3](4) {$Gj$}; 
\draw[->] (1) to (3);
\draw[->] (2) to (4); 
\draw[pro,->] (1) to node[left,la] {$F_{\{i,j\}}$} (2);
\draw[pro,->] (3) to node(a)[right,la] {$G_{\{i,j\}}$} (4);
\node[la] at ($(1)!0.5!(4)$) {$\overline{\varphi}_{\{i,j\}}$};

\node[right of=3,yshift=1.5cm,xshift=1cm](1) {$Fi$}; 
\node[right of=1,xshift=.5cm](2) {$Fi$}; 
\node[below of=2](3) {$F(i+1)$}; 
\node[below of=3](4) {$\vdots$}; 
\node[below of=4](5) {$Fj$}; 
\node[left of=5,xshift=-.5cm](6) {$Fj$}; 
\draw[pro,->] (1) to node(b)[left,la] {$F_{\{i,j\}}$} (6);
\node[la] at ($(a)!0.5!(b)$) {$=$};
\draw[pro,->] (2) to node[left,la] {$F_{\{i,i+1\}}$} (3);
\draw[pro,->] (3) to (4); 
\draw[pro,->] (4) to (5);
\draw[d] (1) to (2); 
\draw[d] (6) to (5); 
\node[la] at ($(1)!0.5!(5)$) {$F\theta$};
\node[yshift=-.4cm,la] at ($(1)!0.5!(5)$) {$\cong$};

\node[right of=2,xshift=.5cm](2') {$Gi$}; 
\node[right of=2',xshift=.5cm](1) {$Gi$}; 
\node[below of=2'](3') {$G(i+1)$}; 
\node[below of=3'](4') {$\vdots$}; 
\node[below of=4'](5') {$Gj$}; 
\node[right of=5',xshift=.5cm](6) {$Gj$}; 
\draw[pro,->] (1) to node[right,la] {$G_{\{i,j\}}$} (6);
\draw[pro,->] (2') to node[right,la] {$G_{\{i,i+1\}}$} (3');
\draw[pro,->] (3') to (4'); 
\draw[pro,->] (4') to (5');
\draw[d] (2') to (1); 
\draw[d] (5') to (6); 
\node[la,xshift=5pt] at ($(1)!0.5!(5')$) {$(G\theta)^{-1}$};
\node[yshift=-.4cm,la] at ($(1)!0.5!(5')$) {$\cong$};

\draw[->] (2) to (2');
\draw[->] (3) to (3');
\draw[->] (5) to (5');
\node[la] at ($(2)!0.5!(3')$) {$\varphi_{u_i}$}; 
\node(a)[la] at ($(3)!0.5!(4')$) {$\varphi_{u_{i+1}}$}; 
\node(b)[la] at ($(4)!0.5!(5')$) {$\varphi_{u_{j-1}}$}; 
\node at ($(a)!0.5!(b)$) {$\vdots$};
\end{tz}
This defines a horizontal pseudo-natural transformation $\overline{\varphi}\colon F\Rightarrow G$ which maps to $\varphi$ via $\bfH[\CD(\gv{k}),\bB]_\ps$, and hence shows fullness on morphisms. 

Let $\overline{\varphi},\overline{\psi}\colon F\Rightarrow G$ be horizontal pseudo-natural transformations in $\bfH[\VK{k},\bB]_\ps$, and let $\mu\colon \varphi\coloneqq \overline{\varphi}\CD(\gv{k})\to \psi\coloneqq \overline{\psi}\CD(\gv{k})$ be a modification in $\bfH[\bbV[k],\bB]_\ps$. The modification~$\mu$ comprises the data of squares $\sq{\mu_i}{\varphi_i}{\psi_i}{e_{Fi}}{e_{Gi}}$, for $0\leq i\leq k$, natural with respect to the square components of $\varphi$ and $\psi$. By the relations between the square components of $\overline{\varphi}$ and~$\varphi$, and the ones of $\overline{\psi}$ and $\psi$ as indicated in the pasting equality above, one can show that the squares $\mu_i$ of $\mu$ are also natural with respect to the square components of $\overline{\varphi}$ and~$\overline{\psi}$. Therefore $\mu$ also defines a modification $\mu\colon \overline{\varphi}\to \overline{\psi}$ in $\bfH[\VK{k},\bB]_\ps$. As it is the unique such modification in $\bfH[\VK{k},\bB]_\ps$ that maps to $\mu$ via $\bfH[\CD(\gv{k}),\bB]_\ps$, this shows fully faithfulness on $2$-morphisms. 
\end{proof}

Finally, we show that the nerve of a double category satisfies the missing condition of a horizontally complete double $(\infty,1)$-category in the list of \cref{prop:properties-nerve}, namely the Reedy/injective fibrancy in the vertical direction, precisely when the double category is weakly horizontally invariant. Recall that the weakly horizontally invariant double categories are the fibrant objects in the model structure on $\DblCat$ of \cref{thm:WHI-MS}.

\begin{theorem} \label{thm:fibrantnerve}
The nerve of a double category $\bA$ is such that $(\ND\bA)_{m,-}\colon \Delta^\op\to \sSet$ is Reedy/injective fibrant, for all $m\geq 0$, if and only if the double category $\bA$ is weakly horizontally invariant.
\end{theorem}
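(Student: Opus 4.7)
The plan is to translate the Reedy/injective fibrancy condition on $(\ND\bA)_{m,-}$ into a lifting property for $\bA$ in $\DblCat$ by combining the adjunction $\CD\dashv\ND$ with the $\TwoCat$-enrichment of the model structure on $\DblCat$, and then to address each direction separately. Since $\delta F[k]$ is constant in the horizontal and space directions, the level-$k$ matching map of the simplicial space $(\ND\bA)_{m,-}$ identifies with
\[ \Map(R[m]\times F[k],\ND\bA)\longrightarrow \Map(R[m]\times\delta F[k],\ND\bA), \]
and asking it to be a Kan fibration is equivalent, by the adjunction $\CD\dashv\ND$ together with the pushout-product identity already invoked in the proof of \cref{thm:CD-ND-Quillen}, to asking $\bA$ to have the right lifting property in $\DblCat$ against $\CD(R[m]\times\iota_k^F)\,\square_{\otimes}\,\overline{\CD}(\ell_{n,t}^\Delta)$ for every $n\geq 1$ and $0\leq t\leq n$.

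For the backward direction, if $\bA$ is weakly horizontally invariant then it is fibrant in $\DblCat$ by \cref{thm:WHI-MS}. The double functor $\CD(R[m]\times\iota_k^F)$ is a cofibration in $\DblCat$, since $\CD$ is left Quillen from the Reedy/injective model structure on $\sSet^{\twoDop}$ by \cref{prop:CD-ND-Quillen-Reedy}, and $\overline{\CD}(\ell_{n,t}^\Delta)$ is a trivial cofibration in $\TwoCat$ by \cref{lem:ellDelta}. By the $\TwoCat$-enrichment of the model structure on $\DblCat$ (\cref{rem:pushprodDblCat}), their pushout-product is then a trivial cofibration in $\DblCat$, against which the fibrant $\bA$ lifts, giving the Kan fibration.

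For the forward direction, the plan is to extract weak horizontal invariance from a single instance of the hypothesis. Given horizontal equivalences $f\colon A\xrightarrow{\simeq}B$ and $f'\colon A'\xrightarrow{\simeq}B'$ and a vertical morphism $v\colon B\arrowdot B'$, we first promote $f$ and $f'$ to horizontal adjoint equivalences. The Reedy/injective fibrancy of $(\ND\bA)_{0,-}$ implies in particular that the matching map
\[ (\ND\bA)_{0,1}\cong \cN(\bfH[\bbV[1],\bA]_\ps)\longrightarrow (\ND\bA)_{0,0}\times (\ND\bA)_{0,0}\cong \cN(\bfH\bA)\times \cN(\bfH\bA) \]
is a Kan fibration. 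We then apply its right lifting property against the horn $\ell_{1,1}^\Delta\colon [0]\to\Delta[1]$, with top map selecting $v$ as a vertex of $(\ND\bA)_{0,1}$ and bottom map selecting the pair $(f,f')$ as a $1$-simplex, the commutativity being precisely that evaluation at vertex~$1$ of the bottom map recovers the pair of endpoints $(B,B')$ of $v$. The resulting lift is a $2$-functor $\ON{1}=\Eadj\to \bfH[\bbV[1],\bA]_\ps$, that is, an adjoint equivalence in $\bfH[\bbV[1],\bA]_\ps$ between some vertical morphism $u\colon A\arrowdot A'$ (read off at vertex $0$) and $v$, with horizontal boundaries $f$ and $f'$. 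By the identification of weakly horizontally invertible squares with equivalences in $\bfH[\bbV[1],\bA]_\ps$ elaborated in \cref{app:weakhorinv}, this lift supplies precisely the data demanded by \cref{def:weakhorinv}. The main subtlety we anticipate is orienting the lifting problem correctly: choosing the horn $\ell_{1,1}^\Delta$ rather than $\ell_{1,0}^\Delta$ so that the prescribed $v$ appears at the target endpoint of $\Delta[1]$ and the unknown $u$ is produced at the source.
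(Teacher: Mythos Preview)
Your proof is correct and follows essentially the same approach as the paper. The only difference is cosmetic: for the backward direction the paper simply cites that $\ND$ is right Quillen (so $\ND\bA$ is fibrant in $\DblInfH$, hence Reedy fibrant in each direction), whereas you unpack this by directly exhibiting the relevant pushout-products $\CD(\id_{R[m]}\times\iota_k^F)\,\square_\otimes\,\overline{\CD}(\ell_{n,t}^\Delta)$ as trivial cofibrations via the $\TwoCat$-enrichment; for the forward direction both arguments extract weak horizontal invariance from the single lifting problem against $\ell_{1,1}^\Delta$ at $(m,k)=(0,1)$, interpreting the lift as an adjoint equivalence in $\bfH[\bbV[1],\bA]_\ps$.
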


\begin{proof} 
Let $\bA$ be a double category. If $\bA$ is weakly horizontally invariant, then $\ND\bA$ is a horizontally complete double $(\infty,1)$-category since $\ND\colon \DblCat\to \DblInfH$ is right Quillen. In particular, this says that $(\ND\bA)_{m,-}\colon \Delta^\op\to \sSet$ is Reedy/injective fibrant, for all $m\geq 0$.

Conversely, suppose that $(\ND\bA)_{m,-}\colon \Delta^\op\to \sSet$ is Reedy/injective fibrant, for all ${m\geq 0}$. Then $(\ND\bA)_{0,-}$ is Reedy/injective fibrant and therefore the map
\[ (\iotav{1})^*\colon (\ND\bA)_{0,1}\cong \Map (\Repv{1},\ND\bA)\to \Map(\partial \Repv{1}, \ND\bA)\cong (\ND\bA)_{0,0}\times (\ND\bA)_{0,0}. \]
is a fibration in $\sSet$, by \cref{rem:Reedyfibrant}. In particular, it has the right lifting property with respect to $\ell^s_{1,1}\colon \Reps{0}\to \Reps{1}$, i.e., there is a lift in every commutative diagram as below left.
\begin{tz}
\node[](1) {$\Reps{0}$};
\node[right of=1,rr](2) {$(\ND\bA)_{0,1}$};
\node[below of=1,yshift=-.5cm](3) {$\Reps{1}$};
\node[below of=2,yshift=-.5cm](4) {$(\ND\bA)_{0,0}\times (\ND\bA)_{0,0}$};
\draw[->] (1) to node[left,la] {$\ell^s_{1,1}$} (3);
\draw[->] (2) to node[right,la]{$(\iotav{1})^*$} (4);
\draw[->] (1) to node[above,la]{$v$} (2);
\draw[->] (3) to node[below,la]{$(f,f')$} (4);
\draw[->,dashed] (3) to node[above,pos=0.4,la]{$\alpha$} (2);

\node[right of=2,xshift=2cm,yshift=-.25cm](1) {$A$};
\node[right of=1](2) {$B$};
\node[below of=1](3) {$A'$};
\node[below of=2](4) {$B'$};
\draw[->,pro] (1) to node[left,la] {$u$} (3);
\draw[->,pro] (2) to node[right,la]{$v$} (4);
\draw[->] (1) to node[below,la]{$\simeq$} node[above,la]{$f$} (2);
\draw[->] (3) to node[below,la]{$f'$} node[above,la]{$\simeq$} (4);
\node[la] at ($(1)!0.5!(4)-(5pt,0)$) {$\alpha$};
\node[la] at ($(1)!0.5!(4)+(5pt,0)$) {$\simeq$};
\end{tz}
By \cref{desc:double00,desc:double01}, the upper map $v$ is the data of a vertical morphism $v\colon B\arrowdot B'$ in $\bA$, while the bottom map $(f,f')$ is the data of a pair of horizontal adjoint equivalences $(f\colon A\xrightarrow{\simeq} B,f'\colon A'\xrightarrow{\simeq} B')$ in $\bA$. Therefore, by \cref{desc:double01} the existence of a lift in each diagram as above corresponds to the existence of a weakly horizontally invertible square in $\bA$ as depicted above right, for each such data $(v,f,f')$. In other words, this says that $\bA$ is weakly horizontally invariant.
\end{proof}

\begin{rem} \label{rem:NHnotfibrant}
In particular, since a horizontal double category is not generally weakly horizontally invariant (see \cite[Remark 6.4]{MSVsecond}), the nerve $\ND\bbH\cA$ of a $2$-category $\cA$ is not generally fibrant in $\DblInfH$. Since every $2$-category is fibrant in Lack's model structure on $\TwoCat$, this shows that the composite $\ND\bbH$ is not right Quillen from $\TwoCat$ to $\DblInfH$. Therefore, we will need to define the nerve for $2$-categories differently in the next section. 
\end{rem}

\section{Nerve of \texorpdfstring{$2$}{2}-categories} \label{sec:nerve2Cat}

As $2$-categories are horizontally embedded in double categories, we hope that the nerve functor $\ND\colon \DblCat\to \DblInfH$ restricts to a nerve functor $\TwoCat\to \TwoCSS$. Since the nerve of a double category $\bbH\cA$ associated to a $2$-category $\cA$ is not generally fibrant, as explained in \cref{rem:NHnotfibrant}, we need to define the nerve of a $2$-category as the nerve of the fibrant replacement $\bbHsim\cA$ of $\bbH\cA$ in $\DblCat$; see \cref{prop:fibreplHA}. In \cref{subsec:NH-Quillen-hff}, we show that the composite of the Quillen pairs $\Lsim\dashv \bbHsim$ and $\CD\dashv \ND$ restrict to a Quillen pair between $\TwoCat$ and $\TwoCSS$, whose derived counit is level-wise a biequivalence. Hence the nerve $\ND\bbHsim$ gives a homotopically full embedding of $\TwoCat$ into~$\TwoCSS$. We further show in \cref{subsec:rightinduced} that Lack's model on $\TwoCat$ is right-induced from~$\TwoCSS$ along~$\ND\bbHsim$, which implies that the homotopy theory of $2$-categories is completely determined by that of $2$-fold complete Segal spaces through its image under $\ND\bbHsim$. In \cref{subsec:NHvsNHsim}, we compare the nerve of the double categories $\bbH\cA$ and $\bbHsim\cA$, by showing that the nerve of the latter is a fibrant replacement of the nerve of the former in $\TwoCSS$, and hence also in $\DblInfH$. 

\subsection{The nerve \texorpdfstring{$\ND\bbHsim$}{NH-sim} is right Quillen and homotopically fully faithful} \label{subsec:NH-Quillen-hff}

We consider the composite of the Quillen pairs 
\begin{tz}
\node[](Z) {$\TwoCat$};
\node[right of=Z,xshift=1.2cm](A) {$\DblCat$};
\node[right of=A,xshift=1.8cm](B) {$\DblInfH$};
\node at ($(B.east)-(0,4pt)$) {,};
\draw[->] ($(A.west)+(0,.25cm)$) to [bend right=25] node[above,la]{$\Lsim$} ($(Z.east)+(0,.25cm)$);
\draw[->] ($(Z.east)-(0,.25cm)$) to [bend right=25] node[below,la]{$\bbHsim$} ($(A.west)+(0,-.25cm)$);
\node[la] at ($(Z.east)!0.5!(A.west)$) {$\bot$};
\draw[->] ($(B.west)+(0,.25cm)$) to [bend right=25] node[above,la]{$\CD$} ($(A.east)+(0,.25cm)$);
\draw[->] ($(A.east)-(0,.25cm)$) to [bend right=25] node[below,la]{$\ND$} ($(B.west)+(0,-.25cm)$);
\node[la] at ($(A.east)!0.5!(B.west)$) {$\bot$};
\end{tz}
and show that this gives a Quillen pair between $\TwoCat$ and the localization $\TwoCSS$ of $\DblInfH$.

\begin{theorem} \label{thm:Quillen-NDH}
The adjunction 
\begin{tz}
\node[](A) {$\TwoCat$};
\node[right of=A,xshift=1.2cm](B) {$\TwoCSS$};
\draw[->] ($(B.west)+(0,.25cm)$) to [bend right=25] node[above,la]{$\Lsim\CD$} ($(A.east)+(0,.25cm)$);
\draw[->] ($(A.east)-(0,.25cm)$) to [bend right=25] node[below,la]{$\ND\bbHsim$} ($(B.west)+(0,-.25cm)$);
\node[la] at ($(A.east)!0.5!(B.west)$) {$\bot$};
\end{tz}
is a Quillen pair between Lack's model structure on $\TwoCat$ and the model structure on $\sSet^{\twoDop}$ for $2$-fold complete Segal spaces, i.e., $(\infty,2)$-categories.
\end{theorem}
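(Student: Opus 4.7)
The plan is to argue in the spirit of \cref{thm:CD-ND-Quillen}: first we obtain a Quillen pair into $\DblInfH$, and then invoke \cref{thm:Quillen-loc} to extend it to the localization $\TwoCSS$. Composing the Quillen pair $\Lsim\dashv\bbHsim$ of \cref{thm:Hsim-rightQUillen} with the Quillen pair $\CD\dashv\ND$ of \cref{thm:CD-ND-Quillen} immediately yields a Quillen pair $\Lsim\CD\dashv\ND\bbHsim$ between Lack's model structure on $\TwoCat$ and the model structure $\DblInfH$ for horizontally complete double $(\infty,1)$-categories. Since $\TwoCSS$ is obtained from $\DblInfH$ by \cref{thm:MS2CSS} as the left Bousfield localization at the set $\CC$ consisting of $c_k\colon F[0]\to F[k]$ (for $k\geq 0$) and $\id_{R[m]}\times e^F\colon R[m]\to R[m]\times N^F I$ (for $m\geq 0$), \cref{thm:Quillen-loc} reduces the theorem to checking that $\Lsim\CD$ sends each map of $\CC$ to a biequivalence in $\TwoCat$.

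For the family $c_k$: using $\CD(F[k])=\VK{k}=\bbV\OM{k}$ and the generators-and-relations description of $\Lsim$ (which preserves horizontal data and freely adjoins an adjoint equivalence for each vertical morphism), one identifies $\Lsim\VK{k}\cong\ON{k}$, so that $\Lsim\CD(c_k)$ becomes the inclusion $[0]\hookrightarrow\ON{k}$ at vertex~$0$. For $k=0,1$ this is the identity or the generating trivial cofibration $j_1\colon[0]\to\Eadj$, both biequivalences. For $k\geq 2$, I would verify biequivalence directly: every object of $\ON{k}$ is connected to $0$ via the composable adjoint equivalences, every endomorphism of $0$ is invertibly $2$-isomorphic to $\id_0$ using units and counits, and the only $2$-cell $\id_0\Rightarrow\id_0$ in $\ON{k}$ is the identity. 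This last full-faithfulness point is the delicate one and would follow from the free nature of the construction of $\ON{k}$ from the locally-thin $2$-category $\OM{k}$.

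For the family $\id_{R[m]}\times e^F$: using the tricosimplicial structure of $\XD$ (and that each $\bI\otimes_G-$ preserves colimits), one rewrites $\CD(\id_{R[m]}\times e^F)$ as $\id_{\bbH\OM{m}}\otimes_G\bbV(j_1)\colon\bbH\OM{m}\otimes_G\bbV[0]\to\bbH\OM{m}\otimes_G\bbV\Eadj$, where the computation $\CD(N^F I)\cong\bbV\Eadj$ is obtained by the same argument as $\overline{\CD}(N^R I)=\Eadj$ in \cref{lem:IR} (using that $\bbV$ preserves colimits). The key step is then to identify $\Lsim$ applied to this map with the pushout-product $(\emptyset\to\OM{m})\,\square_{\otimes_2}\,([0]\to\Lsim\bbV\Eadj)$ in $\TwoCat$; once this is in hand, since $[0]\to\Lsim\bbV\Eadj$ is a biequivalence by the same direct verification as for $[0]\to\ON{k}$, the monoidality of Lack's model structure with respect to $\otimes_2$ (\cref{thm:Lack-MS}) forces this pushout-product of a cofibration with a trivial cofibration to be a trivial cofibration, and hence a biequivalence.

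The principal obstacle is the monoidal-like identification $\Lsim(\bbH\cA\otimes_G\bbV\cB)\cong\cA\otimes_2\Lsim\bbV\cB$ invoked in the previous paragraph, which I expect to verify by a direct generators-and-relations comparison using the universal properties of the two Gray tensor products and the explicit cell-by-cell description of $\Lsim$.
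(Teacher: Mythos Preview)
Your proposal is correct and follows essentially the same route as the paper: compose the two Quillen pairs to land in $\DblInfH$, then invoke \cref{thm:Quillen-loc} and check that $\Lsim\CD$ sends the two families of localizing cofibrations to biequivalences. Your treatment of $c_k$ via $\Lsim\VK{k}\cong\ON{k}$ and the direct verification that $[0]\hookrightarrow\ON{k}$ is a biequivalence matches the paper exactly.

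The one place worth a remark is your ``principal obstacle'', the identification $\Lsim(\bbH\OM{m}\otimes_G\bbV\Eadj)\cong\OM{m}\otimes_2\Lsim\bbV\Eadj$. The paper does not attack this by a bare generators-and-relations comparison; instead it isolates the only way $\Lsim$ can fail to preserve a Gray tensor (\cref{rem:noninv}): the pseudo-naturality squares coming from a vertical morphism and a horizontal morphism may yield a non-invertible $2$-cell after applying $\Lsim$. It then observes that in the case at hand every vertical morphism of $\bbV\Eadj$ is a vertical equivalence, so these mixed squares are weakly vertically invertible, and by (the dual of) \cref{lem:whiinLsim} their images under $\Lsim$ are invertible $2$-cells. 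Once that is known, \cref{rem:noninv} gives the tensor comparison for free, and the pushout-product argument goes through as you describe. This is a cleaner path than a direct cell-by-cell check and is worth internalizing: the failure of $\Lsim$ to be monoidal is controlled entirely by whether the relevant squares are weakly invertible.
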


\begin{rem} \label{rem:noninv}
Note that the functor $\Lsim\colon \DblCat\to \TwoCat$ does not preserve tensors. For example, the $2$-category $\Lsim(\bbV[1]\otimes [1])$ is generated by a non-invertible $2$-morphism as below left, while the $2$-category $\Lsim(\bbV[1])\otimes_2 [1]$ is generated by an invertible $2$-morphism as below right.
\begin{tz}
\node[](1) {$0$};
\node[right of=1](2) {$1$};
\node[below of=1](3) {$0'$};
\node[below of=2](4) {$1'$};
\draw[->] (1) to (2);
\draw[->] (3) to (4);
\draw[->] (1) to node[left,la]{$\simeq$} (3);
\draw[->] (2) to node[right,la]{$\simeq$} (4);
\cell[]{2}{3}{};

\node[right of=2,xshift=1cm](1) {$0$};
\node[right of=1](2) {$1$};
\node[below of=1](3) {$0'$};
\node[below of=2](4) {$1'$};
\draw[->] (1) to (2);
\draw[->] (3) to (4);
\draw[->] (1) to node[left,la]{$\simeq$} (3);
\draw[->] (2) to node[right,la]{$\simeq$} (4);
\cell[la,left,yshift=8pt]{2}{3}{$\cong$};
\end{tz}
However, the fact that the left-hand $2$-morphism is not invertible in a square coming from a pair of a vertical morphism and a horizontal morphism is the only difference between $\Lsim(-\otimes -)$ and $\Lsim(-)\otimes_2 \Lsim(-)$.
\end{rem}

\begin{proof}
First note that the adjunction $\Lsim\CD\dashv \ND\bbHsim$ is a Quillen pair between $\TwoCat$ and $\DblInfH$, since it is a composite of two Quillen pairs. By \cref{thm:Quillen-loc}, it is enough to show that the functor $\Lsim\CD$ sends the cofibrations $\ev\times \id_{\Reph{m}}$ and $c^v_k$, with respect to which we localize $\DblInfH$ to obtain $\TwoCSS$, to weak equivalences in $\TwoCat$. 

We first show that $\Lsim\CD(\ev\times \id_{\Reph{m}})$ is a biequivalence. By a similar computation to the one of $\CD (\Nh I)$ in the proof of \cref{lem:IR}, we obtain that 
\[ \Lsim\CD(\Nv I\times \Reph{m})\cong \Lsim(\bbV \ON{1}\otimes \OM{m}). \]
Then the squares in the tensor $\bbV \ON{1}\otimes \OM{m}$ induced from vertical morphisms in $\bbV\ON{1}$ and morphisms in $\OM{m}$ must be weakly vertically invertible, since all vertical morphisms in $\bbV\ON{1}$ are vertical equivalences, and these correspond to invertible $2$-morphisms in $\Lsim(\bbV \ON{1}\otimes \OM{m})$, by a dual version of \cref{lem:whiinLsim}. By \cref{rem:noninv}, we deduce that $\Lsim$ preserves this tensor:
\[ \Lsim(\bbV \ON{1}\otimes \OM{m})\cong \ON{1}\otimes_2 \OM{m}\cong \Lsim\CD(\Nv I)\otimes_2 \Lsim\CD(\Reph{m}). \]
Therefore, $\Lsim\CD(\ev\times \id_{\Reph{m}})\cong \Lsim\CD(\ev)\,\square_{\otimes_2}\, (\emptyset\to \Lsim\CD \Reph{m})$. Both morphisms in this pushout-product are cofibrations in $\TwoCat$ since $\Lsim\CD$ is left Quillen from $\DblInfH$, and therefore, by \cref{rem:pushprod2Cat}, it is enough to show that $\Lsim\CD(\ev)$ is a biequivalence. But this is clear since the $2$-functor $\Lsim\CD(\ev)\colon \Lsim\CD(\Repv{0})\to \Lsim\CD(\Nv I)$ can be identified with the generating trivial cofibration $j_1\colon [0]\to \Eadj$ in $\TwoCat$.

We now show that the $2$-functor $\Lsim\CD(c^v_k)\colon \Lsim\CD(\Repv{0})\to \Lsim\CD(\Repv{k})$ is a biequivalence. It is given by the inclusion $[0]\to \ON{k}$ at $0$. First note that for $k=0$, this is the identity. For $k\geq 1$, it is a biequivalence since it is
\begin{itemize}
\item bi-essentially surjective on objects as every object in $\ON{k}$ is related by an adjoint equivalence to the object $0$,
\item essentially full on morphisms since every composite of adjoint equivalences $0\to 0$ in $\ON{k}$ is related by an invertible $2$-morphism to $\id_0$, which is given by a pasting of units and counits of the corresponding adjoint equivalences, 
\item fully faithful on $2$-morphisms since the only $2$-morphism $\id_0\Rightarrow\id_0$ in $\ON{k}$ is the identity. \qedhere
\end{itemize} 
\end{proof}

As in the double categorical case, the nerve $\ND\bbHsim$ is homotopically fully faithful.

\begin{theorem} \label{thm:counit-NDH}
The derived counit of the adjunction $\Lsim\CD\dashv \ND\bbHsim$ is level-wise a biequivalence. In particular, the nerve $\ND\bbHsim\colon \TwoCat\to \TwoCSS$ is homotopically fully faithful. 
\end{theorem}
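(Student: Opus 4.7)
The plan is to identify the counit of the composite Quillen adjunction $\Lsim\CD \dashv \ND\bbHsim$ at a $2$-category $\cA$ with a model for the derived counit of the Quillen adjunction $\Lsim \dashv \bbHsim$, which is already known to be a biequivalence by \cref{thm:Hsim-rightQUillen}. By general properties of composite adjunctions, the counit of $\Lsim\CD \dashv \ND\bbHsim$ at $\cA$ factors as
\[
\Lsim\CD\ND\bbHsim\cA \xrightarrow{\Lsim(\epsilon^{\CD\ND}_{\bbHsim\cA})} \Lsim\bbHsim\cA \xrightarrow{\epsilon^{\Lsim\bbHsim}_\cA} \cA,
\]
where $\epsilon^{\CD\ND}$ and $\epsilon^{\Lsim\bbHsim}$ denote the counits of $\CD \dashv \ND$ and $\Lsim \dashv \bbHsim$, respectively.

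First I would verify that $\epsilon^{\CD\ND}_{\bbHsim\cA}\colon \CD\ND\bbHsim\cA \to \bbHsim\cA$ is a cofibrant replacement of $\bbHsim\cA$ in the model structure on $\DblCat$ for weakly horizontally invariant double categories. The object $\CD\ND\bbHsim\cA$ is cofibrant in $\DblCat$: the functor $\CD\colon \DblInfH \to \DblCat$ is left Quillen by \cref{thm:CD-ND-Quillen}, and every object of $\DblInfH$ is cofibrant, since this model structure is a left Bousfield localization of the Reedy/injective model structure on $\sSet^{\twoDop}$ in which all objects are cofibrant. Moreover, as $\bbHsim\cA$ is fibrant (being weakly horizontally invariant by construction), the map $\epsilon^{\CD\ND}_{\bbHsim\cA}$ is a trivial fibration in $\DblCat$ by \cref{thm:counit-CD-ND}, and in particular a weak equivalence out of a cofibrant object.

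With this in place, the displayed composite is a model for the derived counit of $\Lsim \dashv \bbHsim$ at $\cA$, obtained by applying $\Lsim$ to the chosen cofibrant replacement $\CD\ND\bbHsim\cA \to \bbHsim\cA$ and then composing with the strict counit $\epsilon^{\Lsim\bbHsim}_\cA$. By \cref{thm:Hsim-rightQUillen}, this derived counit is a biequivalence in $\TwoCat$ for every $\cA$. Since every object of $\TwoCSS$ is cofibrant, the derived counit of $\Lsim\CD \dashv \ND\bbHsim$ agrees with its strict counit, so the derived counit is level-wise a biequivalence as claimed; the homotopical fully faithfulness of $\ND\bbHsim$ is then immediate from the definition. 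The only mild subtlety is the identification of a composite of strict counits with a derived counit, which rests on the freedom to choose any convenient cofibrant replacement; here $\CD\ND\bbHsim\cA$ serves as an especially natural one.
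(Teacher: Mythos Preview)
Your proof is correct and follows essentially the same approach as the paper, which simply observes that the result follows from the derived counits of $\CD\dashv\ND$ and $\Lsim\dashv\bbHsim$ being level-wise weak equivalences (\cref{thm:counit-CD-ND,thm:Hsim-rightQUillen}). You have spelled out explicitly how these two facts combine, by recognizing the strict counit of the composite adjunction as a model for the derived counit of $\Lsim\dashv\bbHsim$ via the cofibrant replacement $\CD\ND\bbHsim\cA\to\bbHsim\cA$; the paper leaves this combination implicit.
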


\begin{proof}
This follows from the fact that the derived counits of the adjunctions $\CD\dashv \ND$ and $\Lsim\dashv \bbHsim$ are level-wise weak equivalences, by \cref{thm:counit-CD-ND,thm:Hsim-rightQUillen}, respectively. 
\end{proof}

\begin{rem} \label{rem:Quillen-pair-Cat}
Let us denote by $D\colon \Cat\to \TwoCat$ the functor sending a category $\CC$ to the locally discrete $2$-category $D\CC$ with the same objects and morphisms as $\CC$ and only trivial $2$-morphisms. The functor $D$ has a left adjoint $P\colon \TwoCat\to \Cat$ given by base change along the functor $\pi_0\colon \Cat\to \Set$ sending a category to its set of connected components. By \cite[Theorem~8.2]{Lack2Cat}, these functors form a Quillen pair between the canonical model structure on $\Cat$ and Lack's model structure on $\TwoCat$, and its derived counit is level-wise an equivalence of categories. 

By composing with the Quillen pair of \cref{thm:Quillen-NDH}, we obtain a Quillen pair
\begin{tz}
\node[](Z) {$\Cat$};
\node[right of=Z,xshift=.9cm](A) {$\TwoCat$};
\node[right of=A,xshift=1.2cm](B) {$\TwoCSS$};
\draw[->] ($(A.west)+(0,.25cm)$) to [bend right=25] node[above,la] {$P$} ($(Z.east)+(0,.25cm)$);
\draw[->] ($(Z.east)-(0,.25cm)$) to [bend right=25] node[below,la]{$D$} ($(A.west)+(0,-.25cm)$);
\node[la] at ($(Z.east)!0.5!(A.west)$) {$\bot$};
\draw[->] ($(B.west)+(0,.25cm)$) to [bend right=25] node[above,la]{$\Lsim\CD$} ($(A.east)+(0,.25cm)$);
\draw[->] ($(A.east)-(0,.25cm)$) to [bend right=25] node[below,la]{$\ND\bbHsim$} ($(B.west)+(0,-.25cm)$);
\node[la] at ($(A.east)!0.5!(B.west)$) {$\bot$};
\end{tz}
between the canonical model structure on $\Cat$ and the model structure on $\sSet^{\twoDop}$ for $2$-fold complete Segal spaces, i.e., $(\infty,2)$-categories, whose derived counit is level-wise an equivalence of categories.  
\end{rem}

\subsection{\texorpdfstring{$\TwoCat$}{2Cat} is right-induced from \texorpdfstring{$\TwoCSS$}{2CSS} along \texorpdfstring{$\ND\bbHsim$}{NH-sim}} \label{subsec:rightinduced}

We now show that Lack's model structure on $\TwoCat$ is right-induced from $\TwoCSS$ along the nerve $\ND\bbHsim$. In particular, this says that the homotopy theory of $2$-categories is determined by the homotopy theory of $2$-fold complete Segal spaces through its image under $\ND\bbHsim$.

\begin{theorem} \label{thm:2catRI2css}
Lack's model structure on $\TwoCat$ is right-induced along the adjunction
\begin{tz}
\node[](A) {$\TwoCat$};
\node[right of=A,xshift=1.2cm](B) {$\TwoCSS$};
\draw[->] ($(B.west)+(0,.25cm)$) to [bend right=25] node[above,la]{$\Lsim\CD$} ($(A.east)+(0,.25cm)$);
\node at ($(B.east)-(0,4pt)$) {,};
\draw[->] ($(A.east)-(0,.25cm)$) to [bend right=25] node[below,la]{$\ND\bbHsim$} ($(B.west)+(0,-.25cm)$);
\node[la] at ($(A.east)!0.5!(B.west)$) {$\bot$};
\end{tz}
where $\TwoCSS$ denotes the model structure on $\sSet^{\twoDop}$ for $2$-fold complete Segal spaces.
\end{theorem}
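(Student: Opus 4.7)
The proof consists in establishing that Lack's model structure on $\TwoCat$ has the same weak equivalences as, and the same fibrations as, the right-induced model structure along $\ND\bbHsim$. The ``only if'' direction of each is immediate from $\ND\bbHsim$ being right Quillen (\cref{thm:Quillen-NDH}); the substantive content is the two reverse implications.

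For weak equivalences, suppose $\ND\bbHsim F$ is a weak equivalence in $\TwoCSS$. Since $\TwoCSS$ is a left Bousfield localization of the Reedy/injective model structure on $\sSet^{\twoDop}$, all of its objects are cofibrant, so by Ken Brown's Lemma the left Quillen functor $\Lsim\CD$ preserves all weak equivalences. Placing $F$ into the naturality square for the derived counit $\Lsim\CD\ND\bbHsim \Rightarrow \id_{\TwoCat}$ and invoking \cref{thm:counit-NDH} to see that the two horizontal components of the square are biequivalences, two-out-of-three forces $F$ itself to be a biequivalence.

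For fibrations, suppose $\ND\bbHsim F$ is a fibration in $\TwoCSS$; I want to show that $F$ has the right lifting property against the generating trivial cofibrations $\mathcal{J}_2 = \{j_1, j_2\}$ of \cref{not:cofin2cat}. By the adjunction $\Lsim\CD \dashv \ND\bbHsim$, if $\phi$ is any trivial cofibration in $\TwoCSS$ then $\ND\bbHsim F$ has RLP against $\phi$, so $F$ has RLP against $\Lsim\CD\phi$ and hence against any retract of $\Lsim\CD\phi$ in the arrow category of $\TwoCat$. The plan is therefore to exhibit, for each $j_i$, a trivial cofibration $\phi_i$ in $\TwoCSS$ with $j_i$ such a retract. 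For $j_1\colon [0]\to \Eadj$, the map $\phi_1 = e^R\colon R[0]\to N^R I$ works: up to the identification $R[0]\cong R[0]\times F[0]$ it is among the maps at which $\DblInfH$ is localized from the Reedy/injective model structure, hence a trivial cofibration in $\DblInfH$, and a fortiori in $\TwoCSS$. Combining the computation $\CD(N^R I) = \bbH\Eadj$ from the proof of \cref{lem:IR} with the easily verified identity $\Lsim\bbH = \id_{\TwoCat}$ (since $\bbH\cA$ has only trivial vertical morphisms, the free addition of adjoint-equivalence data by $\Lsim$ adds nothing) yields $\Lsim\CD(e^R) = j_1$ outright, so no genuine retract is needed.

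For $j_2\colon [1]\to C_\mathrm{inv}$, the hardest part of the proof, the natural candidate is $\phi_2 = \id_{R[1]}\times e^F\colon R[1]\to R[1]\times N^F I$, which is one of the completeness-in-the-vertical-direction maps at which $\DblInfH$ is further localized to form $\TwoCSS$ (see \cref{thm:MS2CSS}) and is thus a trivial cofibration in $\TwoCSS$. Using that $\CD$ sends the product $R[1]\times N^F I$ to the Gray tensor $\bbH[1]\otimes_G \CD(N^F I)$ (since one factor is constant in the other direction), and imitating the computation of $\CD(N^R I)$ in \cref{lem:IR} to obtain $\CD(N^F I) \cong \bbV\Eadj$, one builds a canonical $2$-functor from $\Lsim\CD(R[1]\times N^F I)$ onto $C_\mathrm{inv}$ that collapses the formal adjoint equivalences to identities, and then verifies that this slots into a retract diagram exhibiting $j_2$ as a retract of $\Lsim\CD(\phi_2)$. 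The main obstacle is precisely this last verification: because $\Lsim$ does not preserve Gray tensors (as recalled in \cref{rem:noninv}), one must track explicitly how the a priori non-invertible Gray $2$-cell in $\bbH[1]\otimes_G \bbV\Eadj$ descends under $\Lsim$ and maps to the invertible $2$-cell of $C_\mathrm{inv}$, and confirm that this produces the desired retraction on both source and target of the arrows $j_2$ and $\Lsim\CD(\phi_2)$.
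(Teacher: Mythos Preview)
Your approach is correct, and for weak equivalences it matches the paper exactly. For fibrations, however, you take a genuinely different route from the paper, and the two are worth comparing.

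The paper argues via the space direction: since $\ND\bbHsim F$ is a fibration between fibrant objects in $\TwoCSS$, it is in particular a Reedy/injective fibration, hence $(\ND\bbHsim F)_{m,k}$ is a Kan fibration for each $(m,k)$. Taking $(m,k)=(0,0)$ and $(1,0)$ and lifting against the horn $\ell^\Delta_{1,1}\colon\Delta[0]\to\Delta[1]$, the explicit low-dimensional descriptions in \cref{desc:2cat00,desc:2cat10} translate these lifts directly into lifts of $F$ against $j_1$ and against $j'_2\colon[1]\to[1]\otimes_2\Eadj$; the paper then exhibits $j_2$ as a retract of $j'_2$. Your approach instead stays at the level of the adjunction and chooses the localizing cofibrations $e^R$ and $\id_{R[1]}\times e^F$ as the test maps. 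Both routes converge: your $\Lsim\CD(\phi_2)$ \emph{is} the paper's $j'_2$, and the retract $j_2\hookrightarrow j'_2$ is identical. Your approach avoids the detour through levelwise fibrancy and the appendix descriptions; the paper's approach makes the geometric content more visible.

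One comment on what you flag as the ``main obstacle'': the non-preservation of Gray tensors by $\Lsim$ is not actually an obstacle here, and you do not need to track any non-invertible $2$-cell by hand. The computation $\Lsim(\bbV\Eadj\otimes[1])\cong\Eadj\otimes_2[1]$ is already carried out in the proof of \cref{thm:Quillen-NDH}: since every vertical morphism of $\bbV\Eadj$ is a vertical adjoint equivalence, the Gray squares in $\bbV\Eadj\otimes[1]$ are weakly vertically invertible, hence become invertible $2$-cells under $\Lsim$ by (the dual of) \cref{lem:whiinLsim}, and then \cref{rem:noninv} gives the tensor preservation. So your $\Lsim\CD(\phi_2)$ is exactly $j'_2$, and the verification you anticipated being delicate is in fact immediate from results already in the paper.
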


\begin{proof}
It is enough to show that a $2$-functor $F$ is a weak equivalence (resp.~fibration) in $\TwoCat$ if and only if $\ND\bbHsim F$ is a weak equivalence (resp.~fibration) in $\TwoCSS$, as model structures are uniquely determined by their classes of weak equivalences and fibrations. 

Since the functor $\ND\bbHsim$ is right Quillen, it preserves fibrations. Moreover, since all objects are fibrant in $\TwoCat$, the functor $\ND\bbHsim$ also preserves weak equivalences by Ken Brown's Lemma (see \cite[Lemma 1.1.12]{Hovey}). This shows that, if $F$ is a weak equivalence (resp.~fibration) in $\TwoCat$, then $\ND\bbHsim F$ is a weak equivalence (resp.~fibration) in $\TwoCSS$.

Now let $F\colon \cA\to \cB$ be a $2$-functor such that $\ND\bbHsim F\colon \ND\bbHsim \cA\to \ND\bbHsim \cB$ is a weak equivalence in $\TwoCSS$. Since all objects are cofibrant in $\TwoCSS$, by Ken Brown's Lemma, the left Quillen functor $\Lsim\CD$ preserves weak equivalences, and the $2$-functor $\Lsim\CD\ND\bbHsim F$ is a biequivalence. We then have a commutative square 
\begin{tz}
\node[](1) {$\Lsim\CD\ND\bbHsim \cA$}; 
\node[right of=1,xshift=2cm](2) {$\Lsim\CD\ND\bbHsim \cB$}; 
\node[below of=1](3) {$\cA$}; 
\node[below of=2](4) {$\cB$}; 
\node at ($(4.east)-(0,4pt)$) {,};

\draw[->] (1) to node[above,la] {$\Lsim\CD\ND\bbHsim F$} node[below,la] {$\simeq$} (2); 
\draw[->] (1) to node[left,la] {$\epsilon_\cA$} node[right,la] {$\simeq$} (3);
\draw[->] (2) to node[right,la] {$\epsilon_\cB$} node[left,la] {$\simeq$} (4); 
\draw[->] (3) to node[below,la] {$F$} (4); 
\end{tz}
where the vertical $2$-functors are biequivalences by \cref{thm:counit-NDH}, since the components of the counit coincide with that of the derived counit as all objects in $\TwoCSS$ are cofibrant. By $2$-out-of-$3$, we get that $F$ is also a biequivalence. 

Finally, let $F\colon \cA\to \cB$ be a $2$-functor such that $\ND\bbHsim F\colon \ND\bbHsim \cA\to \ND\bbHsim \cB$ is a fibration in~$\TwoCSS$. We show that $F$ has the right lifting property with respect to the generating trivial cofibrations $j_1\colon [0]\to \Eadj$ and $j_2\colon [1]\to \Sigma I$ in $\TwoCat$ as described in \cref{not:cofin2cat}. First note that $(\ND\bbHsim F)_{m,k}$ is a fibration in $\sSet$ for all $m,k\geq 0$, since fibrations between fibrant objects in $\TwoCSS$ are in particular level-wise fibrations. 

By taking $m=k=0$, as $(\ND\bbHsim F)_{0,0}$ is a fibration in $\sSet$, there is a lift in every commutative diagram as below left.
\begin{tz}
\node[](1) {$\Reps{0}$};
\node[right of=1,xshift=1cm](2) {$(\ND\bbHsim\cA)_{0,0}$};
\node[below of=1](3) {$\Reps{1}$};
\node[below of=2](4) {$(\ND\bbHsim\cB)_{0,0}$};
\draw[->] (1) to node[left,la] {$\ell^s_{1,1}$} (3);
\draw[->] (2) to node[right,la]{$(\ND\bbHsim F)_{0,0}$} (4);
\draw[->] (1) to (2);
\draw[->] (3) to (4);
\draw[->,dashed] (3) to (2);

\node[right of=2,xshift=2cm](1) {$[0]$};
\node[right of=1,xshift=.5cm](2) {$\cA$};
\node[below of=1](3) {$\Eadj$};
\node[below of=2](4) {$\cB$};
\draw[->] (1) to node[left,la] {$j_1$} (3);
\draw[->] (2) to node[right,la]{$F$} (4);
\draw[->] (1) to (2);
\draw[->] (3) to (4);
\draw[->,dashed] (3) to (2);
\end{tz}
By \cref{desc:2cat00}, a $0$-simplex in $(\ND\bbHsim \cA)_{0,0}$ is an object of $\cA$, and a $1$-simplex in $(\ND\bbHsim \cA)_{0,0}$ is an adjoint equivalence in $\cA$. Therefore, the existence of a lift in each diagram as above left corresponds to the existence of a lift in each diagram as above right. This shows that $F$ has the right lifting property with respect to $j_1$.

Now take $m=1$ and $k=0$. As $(\ND\bbHsim \cA)_{1,0}$ is a fibration in $\sSet$, there is a lift in every commutative diagram as below left.
\begin{tz}
\node[](1) {$\Reps{0}$};
\node[right of=1,xshift=1cm](2) {$(\ND\bbHsim\cA)_{1,0}$};
\node[below of=1](3) {$\Reps{1}$};
\node[below of=2](4) {$(\ND\bbHsim\cB)_{1,0}$};
\draw[->] (1) to node[left,la] {$\ell^s_{1,1}$} (3);
\draw[->] (2) to node[right,la]{$(\ND\bbHsim F)_{1,0}$} (4);
\draw[->] (1) to (2);
\draw[->] (3) to (4);
\draw[->,dashed] (3) to (2);

\node[right of=2,xshift=2cm](1) {$[1]$};
\node[right of=1,xshift=.5cm](2) {$\cA$};
\node[below of=1](3) {$[1]\otimes_2 \Eadj$};
\node[below of=2](4) {$\cB$};
\draw[->] (1) to node[left,la] {$[1]\otimes_2 j_1$} (3);
\draw[->] (2) to node[right,la]{$F$} (4);
\draw[->] (1) to (2);
\draw[->] (3) to (4);
\draw[->,dashed] (3) to (2);
\end{tz}
By \cref{desc:2cat10}, a $0$-simplex in $(\ND\bbHsim \cA)_{1,0}$ is a morphism of $\cA$, and a $1$-simplex in $(\ND\bbHsim \cA)_{1,0}$ is an invertible $2$-morphism in $\cA$, as depicted in \cref{desc:2cat10} (1). Therefore, the existence of a lift in each diagram as above left corresponds to the existence of a lift in each diagram as above right. 

Now we show that the generating trivial cofibration $j_2\colon [1]\to \Sigma I$ is a retract of the $2$-functor $[1]\otimes_2 j_1$ of the following form
\begin{tz}
\node[](1) {$[1]$}; 
\node[right of=1,xshift=.5cm](2) {$[1]$}; 
\node[right of=2,xshift=.5cm](3) {$[1]$}; 
\draw[d] (1) to (2); 
\draw[d] (2) to (3); 
\node[below of=1](4) {$\Sigma I$}; 
\node[below of=2](5) {$[1]\otimes_2 \Eadj$}; 
\node[below of=3](6) {$\Sigma I$}; 
\node at ($(6.east)-(0,4pt)$) {.};
\draw[->] (1) to node[left,la] {$j_2$} (4);
\draw[->] (2) to node[left,la] {$[1]\otimes_2 j_1$} (5);
\draw[->] (3) to node[right,la] {$j_2$} (6);

\draw[->] (4) to node[below,la] {$i$} (5);
\draw[->] (5) to node[below,la] {$r$} (6);
\end{tz}
If we denote the data of the $2$-categories $\Sigma I$ and $[1]\otimes_2 \Eadj$ as below left and right, respectively,
\begin{tz}
\node[](A) {$x$};
\node[right of=A,xshift=.5cm](B) {$y$};
\draw[->,bend left] (A.north east) to node(a)[above,la] {$f$} (B.north west);
\draw[->,bend right] (A.south east) to node(b)[below,la] {$f'$} (B.south west);
\cell[la,right]{a}{b}{$\cong$};

    \node[right of=B,xshift=2cm,yshift=.75cm](A) {$0$};
    \node[right of=A](B) {$1$};
    \node[below of=A](A') {$0'$};
    \node[right of=A'](B') {$1'$};
    \draw[->] (A) to node[above,la]{$\simeq$}  (B);
    \draw[->] (A') to node[below,la] {$\simeq$} (B');
    \draw[->] (A) to (A');
    \draw[->] (B) to (B');
    
    \cell[la,above,xshift=-.2cm]{B}{A'}{$\cong$};
\end{tz}
then the $2$-functor $i\colon \Sigma I\to [1]\otimes_2\Eadj$ is given by sending the object $x$, resp.~$y$, to the object $0$, resp.~$1'$; the morphism $f$, resp.~$f'$, to the composite $0\to 1\to 1'$, resp.~${0\to 0'\to 1'}$; and the invertible $2$-morphism of $\Sigma I$ to the invertible $2$-morphism of $[1]\otimes_2\Eadj$. On the other hand, the $2$-functor $r\colon [1]\otimes_2\Eadj\to \Sigma I$ is given by sending the objects $0,1$, resp.~$0',1'$, to the object $x$, resp.~$y$; the morphism $1\to 1'$, resp.~$0\to 0'$, to the morphism $f$, resp.~$f'$; the adjoint equivalences of $\Eadj$ to identities; and the invertible $2$-morphism of $[1]\otimes_2\Eadj$ to the invertible $2$-morphism of $\Sigma I$. 

Therefore, since $F$ has the right lifting property with respect to~$[1]\otimes_2 j_1$, then $F$ also has the right lifting property with respect to $j_2$. This shows that~$F$ is a fibration in $\TwoCat$ and concludes the proof.  
\end{proof}

\subsection{Comparison between the nerves \texorpdfstring{$\ND\bbH$}{NH} and \texorpdfstring{$\ND\bbHsim$}{NH-sim}} \label{subsec:NHvsNHsim}

We now want to compare the nerves $\ND\bbH\cA$ and $\ND\bbHsim\cA$ of a $2$-category $\cA$. For this, we will construct a homotopy equivalence between the spaces $(\ND\bbH\cA)_{m,k}$ and $(\ND\bbHsim\cA)_{m,k}$. Their sets of $n$-simplices are given by 
\[ (\ND\bbH\cA)_{m,k,n}=\DblCat(\XD_{m,k,n},\bbH\cA)\cong \TwoCat(L\XD_{m,k,n},\cA) \]
and 
\[ (\ND\bbHsim\cA)_{m,k,n}=\DblCat(\XD_{m,k,n},\bbHsim\cA)\cong \TwoCat(\Lsim\XD_{m,k,n},\cA). \]
Let us first describe the $2$-categories $\Lsim\XD_{m,k,n}$ and $L\XD_{m,k,n}$. 

\begin{descr} \label{descr:LX}
The $2$-category $L\XD_{m,k,n}$ is obtained from the double category 
\[ \XD_{m,k,n}=(\VK{k}\otimes\OM{m})\otimes \ON{n} \]
by identifying the objects $(x,y,z)\sim (x,y',z)$, for all $0\leq x\leq m$, $0\leq y,y'\leq k$, and ${0\leq z\leq n}$, and by identifying the vertical morphisms $(x,g,z)\colon (x,y,z)\arrowdot (x,y',z)$, where $g\in\OM{k}(y,y')$, with the identity at $(x,y,z)\sim (x,y',z)$. We denote by $[x,z]$ the equivalence class $\{(x,y,z)\mid 0\leq y\leq k\}$. Then, the $2$-category $L\XD_{m,k,n}$ has 
\begin{itemize}
    \item objects $[x,z]$ for all $0\leq x\leq m$ and $0\leq z\leq n$, 
    \item morphisms freely generated by
    \begin{itemize}
        \item a morphism $(f,y,z)\colon [x,z]\to [x',z]$ where $f\in\OM{m}(x,x')$ is represented by the set $\{x,x'\}$, for all $0\leq x<x'\leq m$, $0\leq y\leq k$, and $0\leq z\leq n$,
        \item a morphism $(x,y,h)\colon [x,z]\to [x,z']$ where $h\in\ON{n}(z,z')$ is represented by the set $\{z,z'\}$, for all $0\leq x\leq m$, $0\leq y\leq k$, and $0\leq z,z'\leq n$ with $z\neq z'$,
    \end{itemize}
    \item $2$-morphisms are generated by 
    \begin{itemize}
        \item a $2$-morphism $\alpha\colon \overline{f}\Rightarrow \overline{f'}$ for each square $\sq{\alpha}{\overline{f}}{\overline{f}'}{u}{v}$ in $\XD_{m,k,n}$
    \end{itemize} 
    subject to the minimal relations making the projection $\XD_{m,k,n}\to \bbH L\XD_{m,k,n}$ into a double functor. Here the projection $\XD_{m,k,n}\to \bbH L\XD_{m,k,n}$ sends an object $(x,y,z)$ to the object $[x,z]$, horizontal morphisms $(f,y,z)$ and $(x,y,h)$ to the morphisms $(f,y,z)$ and $(x,y,h)$, vertical morphisms $(x,u,z)$ to the identity at $[x,z]$, and squares $\sq{\alpha}{\overline{f}}{\overline{f}'}{u}{v}$ to the corresponding $2$-morphism $\alpha\colon \overline{f}\Rightarrow \overline{f'}$.
\end{itemize}
\end{descr}

\begin{descr} \label{descr:LsimX}
The $2$-category $\Lsim\XD_{m,k,n}$ has 
\begin{itemize}
    \item the same objects as the double category $\XD_{m,k,n}=(\VK{k}\otimes\OM{m})\otimes \ON{n}$, i.e., triples $(x,y,z)$ with $0\leq x\leq m$, $0\leq y\leq k$, $0\leq z\leq n$, 
    \item morphisms freely generated by 
    \begin{itemize}
        \item a morphism $(f,y,z)\colon (x,y,z)\to (x',y,z)$ where $f\in\OM{m}(x,x')$ is represented by the set $\{x,x'\}$, for all $0\leq x<x'\leq m$, $0\leq y\leq k$, and $0\leq z\leq n$,
        \item a morphism $(x,y,h)\colon (x,y,z)\to (x,y,z')$ where $h\in\ON{n}(z,z')$ is represented by the set $\{z,z'\}$, for all $0\leq x\leq m$, $0\leq y\leq k$, and $0\leq z,z'\leq n$ with $z\neq z'$,
        \item an adjoint equivalence $(x,g,z)\colon (x,y,z)\xrightarrow{\simeq} (x,y',z)$ where $g\in \OM{k}(y,y')$ is represented by the set $\{y,y'\}$, for all $0\leq x\leq m$, $0\leq y<y'\leq k$, and $0\leq z\leq n$,
    \end{itemize}
    \item $2$-morphisms are generated by 
    \begin{itemize}
        \item a $2$-morphism $\alpha\colon v\overline{f}\Rightarrow \overline{f}'u$ for each square $\sq{\alpha}{\overline{f}}{\overline{f}'}{u}{v}$ in $\XD_{m,k,n}$, 
    \end{itemize}
    subject to relations which are equivalent to requiring that the projection $2$-functor $\pi_{m,k,n}\colon \Lsim\XD_{m,k,n}\to L\XD_{m,k,n}$ is fully faithful on $2$-morphisms. Here the comparison $2$-functor $\pi_{m,k,n}\colon \Lsim\XD_{m,k,n}\to L\XD_{m,k,n}$ sends an object $(x,y,z)$ to the object $[x,z]$, morphisms $(f,y,z)$ and $(x,y,h)$ to the morphisms $(f,y,z)$ and $(x,y,h)$, adjoint equivalences $(x,g,z)$ to the identity at $[x,z]$, and $2$-morphisms $\alpha\colon v\overline{f}\Rightarrow \overline{f}'u$ to the corresponding $2$-morphism $\alpha\colon \overline{f}\Rightarrow \overline{f}'$.
\end{itemize}
\end{descr}

\begin{ex} \label{ex:LvsLsim}
We compute these $2$-categories in the case where $m=1$, $k=1$, and $n=0$. Let us denote by $u\colon 0'\arrowdot 1'$ the vertical morphism in $\bbV[1]$ and by $f\colon 0\to 1$ the morphism in $[1]$. We have that $L(\bbV[1]\otimes [1])$ is the free $2$-category on a $2$-morphism as depicted below left, while $\Lsim(\bbV[1]\otimes [1])$ is the $2$-category as depicted below right. We omit the $z$-component here since it is always $0$.
\begin{tz}
\node[](A) {$[0]$};
\node[right of=A,xshift=.5cm](B) {$[1]$};
\draw[->,bend left] (A.north east) to node(a)[pos=0.35]{} node[above,la] {$(f,0')$} (B.north west);
\draw[->,bend right] (A.south east) to node(b)[pos=0.35]{} node[below,la] {$(f,1')$} (B.south west);
\cell[la,right]{a}{b}{$(f,u)$};

\node[right of=B,yshift=.9cm,xshift=1cm](1) {$(0,0')$};
\node[right of=1,xshift=.5cm](2) {$(1,0')$};
\node[below of=1,yshift=-.3cm](3) {$(0,1')$};
\node[below of=2,yshift=-.3cm](4) {$(1,1')$};
\draw[->] (1) to node[above,la]{$(f,0')$} (2);
\draw[->] (3) to node[below,la]{$(f,1')$} (4);
\draw[->] (1) to node[left,la]{$(0,u)$} node[right,la] {$\simeq$} (3);
\draw[->] (2) to  node[left,la] {$\simeq$}node[right,la]{$(1,u)$} (4);
\cell[la,below,yshift=-3pt,xshift=8pt]{2}{3}{$(f,u)$};
\end{tz}
\end{ex}

\begin{rem}
Using these descriptions, we can see that the $0$-simplices of the simplicial sets $(\ND\bbH\cA)_{0,0}$ and $(\ND\bbHsim\cA)_{0,0}$ are the objects of $\cA$, and the ones of $(\ND\bbH\cA)_{1,0}$ and $(\ND\bbHsim\cA)_{1,0}$ the morphisms of $\cA$. The $0$-simplices in $(\ND\bbH\cA)_{1,1}$ are the $2$-morphisms of $\cA$ as in the left-hand diagram of \cref{ex:LvsLsim}, while the ones of $(\ND\bbHsim\cA)_{1,1}$ are the $2$-morphisms of~$\cA$ as in the right-hand diagram of \cref{ex:LvsLsim}. Finally, the $0$-simplices in $(\ND\bbH\cA)_{0,1}$ are just objects of~$\cA$, while the ones of $(\ND\bbHsim\cA)_{0,1}$ are adjoint equivalences in $\cA$. We describe these simplicial sets in greater detail in \cref{subsec:descr-NH,subsec:descr-NHsim}.
\end{rem}

Recall the comparison $2$-functor $\pi_{m,k,n}\colon \Lsim\XD_{m,k,n}\to L\XD_{m,k,n}$ introduced at the end of \cref{descr:LsimX}.
Then this $2$-functor is clearly surjective on objects, full on morphisms, and fully faithful on $2$-morphisms. By constructing an inverse $2$-functor up to pseudo-natural equivalence to this comparison $2$-functor $\pi_{m,k,n}$, we obtain the following result. 

\begin{theorem} \label{thm:homotopy}
Let $\cA$ be a $2$-category. The map $\pi^*\colon \ND\bbH\cA\to \ND\bbHsim \cA$ induced by the comparison $2$-functors $\pi_{m,k,n}\colon \Lsim\XD_{m,k,n}\to L\XD_{m,k,n}$ is level-wise a homotopy equivalence in $\sSet^{\twoDop}$. In particular, this exhibits $\ND\bbHsim \cA$ as a fibrant replacement of $\ND\bbH\cA$ in $\TwoCSS$ (or in $\DblInfH$).
\end{theorem}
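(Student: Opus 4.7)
The plan follows the hint preceding the theorem: construct a 2-functor $\sigma_{m,k,n}\colon L\XD_{m,k,n}\to \Lsim\XD_{m,k,n}$ which is a strict right inverse to $\pi_{m,k,n}$ and a pseudo-inverse on the other side, then transport this data via $\TwoCat(-,\cA)$ into a simplicial homotopy equivalence at each level $(m,k)$. The fibrant replacement statement will then follow formally once $\ND\bbHsim\cA$ is known to be fibrant.

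First, I define $\sigma_{m,k,n}$ on the generators of \cref{descr:LX}: on objects, $[x,z]\mapsto (x,0,z)$; a generating morphism $(f,y,z)\colon [x,z]\to [x',z]$ is sent to the composite in $\Lsim\XD_{m,k,n}$ obtained by conjugating $(f,y,z)\colon (x,y,z)\to (x',y,z)$ with the adjoint equivalences $(x,\{0,y\},z)$ and $(x',\{0,y\},z)$ (which exist since $\Lsim$ turns every vertical morphism into an adjoint equivalence), and $(x,y,h)$ is handled analogously; a generating 2-cell coming from a square $\sq{\alpha}{\overline{f}}{\overline{f}'}{u}{v}$ of $\XD_{m,k,n}$ is sent to the whiskering of its image $\alpha\colon v\overline{f}\Rightarrow \overline{f}'u$ in $\Lsim\XD_{m,k,n}$ with the appropriate inverse adjoint-equivalence data so that source and target match $\sigma(\overline{f})$ and $\sigma(\overline{f}')$. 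Well-definedness on relations reduces to the invertibility in $\Lsim\XD_{m,k,n}$ of the Gray interchange 2-cells between vertical morphisms and horizontal morphisms, which holds by the same argument used in the proof of \cref{thm:Quillen-NDH}. The equality $\pi_{m,k,n}\sigma_{m,k,n}=\id$ is then immediate, since $\pi$ collapses every adjoint equivalence added by $\sigma$. I also construct the pseudo-natural equivalence $\eta_{m,k,n}\colon \id\Rightarrow \sigma_{m,k,n}\pi_{m,k,n}$ of endofunctors of $\Lsim\XD_{m,k,n}$, with components $\eta_{(x,y,z)}\colon (x,y,z)\to (x,0,z)$ given by the inverse of the adjoint equivalence $(x,\{0,y\},z)$ and naturality data supplied by the same invertible 2-cells.

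Both $\sigma$ and $\eta$ depend only on the $(x,y)$-data and are accordingly strictly natural in $n$, so applying $\TwoCat(-,\cA)$ yields a simplicial map $\sigma^*_{m,k,-}\colon (\ND\bbHsim\cA)_{m,k,-}\to (\ND\bbH\cA)_{m,k,-}$ with $\sigma^*\circ\pi^*=\id$ on the nose, together with a pseudo-natural equivalence $F\eta\colon F\Rightarrow F\sigma\pi$ in $\cA$ for each $F\in(\ND\bbHsim\cA)_{m,k,n}$. Via the identification $(\ND\bA)_{m,k,-}\cong \cN(\bfH[\VK{k}\otimes\OM{m},\bA]_\ps)$ of \cref{lem:cNbfH} together with the tensor–hom adjunction of \cref{def:otimes2}, the pseudo-natural equivalence $F\eta$ transposes to a 2-functor $\Lsim\XD_{m,k,n}\otimes_2 \Eadj\to \cA$, providing a 1-simplex in $(\ND\bbHsim\cA)_{m,k,-}$ from $F$ to $F\sigma\pi$; the strict naturality of $\eta$ in $n$ organizes these 1-simplices into a simplicial homotopy $\pi^*\sigma^*\simeq\id$. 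Since both source and target are Kan complexes (as the nerve $\cN$ of \cref{prop:QuillenC2N2} is right Quillen), this establishes that $\pi^*_{m,k,-}$ is a homotopy equivalence of simplicial sets for every $(m,k)$. The main obstacle will be the verification that $\sigma$ is well-defined on 2-cells and that $\eta$ is genuinely pseudo-natural, each requiring careful bookkeeping of the adjoint-equivalence and interchanger data.

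For the fibrant replacement, $\ND\bbHsim\cA$ is fibrant in $\DblInfH$ by \cref{thm:fibrantnerve} applied to the weakly horizontally invariant $\bbHsim\cA$ of \cref{prop:fibreplHA}, and fibrant in $\TwoCSS$ by \cref{thm:Quillen-NDH}. Since both $\DblInfH$ and $\TwoCSS$ are left Bousfield localizations of the Reedy/injective model structure on $\sSet^{\twoDop}$, a level-wise weak equivalence of bisimplicial spaces is in particular a weak equivalence in each localization, so $\pi^*$ exhibits $\ND\bbHsim\cA$ as a fibrant replacement of $\ND\bbH\cA$ in both model structures.
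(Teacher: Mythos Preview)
Your overall strategy coincides with the paper's: the section $\sigma_{m,k,n}$ you build is exactly the paper's $\iota_{m,k,n}$, and your pseudo-natural equivalence $\eta_{m,k,n}$ is the paper's $\epsilon_{m,k,n}$ (up to direction). The construction of these and the verification $\pi\sigma=\id$ are fine.

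The genuine gap is in the passage from the pseudo-natural equivalence to a \emph{simplicial} homotopy. You write that the transpose $F\eta\colon \Lsim\XD_{m,k,n}\otimes_2\Eadj\to\cA$ ``provides a 1-simplex in $(\ND\bbHsim\cA)_{m,k,-}$ from $F$ to $F\sigma\pi$'', but $F$ is an $n$-simplex, so this phrasing is ill-typed. What you actually need is that this $2$-functor encodes a $(\Delta[n]\times\Delta[1])$-prism in $(\ND\bbHsim\cA)_{m,k}$, and for that you must identify the corepresenting object: a prism is a $2$-functor out of $\Lsim((\VK{k}\otimes\OM{m})\otimes\cC(\Delta[n]\times\Delta[1]))$, and there is no formal reason this should be $\Lsim\XD_{m,k,n}\otimes_2\ON{1}$. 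The paper's key step is precisely the isomorphism
\[
\Lsim\big((\VK{k}\otimes\OM{m})\otimes(\ON{n}\otimes_2\ON{1})\big)\;\cong\;\Lsim\XD_{m,k,n}\otimes_2\ON{1},
\]
which is not automatic because $\Lsim$ does \emph{not} preserve tensors in general (\cref{rem:noninv}). The paper earns this isomorphism by observing that the Gray squares coming from vertical morphisms of $\VK{k}$ and morphisms of $\ON{1}$ are weakly horizontally invertible (since the morphisms in $\ON{1}$ are adjoint equivalences), and then invoking \cref{lem:whiinLsim} to see that their images under $\Lsim$ are invertible $2$-cells, so that \cref{rem:noninv} applies. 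Your appeal to \cref{lem:cNbfH} does not bypass this: under that identification $\pi^*$ becomes $\cN$ of the postcomposition $2$-functor $\bfH[\VK{k}\otimes\OM{m},\bbH\cA]_\ps\to\bfH[\VK{k}\otimes\OM{m},\bbHsim\cA]_\ps$, but $\sigma^*$ is \emph{not} visibly $\cN$ of a $2$-functor in the other direction (there is no double functor $\bbHsim\cA\to\bbH\cA$ to induce one), so you cannot simply read off the homotopy from a pseudo-natural equivalence of endo-$2$-functors on the target. ``Naturality of $\eta$ in $n$'' gives you a map $X_\bullet\to\TwoCat(\Lsim\XD_{m,k,\bullet}\otimes_2\Eadj,\cA)$ of simplicial sets, but identifying the target with $X^{\Delta[1]}$ is exactly the missing computation above.

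Your final paragraph on fibrant replacement is correct as written.
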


\begin{proof} 
We first construct an inverse $2$-functor up to pseudo-natural equivalence
\[ \iota_{m,k,n}\colon L\XD_{m,k,n}\to \Lsim\XD_{m,k,n} \]
to the $2$-functor $\pi_{m,k,n}$ such that the composite $\pi_{m,k,n}\iota_{m,k,n}$ is the identity at $L\XD_{m,k,n}$. It sends an object $[x,z]$ to the object $(x,0,z)$, a generating morphism $(f,y,z)\colon [x,z]\to [x',z]$ with $f\in \OM{m}(x,x')$ represented by the set $\{x,x'\}$ to the composite 
\[ (x,0,z)\xrightarrow[\simeq]{(x,g,z)} (x,y,z)\xrightarrow{(f,y,z)} (x',y,z) \xrightarrow[\simeq]{(x',g',z)} (x',0,z), \]
and a generating morphism $(x,y,h)\colon [x,z]\to [x,z']$ with $h\in \ON{n}(z,z')$ represented by the set $\{z,z'\}$ to the composite 
\[ (x,0,z)\xrightarrow[\simeq]{(x,g,z)} (x,y,z)\xrightarrow{(x,y,h)} (x,y,z') \xrightarrow[\simeq]{(x,g',z')} (x,0,z'), \]
where $g\in \ON{k}(0,y)$ is represented by the set $\{0,y\}$ and $g'\in \ON{k}(y,0)$ is its weak inverse. The assignment on $2$-morphisms is uniquely determined by these assignments on objects and morphisms, since the $2$-functor $\pi_{m,k,n}$ is fully faithful on $2$-morphisms and we imposed that $\pi_{m,k,n}\iota_{m,k,n}=\id_{L\XD_{m,k,n}}$. In particular, since the morphisms in the $2$-category $L\XD_{m,k,n}$ are freely generated by the morphisms $(f,y,z)$ and $(x,y,h)$, this defines a $2$-functor $\iota_{m,k,n}\colon L\XD_{m,k,n}\to \Lsim\XD_{m,k,n}$. 

We now construct a pseudo-natural adjoint equivalence \[ \epsilon_{m,k,n}\colon \iota_{m,k,n}\pi_{m,k,n}\Rightarrow \id_{\Lsim\XD_{m,k,n}}. \] At an object $(x,y,z)\in \Lsim\XD_{m,k,n}$, we define $\epsilon_{(x,y,z)}$ to be the morphism
\[ \epsilon_{(x,y,z)}\coloneqq (x,g,z)\colon (x,0,z)\stackrel{\simeq}{\longrightarrow} (x,y,z), \]
 where $g\in \ON{k}(0,y)$ is represented by the set $\{0,y\}$. Note that the morphism $\epsilon_{(x,y,z)}$ as defined above is an adjoint equivalence. Given a morphism $(f,y,z)\colon (x,y,z)\to (x',y,z)$, we define $\epsilon_{(f,y,z)}$ to be the following invertible $2$-morphism 
\begin{tz}
\node[](1) {$(x,0,z)$};
\node[right of=1,xshift=2cm](5) {$(x,y,z)$};
\node[below of=1](2) {$(x,y,z)$};
\node[below of=2](3) {$(x',y,z)$};
\node[below of=3](4) {$(x',0,z)$};
\node[right of=4,xshift=2cm](6) {$(x',y,z)$};
\draw[->](1) to node[above,la]{$\epsilon_{(x,y,z)}=(x,g,z)$} node[below,la]{$\simeq$} (5);
\draw[->](1) to node[left,la]{$(x,g,z)$} node[right,la]{$\simeq$} (2);
\draw[->](2) to node[left,la]{$(f,y,z)$} (3);
\draw[->](3) to node[left,la]{$(x',g',z)$} node[right,la]{$\simeq$} (4);
\draw[->](4) to node[below,la,yshift=-2pt]{$\epsilon_{(x',y,z)}=(x',g,z)$} node[above,la]{$\simeq$} (6);
\draw[->](5) to node[right,la]{$(f,y,z)$} (6);
\draw[d] (3) to node(a)[pos=0.3]{} (6);
\cell[la,right,xshift=2pt,yshift=-3pt][n][0.6]{4.east}{a}{$\cong$};
\node[la] at ($(1)!0.5!(6)$) {$=$};
\end{tz}
induced by the counit $gg'\cong \id_y$ of the adjoint equivalence $(g,g')$. We define $\epsilon_{(x,y,h)}$ for a morphism $(x,y,h)\colon (x,y,z)\to (x,y,z')$  similarly. This defines a pseudo-natural adjoint equivalence $\epsilon_{m,k,n}\colon \iota_{m,k,n}\pi_{m,k,n}\Rightarrow \id_{\Lsim\XD_{m,k,n}}$, which can be represented by a $2$-functor $\ON{1}\to [\Lsim\XD_{m,k,n},\Lsim\XD_{m,k,n}]_{2,\ps}$ since it corresponds to an adjoint equivalence in the pseudo-hom $2$-category. By definition of the Gray tensor product~$\otimes_2$ (see \cref{def:otimes2}), this pseudo-natural adjoint equivalence can equivalently be seen as a $2$-functor 
\begin{tz}
\node[](1) {$\Lsim\XD_{m,k,n}\otimes_2\ON{1}$};
\node[below of=1](2) {$\Lsim\XD_{m,k,n}$};
\node[above of=1](3) {$\Lsim\XD_{m,k,n}$};
\node[right of=1,xshift=3cm](4) {$\Lsim\XD_{m,k,n}$};
\node at ($(4.east)-(0,4pt)$) {.};

\draw[->] (2) to node[left,la] {$\id \otimes_2 d^1$} (1);
\draw[->] (3) to node[left,la] {$\id \otimes_2 d^0$} (1);
\draw[->] (1) to node[over,la] {$\epsilon_{m,k,n}$} (4);
\draw[->] (3) to node[above,la,xshift=.8cm] {$\iota_{m,k,n}\circ \pi_{m,k,n}$} (4);
\draw[d] (2) to (4);
\end{tz} 

We claim that these $2$-functors $\epsilon_{m,k,n}$ induce a homotopy $\epsilon^*_{m,k}$ as in
\begin{tz}
\node[](1) {$(\ND\bbHsim\cA)_{m,k}\times \Reps{1}$};
\node[below of=1](2) {$(\ND\bbHsim\cA)_{m,k}$};
\node[above of=1](3) {$(\ND\bbHsim\cA)_{m,k}$};
\node[right of=1,xshift=3cm](4) {$(\ND\bbHsim\cA)_{m,k}$};
\node at ($(4.east)-(0,4pt)$) {,};

\draw[->] (2) to node[left,la] {$\id \times d^1$} (1);
\draw[->] (3) to node[left,la] {$\id \times d^0$} (1);
\draw[->] (1) to node[over,la] {$\epsilon^*_{m,k}$} (4);
\draw[->] (3) to node[above,la,xshift=.8cm] {$\pi^*_{m,k}\circ \iota^*_{m,k}$} (4);
\draw[d] (2) to (4);
\end{tz} 
where the $n$th component of $\epsilon^*_{m,k}$ is obtained by applying the functor $\TwoCat(-,\cA)$ to $\epsilon_{m,k,n}$, for all $n\geq 0$. 

For each $F\in(\ND\bbHsim\cA)_{m,k,n}$, we want to describe the corresponding $(\Reps{n}\times \Reps{1})$-prism of the homotopy, which coincide with $F\iota_{m,k,n}\pi_{m,k,n}$ at $0\in \Reps{1}$ and with $F$ at $1\in \Reps{1}$. Note that a $(\Reps{n}\times \Reps{1})$-prism in $(\ND\bbHsim\cA)_{m,k}$ corresponds to a $2$-functor 
\[ \Lsim((\VK{k}\otimes \OM{m})\otimes (\ON{n}\otimes_2 \ON{1}))\longrightarrow \cA. \]
The squares induced by vertical morphisms in $\VK{k}$ and morphisms in $\ON{1}$ must be weakly horizontally invertible in $(\VK{k}\otimes \OM{m})\otimes (\ON{n}\otimes_2 \ON{1})$, since the morphisms in $\ON{1}$ are adjoint equivalences. It follows from \cref{lem:whiinLsim} that the corresponding $2$-morphisms in $\Lsim((\VK{k}\otimes \OM{m})\otimes (\ON{n}\otimes_2 \ON{1}))$ are invertible and therefore, by \cref{rem:noninv}, we get that
\begin{align*}
    \Lsim((\VK{k}&\otimes \OM{m})\otimes (\ON{n}\otimes_2 \ON{1})) \\ &\cong \Lsim((\VK{k}\otimes \OM{m})\otimes \ON{n})\otimes_2 \ON{1} =\Lsim\XD_{m,k,n}\otimes_2 \ON{1}.
\end{align*} 
This says that a $(\Reps{n}\times \Reps{1})$-prism in $(\ND\bbHsim\cA)_{m,k}$ corresponds to a $2$-functor \[ \Lsim\XD_{m,k,n}\otimes_2 \ON{1}\to \cA. \]
We can therefore define the component of the homotopy at $F\in (\ND\bbHsim\cA)_{m,k,n}$ to be $F \epsilon_{m,k,n}$. This shows the claim.

Since $\iota^*_{m,k}\circ \pi^*_{m,k}=\id_{(\ND\bbH\cA)_{m,k}}$ and by the above homotopy, we see that $\iota^*_{m,k}$ and $\pi^*_{m,k}$ give a homotopy equivalence between $(\ND\bbH\cA)_{m,k}$ and $(\ND\bbHsim\cA)_{m,k}$, for all $m,k\geq 0$. These assemble into maps $\iota^*$ and $\pi^*$ of $\sSet^{\twoDop}$ which give a level-wise weak equivalence between $\ND\bbH\cA$ and $\ND\bbHsim\cA$. This is in particular a weak equivalence in $\TwoCSS$ and in $\DblInfH$. Since $\ND\bbHsim\cA$ is fibrant in $\TwoCSS$ and in $\DblInfH$, we conclude that it gives a fibrant replacement of $\ND\bbH\cA$. 
\end{proof}

\begin{rem}
    Note that the map $\pi^*\colon \ND\bbH\cA\to \ND\bbHsim\cA$ is also a monomorphism, hence $\pi^*$ is in fact level-wise a trivial cofibration in $\sSet^{\twoDop}$.
\end{rem}

\begin{rem}
Recall from \cref{rem:Quillen-pair-Cat} the Quillen pair $P\dashv D$ between $\Cat$ and $\TwoCat$ and let $\CC$ be a category. The nerve of the double category $\bbH D\CC$ is given by
\[    (\ND\bbH D\CC)_{m,k,n}=\TwoCat(L\XD_{m,k,n},D\CC) \cong \Cat(PL\XD_{m,k,n},\CC), \]
for all $m,k,n\geq 0$. By applying the functor $P$ to the $2$-category $L\XD_{m,k,n}$ as given in \cref{descr:LX}, we can see that $PL\XD_{m,k,n}\cong [m]\times I[n]$, where $I[n]$ is the category with object set $\{0,\ldots,n\}$ and a unique isomorphism between any two objects. Therefore, 
\[ (\ND\bbH D\CC)_{m,k,n}\cong \Cat([m]\times I[n],\CC) =N_{\mathrm{Rezk}}(\CC)_{m,n} \]
is given by the Rezk nerve (defined in \cite[\S 3.5]{Rezk}) constant in the vertical direction. On the other hand, the nerve of the double category $\bbHsim D\CC$ is given by
\begin{align*}
    (\ND\bbHsim D\CC)_{m,k,n}&=\TwoCat(\Lsim\XD_{m,k,n},D\CC) \cong \Cat(P\Lsim\XD_{m,k,n},\CC) \\
    &    \cong \Cat((I[k]\times [m])\times I[n], \CC),
\end{align*} 
for all $m,k,n\geq 0$. Then, by \cref{thm:homotopy}, there is a level-wise homotopy equivalence $\ND\bbH D\CC\to \ND\bbHsim D\CC$ which exhibits $\ND\bbHsim D\CC$ as a fibrant replacement of the Rezk nerve of~$\CC$ in $\TwoCSS$ (or $\DblInfH$).
\end{rem}

\appendix
\section{Weakly horizontally invertible squares} \label{app:weakhorinv}

In this first appendix, we give some technical results about weakly horizontally invertible squares, which will be of use to describe the nerves in low dimensions in \cref{app:nerveinlow}. 
These results also find their utility in the papers \cite{MSVfirst,MSVsecond} by the author, Sarazola, and Verdugo. Some of the lemmas presented here (\cref{lem:uniqueweakinv,lem:whiiffvi,lem:pseudoeq}) were also proven independently in another context by proven by Grandis and Par\'e in \cite{GraPar19} -- their terminology for weakly horizontally invertible squares is that of \emph{equivalence cells}. In \cref{subsec:uniquewhi}, we first prove that the weak inverse of a weakly horizontally invertible square is unique when one first fixes horizontal \emph{adjoint} equivalence data. In \cref{subsec:specialwhi}, we consider weakly horizontally invertible squares of special forms and characterize them. Finally, in \cref{subsec:ps-equ}, we give a definition of horizontal pseudo-natural transformations and modifications, which correspond to the morphisms and $2$-morphisms in the pseudo-hom $2$-categories $\bfH[-,-]_\ps$ of the $\TwoCat$-enrichment of $\DblCat$ given in \cref{def:tensor}. We then characterize the equivalences in these pseudo-hom $2$-categories.

\subsection{Unique inverse lemma} \label{subsec:uniquewhi}

We first show the existence and uniqueness of a weak inverse for a weakly horizontally invertible square with respect to fixed horizontal adjoint equivalence data. 

\begin{lemme} \label{lem:uniqueweakinv}
Let $\sq{\alpha}{f}{f'}{u}{v}$ be a weakly horizontally invertible square in a double category $\bA$. Suppose $(f,g,\eta,\epsilon)$ and $(f',g',\eta',\epsilon')$ are horizontal adjoint equivalences. Then there is a unique square $\sq{\beta}{g}{g'}{v}{u}$ in $\bA$ which is the weak inverse of $\alpha$ with respect to the horizontal adjoint equivalence data $(f,g,\eta,\epsilon)$ and $(f',g',\eta',\epsilon')$. 
\end{lemme}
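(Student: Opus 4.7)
The plan is to reduce both parts to standard $2$-categorical facts about adjoint equivalences applied to the $2$-category $\cV\bA = \bfH[\bbV[1],\bA]_\ps$, in which $\alpha$ lives as a $1$-morphism from $u$ to $v$ whose being weakly horizontally invertible amounts to being an equivalence. Once one checks that a weak inverse of $\alpha$ together with its four vertically invertible squares $\eta,\epsilon,\eta',\epsilon'$ encodes exactly an adjoint equivalence structure on $\alpha$ in $\cV\bA$ with unit and counit built from the fixed data, the lemma reduces to the classical fact that an adjoint equivalence structure is determined by its underlying $1$-morphism together with its unit and counit.

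For uniqueness I would argue directly. Given two weak inverses $\beta_1,\beta_2$ of $\alpha$ with respect to the same horizontal adjoint equivalence data, I would build a large pasting diagram that begins with $\beta_1$, uses $\eta$ to insert a copy of $g f \simeq \id_A$ above and $\epsilon'$ to insert a copy of $f' g' \simeq \id_{B'}$ below, and then substitutes the horizontal composite of $\alpha$ and $\beta_2$ for the middle instance of $\id_v$ using the $\epsilon$-pasting equality for $\beta_2$. Applying the $\eta$-pasting equality for $\beta_1$ then absorbs the internal horizontal composite of $\beta_1$ and $\alpha$ back into $\id_u$, and the two triangle identities of the horizontal adjoint equivalences $(f,g,\eta,\epsilon)$ and $(f',g',\eta',\epsilon')$ cancel the residual copies of $\eta,\epsilon,\eta',\epsilon'$, collapsing the diagram to $\beta_2$. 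This is the classical ``given the unit, the right adjoint is unique'' argument for adjunctions, transported into the double-categorical setting, and it explains why the adjointness hypothesis (as opposed to mere equivalence) is essential.

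For existence, since $\alpha$ is weakly horizontally invertible there exists some weak inverse $\beta_0$ with horizontal equivalence data $(f,g_0,\eta_0,\epsilon_0)$ and $(f',g_0',\eta_0',\epsilon_0')$, which by \cref{lem:promoteadjoint} may be taken to be adjoint equivalences. Any two adjoint equivalence structures on a common $1$-morphism $f$ are related by a unique invertible $2$-cell $g_0 \cong g$ in $\bfH\bA$ compatible with the respective units and counits, and similarly $g_0' \cong g'$. Viewing these invertible $2$-cells as vertically invertible squares in $\bA$ with trivial vertical boundaries and pasting them onto the top and bottom of $\beta_0$ produces a square $\beta$ of the required shape, and a direct check using the compatibility of the comparison squares with the units and counits shows that the four pasting equalities defining a weak inverse with respect to $(f,g,\eta,\epsilon)$ and $(f',g',\eta',\epsilon')$ hold for $\beta$.

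The main obstacle will be the pasting computation in the uniqueness part: several layers of squares must be manipulated simultaneously via the interchange law, and tracking which copy of $\eta,\epsilon,\eta',\epsilon'$ cancels against which is delicate. A cleaner conceptual route would invoke uniqueness of adjoint inverses abstractly in $\cV\bA$, but this merely repackages the combinatorics into the verification that a weak inverse of $\alpha$ encodes an adjoint equivalence structure on $\alpha$ in the pseudo-hom $2$-category with the prescribed unit and counit, which is itself nontrivial bookkeeping.
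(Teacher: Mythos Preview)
Your proposal is correct and follows essentially the same route as the paper. For existence, the paper also starts from an arbitrary weak inverse $\gamma$ of $\alpha$ with respect to some horizontal adjoint equivalence data $(f,h,\mu,\delta)$, $(f',h',\mu',\delta')$ and then writes down $\beta$ as an explicit pasting that sandwiches $\gamma$ between the comparison $2$-cells $g\cong h$ and $h'\cong g'$; your abstract description via the canonical isomorphism between two right adjoints of the same morphism unpacks to exactly this formula. For uniqueness, both you and the paper perform the standard ``swap two inverses past each other'' pasting using the weak-inverse equalities and the triangle identities; the paper enters via $\epsilon^{-1}$ and $\epsilon'$ rather than $\eta$ and $\epsilon'$, but the manipulation is the same. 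Your $\cV\bA$ framing is a pleasant conceptual wrapper (and is precisely how the paper justifies, in the remark preceding the lemma, that the initial weak inverse may be taken with adjoint data), though note that the promotion to adjoint equivalence data must be done in $\cV\bA$ rather than componentwise in $\bfH\bA$, since adjusting $\epsilon_0$ alone could break the weak-inverse pasting equalities for $\beta_0$.
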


\begin{proof}
Since $\alpha$ is weakly horizontally invertible, by definition, there is a weak inverse $\gamma$ of $\alpha$ with respect to horizontal adjoint equivalence data $(f,h,\mu,\delta)$ and $(f',h',\mu',\delta')$. We define $\beta$ to be given by the following pasting. 
\begin{tz}
\node[](1) {$B$};
\node[right of=1](2) {$A$};
\node[right of =2,rr](3) {$A$};
\draw[->] (1) to node[above,la]{$g$} (2);
\draw[d] (2) to (3);

\node[below of=1](4) {$B$};
\node[right of=4](5) {$A$};
\node[right of=5](6) {$B$};
\node[right of=6](7) {$A$};
\draw[d,pro] (1) to (4);
\draw[d,pro] (2) to node(a)[]{} (5);
\draw[d,pro] (3) to node(b)[]{} (7);
\draw[->] (4) to node[over,la]{$g$} (5);
\draw[->] (5) to node[over,la]{$f$} (6);
\draw[->] (6) to node[over,la]{$h$} (7);

\node[la] at ($(1)!0.5!(5)$) {$e_g$};
\node[la] at ($(a)!0.45!(b)$) {$\mu$};
\node[la] at ($(a)!0.55!(b)$) {$\vcong$};

\node[below of=4](1) {$B$};
\node[right of=1,rr](2) {$B$};
\node[right of =2](3) {$A$};
\draw[->] (2) to node[over,la]{$h$} (3);
\draw[d] (1) to (2);
\draw[d,pro] (4) to node(a)[]{} (1);
\draw[d,pro] (6) to node(b)[]{} (2);
\draw[d,pro] (7) to (3);

\node[la] at ($(6)!0.5!(3)$) {$e_h$};
\node[la] at ($(a)!0.45!(b)$) {$\epsilon$};
\node[la] at ($(a)!0.55!(b)$) {$\vcong$};

\node[left of=1,xshift=-.5cm](A) {$A$};
\node[left of=A](B) {$B$};
\node[below of=B](B') {$B'$};
\node[below of=A](A') {$A'$};
\draw[->] (B) to node[above,la] {$g$} (A);
\draw[->,pro] (B) to node[left,la] {$v$} (B');
\draw[->] (B') to node[below,la] {$g'$} (A');
\draw[->,pro] (A) to node[right,la] {$u$} (A');

\node[la] at ($(B)!0.5!(A')$) {$\beta$};
\node[la] at ($(A')!0.5!(1)$) {$=$};

\node[below of=1](4) {$B'$};
\node[right of=4,rr](5) {$B'$};
\node[right of=5](6) {$A'$};
\draw[->] (5) to node[over,la]{$h'$} (6);
\draw[d] (4) to (5);
\draw[->,pro] (1) to node[left,la]{$v$} (4);
\draw[->,pro] (2) to node[left,la]{$v$} (5);
\draw[->,pro] (3) to node[right,la]{$u$} (6);

\node[la] at ($(1)!0.5!(5)$) {$\id_v$};
\node[la] at ($(2)!0.5!(6)$) {$\gamma$};

\node[below of=4](1) {$B'$};
\node[right of=1](2) {$A'$};
\node[right of=2](3) {$B'$};
\node[right of=3](9) {$A'$};
\draw[d,pro] (4) to node(a)[]{} (1);
\draw[d,pro] (5) to node(b)[]{} (3);
\draw[d,pro] (6) to (9);
\draw[->] (1) to node[over,la]{$g'$} (2);
\draw[->] (2) to node[over,la]{$f'$} (3);
\draw[->] (3) to node[over,la]{$h'$} (9);

\node[la] at ($(5)!0.5!(9)$) {$e_{h'}$};
\node[la] at ($(a)!0.4!(b)$) {$(\epsilon')^{-1}$};
\node[la] at ($(a)!0.6!(b)$) {$\vcong$};

\node[below of=1](4) {$B'$};
\node[right of=4](5) {$A'$};
\node[right of=5,rr](6) {$A'$};
\draw[->] (4) to node[below,la]{$g'$} (5);
\draw[d] (5) to (6);
\draw[d,pro] (1) to (4);
\draw[d,pro] (2) to node(a)[]{} (5);
\draw[d,pro] (9) to node(b)[]{} (6);

\node[la] at ($(1)!0.5!(5)$) {$e_{g'}$};
\node[la] at ($(a)!0.4!(b)$) {$(\mu')^{-1}$};
\node[la] at ($(a)!0.6!(b)$) {$\vcong$};
\end{tz}
We check that $\beta$ is a weak inverse of $\alpha$ with respect to the horizontal adjoint equivalence data $(f,g,\eta,\epsilon)$ and $(f',g',\eta',\epsilon')$. We have that 
\begin{tz}
\node[](0) {$A$};
\node[right of=0,rr](1) {$A$};
\node[below of=0](2) {$A$};
\node[right of=2](3) {$B$};
\node[right of=3](4) {$A$};
\node[below of=2](5) {$A'$};
\node[right of=5](6) {$B'$};
\node[right of=6](7) {$A'$};
\node[below of=5](8) {$A'$};
\node[right of=8,rr](9) {$A'$};
\draw[d] (0) to (1);
\draw[d] (8) to (9);
\draw[d,pro] (0) to node(a)[]{} (2);
\draw[d,pro] (1) to node(b)[]{} (4);
\draw[d,pro] (5) to node(c)[]{} (8);
\draw[d,pro] (7) to node(d)[]{} (9);
\draw[->] (2) to node[over,la]{$f$} (3);
\draw[->] (3) to node[over,la]{$g$} (4);
\draw[->] (5) to node[over,la]{$f'$} (6);
\draw[->] (6) to node[over,la]{$g'$} (7);
\draw[->,pro] (2) to node[left,la] {$u$} (5);
\draw[->,pro] (3) to node[left,la] {$v$} (6);
\draw[->,pro] (4) to node[right,la] {$u$} (7);

\node[la] at ($(a)!0.45!(b)$) {$\eta$};
\node[la] at ($(a)!0.55!(b)$) {$\vcong$};
\node[la] at ($(2)!0.5!(6)$) {$\alpha$};
\node[la] at ($(3)!0.5!(7)$) {$\beta$};
\node[la] at ($(c)!0.4!(d)$) {$(\eta')^{-1}$};
\node[la] at ($(c)!0.6!(d)$) {$\vcong$};

\node[right of=4,xshift=.5cm](0) {$A$};
\node[la] at ($(7)!0.5!(0)$) {$=$};
\node[right of=0](1') {$B$};
\node[right of=1',rr](2') {$B$};
\node[right of=2'](3') {$A$};
\node[below of=0](4) {$A'$};
\node[right of=4](5) {$B'$};
\node[right of=5,rr](6) {$B'$};
\node[right of=6](7) {$A'$};
\draw[->] (0) to node[over,la]{$f$} (1');
\draw[->] (2') to node[over,la]{$h$} (3');
\draw[->] (4) to node[over,la]{$f'$} (5);
\draw[->] (6) to node[over,la]{$h'$} (7);
\draw[d] (1') to (2');
\draw[d] (5) to (6);
\draw[->,pro] (0) to node[left,la] {$u$} (4);
\draw[->,pro] (1') to node[right,la] {$v$} (5);
\draw[->,pro] (2') to node[left,la] {$v$} (6);
\draw[->,pro] (3') to node[right,la] {$u$} (7);

\node[la] at ($(0)!0.5!(5)$) {$\alpha$};
\node[la] at ($(2')!0.5!(7)$) {$\gamma$};
\node[la] at ($(1')!0.5!(6)$) {$\id_v$};

\node[below of=4](1) {$A'$};
\node[right of=1](2) {$B'$};
\node[right of=2](3) {$A'$};
\node[right of=3](8) {$B'$};
\node[right of=8](9) {$A'$};
\draw[->] (1) to node[over,la]{$f'$} (2);
\draw[->] (2) to node[over,la]{$g'$} (3);
\draw[->] (3) to node[over,la]{$f'$} (8);
\draw[->] (8) to node[over,la]{$h'$} (9);
\draw[d,pro] (4) to (1);
\draw[d,pro] (5) to node(a)[]{} (2);
\draw[d,pro] (6) to node(b)[]{} (8);
\draw[d,pro] (7) to (9);

\node[la] at ($(a)!0.4!(b)$) {$(\epsilon')^{-1}$};
\node[la] at ($(a)!0.6!(b)$) {$\vcong$};
\node[la] at ($(4)!0.5!(2)$) {$e_{f'}$};
\node[la] at ($(6)!0.5!(9)$) {$e_{h'}$};

\node[below of=1](4) {$A'$};
\node[right of=4,rr](5) {$A'$};
\node[right of=5,rr](6) {$A'$};
\draw[d] (4) to (5);
\draw[d] (5) to (6);
\draw[d,pro] (1) to node(a)[]{} (4);
\draw[d,pro] (3) to node(b)[]{} (5);
\draw[d,pro] (9) to node(c)[]{} (6);

\node[la] at ($(a)!0.4!(b)$) {$(\eta')^{-1}$};
\node[la] at ($(a)!0.6!(b)$) {$\vcong$};
\node[la] at ($(b)!0.4!(c)$) {$(\mu')^{-1}$};
\node[la] at ($(b)!0.6!(c)$) {$\vcong$};

\node[above of=0](1) {$A$};
\node[right of=1](2) {$B$};
\node[right of=2](3) {$A$};
\node[right of=3](8) {$B$};
\node[right of=8](9) {$A$};
\draw[->] (1) to node[over,la]{$f$} (2);
\draw[->] (2) to node[over,la]{$g$} (3);
\draw[->] (3) to node[over,la]{$f$} (8);
\draw[->] (8) to node[over,la]{$h$} (9);
\draw[d,pro] (1) to (0);
\draw[d,pro] (2) to node(a)[]{} (1');
\draw[d,pro] (8) to node(b)[]{} (2');
\draw[d,pro] (9) to (3');

\node[la] at ($(a)!0.45!(b)$) {$\epsilon$};
\node[la] at ($(a)!0.55!(b)$) {$\vcong$};
\node[la] at ($(1)!0.5!(1')$) {$e_{f}$};
\node[la] at ($(8)!0.5!(3')$) {$e_{h}$};

\node[above of=1](4) {$A$};
\node[right of=4,rr](5) {$A$};
\node[right of=5,rr](6) {$A$};
\draw[d] (4) to (5);
\draw[d] (5) to (6);
\draw[d,pro] (4) to node(a)[]{} (1);
\draw[d,pro] (5) to node(b)[]{} (3);
\draw[d,pro] (6) to node(c)[]{} (9);

\node[la] at ($(a)!0.45!(b)$) {$\eta$};
\node[la] at ($(a)!0.55!(b)$) {$\vcong$};
\node[la] at ($(b)!0.45!(c)$) {$\mu$};
\node[la] at ($(b)!0.55!(c)$) {$\vcong$};
\end{tz}
\begin{tz}
\node[](0) {$A$};
\node[right of=0,rr](1) {$A$};
\node[below of=0](2) {$A$};
\node[right of=2](3) {$B$};
\node[right of=3](4) {$A$};
\node[below of=2](5) {$A'$};
\node[right of=5](6) {$B'$};
\node[right of=6](7) {$A'$};
\node[below of=5](8) {$A'$};
\node[right of=8,rr](9) {$A'$};
\draw[d] (0) to (1);
\draw[d] (8) to (9);
\draw[d,pro] (0) to node(a)[]{} (2);
\draw[d,pro] (1) to node(b)[]{} (4);
\draw[d,pro] (5) to node(c)[]{} (8);
\draw[d,pro] (7) to node(d)[]{} (9);
\draw[->] (2) to node[over,la]{$f$} (3);
\draw[->] (3) to node[over,la]{$h$} (4);
\draw[->] (5) to node[over,la]{$f'$} (6);
\draw[->] (6) to node[over,la]{$h'$} (7);
\draw[->,pro] (2) to node(u)[left,la] {$u$} (5);
\draw[->,pro] (3) to node[left,la] {$v$} (6);
\draw[->,pro] (4) to node[right,la] {$u$} (7);

\node[left of=u,la,xshift=.5cm] {$=$};
\node[la] at ($(a)!0.45!(b)$) {$\mu$};
\node[la] at ($(a)!0.55!(b)$) {$\vcong$};
\node[la] at ($(2)!0.5!(6)$) {$\alpha$};
\node[la] at ($(3)!0.5!(7)$) {$\gamma$};
\node[la] at ($(c)!0.4!(d)$) {$(\mu')^{-1}$};
\node[la] at ($(c)!0.6!(d)$) {$\vcong$};

\node[right of=4,xshift=.5cm](B) {$A$};
\node[right of=B](A) {$A$};
\node[below of=B](B') {$A'$};
\node[below of=A](A') {$A'$};
\draw[d] (B) to (A);
\draw[->,pro] (B) to node[left,la] {$u$} (B');
\draw[d] (B') to (A');
\draw[->,pro] (A) to node[right,la] {$u$} (A');

\node[la] at ($(B)!0.5!(A')$) {$\id_u$};
\node[la] at ($(4)!0.5!(B')$) {$=$};
\end{tz}
where the first equality holds by definition of $\beta$, the second by the triangle identities for $(\eta,\epsilon)$ and $(\eta',\epsilon')$, and the last by definition of $\gamma$ being a weak inverse of $\alpha$ with respect to the horizontal adjoint equivalence data $(f,h,\mu,\delta)$ and $(f',h',\mu',\delta')$. The other pasting equality for $\alpha$, $\beta$, $\epsilon^{-1}$, and $\epsilon'$ also holds by definition of $\gamma$ being a weak inverse of $\alpha$, and by the triangle identities for $(\mu,\delta)$ and $(\mu',\delta')$. This shows that $\beta$ is a weak inverse of $\alpha$ with respect to the horizontal adjoint equivalence data $(f,g,\eta,\epsilon)$ and $(f',g',\eta',\epsilon')$. 

Now suppose that $\sq{\beta'}{g}{g'}{v}{u}$ is another weak inverse of $\alpha$ with respect to the horizontal adjoint equivalence data $(f,g,\eta,\epsilon)$ and $(f',g',\eta',\epsilon')$. Then we have that
\begin{tz}
\node[](A) {$A$};
\node[left of=A](B) {$B$};
\node[below of=B](B') {$B'$};
\node[below of=A](A') {$A'$};
\draw[->] (B) to node[above,la] {$g$} (A);
\draw[->,pro] (B) to node[left,la] {$v$} (B');
\draw[->] (B') to node[below,la] {$g'$} (A');
\draw[->,pro] (A) to node[right,la] {$u$} (A');

\node[la] at ($(B)!0.5!(A')$) {$\beta'$};

\node[right of=A,yshift=1.5cm,xshift=.5cm](0) {$B$};
\node[right of=0,rr](1) {$B$};
\node[right of=1](1') {$A$};
\node[below of=0](2) {$B$};
\node[la] at ($(A')!0.5!(2)$) {$=$};
\node[right of=2](3) {$A$};
\node[right of=3](4) {$B$};
\node[right of=4](4') {$A$};
\node[below of=2](5) {$B'$};
\node[right of=5](6) {$A'$};
\node[right of=6](7) {$B'$};
\node[right of=7](7') {$A'$};
\node[below of=5](8) {$B'$};
\node[right of=8,rr](9) {$B'$};
\node[right of=9](9') {$A'$};
\draw[d] (0) to (1);
\draw[d] (8) to (9);
\draw[d,pro] (0) to node(a)[]{} (2);
\draw[d,pro] (1) to node(b)[]{} (4);
\draw[d,pro] (1') to (4');
\draw[d,pro] (5) to node(c)[]{} (8);
\draw[d,pro] (7) to node(d)[]{} (9);
\draw[d,pro] (7') to (9');
\draw[->] (1) to node[above,la]{$g$} (1');
\draw[->] (4) to node[over,la]{$g$} (4');
\draw[->] (7) to node[over,la]{$g'$} (7');
\draw[->] (9) to node[below,la]{$g'$} (9');
\draw[->] (2) to node[over,la]{$g$} (3);
\draw[->] (3) to node[over,la]{$f$} (4);
\draw[->] (5) to node[over,la]{$g'$} (6);
\draw[->] (6) to node[over,la]{$f'$} (7);
\draw[->,pro] (2) to node[left,la] {$v$} (5);
\draw[->,pro] (3) to node[left,la] {$u$} (6);
\draw[->,pro] (4) to node[right,la] {$v$} (7);
\draw[->,pro] (4') to node[right,la] {$v$} (7');

\node[la] at ($(a)!0.4!(b)$) {$\epsilon^{-1}$};
\node[la] at ($(a)!0.6!(b)$) {$\vcong$};
\node[la] at ($(2)!0.5!(6)$) {$\beta$};
\node[la] at ($(3)!0.5!(7)$) {$\alpha$};
\node[la] at ($(4)!0.5!(7')$) {$\beta'$};
\node[la] at ($(c)!0.45!(d)$) {$\epsilon'$};
\node[la] at ($(c)!0.55!(d)$) {$\vcong$};
\node[la] at ($(1)!0.5!(4')$) {$e_g$};
\node[la] at ($(7)!0.5!(9')$) {$e_{g'}$};

\node[right of=7',xshift=.5cm](B') {$B'$};
\node[above of=B'](B) {$B$};
\node[right of=B](A) {$A$};
\node[la] at ($(7')!0.5!(B)$) {$=$};
\node[below of=A](A') {$A'$};
\draw[->] (B) to node[above,la] {$g$} (A);
\draw[->,pro] (B) to node[left,la] {$v$} (B');
\draw[->] (B') to node[below,la] {$g'$} (A');
\draw[->,pro] (A) to node[right,la] {$u$} (A');

\node[la] at ($(B)!0.5!(A')$) {$\beta$};
\end{tz}
where the first equality holds by definition of $\beta$ being a weak inverse of $\alpha$ with respect to the horizontal adjoint equivalence data $(f,g,\eta,\epsilon)$ and $(f',g',\eta',\epsilon')$, the third by definition of $\beta'$ being a weak inverse of $\alpha$ with respect to the horizontal adjoint equivalence data $(f,g,\eta,\epsilon)$ and $(f',g',\eta',\epsilon')$ and by the triangle identities for $(\eta,\epsilon)$ and $(\eta',\epsilon')$. This shows that $\beta'=\beta$ and therefore such a weak inverse is unique. 
\end{proof}

\subsection{Weakly horizontally invertible square in \texorpdfstring{$\bbH\cA$}{HA}, \texorpdfstring{$\bbHsim\cA$}{H-sim-A}, and \texorpdfstring{$\Lsim\bA$}{L-sim-A}} \label{subsec:specialwhi}

We first show that weakly horizontally invertible squares with trivial vertical boundaries correspond to vertically invertible squares between horizontal equivalences. 

\begin{lemme} \label{lem:whiiffvi}
Let $\alpha$ be a square in a double category $\bA$ of the form
\begin{tz}
\node[](A) {$A$};
\node[right of=A](B) {$B$};
\node[below of=A](A') {$A$};
\node[right of=A'](B') {$B$};
\draw[d,pro] (A) to (A');
\draw[d,pro] (B) to (B');
\draw[->] (A) to node[above,la] {$f$} (B);
\draw[->] (A') to node[below,la] {$f'$} (B');

\node[la] at ($(A)!0.5!(B')$) {$\alpha$};
\end{tz}
where $f$ and $f'$ are horizontal equivalences in $\bA$. Then the square $\alpha$ is weakly horizontally invertible if and only if it is vertically invertible. 
\end{lemme}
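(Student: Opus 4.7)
The strategy is to reduce the lemma to a purely 2-categorical statement in the underlying horizontal $2$-category $\bfH\bA$. Since $\alpha$ has trivial vertical boundaries, it is precisely a 2-cell $\alpha\colon f \Rightarrow f'$ in $\bfH\bA$, and it is vertically invertible iff it is invertible in $\bfH\bA$; similarly $f, f'$ are horizontal equivalences iff they are equivalences in $\bfH\bA$. By \cref{lem:promoteadjoint}, I may fix horizontal adjoint equivalence data $(f,g,\eta,\epsilon)$ and $(f',g',\eta',\epsilon')$, where $\eta,\eta',\epsilon,\epsilon'$ are invertible 2-cells of $\bfH\bA$. Unpacking the pasting diagrams and using \cref{lem:uniqueweakinv}, the existence of a weak inverse $\beta$ of $\alpha$ with trivial vertical boundaries translates to the existence of a unique 2-cell $\beta\colon g \Rightarrow g'$ in $\bfH\bA$ satisfying
\[ \beta \ast \alpha = \eta' \circ \eta^{-1} \qquad \text{and} \qquad \alpha \ast \beta = (\epsilon')^{-1} \circ \epsilon, \]
where $\ast$ and $\circ$ denote horizontal and vertical composition of 2-cells, respectively.

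For the direction ``weakly horizontally invertible $\Rightarrow$ vertically invertible'', I would apply the trick suggested by Maru Sarazola: given such a $\beta$, define explicitly
\[ \widetilde\alpha := (\epsilon' \ast f) \circ (f' \ast \beta \ast f) \circ (f' \ast \eta)\colon f' \Rightarrow f. \]
Middle interchange gives $(f' \ast \eta) \circ \alpha = (\alpha \ast gf) \circ (f \ast \eta)$, and $(f' \ast \beta \ast f) \circ (\alpha \ast gf) = (\alpha \ast \beta) \ast f = ((\epsilon')^{-1} \circ \epsilon) \ast f$ by the second equation above; combining these, the composite $\widetilde\alpha \circ \alpha$ collapses to $(\epsilon \ast f) \circ (f \ast \eta)$, which is $\id_f$ by the triangle identity for $(f,g,\eta,\epsilon)$. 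A symmetric computation, now using the first equation and the triangle identity for $(f',g',\eta',\epsilon')$, yields $\alpha \circ \widetilde\alpha = \id_{f'}$.

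For the converse, I would construct $\beta$ from $\alpha^{-1}$ via the usual mate formula
\[ \beta := (g' \ast \epsilon) \circ (g' \ast \alpha^{-1} \ast g) \circ (\eta' \ast g)\colon g \Rightarrow g', \]
which is a 2-cell in $\bfH\bA$ and therefore corresponds to a square with trivial vertical boundaries. Verifying the two displayed equations is then a routine computation using middle interchange, the triangle identities for both adjoint equivalence data, and invertibility of $\alpha$.

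The main obstacle is not conceptual but notational: carefully carrying out the translation between the four large pasting-diagram equations defining weak horizontal invertibility and the two $\ast/\circ$ equations above. Once this translation is in place, both directions are formal 2-categorical manipulations requiring no further input than the existence of horizontal adjoint equivalence data.
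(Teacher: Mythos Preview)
Your proposal is correct. The forward direction is identical to the paper's: your $\widetilde\alpha$ is precisely the paper's square $\gamma$ written in $2$-categorical pasting notation, and your verification via interchange and the triangle identities is the same computation the paper leaves to the reader.

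For the converse, however, you take a genuinely different route. You fix \emph{two} independent adjoint equivalence data $(f,g,\eta,\epsilon)$ and $(f',g',\eta',\epsilon')$ and build the weak inverse $\beta\colon g\Rightarrow g'$ as the mate of $\alpha^{-1}$. The paper instead fixes only the data $(f,g,\eta,\epsilon)$ for $f$, then \emph{transports} it along $\alpha$ to obtain data $(f',g,\eta',\epsilon')$ for $f'$ with the \emph{same} $g$, by setting $\eta'=(g\alpha)\circ\eta$ and $\epsilon'=\epsilon\circ(\alpha^{-1}g)$; the weak inverse is then simply the identity square $e_g$. The paper's argument is slicker---nothing needs to be verified beyond the fact that $(\eta',\epsilon')$ still satisfy the triangle identities---whereas your approach requires the two mate computations you sketch. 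On the other hand, your version is more symmetric (both directions use the same mate construction) and shows directly that the weak inverse can be taken with respect to \emph{any} prescribed adjoint equivalence data for $f'$, not just the transported one. Both arguments are short once the translation to $\bfH\bA$ is made.
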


\begin{proof}
Suppose first that $\alpha$ is weakly horizontally invertible. Let $\beta$ be its weak inverse with respect to horizontal adjoint equivalence data $(f,g,\eta,\epsilon)$ and $(f',g',\eta',\epsilon')$. We define $\gamma$ to be given by the following pasting. 
\begin{tz}
\node[](A) {$A$};
\node[right of=A](B) {$B$};
\node[below of=A](A') {$A$};
\node[right of=A'](B') {$B$};
\draw[d,pro] (A) to (A');
\draw[d,pro] (B) to (B');
\draw[->] (A) to node[above,la] {$f'$} (B);
\draw[->] (A') to node[below,la] {$f$} (B');

\node[la] at ($(A)!0.5!(B')$) {$\gamma$};

\node[right of=B,xshift=.5cm](1) {$A$};
\node[la] at ($(B')!0.5!(1)$) {$=$};
\node[right of=1](2) {$B$};
\node[right of=2](3) {$A$};
\draw[->] (1) to node[over,la] {$f$} (2);
\draw[->] (2) to node[over,la] {$g$} (3);
\node[below of=2](2') {$B$};
\node[right of=2'](3') {$A$};
\node[right of=3'](4') {$B$};
\draw[->] (2') to node[over,la] {$g'$} (3');
\draw[->] (3') to node[over,la] {$f'$} (4');
\node[above of=1](4) {$A$};
\node[right of=4,rr](5) {$A$};
\node[right of=5](6) {$B$};
\draw[d] (4) to (5);
\draw[->] (5) to node[above,la] {$f'$} (6);
\node[below of=2'](5') {$B$};
\node[left of=5'](4'') {$A$};
\node[right of=5',rr](6') {$B$};
\draw[d] (5') to (6');
\draw[->] (4'') to node[below,la] {$f$} (5');
\draw[d,pro] (4) to node(a)[]{} (1);
\draw[d,pro] (5) to node(b)[]{} (3);
\draw[d,pro] (6) to node(y)[]{} (4');
\draw[d,pro] (1) to node(x)[]{} (4'');
\draw[d,pro] (2) to (2');
\draw[d,pro] (3) to (3');
\draw[d,pro] (2') to node(c)[]{} (5');
\draw[d,pro] (4') to node(d)[]{} (6');

\node[la] at ($(a)!0.45!(b)$) {$\eta$};
\node[la] at ($(a)!0.55!(b)$) {$\vcong$};
\node[la] at ($(c)!0.45!(d)$) {$\epsilon'$};
\node[la] at ($(c)!0.55!(d)$) {$\vcong$};
\node[la] at ($(x)!0.5!(2')$) {$e_f$};
\node[la] at ($(3)!0.5!(y)$) {$e_{f'}$};
\node[la] at ($(2)!0.5!(3')$) {$\beta$};
\end{tz}
Then one can show that $\gamma=\alpha^{-1}$ is the vertical inverse of $\alpha$ by using the definition of $\beta$ being a weak inverse of $\alpha$ with respect to horizontal adjoint equivalence data $(f,g,\eta,\epsilon)$ and $(f',g',\eta',\epsilon')$, and the triangle identities for $(\eta,\epsilon)$ and $(\eta',\epsilon')$.

Suppose now that $\alpha$ is vertically invertible. Let $(f,g,\eta,\epsilon)$ be an adjoint equivalence data and define $\eta'$ and $\epsilon'$ to be the following pasting, respectively. 
\begin{tz}
\node[](1) {$A$};
\node[right of=1,rr](2) {$A$};
\node[below of=1](4) {$A$};
\node[right of=4](5) {$B$};
\node[right of=5](6) {$A$};
\draw[d] (1) to (2);
\draw[->] (4) to node[over,la] {$f$} (5);
\draw[->] (5) to node[over,la] {$g$} (6);
\draw[d,pro] (1) to node(a)[]{} (4);
\draw[d,pro] (2) to node(b)[]{} (6);

\node[la] at ($(a)!0.45!(b)$) {$\eta$};
\node[la] at ($(a)!0.55!(b)$) {$\vcong$};

\node[below of=4](4') {$A$};
\node[right of=4'](5') {$B$};
\node[right of=5'](6') {$A$};
\draw[->] (4') to node[below,la] {$f'$} (5');
\draw[->] (5') to node[below,la] {$g$} (6');
\draw[d,pro] (4) to node(a)[]{} (4');
\draw[d,pro] (5) to node(b)[]{} (5');
\draw[d,pro] (6) to (6');

\node[la] at ($(a)!0.4!(b)$) {$\alpha$};
\node[la] at ($(a)!0.6!(b)$) {$\vcong$};
\node[la] at ($(5)!0.5!(6')$) {$e_g$};

\node[right of=6,xshift=1cm](5) {$B$};
\node[right of=5](6) {$A$};
\node[right of=6](7) {$B$};
\node[below of=5](2') {$B$};
\node[right of=2',rr](3') {$B$};
\draw[d] (2') to (3');
\draw[->] (5) to node[over,la] {$g$} (6);
\draw[->] (6) to node[over,la] {$f$} (7);
\draw[d,pro] (5) to node(c)[]{} (2');
\draw[d,pro] (7) to node(d)[]{} (3');

\node[la] at ($(c)!0.45!(d)$) {$\epsilon$};
\node[la] at ($(c)!0.55!(d)$) {$\vcong$};

\node[above of=5](5') {$B$};
\node[right of=5'](6') {$A$};
\node[right of=6'](7') {$B$};
\draw[->] (5') to node[above,la] {$g$} (6');
\draw[->] (6') to node[above,la] {$f'$} (7');
\draw[d,pro] (5') to (5);
\draw[d,pro] (6') to node(c)[]{} (6);
\draw[d,pro] (7') to node(d)[]{} (7);

\node[la] at ($(c)!0.35!(d)$) {$\alpha^{-1}$};
\node[la] at ($(c)!0.65!(d)$) {$\vcong$};
\node[la] at ($(5')!0.5!(6)$) {$e_g$};
\end{tz}
Then $(f',g,\eta',\epsilon')$ is a horizontal adjoint equivalence, and $e_g$ is a weak inverse of $\alpha$ with respect to the horizontal adjoint equivalence data $(f,g,\eta,\epsilon)$ and $(f',g,\eta',\epsilon')$. This shows that $\alpha$ is weakly horizontally invertible. 
\end{proof}

\begin{rem}
Given a $2$-category $\cA$, \cref{lem:whiiffvi} shows that a square in $\bbH\cA$ is weakly horizontally invertible if and only if its associated $2$-morphism is invertible.
\end{rem}

We now use the result above to characterize the weakly horizontally invertible squares in $\bbHsim\cA$ as invertible $2$-morphisms.

\begin{lemme} \label{lem:weakinvinHsim}
Let $\cA$ be a $2$-category and let $\sq{\alpha}{f}{f'}{u}{v}$ be a square in $\bbHsim \cA$, where $f$ and $f'$ are equivalences in $\cA$. Then $\alpha$ is weakly horizontally invertible if and only if its associated $2$-morphism $\alpha\colon vf\Rightarrow f'u$ is invertible. 
\end{lemme}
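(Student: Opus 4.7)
Plan. I will prove both implications by direct constructions in the 2-category $\cA$, leveraging the triangle identities together with the pasting identities of weak horizontal invertibility. The key observation is that in $\bbHsim\cA$, horizontal morphisms and 2-cells between them are the morphisms and 2-cells of $\cA$, so the data of a square $\alpha\colon vf \Rightarrow f'u$ is literally a 2-cell in $\cA$ between two composites of equivalences, and horizontal (resp.\ vertical) composition of squares in $\bbHsim\cA$ corresponds to whiskering by $g, g', f, f'$ (resp.\ by $u, v$) and ordinary vertical composition in $\cA$.

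For the \emph{if} direction, suppose the 2-cell $\alpha\colon vf \Rightarrow f'u$ is invertible in $\cA$. Using \cref{lem:promoteadjoint}, promote $f, f'$ to horizontal adjoint equivalences $(f, g, \eta, \epsilon)$ and $(f', g', \eta', \epsilon')$. I define the candidate weak inverse $\beta\colon ug \Rightarrow g'v$ (viewed as a square in $\bbHsim\cA$ with top $g$, bottom $g'$, left $v$, right $u$) by the pasting
\[
\beta \;=\; (g'v \cdot \epsilon) \,\circ\, (g' \cdot \alpha^{-1} \cdot g) \,\circ\, (\eta' \cdot ug).
\]
The four witnessing vertically invertible squares are taken to be $\eta, \eta', \epsilon, \epsilon'$ themselves, viewed as squares in $\bbHsim\cA$ with trivial vertical boundaries; as their underlying 2-cells in $\cA$ are invertible, they define vertically invertible squares. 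Verification of the two pasting identities of weak horizontal invertibility then reduces, after translating to $\cA$, to equalities of pasted 2-cells that follow from the triangle identities for $f\dashv g$ and $f'\dashv g'$, applied to $\alpha$ and $\alpha^{-1}$.

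For the \emph{only if} direction, assume $\alpha$ is weakly horizontally invertible. Fix horizontal adjoint equivalence data $(f, g, \eta, \epsilon)$ and $(f', g', \eta', \epsilon')$, and by \cref{lem:uniqueweakinv} obtain the unique weak inverse $\beta\colon ug \Rightarrow g'v$ in $\cA$ satisfying the two pasting identities. I then define
\[
\alpha^{-1} \;\coloneqq\; (\epsilon' \cdot vf) \,\circ\, (f' \cdot \beta \cdot f) \,\circ\, (f'u \cdot \eta) \;\colon\; f'u \Rightarrow vf.
\]
That $\alpha \circ \alpha^{-1} = \id_{f'u}$ and $\alpha^{-1}\circ \alpha = \id_{vf}$ follows by unpacking the two pasting identities as equations of 2-cells in $\cA$ (whiskered respectively by $f$ and by $g$ as needed) and applying the triangle identities. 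In effect, the two pasting identities are exactly the information needed to cancel the auxiliary units/counits appearing in the formula for $\alpha^{-1}$.

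The main obstacle is the bookkeeping for the two diagram chases: both verifications require translating pasting of squares in $\bbHsim\cA$ into whiskered vertical compositions in $\cA$ and then collapsing the resulting diagrams via interchange and the triangle identities. Neither calculation is deep; they are lengthy but routine, and the content is essentially already packaged in the pasting identities from the definition of weak horizontal invertibility.
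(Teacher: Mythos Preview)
Your proof is correct, and the calculations go through exactly as you outline: substituting your formula for $\beta$ into the two pasting identities and applying interchange together with the triangle identities for $(f,g,\eta,\epsilon)$ and $(f',g',\eta',\epsilon')$ yields the required equalities, and likewise your formula for $\alpha^{-1}$ gives a two-sided inverse once one whiskers the pasting identities by $f$ and $f'$ and cancels via the triangle identities.

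The route, however, is genuinely different from the paper's. The paper does not compute directly; instead it associates to $\alpha$ the square $\overline{\alpha}$ in $\bbHsim\cA$ with \emph{trivial} vertical boundaries (top $vf$, bottom $f'u$), and shows via a mate construction---using the adjoint equivalence data $(u,u',\eta_u,\epsilon_u)$ and $(v,v',\eta_v,\epsilon_v)$ of the \emph{vertical} morphisms---that $\alpha$ is weakly horizontally invertible if and only if $\overline{\alpha}$ is. It then invokes \cref{lem:whiiffvi} to conclude that $\overline{\alpha}$ is weakly horizontally invertible if and only if it is vertically invertible, which is precisely invertibility of the $2$-cell $\alpha$. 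Your argument bypasses both the mate construction and \cref{lem:whiiffvi}, working only with the adjoint equivalence data of the horizontal morphisms $f,f'$; it is self-contained but trades the paper's modular reduction for a longer explicit computation. Both approaches ultimately hinge on the same triangle-identity cancellations, just organised differently.
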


\begin{proof}
Consider a square $\alpha$ in $\bbHsim \cA$ as below right, where $f$ and $f'$ are horizontal equivalences.
\begin{tz}
    \node[](A) {$A$};
    \node[right of=A](B) {$B$};
    \node[below of=A](A') {$A'$};
    \node[right of=A'](B') {$B'$};
    \draw[->] (A) to node[above,la] {$f$} (B);
    \draw[->] (A') to node[below,la] {$f'$} (B');
    \draw[->] (A) to node[left,la]{$\underline{u}=(u,u',\eta_u,\epsilon_u)$} node[right,la]{$\lsimeq$} (A');
    \draw[->] (B) to node[right,la]{$\underline{v}=(v,v',\eta_v,\epsilon_v)$} node[left,la]{$\rsimeq$} (B');
    
    \cell[la,above,xshift=-.2cm]{B}{A'}{$\alpha$};
    
    \node[right of=B, xshift=2.5cm](A) {$A$};
    \node[right of=A](B) {$B$};
    \node[right of=B](C) {$B'$};
    \node[below of=A](A') {$A$};
    \node[right of=A'](B') {$A'$};
    \node[right of=B'](C') {$B'$};
    \draw[->] (A) to node[above,la] {$f$} (B);
    \draw[->] (B') to node[below,la] {$f'$} (C');
    \draw[->] (A') to node[below,la]{$u$} (B');
    \draw[->] (B) to node[above,la]{$v$} (C);
    \draw[d] (A) to (A');
    \draw[d] (C) to (C');
    
    \cell[la,left,xshift=-2pt]{B}{B'}{$\overline\alpha$};
\end{tz}
Then the corresponding $2$-morphism $\alpha\colon vf\Rightarrow f'u$ also gives rise to a square $\overline{\alpha}$ in $\bbHsim\cA$ as above right, where the composites $vf$ and $f'u$ are horizontal equivalences. We show that $\alpha$ is weakly horizontally invertible if and only if its associated square $\overline \alpha$ is weakly horizontally invertible. We can then conclude by applying \cref{lem:whiiffvi}. 

Let us fix adjoint equivalence data $(f,g,\eta,\epsilon)$ and $(f',g',\eta',\epsilon')$. Suppose first that the square $\beta$ in $\bbHsim\cA$, as depicted below left, is a weak inverse of $\alpha$ with respect to the adjoint equivalence data $(f,g,\eta,\epsilon)$ and $(f',g',\eta',\epsilon')$.
\begin{tz}
    \node[](A) {$B$};
    \node[right of=A](B) {$A$};
    \node[below of=A](A') {$B'$};
    \node[right of=A'](B') {$A'$};
    \draw[->] (A) to node[above,la] {$g$} (B);
    \draw[->] (A') to node[below,la] {$g'$} (B');
    \draw[->] (A) to node[left,la]{$\underline{v}$} node[right,la]{$\lsimeq$} (A');
    \draw[->] (B) to node[right,la]{$\underline{u}$} node[left,la]{$\rsimeq$} (B');
    
    \cell[la,above,xshift=-.2cm]{B}{A'}{$\beta$};
    
    \node[right of=B, xshift=1cm](A) {$B'$};
    \node[right of=A](B) {$B$};
    \node[right of=B](C) {$A$};
    \node[below of=A](A') {$B'$};
    \node[right of=A'](B') {$A'$};
    \node[right of=B'](C') {$A$};
    \draw[->] (A) to node[above,la] {$v'$} (B);
    \draw[->] (B') to node[below,la] {$u'$} (C');
    \draw[->] (A') to node[below,la]{$g'$} (B');
    \draw[->] (B) to node[above,la]{$g$} (C);
    \draw[d] (A) to (A');
    \draw[d] (C) to (C');
    
    \cell[la,left,xshift=-2pt]{B}{B'}{$\beta_*$};

    \node[right of=C](A) {$B'$};
    \node[la] at ($(C')!0.5!(A)$) {$=$};
    \node[right of=A](B) {$B$};
    \node[right of=B](C) {$A$};
    \node[below of=B](A') {$B'$};
    \node[right of=A'](B') {$A'$};
    \node[right of=B'](C') {$A$};
    \draw[->] (A) to node[above,la] {$v'$} (B);
    \draw[->] (B') to node[below,la] {$u'$} (C');
    \draw[->] (A') to node[below,la]{$g'$} (B');
    \draw[->] (B) to node[above,la]{$g$} (C);
    \draw[d] (A) to node(a)[]{} (A');
    \draw[d] (C) to node(b)[]{} (C');
    \draw[->] (B) to node[over,la]{$v$} (A');
    \draw[->] (C) to node[over,la]{$u$} (B');
    
    \cell[la,above,xshift=-.2cm]{C}{A'}{$\beta$};
    \cell[la,above,xshift=-.2cm][n][0.6]{B}{a}{$\epsilon_v$};
    \cell[la,below,xshift=.3cm][n][0.4]{b}{B'}{$\eta_u$};
\end{tz}
Then its mate $\beta_*$, given by the pasting as above right, is a weak inverse for the square $\overline\alpha$ with respect to the composite of the adjoint equivalence data $(f,g,\eta,\epsilon)$ with $(v,v',\eta_v,\epsilon_v)$, and of $(u,u',\eta_u,\epsilon_u)$ with $(f',g',\eta',\epsilon')$. This follows from the triangle identities for $(\eta_u,\epsilon_u)$ and $(\eta_v,\epsilon_v)$ and the definition of $\beta$ being a weak inverse of $\alpha$ with respect to the adjoint equivalence data $(f,g,\eta,\epsilon)$, $(f',g',\eta',\epsilon')$. 

Conversely, suppose that the square $\overline{\beta}$ in $\bbHsim\cA$, as depicted below left, is a weak inverse of $\overline \alpha$ with respect to the composite of the adjoint equivalence data $(f,g,\eta,\epsilon)$ with $(v,v',\eta_v,\epsilon_v)$, and of $(u,u',\eta_u,\epsilon_u)$ with $(f',g',\eta',\epsilon')$.
\begin{tz}
    \node[](A) {$B'$};
    \node[right of=A](B) {$B$};
    \node[right of=B](C) {$A$};
    \node[below of=A](A') {$B'$};
    \node[right of=A'](B') {$A'$};
    \node[right of=B'](C') {$A$};
    \draw[->] (A) to node[above,la] {$v'$} (B);
    \draw[->] (B') to node[below,la] {$u'$} (C');
    \draw[->] (A') to node[below,la]{$g'$} (B');
    \draw[->] (B) to node[above,la]{$g$} (C);
    \draw[d] (A) to (A');
    \draw[d] (C) to (C');
    
    \cell[la,left,xshift=-2pt]{B}{B'}{$\overline{\beta}$};
    
     \node[right of=C,xshift=1cm](A) {$B$};
    \node[right of=A](B) {$A$};
    \node[below of=A](A') {$B'$};
    \node[right of=A'](B') {$A'$};
    \draw[->] (A) to node[above,la] {$g$} (B);
    \draw[->] (A') to node[below,la] {$g'$} (B');
    \draw[->] (A) to node[left,la]{$v$} node[right,la]{$\lsimeq$} (A');
    \draw[->] (B) to node[right,la]{$u$} node[left,la]{$\rsimeq$} (B');
    
    \cell[la,above,xshift=-.2cm]{B}{A'}{$\overline \beta_!$};
    
    \node[right of=B'](X) {$B$};
    \node[la] at ($(B)!0.5!(X)$) {$=$};
    \node[right of=X](A) {$B'$};
    \node[above of=A](B) {$B$};
    \node[right of=A](A') {$A'$};
    \node[above of=A'](B') {$A$};
    \node[right of=B'](Y) {$A'$};
    \draw[->] (A) to node[over,la] {$v'$} (B);
    \draw[->] (X) to node[below,la] {$v$} (A);
    \draw[->] (B') to node[above,la] {$u$} (Y);
    \draw[->] (A') to node[over,la] {$u'$} (B');
    \draw[->] (A) to node[below,la]{$g'$} (A');
    \draw[->] (B) to node[above,la]{$g$} (B');
    \draw[d] (X) to node(a)[]{} (B);
    \draw[d] (A') to node(b)[]{} (Y);
    
    \cell[la,right,yshift=.3cm]{B}{A'}{$\overline\beta$};
    \cell[la,left,yshift=-.2cm][n][0.4]{a}{A}{$\eta_v$};
    \cell[la,right,yshift=.2cm][n][0.6]{B'}{b}{$\epsilon_u$};
\end{tz}
Then its mate $\overline{\beta}_!$, given by the pasting as above right, is a weak inverse of $\alpha$ with respect to the adjoint equivalence data $(f,g,\eta,\epsilon)$, $(f',g',\eta',\epsilon')$. 
\end{proof}

In particular, we can see that a $2$-morphism in $\Lsim\bA$ corresponding to a weakly horizontally invertible square in a double category $\bA$ is invertible, where $\Lsim\colon \DblCat\to \TwoCat$ of the functor $\bbHsim$.

\begin{lemme} \label{lem:whiinLsim}
Let $\bA$ be a double category. 
\begin{rome}
\item If $f\colon A\to B$ is a horizontal equivalence in $\bA$, then the corresponding morphism $f\colon A\to B$ in $\Lsim\bA$ is an equivalence. 
\item If $\sq{\alpha}{f}{f'}{u}{v}$ is a weakly horizontally invertible square in $\bA$, then the corresponding $2$-morphism $\alpha\colon vf\Rightarrow f'u$ in $\Lsim\bA$ is invertible. 
\end{rome}
\end{lemme}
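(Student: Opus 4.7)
The plan is to deduce both statements from the universal property of the adjunction $\Lsim \dashv \bbHsim$ by applying the unit $\eta_\bA \colon \bA \to \bbHsim\Lsim\bA$. The first observation I would establish is that every double functor $F\colon \bA \to \bB$ preserves horizontal equivalences and weakly horizontally invertible squares. This follows immediately from the definitions: both notions are witnessed by tuples of horizontal morphisms, vertical morphisms, and (vertically invertible) squares subject to certain pasting equalities, and all such data are transported strictly by $F$ (in particular, vertically invertible squares go to vertically invertible squares, and the required pasting equalities are preserved).

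Specialising this to $F = \eta_\bA$, a horizontal equivalence $f$ in $\bA$ is sent to a horizontal equivalence $\eta_\bA(f)$ of $\bbHsim\Lsim\bA$, and a weakly horizontally invertible square $\alpha$ in $\bA$ is sent to a weakly horizontally invertible square $\eta_\bA(\alpha)$ of $\bbHsim\Lsim\bA$. From the description of $\Lsim\bA$ and $\bbHsim\Lsim\bA$ recalled in the paper, the image $\eta_\bA(f)$ is exactly the horizontal morphism of $\bbHsim\Lsim\bA$ given by the generating morphism $f$ of $\Lsim\bA$, and $\eta_\bA(\alpha)$ is the square of $\bbHsim\Lsim\bA$ corresponding to the generating $2$-cell $\alpha\colon vf \Rightarrow f'u$ in $\Lsim\bA$. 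For (i), a horizontal equivalence in $\bbHsim\Lsim\bA$ is, by definition, an equivalence in its underlying horizontal $2$-category $\bfH\bbHsim\Lsim\bA = \Lsim\bA$, so the morphism $f$ of $\Lsim\bA$ is an equivalence. For (ii), \cref{lem:weakinvinHsim} applied to $\cA = \Lsim\bA$ states that a weakly horizontally invertible square in $\bbHsim\Lsim\bA$ corresponds to an invertible $2$-cell in $\Lsim\bA$, so the $2$-cell $\alpha\colon vf \Rightarrow f'u$ in $\Lsim\bA$ is invertible.

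There is no real obstacle here; the only mild point to check is the explicit description of how the unit $\eta_\bA$ acts on the generating horizontal morphisms, vertical morphisms and squares of $\bA$, which is an immediate consequence of the construction of $\Lsim\bA$ as the $2$-category freely generated in the prescribed way, together with the adjunction $\TwoCat(\Lsim\bA,\cA) \cong \DblCat(\bA,\bbHsim\cA)$ applied to $\cA = \Lsim\bA$ and $\id_{\Lsim\bA}$.
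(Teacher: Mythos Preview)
Your proof is correct and follows essentially the same strategy as the paper. For (ii), both arguments reduce to showing that the corresponding square in $\bbHsim\Lsim\bA$ is weakly horizontally invertible and then invoking \cref{lem:weakinvinHsim}; where the paper simply says the relations in $\bA$ ``translate to relations in $\bbHsim\Lsim\bA$'', you make this step precise by transporting the data along the unit $\eta_\bA\colon \bA\to \bbHsim\Lsim\bA$, which is a cleaner packaging of the same idea. For (i), the paper is slightly more direct --- it just reads off the equivalence data $(f,g,\eta,\epsilon)$ in $\Lsim\bA$ --- whereas you pass through $\bbHsim\Lsim\bA$ and use $\bfH\bbHsim\Lsim\bA=\Lsim\bA$; but this is a minor difference in presentation, not in substance.
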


\begin{proof}
Given a horizontal equivalence $(f,g,\eta,\epsilon)$ in $\bA$, then there are corresponding morphisms $f$ and $g$ and corresponding invertible $2$-morphisms $\eta\colon \id\cong gf$ and $\epsilon\colon fg\cong \id$ in $\Lsim\bA$, i.e., this is the data of an equivalence in $\Lsim\bA$. This proves (i).

Now, given a weakly horizontally invertible square $\sq{\alpha}{f}{f'}{u}{v}$ in $\bA$, then the corresponding morphisms $f$ and $f'$ are equivalences in $\Lsim\bA$ by (i). The relations expressing the fact that $\alpha$ is a weakly horizontally invertible square in $\bA$ translate to relations in $\bbHsim\Lsim\bA$ implying that the corresponding square 
\begin{tz}
    \node[](A) {$A$};
    \node[right of=A](B) {$B$};
    \node[below of=A](A') {$A'$};
    \node[right of=A'](B') {$B'$};
    \draw[->] (A) to node[above,la] {$f$} (B);
    \draw[->] (A') to node[below,la] {$f'$} (B');
    \draw[->] (A) to node[left,la]{$u$} node[right,la]{$\lsimeq$} (A');
    \draw[->] (B) to node[right,la]{$v$} node[left,la]{$\rsimeq$} (B');
    
    \cell[la,above,xshift=-.2cm]{B}{A'}{$\alpha$};
\end{tz}
is weakly horizontally invertible in $\bbHsim\Lsim\bA$. By \cref{lem:weakinvinHsim}, we obtain that the associated $2$-morphism $\alpha\colon vf\Rightarrow f'u$ is invertible. 
\end{proof}

\subsection{Horizontal pseudo-natural equivalences} \label{subsec:ps-equ}

We now give complete definitions of the morphisms and $2$-morphisms of the pseudo-hom $2$-category $\bfH[\bI, \bA]_\ps$ of double functors defined in \cref{def:tensor}. 

\begin{defn} \label{def:pseudohor}
Let $F,G\colon \bI\to \bA$ be double functors. A \textbf{horizontal pseudo-natural transformation} $\varphi\colon F\Rightarrow G$ consists of 
\begin{rome}
\item a horizontal morphism $\varphi_i\colon Fi\to Gi$ in $\bA$, for each object $i\in \bI$,
\item a square $\sq{\varphi_u}{\varphi_i}{\varphi_{i'}}{Fu}{Gu}$ in $\bA$, for each vertical morphism $u\colon i\arrowdot i'$ in $\bI$, 
\item a vertically invertible square $\sq{\varphi_f}{(Gf)\varphi_i}{\varphi_j(Ff)}{e_{Fi}}{e_{Gj}}$ in $\bA$, for each horizontal morphism $f\colon i\to j$ in $\bI$,
\end{rome}
such that the following conditions hold:
\begin{enumerate}
\item for every object $i\in \bI$, $\sq{\varphi_{e_i}=e_{\varphi_i}}{\varphi_i}{\varphi_i}{e_{Fi}}{e_{Gi}}$, 
\item for every pair of composable vertical morphisms $u\colon i\arrowdot i'$ and $v\colon i'\arrowdot i''$ in $\bI$, the vertical composite of the squares $\varphi_u$ and $\varphi_v$ is given by the square $\varphi_{vu}$,
\item for every object $i\in \bI$, $\sq{\varphi_{\id_i}=e_{\varphi_i}}{\varphi_i}{\varphi_i}{e_{Fi}}{e_{Gi}}$, 
\item for every pair of composable horizontal morphisms $f\colon i\to j$ and $g\colon j\to k$ in $\bI$,
\begin{tz}
\node[](A) {$Fi$};
\node[right of=A](B) {$Gi$};
\node[right of=B](C) {$Gj$};
\node[below of=A](A') {$Fi$};
\node[right of=A'](B') {$Fj$};
\node[right of=B'](C') {$Gj$};
\node[right of=C](D) {$Gk$};
\node[right of=C'](D') {$Gk$};
\draw[->] (A') to node[over,la] {$Ff$} (B');
\draw[->] (B') to node[over,la] {$\varphi_j$} (C');
\draw[->] (C) to node[above,la] {$Gg$} (D);
\draw[->] (C') to node[over,la] {$Gg$} (D');
\draw[->] (A) to node[above,la]{$\varphi_i$} (B);
\draw[->] (B) to node[above,la] {$Gf$} (C);
\draw[d,pro] (A) to node(a)[]{} (A');
\draw[d,pro] (C) to node(b)[]{} (C');
\draw[d,pro] (D) to (D');

\node[la] at ($(a)!0.45!(b)$) {$\varphi_f$};
\node[la] at ($(a)!0.55!(b)$) {$\vcong$};
\node[la] at ($(C)!0.5!(D')$) {$e_{Gg}$};

\node[below of=A'](A'') {$Fi$};
\node[below of=B'](B'') {$Fj$};
\node[below of=C'](C'') {$Fk$};
\node[below of=D'](D'') {$Gk$};
\draw[->] (A'') to node[below,la] {$Ff$} (B'');
\draw[->] (B'') to node[below,la] {$Fg$} (C'');
\draw[->] (C'') to node[below,la] {$\varphi_k$} (D'');
\draw[d,pro] (A') to (A'');
\draw[d,pro] (B') to node(b)[]{} (B'');
\draw[d,pro] (D') to node(c)[]{} (D'');

\node[la] at ($(A')!0.5!(B'')$) {$e_{Ff}$};
\node[la] at ($(b)!0.45!(c)$) {$\varphi_g$};
\node[la] at ($(b)!0.55!(c)$) {$\vcong$};

\node[right of=D',yshift=.75cm,xshift=.5cm](A) {$Fi$};
\node[right of=A](B) {$Gi$};
\node[right of=B](C) {$Gk$};
\node[below of=A](A') {$Fi$};
\node[right of=A'](B') {$Fk$};
\node[right of=B'](C') {$Gk$};
\node at ($(C'.east)-(0,4pt)$) {,};
\draw[->] (A) to node[above,la] {$\varphi_i$} (B);
\draw[->] (B) to node[above,la] {$G(gf)$} (C);
\draw[->] (A') to node[below,la] {$F(gf)$} (B');
\draw[->] (B') to node[below,la] {$\varphi_k$} (C');
\draw[d,pro] (A) to node(a)[]{} (A');
\node[la] at ($(D')!0.5!(a)$) {$=$};
\draw[d,pro] (C) to node(b)[]{} (C');

\node[la] at ($(a)!0.42!(b)$) {$\varphi_{gf}$};
\node[la] at ($(a)!0.58!(b)$) {$\vcong$};
\end{tz}
\item for every square $\sq{\alpha}{f}{f'}{u}{v}$ in $\bI$,
\begin{tz}
\node[](1) {$Fi$};
\node[right of=1](2) {$Gi$};
\node[right of=2](3) {$Gj$};
\node[below of=1](4) {$Fi$};
\node[right of=4](5) {$Fj$};
\node[right of=5](6) {$Gj$};
\draw[->] (1) to node[above,la] {$\varphi_i$} (2);
\draw[->] (2) to node[above,la] {$Gf$} (3);
\draw[d,pro] (1) to node(a)[]{} (4);
\draw[d,pro] (3) to node(b)[]{} (6);
\draw[->] (4) to node[above,la] {$Ff$}  (5);
\draw[->] (5) to node[over,la] {$\varphi_j$}  (6);

\node[la] at ($(a)!0.45!(b)$) {$\varphi_f$};
\node[la] at ($(a)!0.55!(b)$) {$\vcong$};

\node[below of=4](7) {$Fi'$};
\node[right of=7](8) {$Fj'$};
\node[right of=8](9) {$Gj'$};
\draw[->,pro] (4) to node[left,la] {$Fu$} (7);
\draw[->,pro] (5) to node(b)[right,la] {$Fv$} (8);
\draw[->,pro] (6) to node(c)[right,la] {$Gv$} (9);
\draw[->] (7) to node[below,la] {$Ff'$} (8);
\draw[->] (8) to node[below,la] {$\varphi_{j'}$} (9);

\node[la] at ($(4)!0.5!(8)$) {$F\alpha$};
\node[la] at ($(b)!0.4!(c)$) {$\varphi_v$};

\node[right of=3,xshift=.5cm](1) {$Fi$};
\node[la] at ($(9)!0.5!(1)$) {$=$};
\node[right of=1](2) {$Gi$};
\node[right of=2](3') {$Gj$};
\node[below of=1](3) {$Fi'$};
\node[right of=3](4) {$Gi'$};
\node[right of=4](5) {$Gj'$};
\draw[->] (1) to node[above,la] {$\varphi_i$} (2);
\draw[->] (2) to node[above,la] {$Gf$} (3');
\draw[->,pro] (1) to node(a)[left,la] {$Fu$} (3);
\draw[->,pro] (2) to node(b)[left,la] {$Gu$} (4);
\draw[->,pro] (3') to node[right,la] {$Gv$} (5);
\draw[->] (3) to node[over,la] {$\varphi_{i'}$} (4);
\draw[->] (4) to node[below,la] {$Gf'$} (5);

\node[la] at ($(a)!0.6!(b)$) {$\varphi_u$};
\node[la] at ($(2)!0.5!(5)$) {$G\alpha$};

\node[below of=3](7) {$Fi'$};
\node[right of=7](8) {$Fj'$};
\node[right of=8](9) {$Gj'$};
\node at ($(9.east)-(0,4pt)$) {.};
\draw[->] (7) to node[below,la] {$Ff'$} (8);
\draw[->] (8) to node[below,la] {$\varphi_{j'}$} (9);
\draw[d,pro] (3) to node(a)[]{} (7);
\draw[d,pro] (5) to node(b)[]{} (9);

\node[la] at ($(a)!0.45!(b)$) {$\varphi_{f'}$};
\node[la] at ($(a)!0.55!(b)$) {$\vcong$};
\end{tz}
\end{enumerate}
\end{defn}

\begin{defn} \label{def:modif}
Let $\varphi,\psi\colon F\Rightarrow G$ be horizontal pseudo-natural transformations between double functors $F,G\colon \bI\to \bA$. A \textbf{modification} $\mu\colon \varphi\to \psi$ consists of a square $\sq{\mu_i}{\varphi_i}{\psi_i}{e_{Fi}}{e_{Gi}}$ in $\bA$, for each object $i\in \bI$, such that
\begin{enumerate}
\item for every horizontal morphisms $f\colon i\to j$ in $\bI$,
\begin{tz}
\node[](1) {$Fi$};
\node[right of=1](2) {$Gi$};
\node[right of=2](3) {$Gj$};
\node[below of=1](4) {$Fi$};
\node[right of=4](5) {$Fj$};
\node[right of=5](6) {$Gj$};
\draw[->] (1) to node[above,la] {$\varphi_i$} (2);
\draw[->] (2) to node[above,la] {$Gf$} (3);
\draw[d,pro] (1) to node(a)[]{} (4);
\draw[d,pro] (3) to node(b)[]{} (6);
\draw[->] (4) to node[over,la] {$Ff$}  (5);
\draw[->] (5) to node[over,la] {$\varphi_j$}  (6);

\node[la] at ($(a)!0.45!(b)$) {$\varphi_f$};
\node[la] at ($(a)!0.55!(b)$) {$\vcong$};

\node[below of=4](7) {$Fi$};
\node[right of=7](8) {$Fj$};
\node[right of=8](9) {$Gj$};
\draw[d,pro] (4) to (7);
\draw[d,pro] (5) to (8);
\draw[d,pro] (6) to (9);
\draw[->] (7) to node[below,la] {$Ff$} (8);
\draw[->] (8) to node[below,la] {$\psi_{j}$} (9);

\node[la] at ($(4)!0.5!(8)$) {$e_{Ff}$};
\node[la] at ($(5)!0.5!(9)$) {$\mu_j$};

\node[right of=3,xshift=.5cm](1) {$Fi$};
\node[la] at ($(9)!0.5!(1)$) {$=$};
\node[right of=1](2) {$Gi$};
\node[right of=2](3') {$Gj$};
\node[below of=1](3) {$Fi$};
\node[right of=3](4) {$Gi$};
\node[right of=4](5) {$Gj$};
\draw[->] (1) to node[above,la] {$\varphi_i$} (2);
\draw[->] (2) to node[above,la] {$Gf$} (3');
\draw[d,pro] (1) to (3);
\draw[d,pro] (2) to (4);
\draw[d,pro] (3') to (5);
\draw[->] (3) to node[over,la] {$\psi_i$} (4);
\draw[->] (4) to node[over,la] {$Gf$} (5);

\node[la] at ($(1)!0.5!(4)$) {$\mu_i$};
\node[la] at ($(2)!0.5!(5)$) {$e_{Gf}$};

\node[below of=3](7) {$Fi$};
\node[right of=7](8) {$Fj$};
\node[right of=8](9) {$Gj$};
\node at ($(9.east)-(0,4pt)$) {,};
\draw[->] (7) to node[below,la] {$Ff$} (8);
\draw[->] (8) to node[below,la] {$\psi_{j}$} (9);
\draw[d,pro] (3) to node(a)[]{} (7);
\draw[d,pro] (5) to node(b)[]{} (9);

\node[la] at ($(a)!0.45!(b)$) {$\psi_{f}$};
\node[la] at ($(a)!0.55!(b)$) {$\vcong$};
\end{tz}
\item for every vertical morphism $u\colon i\arrowdot i'$ in $\bI$,
\begin{tz}
    \node[](1) {$Fi$};
    \node[right of=1](2) {$Gi$};
    \node[below of=1](3) {$Fi'$};
    \node[right of=3](4) {$Gi'$};
    \node[below of=3](5) {$Fi'$};
    \node[right of=5](6) {$Gi'$};
    \draw[->] (1) to node[above,la] {$\varphi_{i}$} (2);
    \draw[->] (3) to node[over,la] {$\varphi_{i'}$} (4);
    \draw[->] (5) to node[below,la] {$\psi_{i'}$} (6);
    \draw[->,pro] (1) to node[left,la] {$Fu$} (3);
    \draw[->,pro] (2) to node[right,la] {$Gu$} (4);
    \draw[d,pro] (3) to (5);
    \draw[d,pro] (4) to (6);
    
    \node[la] at ($(1)!0.5!(4)$) {$\varphi_u$};
    \node[la] at ($(3)!0.5!(6)$) {$\mu_{i'}$};
    
    \node[right of=2,xshift=.5cm](1) {$Fi$};
    \node[right of=1](2) {$Gi$};
    \node[below of=1](3) {$Fi$};
    \node[la] at ($(4)!0.5!(3)$) {$=$};
    \node[right of=3](4) {$Gi$};
    \node[below of=3](5) {$Fi'$};
    \node[right of=5](6) {$Gi'$};
\node at ($(6.east)-(0,4pt)$) {.};
    \draw[->] (1) to node[above,la] {$\varphi_{i}$} (2);
    \draw[->] (3) to node[over,la] {$\psi_{i}$} (4);
    \draw[->] (5) to node[below,la] {$\psi_{i'}$} (6);
    \draw[d,pro] (1) to (3);
    \draw[d,pro] (2) to (4);
    \draw[->,pro] (3) to node[left,la] {$Fu$} (5);
    \draw[->,pro] (4) to node[right,la] {$Gu$} (6);
    
    \node[la] at ($(1)!0.5!(4)$) {$\mu_i$};
    \node[la] at ($(3)!0.5!(6)$) {$\psi_u$};
\end{tz}
\end{enumerate}
\end{defn}

In particular, we show that an equivalence in the $2$-category $\bfH[\bI,\bA]_\ps$ is precisely a horizontal pseudo-natural transformation whose square components are weakly horizontally invertible squares. 

\begin{lemme} \label{lem:pseudoeq}
Let $\varphi\colon F\Rightarrow G$ be a horizontal pseudo-natural transformation between double functors $F,G\colon \bI\to \bA$. Then $\varphi$ is an equivalence in the $2$-category $\bfH[\bI,\bA]_\ps$ if and only if the square $\sq{\varphi_u}{\varphi_i}{\varphi_{i'}}{Fu}{Gu}$ is weakly horizontally invertible, for every vertical morphism $u\colon i\arrowdot i'$ in $\bI$. In particular, the horizontal morphism $\varphi_i\colon Fi\to Gi$ is a horizontal equivalence, for every object $i\in \bI$. 
\end{lemme}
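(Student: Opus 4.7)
The plan is to prove the two directions separately, with the forward direction being essentially formal and the converse requiring a construction of an inverse pseudo-natural transformation.

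For the forward direction, any vertical morphism $u\colon i\arrowdot i'$ in $\bI$ classifies a double functor $u\colon \bbV[1]\to \bI$, and precomposition with $u$ defines a $2$-functor $u^*\colon \bfH[\bI,\bA]_\ps \to \bfH[\bbV[1],\bA]_\ps$. Since $2$-functors preserve equivalences, if $\varphi$ is an equivalence in $\bfH[\bI,\bA]_\ps$ then $u^*\varphi$ is an equivalence in $\bfH[\bbV[1],\bA]_\ps$. By definition, $u^*\varphi$ is precisely the square $\varphi_u$ viewed as a morphism in $\bfH[\bbV[1],\bA]_\ps$, so this says exactly that $\varphi_u$ is weakly horizontally invertible. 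Taking $u=e_i$ in particular gives that $e_{\varphi_i}=\varphi_{e_i}$ is weakly horizontally invertible, so $\varphi_i$ is a horizontal equivalence by definition.

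For the converse, assume every $\varphi_u$ is weakly horizontally invertible. Applying the forward argument only formally, we first note that $\varphi_{e_i}=e_{\varphi_i}$ is weakly horizontally invertible, so $\varphi_i$ is a horizontal equivalence, which we promote to a horizontal adjoint equivalence $(\varphi_i,\psi_i,\eta_i,\epsilon_i)$ using \cref{lem:promoteadjoint}. We then build a candidate inverse $\psi\colon G\Rightarrow F$ as follows. On objects, take $\psi_i$ as chosen. For each vertical morphism $u\colon i\arrowdot i'$, let $\psi_u\colon (Gu\,{}^{\psi_i}_{\psi_{i'}}\,Fu)$ be the unique weak inverse of $\varphi_u$ with respect to the fixed horizontal adjoint equivalence data on the two boundaries, which exists by \cref{lem:uniqueweakinv}. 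For each horizontal morphism $f\colon i\to j$, define the vertically invertible square $\psi_f\colon (Ff)\psi_i\Rightarrow\psi_j(Gf)$ to be the mate of $\varphi_f^{-1}$ along the adjoint equivalences $\varphi_i\dashv\psi_i$ and $\varphi_j\dashv\psi_j$ (pasted using $\eta_i$ and $\epsilon_j$ together with horizontal identity squares).

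To see that $\psi$ is a horizontal pseudo-natural transformation, conditions (1) and (2) of \cref{def:pseudohor} follow from the uniqueness statement in \cref{lem:uniqueweakinv}: both $e_{\psi_i}$ and $\psi_{v}\psi_u$ are weak inverses of $\varphi_{e_i}$ and $\varphi_{vu}=\varphi_v\varphi_u$ respectively with respect to the prescribed boundary data, hence coincide with $\psi_{e_i}$ and $\psi_{vu}$. Condition (3) and the naturality conditions (4) and (5) are standard mate calculations: (4) unpacks to a pasting identity that follows from applying $\psi_i$ and $\psi_k$ mates to the analogous identity for $\varphi$, while (5) is verified by pasting the weak-inverse triangle identities of $\varphi_u$ with the mate $\psi_f$ and $\psi_{f'}$ and cancelling via $\eta,\epsilon$. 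Finally, the squares $\eta_i$ and $\epsilon_i$ assemble into invertible modifications $\eta\colon \id_F\Rightarrow \psi\varphi$ and $\epsilon\colon \varphi\psi\Rightarrow \id_G$; the modification axiom of \cref{def:modif} on vertical morphisms again reduces, via \cref{lem:uniqueweakinv}, to a uniqueness statement for weak inverses with fixed boundary data, and on horizontal morphisms to the very definition of $\psi_f$ as a mate.

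The main obstacle is verifying the square-naturality axiom (5) for $\psi$ together with the vertical modification axiom for $\eta$ and $\epsilon$; both are pasting diagram computations that become manageable precisely because of the uniqueness of weak inverses relative to fixed horizontal adjoint equivalence data (\cref{lem:uniqueweakinv}), which eliminates the need to check the identities by brute force and instead reduces them to checking that two candidate squares have the same boundary and therefore must coincide.
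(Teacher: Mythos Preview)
Your proof is correct and follows essentially the same approach as the paper: the forward direction is formal (the paper unpacks the modification axiom for $\eta$ and $\epsilon$ directly, while you phrase it via the $2$-functor $u^*$, but these amount to the same observation), and the converse proceeds by fixing adjoint equivalence data $(\varphi_i,\psi_i,\eta_i,\epsilon_i)$, invoking \cref{lem:uniqueweakinv} to define $\psi_u$, and taking $\psi_f$ to be the mate of $\varphi_f^{-1}$, exactly as in the paper. One small imprecision: the vertical modification axiom for $\eta$ and $\epsilon$ does not reduce to a uniqueness statement via \cref{lem:uniqueweakinv}; rather, it \emph{is} literally one of the two pasting equalities in the definition of $\psi_u$ being a weak inverse of $\varphi_u$ with respect to the chosen data, so it holds by construction. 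Similarly, condition~(5) for $\psi$ is a direct mate computation from condition~(5) for $\varphi$, not a uniqueness argument; the uniqueness lemma is only genuinely needed for conditions~(1) and~(2).
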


\begin{proof}
Suppose first that $(\varphi,\psi,\eta,\epsilon)$ is an equivalence in the $2$-category $\bfH[\bI,\bA]_\ps$, i.e., we have the data of horizontal pseudo-natural transformations $\varphi\colon F\Rightarrow G$ and $\psi\colon G\Rightarrow F$ together with invertible modifications $\eta\colon \id_F\cong \psi\varphi$ and $\epsilon\colon \varphi\psi\cong \id_G$. By applying condition~(2) of \cref{def:modif} to the modifications $\eta$ and $\epsilon$, we directly get that $(\varphi_u,\psi_u)$ are weak inverses with respect to the horizontal equivalence data $(\varphi_i,\psi_i,\eta_i,\epsilon_i)$ and  $(\varphi_{i'},\psi_{i'},\eta_{i'},\epsilon_{i'})$, for  every vertical morphism $u\colon i\arrowdot i'$ in $\bA$. This shows that every square $\varphi_u$ is weakly horizontally invertible. 

Now suppose that the square $\sq{\varphi_u}{\varphi_i}{\varphi_{i'}}{Fu}{Gu}$ is weakly horizontally invertible, for every vertical morphism $u\colon i\arrowdot i'$ in $\bI$. For each object $i\in \bI$, let us fix a horizontal adjoint equivalence data $(\varphi_i,\psi_i,\eta_i,\epsilon_i)$. For each vertical morphism $u\colon i\arrowdot i'$ in $\bI$, we denote by $\sq{\psi_u}{\psi_i}{\psi_{i'}}{Gu}{Fu}$ the unique weak inverse of $\varphi_u$ given by \cref{lem:uniqueweakinv} with respect to the horizontal adjoint equivalence data $(\varphi_i,\psi_i,\eta_i,\epsilon_i)$ and  $(\varphi_{i'},\psi_{i'},\eta_{i'},\epsilon_{i'})$.

We define a horizontal pseudo-natural transformation $\psi\colon G\Rightarrow F$ which is given by the horizontal morphism $\psi_i\colon Gi\to Fi$, at each object $i\in \bI$, the square $\sq{\psi_u}{\psi_i}{\psi_{i'}}{Gu}{Fu}$, at each vertical morphism $u\colon i\arrowdot i'$ in $\bI$, and by the vertically invertible square $\psi_f$
\begin{tz}
    \node[](A) {$Gi$};
    \node[right of=A](B) {$Fi$};
    \node[right of=B](C) {$Fj$};
    \node[below of=A](A') {$Gi$};
    \node[right of=A'](B') {$Gj$};
    \node[right of=B'](C') {$Fj$};
    \draw[->] (A) to node[above,la] {$\psi_i$} (B);
    \draw[->] (B) to node[above,la]{$Ff$} (C);
    \draw[->] (B') to node[below,la] {$\psi_j$} (C');
    \draw[->] (A') to node[below,la]{$Gf$} (B');
    \draw[d,pro] (A) to node(a)[]{} (A');
    \draw[d,pro] (C) to node(c)[]{} (C');
    
    \node[la] at ($(a)!0.45!(c)$) {$\psi_f$};
    \node[la] at ($(a)!0.55!(c)$) {$\vcong$};
    
    \node[right of=C',xshift=.5cm](1) {$Gi$};
    \node[la] at ($(C)!0.5!(1)$) {$=$};
    \node[right of =1](2) {$Fi$};
    \node[right of=2](3) {$Gi$};
    \node[right of=3](4) {$Gj$};
    \node[below of=1](5) {$Gi$};
    \node[below of=3](6) {$Gi$};
    \node[right of=6](7) {$Gj$};
    \node[right of=7](8) {$Fj$};
    \draw[->] (1) to node[over,la] {$\psi_i$} (2);
    \draw[->] (2) to node[over,la] {$\varphi_i$} (3);
    \draw[->] (3) to node[over,la] {$Gf$} (4);
    \draw[d] (5) to (6);
    \draw[->] (6) to node[below,la] {$Gf$} (7);
    \draw[->] (7) to node[below,la]{$\psi_j$} (8);
    \draw[d,pro] (1) to node(a)[]{} (5);
    \draw[d,pro] (3) to node(b)[]{} (6);
    \draw[d,pro] (4) to (7);
    
    \node[la] at ($(a)!0.45!(b)$) {$\epsilon_i$};
    \node[la] at ($(a)!0.55!(b)$) {$\vcong$};
    \node[la] at ($(3)!0.55!(7)$) {$e_{Gf}$};
    
    \node[above of=2](1') {$Fi$};
    \node[right of =1'](2') {$Fj$};
    \node[right of=2'](3') {$Gj$};
    \node[right of=3'](4') {$Fj$};
    \draw[->] (1') to node[over,la] {$Ff$} (2');
    \draw[->] (2') to node[over,la] {$\varphi_j$} (3');
    \draw[->] (3') to node[over,la] {$\psi_j$} (4');
    \draw[d,pro] (1') to node(a)[]{} (2);
    \draw[d,pro] (3') to node(b)[]{} (4);
    \draw[d,pro] (4') to node(z)[]{} (8);
    
    \node[la] at ($(a)!0.43!(b)$) {$\varphi^{-1}_f$};
    \node[la] at ($(a)!0.57!(b)$) {$\vcong$};
    \node[la] at ($(4)!0.55!(z)$) {$e_{\psi_j}$};
    
    \node[above of=1'](6) {$Fi$};
    \node[left of=6](5) {$Gi$};
    \node[right of=6](7) {$Fj$};
    \node[above of=4'](8) {$Fj$};
    \draw[d] (7) to (8);
    \draw[->] (5) to node[above,la] {$\psi_i$} (6);
    \draw[->] (6) to node[above,la]{$Ff$} (7);
    \draw[d,pro] (5) to node(z)[]{} (1);
    \draw[d,pro] (6) to (1');
    \draw[d,pro] (7) to node(a)[]{} (2');
    \draw[d,pro] (8) to node(b)[]{} (4');
    
    \node[la] at ($(a)!0.45!(b)$) {$\eta_j$};
    \node[la] at ($(a)!0.55!(b)$) {$\vcong$};
    \node[la] at ($(6)!0.55!(2')$) {$e_{Ff}$};
    \node[la] at ($(z)!0.55!(1')$) {$e_{\psi_i}$};
    \end{tz}
at each horizontal morphism $f\colon i\to j$ in $\bI$. We show that this data assemble into a horizontal pseudo-natural transformation $\psi\colon G\Rightarrow F$ by verifying conditions (1)-(5) of \cref{def:pseudohor}. We have (1), since $\psi_{e_i}$ is the inverse of $\varphi_{e_i}$, which is unique by \cref{lem:uniqueweakinv} and therefore must be equal to $e_{\psi_i}$. Condition (2) follows from the fact that the vertical composite of $\psi_u$ and $\psi_{v}$, and the square $\psi_{vu}$ are both weak inverse of $\varphi_{vu}$ with respect to the horizontal adjoint equivalence data $(\varphi_i,\psi_i,\eta_i,\epsilon_i)$ and  $(\varphi_{i''},\psi_{i''},\eta_{i''},\epsilon_{i''})$; they must therefore be equal since such a weak inverse is unique by \cref{lem:uniqueweakinv}. Conditions (3) and (4) follow from the definition of $\psi_f$ and the triangle identities for $(\eta_i,\epsilon_i)$, for each $i\in \bI$. The last condition follows from the definition of $\psi_f$ and condition (5) for the horizontal pseudo-natural transformation $\varphi$. Moreover, it is straightforward to check that the vertically invertible squares $\eta_i$ and $\epsilon_i$ assemble into invertible modifications $\eta\colon \id_F\cong \psi\varphi$ and $\epsilon\colon \varphi\psi\cong \id_G$. This shows that $(\varphi,\psi,\eta,\epsilon)$ is an equivalence in $\bfH[\bA,\bB]_\ps$.
\end{proof}

\section{Explicit description of the nerves in lower dimensions} \label{app:nerveinlow}

In this appendix, we describe the nerves of the different double categories considered in this paper in lower dimensions; namely, for $0\leq m,k\leq 1$ and $0\leq n\leq 2$. The aim of these descriptions is to give the intuition that the space of the nerve at $(m,k)=(0,0)$ models the \emph{space of objects}, the one at $(m,k)=(1,0)$ the \emph{space of horizontal morphisms}, the one at $(m,k)=(0,1)$ the \emph{space of vertical morphisms}, and the one at $(m,k)=(1,1)$ the \emph{space of squares} of the corresponding double category. In \cref{subsec:descr-ND}, we first describe the nerve $\ND$ of a general double category. Then, in \cref{subsec:descr-NHsim}, we describe the nerve $\ND\bbHsim$ of a $2$-category. Finally, in \cref{subsec:descr-NH}, we also describe the nerve $\ND\bbH$ of a $2$-category, in order to compare it with its fibrant replacement~$\ND\bbHsim$.

\subsection{Nerve of a double category} \label{subsec:descr-ND}

Let $\bA$ be a double category. We first want to describe the $0$-, $1$-, and $2$-simplices of the space $(\ND\bA)_{m,k}$ for $0\leq m,k\leq 1$.

\begin{descr}
 By definition of $\ND$, we have that 
\begin{align*} (\ND\bA)_{m,k,n}&=\DblCat((\VK{k}\otimes\OM{m})\otimes \ON{n},\ND\bA) \\
&\cong \TwoCat(\ON{n},\bfH[\VK{k}\otimes\OM{m},\bA]_\ps). 
\end{align*}
Therefore we can describe the $0$-, $1$-, and $2$-simplices of the space $(\ND\bA)_{m,k}$ as follows. 
\begin{enumerate}\addtocounter{enumi}{-1}
    \item A $0$-simplex in $(\ND\bA)_{m,k}$ is a double functor $F\colon \VK{k}\otimes\OM{m}\to\bA$.
    \item A $1$-simplex in $(\ND\bA)_{m,k}$ is an adjoint equivalence in $\bfH[\VK{k}\otimes\OM{m},\bA]_\ps$, i.e., by \cref{lem:pseudoeq}, a horizontal pseudo-natural transformation
\begin{tz}
\node[](A) {$\VK{k}\otimes\OM{m}$};
\node[right of=A,rr](B) {$\bA$};
\draw[->,bend left] (A.north east) to node(a)[above,la] {$F$} (B.north west);
\draw[->,bend right] (A.south east) to node(b)[below,la] {$G$} (B.south west);
\cell[la,right]{a}{b}{$\varphi$};
\end{tz}
    such that, the horizontal morphism $\varphi_i\colon Fi\to Gi$ is a horizontal adjoint equivalence in $\bA$, for each object $i\in \VK{k}\otimes\OM{m}$, and the square $\sq{\varphi_u}{\varphi_i}{\varphi_{i'}}{Fu}{Gu}$ is weakly horizontally invertible, for each vertical morphism $u$ in $\VK{k}\otimes\OM{m}$. In what follows, we call such a $\varphi$ a \textbf{horizontal pseudo-natural adjoint equivalence} and we write $\varphi\colon F\xRightarrow{\simeq} G$.
    \item A $2$-simplex is the data of three horizontal pseudo-natural adjoint equivalences $\varphi\colon F\xRightarrow{\simeq} G$, $\psi\colon G\xRightarrow{\simeq} H$, and $\theta\colon F\xRightarrow{\simeq} H$ together with an invertible modification $\mu$ as follows. 
\begin{center}
\begin{tikzpicture}[node distance=1.7cm]
\node[](1) {$F$};
\node[above right of=1](2) {$G$};
\node[below right of=2](3) {$H$};
\draw[n] (1) to node[left,la,yshift=.3cm]{$\varphi$} (2);
\draw[n] (2) to node[right,la,yshift=.3cm]{$\psi$}(3);
\draw[n] (1) to node(b)[below,la]{$\theta$}(3);
\coordinate(a) at ($(1)!0.5!(3)$);
\cell[la,right,xshift=2pt][t][0.37]{a}{2}{$\cong$};
\node[la] at ($(2)!0.5!(b)-(.3cm,.1cm)$) {$\mu$};
\end{tikzpicture}
\end{center}
\end{enumerate}
\end{descr}

We first compute the space $(\ND\bA)_{0,0}$, which is given by the \emph{space of objects}. As expected from the completeness condition being in the horizontal direction, its $0$-simplices are given by the objects, and its $1$-simplices by the horizontal adjoint equivalences. 

\begin{descr}[$m=0$, $k=0$] \label{desc:double00}
We describe the space $(\ND\bA)_{0,0}$. First note that the double category $\VK{0}\otimes\OM{0}=[0]$ is the terminal (double) category.
\begin{enumerate}\addtocounter{enumi}{-1}
    \item A $0$-simplex in $(\ND\bA)_{0,0}$ is a double functor $A\colon [0]\to \bA$, i.e., the data of an object $A\in \bA$.
    \item A $1$-simplex in the space $(\ND\bA)_{0,0}$ is a horizontal pseudo-natural adjoint equivalence $\varphi\colon A \xRightarrow{\simeq} B$, i.e., the data of a horizontal adjoint equivalence $\varphi\colon A\xrightarrow{\simeq} C$ in $\bA$.
    \item A $2$-simplex in $(\ND\bA)_{0,0}$ is an invertible modification $\mu\colon \theta\cong \psi\varphi$ between such horizontal pseudo-natural adjoint equivalences, i.e., the data of a vertically invertible square in $\bA$
\begin{tz}
\node[](A) {$A$};
\node[right of=A,rr](C) {$E$};
\node[below of=A](A') {$A$};
\node[right of=A'](B') {$C$};
\node[right of=B'](C') {$E$};
\node at ($(C'.east)-(0,4pt)$) {.};
\draw[d,pro] (A) to node(a)[]{} (A'); 
\draw[d,pro] (C) to node(b)[]{} (C'); 
\draw[->] (A) to node[below,la] {$\simeq$} node[above,la] {$\theta$} (C);
\draw[->] (A') to node[below,la] {$\varphi$} node[above,la] {$\simeq$} (B');
\draw[->] (B') to node[below,la] {$\psi$} node[above,la] {$\simeq$} (C');

\node[la] at ($(a)!0.45!(b)$) {$\mu$};
\node[la] at ($(a)!0.55!(b)$) {$\vcong$};
\end{tz}
\end{enumerate}
\end{descr}

We now turn our attention to the \emph{space of horizontal morphisms} $(\ND\bA)_{1,0}$. We observe that the squares appearing as $n$-simplices of this space all have trivial vertical boundaries. In particular, this prevents a completeness condition for $(\ND\bA)_{1,-}$ for a general double category. 

\begin{descr}[$m=1$, $k=0$] \label{desc:double10}
We describe the space $(\ND\bA)_{1,0}$. First note that $\VK{0}\otimes \OM{1}=\bbH[1]$ is the free double category on a horizontal morphism. 
\begin{enumerate}\addtocounter{enumi}{-1}
    \item A $0$-simplex in $(\ND\bA)_{1,0}$ is a double functor $f\colon \bbH[1]\to \bA$, i.e., the data of a horizontal morphism $f\colon A\to B$ in $\bA$.
    \item A $1$-simplex in the space $(\ND\bA)_{1,0}$ is a horizontal pseudo-natural adjoint equivalence $\varphi\colon f\xRightarrow{\simeq} g$, i.e., the data of two horizontal adjoint equivalences $\varphi_0\colon A\xrightarrow{\simeq} C$ and $\varphi_1\colon B\xrightarrow{\simeq} D$ together with a vertically invertible square in $\bA$
\begin{tz}
\node[](A) {$A$};
\node[right of=A](B) {$C$};
\node[right of=B](C) {$D$};
\node[below of=A](A') {$A$};
\node[right of=A'](B') {$B$};
\node[right of=B'](C') {$D$};
\node at ($(C'.east)-(0,4pt)$) {.};
\draw[->] (A) to node[above,la] {$\varphi_0$} node[below,la] {$\simeq$} (B);
\draw[->] (B) to node[above,la] {$g$} (C);
\draw[->] (A') to node[below,la] {$f$} (B');
\draw[->] (B') to node[below,la]  {$\varphi_1$} node[above,la]{$\simeq$}  (C');
\draw[d,pro] (A) to node(a)[]{} (A');
\draw[d,pro] (C) to node(b)[]{} (C');

\node[la] at ($(a)!0.45!(b)$) {$\varphi$};
\node[la] at ($(a)!0.55!(b)$) {$\vcong$};
\end{tz}
\item A $2$-simplex in $(\ND\bA)_{1,0}$ is an invertible modification $\mu\colon \theta\cong \psi\varphi$ between such horizontal pseudo-natural adjoint equivalences, i.e., the data of two vertically invertible squares $\mu_0$ and $\mu_1$ in $\bA$ satisfying the following pasting equality. 
\begin{tz}
\node[](1) {$A$};
\node[right of=1,rr](2) {$E$};
\node[right of=2](3) {$F$};
\node[below of=1](4) {$A$};
\node[right of=4](5) {$C$};
\node[right of=5](6) {$E$};
\node[right of=6](7) {$F$};
\draw[->] (1) to node[above,la] {$\theta_0$} node[below,la] {$\simeq$} (2);
\draw[->] (2) to node[above,la] {$h$} (3);
\draw[d,pro] (1) to node(a)[]{} (4);
\draw[d,pro] (2) to node(b)[]{} (6);
\draw[d,pro] (3) to (7);
\draw[->] (4) to node[above,la] {$\simeq$} node[below,la] {$\varphi_0$} (5);
\draw[->] (5) to node[above,la] {$\simeq$} node[below,la] {$\psi_0$} (6);
\draw[->] (6) to node[over,la] {$h$} (7);

\node[la] at ($(a)!0.45!(b)$) {$\mu_0$};
\node[la] at ($(a)!0.55!(b)$) {$\vcong$};
\node[la] at ($(2)!0.5!(7)$) {$e_h$};

\node[below of=4](1) {$A$};
\node[right of=1](2) {$C$};
\node[right of=2](8) {$D$};
\node[right of=8](9) {$F$};
\draw[d,pro] (5) to node(a)[]{} (2);
\draw[d,pro] (7) to node(z)[]{} (9);
\draw[d,pro] (4) to (1);
\draw[->] (1) to node[above,la] {$\simeq$} node[below,la] {$\varphi_0$} (2);
\draw[->] (8) to node[above,la] {$\simeq$} node[below,la] {$\psi_1$} (9);
\draw[->] (2) to node[over,la] {$g$} (8);

\node[la] at ($(a)!0.45!(z)$) {$\psi$};
\node[la] at ($(a)!0.55!(z)$) {$\vcong$};
\node[la] at ($(4)!0.5!(2)$) {$e_{\varphi_0}$};

\node[below of=1](3) {$A$};
\node[right of=3](4) {$B$};
\node[right of=4](5) {$D$};
\node[right of=5](6) {$F$};
\draw[d,pro] (1) to node(a)[]{} (3);
\draw[d,pro] (8) to node(b)[]{} (5);
\draw[d,pro] (9) to (6);
\draw[->] (4) to node[above,la] {$\simeq$} node[below,la] {$\varphi_1$} (5);
\draw[->] (5) to node[above,la] {$\simeq$} node[below,la] {$\psi_1$} (6);
\draw[->] (3) to node[below,la] {$f$} (4);

\node[la] at ($(a)!0.45!(b)$) {$\varphi$};
\node[la] at ($(a)!0.55!(b)$) {$\vcong$};
\node[la] at ($(8)!0.5!(6)$) {$e_{\psi_1}$};

\node[right of=7,xshift=.5cm,yshift=.75cm](1) {$A$};
\node[right of=1,rr](2) {$E$};
\node[right of=2](3) {$F$};
\node[below of=1](4) {$A$};
\node[la] at ($(z)!0.5!(4)$) {$=$};
\node[right of=4](5) {$B$};
\node[right of=5,rr](6) {$E$};
\draw[->] (1) to node[above,la] {$\theta_0$} node[below,la] {$\simeq$} (2);
\draw[->] (2) to node[above,la] {$h$} (3);
\draw[d,pro] (1) to node(a)[]{} (4);
\draw[d,pro] (3) to node(b)[]{} (6);
\draw[->] (5) to node[above,la] {$\theta_1$} node[below,la] {$\simeq$} (6);
\draw[->] (4) to node[over,la] {$f$} (5);

\node[la] at ($(a)!0.45!(b)$) {$\theta$};
\node[la] at ($(a)!0.55!(b)$) {$\vcong$};

\node[below of=4](1) {$A$};
\node[right of=1](2) {$B$};
\node[right of=2](3) {$D$};
\node[right of=3](9) {$F$};
\draw[d,pro] (4) to (1);
\draw[d,pro] (5) to node(a)[]{} (2);
\draw[d,pro] (6) to node(b)[]{} (9);
\draw[->] (2) to node[above,la] {$\simeq$} node[below,la] {$\varphi_1$} (3);
\draw[->] (3) to node[above,la] {$\simeq$} node[below,la] {$\psi_1$} (9);
\draw[->] (1) to node[below,la] {$f$} (2);

\node[la] at ($(a)!0.45!(b)$) {$\mu_1$};
\node[la] at ($(a)!0.55!(b)$) {$\vcong$};
\node[la] at ($(4)!0.5!(2)$) {$e_f$};
\end{tz}
\end{enumerate}
\end{descr}

We now compute the lower simplices of the space $(\ND\bA)_{0,1}$ -- the \emph{space of vertical morphisms}. As expected from the horizontal completeness condition, its $0$-simplices are given by the vertical morphisms, and its $1$-simplices by the weakly horizontally invertible squares. 

\begin{descr}[$m=0$, $k=1$] \label{desc:double01}
We describe the space $(\ND\bA)_{0,1}$. First note that $\VK{1}\otimes \OM{0}=\bbV[1]$ is the free double category on a vertical morphism. 
\begin{enumerate}\addtocounter{enumi}{-1}
    \item A $0$-simplex in $(\ND\bA)_{0,1}$ is a double functor $u\colon \bbV[1]\to \bA$, i.e., the data of a vertical morphism $u\colon A\arrowdot A'$ in $\bA$.
    \item A $1$-simplex in the space $(\ND\bA)_{0,1}$ is a horizontal pseudo-natural adjoint equivalence $\varphi\colon u\xRightarrow{\simeq} w$, i.e., the data of two horizontal adjoint equivalences $\varphi\colon A\xrightarrow{\simeq} C$ and $\varphi'\colon A'\xrightarrow{\simeq} C'$ together with a weakly horizontally invertible square in $\bA$
\begin{tz}
\node[](B') {$A'$};
\node[above of=B'](B) {$A$};
\node[right of=B](A) {$C$};
\node[below of=A](A') {$C'$};
\node at ($(A'.east)-(0,4pt)$) {.};
\draw[->] (B) to node[above,la] {$\varphi$} node[below,la]{$\simeq$} (A);
\draw[->,pro] (B) to node(a)[left,la] {$u$} (B');
\draw[->] (B') to node[below,la] {$\varphi'$} node[above,la]{$\simeq$} (A');
\draw[->,pro] (A) to node(b)[right,la] {$w$} (A');

\node[la] at ($(a)!0.4!(b)$) {$\widetilde{\varphi}$};
\node[la] at ($(a)!0.6!(b)$) {$\simeq$};
\end{tz}
    \item A $2$-simplex in $(\ND\bA)_{0,1}$ is an invertible modification $\mu\colon \theta\cong \psi\varphi$ between such horizontal pseudo-natural adjoint equivalences, i.e., the data of two vertically invertible squares $\mu$ and $\mu'$ in $\bA$ satisfying the following pasting equality.
\begin{tz}
\node[](1) {$A$};
\node[right of=1,rr](2) {$E$};
\node[below of=1](4) {$A$};
\node[right of=4](5) {$C$};
\node[right of=5](6) {$E$};
\draw[->] (1) to node[above,la] {$\theta$} node[below,la] {$\simeq$} (2);
\draw[d,pro] (1) to node(a)[]{} (4);
\draw[d,pro] (2) to node(b)[]{} (6);
\draw[->] (4) to node[above,la] {$\varphi$}node[below,la]{$\simeq$}   (5);
\draw[->] (5) to node[above,la] {$\psi$} node[below,la]{$\simeq$}  (6);

\node[la] at ($(a)!0.45!(b)$) {$\mu$};
\node[la] at ($(a)!0.55!(b)$) {$\vcong$};

\node[below of=4](7) {$A'$};
\node[right of=7](8) {$C'$};
\node[right of=8](9) {$E'$};
\draw[->,pro] (4) to node(a)[left,la] {$u$} (7);
\draw[->,pro] (5) to node(b)[left,la] {$w$} (8);
\draw[->,pro] (6) to node(c)[right,la] {$y$} (9);
\draw[->] (7) to node[above,la] {$\simeq$} node[below,la] {$\varphi'$} (8);
\draw[->] (8) to node[above,la] {$\simeq$} node[below,la] {$\psi'$} (9);

\node[la] at ($(a)!0.5!(b)$) {$\widetilde{\varphi}$};
\node[la] at ($(a)!0.7!(b)$) {$\simeq$};
\node[la] at ($(b)!0.4!(c)$) {$\widetilde{\psi}$};
\node[la] at ($(b)!0.6!(c)$) {$\simeq$};

\node[right of=2,xshift=.5cm](1) {$A$};
\node[la] at ($(9)!0.5!(1)$) {$=$};
\node[right of=1,rr](2) {$E$};
\node[below of=1](3) {$A'$};
\node[right of=3,rr](4) {$E'$};
\draw[->] (1) to node[above,la] {$\theta$} node[below,la]{$\simeq$} (2);
\draw[->,pro] (1) to node(a)[left,la] {$u$} (3);
\draw[->] (3) to node[below,la] {$\simeq$} node[above,la]{$\theta'$} (4);
\draw[->,pro] (2) to node(b)[right,la] {$y$} (4);

\node[la] at ($(a)!0.45!(b)$) {$\widetilde{\theta}$};
\node[la] at ($(a)!0.55!(b)$) {$\simeq$};

\node[below of=3](7) {$A'$};
\node[right of=7](8) {$C'$};
\node[right of=8](9) {$E'$};
\draw[->] (7) to node[above,la] {$\simeq$} node[below,la] {$\varphi'$} (8);
\draw[->] (8) to node[above,la] {$\simeq$} node[below,la] {$\psi'$} (9);
\draw[d,pro] (3) to node(a)[]{} (7);
\draw[d,pro] (4) to node(b)[]{} (9);

\node[la] at ($(a)!0.45!(b)$) {$\mu'$};
\node[la] at ($(a)!0.55!(b)$) {$\vcong$};
\end{tz}
\end{enumerate}
\end{descr}

Finally, we consider the \emph{space of squares} $(\ND\bA)_{1,1}$.

\begin{descr}[$m=1$, $k=1$] \label{desc:double11}
We describe the space $(\ND\bA)_{1,1}$. First note that $\VK{1}\otimes \OM{1}=\bbV[1]\times \bbH[1]$ is the free double category on a square.
\begin{enumerate}\addtocounter{enumi}{-1}
    \item A $0$-simplex in $(\ND\bA)_{1,1}$ is a double functor $\alpha\colon \bbV[1]\times \bbH[1]\to \bA$, i.e., the data of a square $\alpha$ in $\bA$
\begin{tz}
\node[](B') {$A'$};
\node[above of=B'](B) {$A$};
\node[right of=B](A) {$B$};
\node[below of=A](A') {$B'$};
\node at ($(A'.east)-(0,4pt)$) {.};
\draw[->] (B) to node[above,la] {$f$} (A);
\draw[->,pro] (B) to node[left,la] {$u$} (B');
\draw[->] (B') to node[below,la] {$f'$} (A');
\draw[->,pro] (A) to node[right,la] {$v$} (A');

\node[la] at ($(B)!0.5!(A')$) {$\alpha$};
\end{tz}
    \item A $1$-simplex in the space $(\ND\bA)_{1,1}$ is a horizontal pseudo-natural adjoint equivalence $\varphi\colon \alpha\xRightarrow{\simeq} \beta$, i.e., the data of four horizontal adjoint equivalences $\varphi_0$, $\varphi_1$, $\varphi'_0$, and $\varphi'_1$, two vertically invertible squares $\varphi$ and $\varphi'$, and two weakly horizontally invertible squares $\widetilde{\varphi_0}$ and $\widetilde{\varphi_1}$ in $\bA$ fitting in the following pasting equality. 
\begin{tz}
\node[](1) {$A$};
\node[right of=1](2) {$C$};
\node[right of=2](3) {$D$};
\node[below of=1](4) {$A$};
\node[right of=4](5) {$B$};
\node[right of=5](6) {$D$};
\draw[->] (1) to node[above,la] {$\varphi_0$} node[below,la] {$\simeq$} (2);
\draw[->] (2) to node[above,la] {$g$} (3);
\draw[d,pro] (1) to node(a)[]{} (4);
\draw[d,pro] (3) to node(b)[]{} (6);
\draw[->] (4) to node[over,la] {$f$}  (5);
\draw[->] (5) to node[above,la] {$\varphi_1$} node[below,la]{$\simeq$}  (6);

\node[la] at ($(a)!0.45!(b)$) {$\varphi$};
\node[la] at ($(a)!0.55!(b)$) {$\vcong$};

\node[below of=4](7) {$A'$};
\node[right of=7](8) {$B'$};
\node[right of=8](9) {$D'$};
\draw[->,pro] (4) to node[left,la] {$u$} (7);
\draw[->,pro] (5) to node(b)[left,la] {$v$} (8);
\draw[->,pro] (6) to node(c)[right,la] {$x$} (9);
\draw[->] (7) to node[below,la] {$f'$} (8);
\draw[->] (8) to node[above,la] {$\simeq$} node[below,la] {$\varphi'_1$} (9);

\node[la] at ($(4)!0.5!(8)$) {$\alpha$};
\node[la] at ($(b)!0.4!(c)$) {$\widetilde{\varphi_1}$};
\node[la] at ($(b)!0.6!(c)$) {$\simeq$};

\node[right of=3,xshift=.5cm](1) {$A$};
\node[la] at ($(9)!0.5!(1)$) {$=$};
\node[right of=1](2) {$C$};
\node[right of=2](3') {$D$};
\node[below of=1](3) {$A'$};
\node[right of=3](4) {$C'$};
\node[right of=4](5) {$D'$};
\draw[->] (1) to node[above,la] {$\varphi_0$} node[below,la] {$\simeq$} (2);
\draw[->] (2) to node[above,la] {$g$} (3');
\draw[->,pro] (1) to node(a)[left,la] {$u$} (3);
\draw[->,pro] (2) to node(b)[right,la] {$w$} (4);
\draw[->,pro] (3') to node[right,la] {$x$} (5);
\draw[->] (3) to node[below,la] {$\varphi'_0$} node[above,la]{$\simeq$} (4);
\draw[->] (4) to node[over,la] {$g'$} (5);

\node[la] at ($(a)!0.4!(b)$) {$\widetilde{\varphi_0}$};
\node[la] at ($(a)!0.6!(b)$) {$\simeq$};
\node[la] at ($(2)!0.5!(5)$) {$\beta$};

\node[below of=3](7) {$A'$};
\node[right of=7](8) {$B'$};
\node[right of=8](9) {$D'$};
\draw[->] (7) to node[below,la] {$f'$} (8);
\draw[->] (8) to node[above,la] {$\simeq$} node[below,la] {$\varphi'_1$} (9);
\draw[d,pro] (3) to node(a)[]{} (7);
\draw[d,pro] (5) to node(b)[]{} (9);

\node[la] at ($(a)!0.45!(b)$) {$\varphi'$};
\node[la] at ($(a)!0.55!(b)$) {$\vcong$};
\end{tz}
    \item A $2$-simplex in $(\ND\bA)_{1,1}$ is an invertible modification $\mu\colon \theta\cong \psi\varphi$ between such horizontal pseudo-natural adjoint equivalences, i.e., the data of four vertically invertible squares in $\bA$
\begin{tz}
\node[](A) {$A$};
\node[right of=A,rr](C) {$E$};
\node[below of=A](A') {$A$};
\node[right of=A'](B') {$C$};
\node[right of=B'](C') {$E$};
\draw[d,pro] (A) to node(a)[]{} (A'); 
\draw[d,pro] (C) to node(b)[]{} (C'); 
\draw[->] (A) to node[below,la] {$\simeq$} node[above,la] {$\theta_0$} (C);
\draw[->] (A') to node[below,la] {$\varphi_0$} node[above,la] {$\simeq$} (B');
\draw[->] (B') to node[below,la] {$\psi_0$} node[above,la] {$\simeq$} (C');

\node[la] at ($(a)!0.45!(b)$) {$\mu_0$};
\node[la] at ($(a)!0.55!(b)$) {$\vcong$};

\node[right of =C,xshift=1cm](A) {$B$};
\node[right of=A,rr](C) {$F$};
\node[below of=A](A') {$B$};
\node[right of=A'](B') {$D$};
\node[right of=B'](C') {$F$};
\draw[d,pro] (A) to node(a)[]{} (A'); 
\draw[d,pro] (C) to node(b)[]{} (C'); 
\draw[->] (A) to node[below,la] {$\simeq$} node[above,la] {$\theta_1$} (C);
\draw[->] (A') to node[below,la] {$\varphi_1$} node[above,la] {$\simeq$} (B');
\draw[->] (B') to node[below,la] {$\psi_1$} node[above,la] {$\simeq$} (C');

\node[la] at ($(a)!0.45!(b)$) {$\mu_1$};
\node[la] at ($(a)!0.55!(b)$) {$\vcong$};

\node[below of=A',yshift=.5cm](A) {$B'$};
\node[right of=A,rr](C) {$F'$};
\node[below of=A](A') {$B'$};
\node[right of=A'](B') {$D'$};
\node[right of=B'](C') {$F'$};
\draw[d,pro] (A) to node(a)[]{} (A'); 
\draw[d,pro] (C) to node(b)[]{} (C'); 
\draw[->] (A) to node[below,la] {$\simeq$} node[above,la] {$\theta'_1$} (C);
\draw[->] (A') to node[below,la] {$\varphi'_1$} node[above,la] {$\simeq$} (B');
\draw[->] (B') to node[below,la] {$\psi'_1$} node[above,la] {$\simeq$} (C');

\node[la] at ($(a)!0.45!(b)$) {$\mu'_1$};
\node[la] at ($(a)!0.55!(b)$) {$\vcong$};

\node[left of=A,xshift=-1cm](C) {$E'$};
\node[left of=C,xshift=-1.5cm](A) {$A'$};
\node[below of=A](A') {$A'$};
\node[right of=A'](B') {$C'$};
\node[right of=B'](C') {$E'$};
\draw[d,pro] (A) to node(a)[]{} (A'); 
\draw[d,pro] (C) to node(b)[]{} (C'); 
\draw[->] (A) to node[below,la] {$\simeq$} node[above,la] {$\theta'_0$} (C);
\draw[->] (A') to node[below,la] {$\varphi'_0$} node[above,la] {$\simeq$} (B');
\draw[->] (B') to node[below,la] {$\psi'_0$} node[above,la] {$\simeq$} (C');

\node[la] at ($(a)!0.45!(b)$) {$\mu'_0$};
\node[la] at ($(a)!0.55!(b)$) {$\vcong$};
\end{tz}
for $i=0,1$, such that
\begin{itemize}
    \item $(\mu_0,\mu_1)$ satisfies the pasting equality as in \cref{desc:double10} (2) with respect to $\varphi$, $\psi$, and $\theta$,
    \item $(\mu'_0,\mu'_1)$ satisfies the pasting equality as in \cref{desc:double10} (2) with respect to $\varphi'$, $\psi'$, and $\theta'$,
    \item $(\mu_0,\mu'_0)$ satisfies the pasting equality as in \cref{desc:double01} (2) with respect to $\widetilde{\varphi_0}$, $\widetilde{\psi_0}$, and $\widetilde{\theta_0}$,
    \item $(\mu_1,\mu'_1)$ satisfies the pasting equality as in \cref{desc:double01} (2) with respect to $\widetilde{\varphi_1}$, $\widetilde{\psi_1}$, and $\widetilde{\theta_1}$.
\end{itemize}
\end{enumerate}
\end{descr}

To further get intuition on higher simplex directions, we further describe the $0$- and $1$-simplices of the spaces $(\ND\bA)_{2,0}$ and $(\ND\bA)_{0,2}$. These should be thought of as the \emph{spaces of horizontal composites} and \emph{vertical composites}, respectively.

\begin{descr}[$m=2$, $k=0$]
We describe the space $(\ND\bA)_{2,0}$. First note that $\VK{0}\otimes \OM{2}=\bbH\OM{2}$ is the horizontal double category associated to $\OM{2}$.  
\begin{enumerate}\addtocounter{enumi}{-1}
\item A $0$-simplex in $(\ND\bA)_{2,0}$ is a double functor $\alpha\colon \bbH\OM{2}\to \bA$, i.e., the data of a vertically invertible square $\alpha$ in $\bA$
\begin{tz}
\node[](A) {$A$};
\node[right of=A,rr](C) {$C$};
\node[below of=A](A') {$A$};
\node[right of=A'](B') {$B$};
\node[right of=B'](C') {$C$};

\node at ($(C'.east)-(0,4pt)$) {.};
\draw[d,pro] (A) to node(a)[]{} (A'); 
\draw[d,pro] (C) to node(b)[]{} (C'); 
\draw[->] (A) to node[above,la] {$h$} (C);
\draw[->] (A') to node[below,la] {$f$} (B');
\draw[->] (B') to node[below,la] {$g$} (C');

\node[la] at ($(a)!0.45!(b)$) {$\alpha$};
\node[la] at ($(a)!0.55!(b)$) {$\vcong$};
\end{tz}
\item A $1$-simplex in the space $(\ND\bA)_{2,0}$ is a horizontal pseudo-natural adjoint equivalence $\varphi\colon \alpha\xRightarrow{\simeq} \alpha'$, i.e., the data of three horizontal adjoint equivalences $\varphi_0$, $\varphi_1$, and $\varphi_2$, and three vertically invertible squares $\varphi_f$, $\varphi_g$, and $\varphi_h$ in $\bA$ fitting in the following pasting equality.
\begin{tz}
\node[](1) {$A$};
\node[right of=1](2) {$A'$};
\node[right of=2,rr](3) {$C'$};
\node[below of=1](4) {$A$};
\node[right of=4](5) {$A'$};
\node[right of=5](6) {$B'$};
\node[right of=6](7) {$C'$};
\draw[->] (1) to node[above,la] {$\varphi_0$} node[below,la] {$\simeq$} (2);
\draw[->] (2) to node[above,la] {$h'$} (3);
\draw[d,pro] (1) to (4);
\draw[d,pro] (2) to node(a)[]{} (5);
\draw[d,pro] (3) to node(b)[]{} (7);
\draw[->] (4) to node[above,la] {$\simeq$} node[below,la] {$\varphi_0$} (5);
\draw[->] (5) to node[over,la] {$f'$} (6);
\draw[->] (6) to node[over,la] {$g'$} (7);

\node[la] at ($(a)!0.44!(b)$) {$\alpha'$};
\node[la] at ($(a)!0.56!(b)$) {$\vcong$};
\node[la] at ($(1)!0.5!(5)$) {$e_{\varphi_0}$};

\node[below of=4](1) {$A$};
\node[right of=1](2) {$B$};
\node[right of=2](8) {$B'$};
\node[right of=8](9) {$C'$};
\draw[d,pro] (6) to node(z)[]{} (8);
\draw[d,pro] (7) to node(y)[]{}(9);
\draw[d,pro] (4) to node(a)[]{} (1);
\draw[->] (1) to node[over,la] {$f$} (2);
\draw[->] (8) to node[over,la] {$g'$} (9);
\draw[->] (2) to node[above,la] {$\simeq$} node[below,la] {$\varphi_1$} (8);

\node[la] at ($(a)!0.44!(z)$) {$\varphi_f$};
\node[la] at ($(a)!0.56!(z)$) {$\vcong$};
\node[la] at ($(6)!0.5!(9)$) {$e_{g'}$};

\node[below of=1](3) {$A$};
\node[right of=3](4) {$B$};
\node[right of=4](5) {$C$};
\node[right of=5](6) {$C'$};
\draw[d,pro] (1) to (3);
\draw[d,pro] (2) to node(a)[]{} (4);
\draw[d,pro] (9) to node(b)[]{} (6);
\draw[->] (4) to node[below,la] {$g$} (5);
\draw[->] (5) to node[above,la] {$\simeq$} node[below,la] {$\varphi_2$} (6);
\draw[->] (3) to node[below,la] {$f$} (4);

\node[la] at ($(a)!0.44!(b)$) {$\varphi_g$};
\node[la] at ($(a)!0.56!(b)$) {$\vcong$};
\node[la] at ($(1)!0.5!(4)$) {$e_{f}$};

\node[right of=7,xshift=.5cm,yshift=.75cm](1) {$A$};
\node[right of=1](2) {$A'$};
\node[right of=2,rr](3) {$C'$};
\node[below of=1](4) {$A$};
\node[la] at ($(y)!0.5!(4)$) {$=$};
\node[right of=4,rr](5) {$C$};
\node[right of=5](6) {$C'$};
\draw[->] (1) to node[above,la] {$\varphi_0$} node[below,la] {$\simeq$} (2);
\draw[->] (2) to node[above,la] {$h'$} (3);
\draw[d,pro] (1) to node(a)[]{} (4);
\draw[d,pro] (3) to node(b)[]{} (6);
\draw[->] (5) to node[above,la] {$\varphi_2$} node[below,la] {$\simeq$} (6);
\draw[->] (4) to node[over,la] {$h$} (5);

\node[la] at ($(a)!0.45!(b)$) {$\varphi_h$};
\node[la] at ($(a)!0.55!(b)$) {$\vcong$};

\node[below of=4](1) {$A$};
\node[right of=1](2) {$B$};
\node[right of=2](3) {$C$};
\node[right of=3](9) {$C'$};
\draw[d,pro] (4) to node(a)[]{} (1);
\draw[d,pro] (5) to node(b)[]{} (3);
\draw[d,pro] (6) to (9);
\draw[->] (2) to node[below,la] {$g$} (3);
\draw[->] (3) to node[above,la] {$\simeq$} node[below,la] {$\varphi_2$} (9);
\draw[->] (1) to node[below,la] {$f$} (2);

\node[la] at ($(a)!0.44!(b)$) {$\alpha$};
\node[la] at ($(a)!0.56!(b)$) {$\vcong$};
\node[la] at ($(5)!0.5!(9)$) {$e_{\varphi_2}$};
\end{tz}
\end{enumerate}
\end{descr}

\begin{descr}[$m=0$, $k=2$] 
We describe the space $(\ND\bA)_{0,2}$. First note that $\VK{2}\otimes \OM{2}=\VK{2}$ is the vertical double category associated to $\OM{2}$. 
\begin{enumerate}\addtocounter{enumi}{-1}
\item A $0$-simplex in $(\ND\bA)_{0,2}$ is a double functor $\alpha\colon \VK{2}\to \bA$, i.e., the data of a horizontally invertible square $\alpha$ in $\bA$
\begin{tz}
    \node[](1) {$A$}; 
\node[right of=1](2) {$A$}; 
\node[below of=2](3) {$A'$}; 
\node[below of=3](4) {$A''$}; 
\node[left of=4](6) {$A''$}; 

\node at ($(4.east)-(0,4pt)$) {.};
\draw[pro,->] (1) to node(b)[left,la] {$u''$} (6);
\draw[pro,->] (2) to node[right,la] {$u$} (3);
\draw[pro,->] (3) to node[right,la] {$u'$} (4); 
\draw[d] (1) to (2); 
\draw[d] (6) to (4); 
\node[la] at ($(b)!0.45!(3)$) {$\alpha$};
\node[la] at ($(b)!0.65!(3)$) {$\cong$};
\end{tz}
\item A $1$-simplex in the space $(\ND\bA)_{0,2}$ is a horizontal pseudo-natural adjoint equivalence $\varphi\colon \alpha\xRightarrow{\simeq} \beta$, i.e., the data of three horizontal adjoint equivalences $\varphi$, $\varphi'$, and $\varphi''$, and three weakly horizontally invertible squares $\widetilde{\varphi}$, $\widetilde{\varphi}'$, and $\widetilde{\varphi}''$ fitting in the following pasting equality.
\begin{tz}
    \node[](1) {$A$}; 
\node[right of=1](2) {$A$}; 
\node[below of=2](3) {$A'$}; 
\node[below of=3](4) {$A''$}; 
\node[left of=4](6) {$A''$}; 
\draw[pro,->] (1) to node(b)[left,la] {$u''$} (6);
\draw[pro,->] (2) to node(a)[left,la] {$u$} (3);
\draw[pro,->] (3) to node(a')[left,la] {$u'$} (4); 
\draw[d] (1) to (2); 
\draw[d] (6) to (4); 
\node[la] at ($(b)!0.45!(3)$) {$\alpha$};
\node[la] at ($(b)!0.65!(3)$) {$\cong$};

\node[right of=2](2') {$C$}; 
\node[below of=2'](3') {$C'$}; 
\node[below of=3'](4') {$C''$}; 
\draw[->](2) to node[above,la]{$\varphi$} node[below,la]{$\simeq$} (2');
\draw[->](3) to node[below,la]{$\varphi'$} node[above,la]{$\simeq$} (3');
\draw[->](4) to node[below,la]{$\varphi''$} node[above,la]{$\simeq$} (4');
\draw[pro,->] (2') to node(b)[right,la] {$v$} (3');
\draw[pro,->] (3') to node(b')[right,la] {$v'$} (4');
\node[la] at ($(a)!0.4!(b)$) {$\widetilde{\varphi}$};
\node[la] at ($(a)!0.6!(b)$) {$\simeq$};
\node[la] at ($(a')!0.4!(b')$) {$\widetilde{\varphi}'$};
\node[la] at ($(a')!0.6!(b')$) {$\simeq$};

\node[right of=2',xshift=.5cm](1) {$A$}; 
\node[la] at ($(2')!0.5!(1)$) {$=$};
\node[right of=1](2) {$C$};
\node[below of=2,yshift=-1.5cm](4) {$C''$}; 
\node[left of=4](6) {$A''$}; 
\draw[pro,->] (1) to node(b)[left,la] {$u''$} (6);
\draw[pro,->] (2) to node(a)[right,la] {$v''$} (4); 
\draw[->](1) to node[above,la]{$\varphi$} node[below,la]{$\simeq$} (2);
\draw[->](6) to node[below,la]{$\varphi''$} node[above,la]{$\simeq$} (4);
\node[la] at ($(b)!0.4!(a)$) {$\widetilde{\varphi}''$};
\node[la] at ($(b)!0.6!(a)$) {$\simeq$};

\node[right of=2](2') {$C$}; 
\node[below of=2'](3') {$C'$}; 
\node[below of=3'](4') {$C''$};
\draw[pro,->] (2') to node[right,la] {$v$} (3');
\draw[pro,->] (3') to node[right,la] {$v'$} (4');
\draw[d] (2) to (2'); 
\draw[d] (4) to (4'); 
\node[la] at ($(a)!0.3!(3')$) {$\beta$};
\node[la] at ($(a)!0.6!(3')$) {$\cong$};
\end{tz}
\end{enumerate}
\end{descr}

\subsection{Nerve of a \texorpdfstring{$2$}{2}-category} \label{subsec:descr-NHsim}

By computing the nerve of a $2$-category, we expect to see the \emph{space of objects} at $(m,k)=(0,0)$, the \emph{space of morphisms} at $(m,k)=(1,0)$, and the \emph{space of $2$-morphisms} at $(m,k)=(1,1)$, while the space at $(m,k)=(0,1)$ should be weakly equivalent to the space of objects, since the first column of $2$-fold complete Segal space is essentially constant.

Let $\cA$ be a $2$-category. Recall that its nerve is given by the nerve of its associated double category $\bbHsim\cA$. We therefore translate \cref{desc:double00,desc:double10,desc:double01,desc:double11} to this setting. In particular, we first obtain the \emph{space of objects} $(\ND\bbHsim \cA)_{0,0}$, whose $0$-simplices are the objects, and whose $1$-simplices are the adjoint equivalences of $\cA$, as expected by the completeness condition. 
\begin{descr}[$m=0$, $k=0$] \label{desc:2cat00}
We describe the space $(\ND\bbHsim\cA)_{0,0}$. 
\begin{enumerate}\addtocounter{enumi}{-1}
    \item A $0$-simplex in $(\ND\bbHsim\cA)_{0,0}$ is the data of an object $A\in \cA$. 
    \item A $1$-simplex in $(\ND\bbHsim\cA)_{0,0}$ is the data of an adjoint equivalence $A\xrightarrow{\simeq} C$ in $\cA$. 
    \item A $2$-simplex in $(\ND\bbHsim\cA)_{0,0}$ is the data of an invertible $2$-morphism as in the following diagram. 
\begin{tz}
\node[](1) {$A$};
\node[below right of=1,xshift=.5cm](2) {$C$};
\node[above right of=2,xshift=.5cm](3) {$E$};
\draw[->] (1) to node[below,la,pos=0.4]{$\simeq$} (2);
\draw[->] (2) to node[below,la,pos=0.6]{$\simeq$} (3);
\draw[->] (1) to node[above,la]{$\simeq$} (3);
\coordinate(a) at ($(1)!0.5!(3)$);
\cell[la,right][n][0.37]{a}{2}{$\cong$};
\end{tz}
\end{enumerate}
\end{descr}

As for the \emph{space of morphisms} $(\ND\bbHsim\cA)_{1,0}$, we can see that the completeness condition is now satisfied for $(\ND\bbHsim\cA)_{1,-}$, since vertical morphisms are now adjoint equivalences in~$\cA$ and they therefore also appear in the horizontal direction. 

\begin{descr}[$m=1$, $k=0$] \label{desc:2cat10}
We describe the space $(\ND\bbHsim\cA)_{1,0}$. 
\begin{enumerate}\addtocounter{enumi}{-1}
    \item A $0$-simplex in $(\ND\bbHsim\cA)_{1,0}$ is the data of a morphism $f\colon A\to B$ in $\cA$. 
    \item A $1$-simplex in $(\ND\bbHsim\cA)_{1,0}$ is the data of two adjoint equivalences and an invertible $2$-morphism in $\cA$ as in the following diagram. 
\begin{tz}
    \node[](A) {$A$};
    \node[right of=A](B) {$C$};
    \node[below of=A](A') {$B$};
    \node[right of=A'](B') {$D$};
    \draw[->] (A) to node[above,la]{$\simeq$}  (B);
    \draw[->] (A') to node[below,la] {$\simeq$} (B');
    \draw[->] (A) to node[left,la]{$f$} (A');
    \draw[->] (B) to node[right,la]{$g$} (B');
    
    \cell[la,above,xshift=-.2cm]{B}{A'}{$\cong$};
\end{tz}
    \item A $2$-simplex in $(\ND\bbHsim\cA)_{1,0}$ is the data of two invertible $2$-morphisms filling the triangles of the following pasting equality. 
\begin{tz}
\node[](1) {$A$};
\node[below right of=1,xshift=.5cm](2) {$C$};
\node[above right of=2,xshift=.5cm](3) {$E$};
\draw[->] (1) to node[below,la,pos=0.4]{$\simeq$} (2);
\draw[->] (2) to node[below,la,pos=0.6]{$\simeq$} (3);
\draw[->] (1) to node[above,la]{$\simeq$} (3);
\coordinate(a) at ($(1)!0.5!(3)$);
\cell[la,right][n][0.37]{a}{2}{$\cong$};

\node[below of=1](4) {$B$};
\node[below of=2](5) {$D$};
\node[below of=3](6) {$F$};
\draw[->] (1) to node[left,la]{$f$} (4);
\draw[->] (2) to node[over,la]{$g$} (5);
\draw[->] (3) to node[right,la]{$h$} (6);
\draw[->] (4) to node[below,la,pos=0.4]{$\simeq$} (5);
\draw[->] (5) to node[below,la,pos=0.6]{$\simeq$} (6);

\cell[la,below,xshift=4pt]{2}{4}{$\cong$};
\cell[la,right,xshift=1pt]{3}{5}{$\cong$};

\node[right of=3,xshift=.5cm](1) {$A$};
\node[la] at ($(6)!0.5!(1)$) {$=$};

\node[below of=1](4) {$B$};
\node[below right of=4,xshift=.5cm](5) {$D$};
\node[above right of=5,xshift=.5cm](6) {$F$};
\node[above of=6](3) {$E$};
\draw[->] (1) to node[above,la]{$\simeq$} (3);
\draw[->] (4) to node[above,la]{$\simeq$} (6);
\coordinate(a) at ($(4)!0.5!(6)$);
\draw[->] (1) to node[left,la]{$f$} (4);
\draw[->] (3) to node[right,la]{$h$} (6);
\draw[->] (4) to node[below,la,pos=0.4]{$\simeq$} (5);
\draw[->] (5) to node[below,la,pos=0.6]{$\simeq$} (6);
\coordinate(b) at ($(3)-(.5cm,0)$);
\coordinate(c) at ($(4)+(.5cm,0)$);

\cell[la,right][n][0.37]{a}{5}{$\cong$};
\cell[la,above,xshift=-.2cm]{b}{c}{$\cong$};
\end{tz}
\end{enumerate}
\end{descr}

The space $(\ND\bbHsim\cA)_{0,1}$ is actually given by the \emph{space of adjoint equivalences}. Since the ``free-living adjoint equivalence'' is biequivalent to the point, this space can be interpreted as ``homotopically the same'' as the space of objects.

\begin{descr}[$m=0$, $k=1$] \label{desc:2cat01}
We describe the space $(\ND\bbHsim\cA)_{0,1}$. 
\begin{enumerate}\addtocounter{enumi}{-1}
    \item A $0$-simplex in $(\ND\bbHsim\cA)_{0,1}$ is the data of an adjoint equivalence $u\colon A\xrightarrow{\simeq} A'$ in $\cA$. 
    \item A $1$-simplex in $(\ND\bbHsim\cA)_{0,1}$ is the data of an invertible $2$-morphism as in the following diagram, by \cref{lem:weakinvinHsim}.
\begin{tz}
    \node[](A) {$A$};
    \node[right of=A](B) {$C$};
    \node[below of=A](A') {$A'$};
    \node[right of=A'](B') {$C'$};
    \draw[->] (A) to node[above,la]{$\simeq$}  (B);
    \draw[->] (A') to node[below,la] {$\simeq$} (B');
    \draw[->] (A) to node[left,la]{$u$} node[right,la]{$\lsimeq$} (A');
    \draw[->] (B) to node[right,la]{$w$} node[left,la]{$\rsimeq$} (B');
    
    \cell[la,above,xshift=-.2cm]{B}{A'}{$\cong$};
\end{tz}
    \item A $2$-simplex in $(\ND\bbHsim\cA)_{0,1}$ is the data of two invertible $2$-morphisms filling the triangles of the following pasting equality. 
\begin{tz}
\node[](1) {$A$};
\node[below right of=1,xshift=.5cm](2) {$C$};
\node[above right of=2,xshift=.5cm](3) {$E$};
\draw[->] (1) to node[below,la,pos=0.4]{$\simeq$} (2);
\draw[->] (2) to node[below,la,pos=0.6]{$\simeq$} (3);
\draw[->] (1) to node[above,la]{$\simeq$} (3);
\coordinate(a) at ($(1)!0.5!(3)$);
\cell[la,right][n][0.37]{a}{2}{$\cong$};

\node[below of=1](4) {$A'$};
\node[below of=2](5) {$C'$};
\node[below of=3](6) {$E'$};
\draw[->] (1) to node[left,la]{$u$} node[right,la]{$\lsimeq$} (4);
\draw[->] (2) to node[left,la]{$w$} node[right,la]{$\lsimeq$} (5);
\draw[->] (3) to node[right,la]{$y$} node[left,la]{$\rsimeq$} (6);
\draw[->] (4) to node[below,la,pos=0.4]{$\simeq$} (5);
\draw[->] (5) to node[below,la,pos=0.6]{$\simeq$} (6);

\cell[la,below,xshift=4pt]{2}{4}{$\cong$};
\cell[la,right,xshift=1pt]{3}{5}{$\cong$};

\node[right of=3,xshift=.5cm](1) {$A$};
\node[la] at ($(6)!0.5!(1)$) {$=$};

\node[below of=1](4) {$A'$};
\node[below right of=4,xshift=.5cm](5) {$C'$};
\node[above right of=5,xshift=.5cm](6) {$E'$};
\node[above of=6](3) {$E$};
\draw[->] (1) to node[above,la]{$\simeq$} (3);
\draw[->] (4) to node[above,la]{$\simeq$} (6);
\coordinate(a) at ($(4)!0.5!(6)$);
\draw[->] (1) to node[left,la]{$u$} node[right,la]{$\lsimeq$} (4);
\draw[->] (3) to node[right,la]{$y$} node[left,la]{$\rsimeq$} (6);
\draw[->] (4) to node[below,la,pos=0.4]{$\simeq$} (5);
\draw[->] (5) to node[below,la,pos=0.6]{$\simeq$} (6);
\coordinate(b) at ($(3)-(.5cm,0)$);
\coordinate(c) at ($(4)+(.5cm,0)$);

\cell[la,right][n][0.37]{a}{5}{$\cong$};
\cell[la,above,xshift=-.2cm]{b}{c}{$\cong$};
\end{tz}
\end{enumerate}
\end{descr}

Finally, we compute the \emph{space of $2$-morphisms} $(\ND\bbHsim\cA)_{1,1}$. Although its $0$-simplices are not precisely the $2$-morphisms of $\cA$, homotopically they give the right notion as the vertical morphisms $u$ and $v$ in the square below are adjoint equivalences. 

\begin{descr}
We describe the space $(\ND\bbHsim\cA)_{1,1}$. 
\begin{enumerate}\addtocounter{enumi}{-1}
    \item A $0$-simplex in $(\ND\bbHsim\cA)_{1,1}$ is the data of a $2$-morphism in $\cA$ as in the following diagram.
\begin{tz}
    \node[](A) {$A$};
    \node[right of=A](B) {$B$};
    \node[below of=A](A') {$A'$};
    \node[right of=A'](B') {$B'$};
    \draw[->] (A) to node[above,la]{$f$}  (B);
    \draw[->] (A') to node[below,la] {$f'$} (B');
    \draw[->] (A) to node[left,la]{$u$} node[right,la]{$\lsimeq$} (A');
    \draw[->] (B) to node[right,la]{$v$} node[left,la]{$\rsimeq$} (B');
    
    \cell[la,above,xshift=-.2cm]{B}{A'}{$\alpha$};
\end{tz}
    \item A $1$-simplex in $(\ND\bbHsim\cA)_{1,1}$ is the data of four adjoint equivalences and four invertible $2$-morphisms in $\cA$ as in the following diagram.
\begin{tz}
\node[](1) {$A$};
\node[below right of=1,xshift=.5cm](2) {$B$};
\node[above right of=1, xshift=.5cm](1') {$C$};
\node[above right of=2,xshift=.5cm](3) {$D$};
\draw[->] (1) to node[over,la]{$f$} (2);
\draw[->] (2) to node[above,la,pos=0.4]{$\simeq$} (3);
\draw[->] (1) to node[above,la,pos=0.4]{$\simeq$} (1');
\draw[->] (1') to node[above,la,pos=0.6]{$g$}(3);
\cell[la,right]{1'}{2}{$\cong$};

\node[below of=1](4) {$A'$};
\node[below of=2](5) {$B'$};
\node[below of=3](6) {$D'$};
\draw[->] (1) to node[left,la]{$u$} node[right,la]{$\lsimeq$} (4);
\draw[->] (2) to node[left,la]{$v$} node[right,la]{$\lsimeq$} (5);
\draw[->] (3) to node[right,la]{$x$} node[left,la]{$\rsimeq$} (6);
\draw[->] (4) to node[below,la,pos=0.4]{$f'$} (5);
\draw[->] (5) to node[below,la,pos=0.6]{$\simeq$} (6);

\cell[la,below,xshift=4pt]{2}{4}{$\alpha$};
\cell[la,right,xshift=1pt]{3}{5}{$\cong$};

\node[right of=3,xshift=.5cm](1) {$A$};
\node[above right of=1, xshift=.5cm](1') {$C$};
\node[la] at ($(6)!0.5!(1)$) {$=$};

\node[below of=1](4) {$A'$};
\node[above right of=4,xshift=.5cm](5) {$C'$};
\node[below right of=4,xshift=.5cm](4') {$B'$};
\node[below right of=5,xshift=.5cm](6) {$D'$};
\node[above of=6](3) {$D$};
\draw[->] (1) to node[above,la,pos=0.4]{$\simeq$} (1');
\draw[->] (1') to node[above,la,pos=0.6]{$g$}(3);
\coordinate(a) at ($(4)!0.5!(6)$);
\draw[->] (1) to node[left,la]{$u$} node[right,la]{$\lsimeq$} (4);
\draw[->] (1') to node[left,la]{$w$} node[right,la]{$\lsimeq$} (5);
\draw[->] (3) to node[right,la]{$x$} node[left,la]{$\rsimeq$} (6);
\draw[->] (4) to node[below,la,pos=0.6]{$\simeq$} (5);
\draw[->] (5) to node[over,la]{$g'$} (6);
\draw[->] (4) to node[below,la,pos=0.4]{$f'$} (4');
\draw[->] (4') to node[below,la,pos=0.6]{$\simeq$} (6);

\cell[la,right]{5}{4'}{$\cong$};

\cell[la,below,xshift=4pt]{3}{5}{$\beta$};
\cell[la,right,xshift=1pt]{1'}{4}{$\cong$};

\end{tz}
\item A $2$-simplex in $(\ND\bbHsim\cA)_{1,1}$ is the data of four invertible $2$-morphisms filling triangles satisfying relations as described in \cref{desc:2cat10} (2) and \cref{desc:2cat01}~(2).
\end{enumerate}
\end{descr}

\subsection{Nerve of a horizontal double category} \label{subsec:descr-NH}

Finally, we compute the nerve of a horizontal double category $\bbH\cA$ in lower dimensions, where $\cA$ is a $2$-category, in order to compare it with the nerve $\ND\bbHsim\cA$ described above. Since $\bbH\cA$ and $\bbHsim\cA$ have the same underlying horizontal $2$-category, namely $\cA$ itself, then the spaces $(\ND\bbH\cA)_{0,0}$ and $(\ND\bbH\cA)_{1,0}$ are equal to the spaces $(\ND\bbHsim\cA)_{0,0}$ and $(\ND\bbHsim\cA)_{1,0}$ and they can therefore be described as in \cref{desc:2cat00,desc:2cat10}, respectively. In particular, they are the desired \emph{space of objects} and \emph{space of morphisms}. 

We now turn our attention to the space $(\ND\bbH\cA)_{0,1}$. Unlike $(\ND\bbHsim\cA)_{0,1}$, this space has as $0$-simplices the objects of $\cA$. This prohibits a completeness condition in the vertical direction since equalities are not homotopically good enough. 

\begin{descr}[$m=0$, $k=1$] \label{desc:hor2cat01}
We describe the space $(\ND\bbH\cA)_{0,1}$. 
\begin{enumerate}\addtocounter{enumi}{-1}
    \item A $0$-simplex in $(\ND\bbH\cA)_{0,1}$ is the data of an object $A\in \cA$. 
    \item A $1$-simplex in $(\ND\bbH\cA)_{0,1}$ is the data of an invertible $2$-morphism as in the following diagram, by \cref{lem:whiiffvi}.
\begin{tz}
\node[](A) {$A$};
\node[right of=A,xshift=.5cm](B) {$C$};
\draw[->,bend left] (A.north east) to node(a)[above,la] {$\simeq$} (B.north west);
\draw[->,bend right] (A.south east) to node(b)[below,la] {$\simeq$} (B.south west);
\cell[la,right]{a}{b}{$\cong$};
\end{tz}
    \item A $2$-simplex in $(\ND\bbH\cA)_{0,1}$ is the data of two invertible $2$-morphisms filling the triangles of the following pasting equality. 
\begin{tz}
\node[](1) {$A$};
\node[below right of=1,xshift=.5cm](2) {$C$};
\node[above right of=2,xshift=.5cm](3) {$E$};
\draw[->] (1) to node(a'')[over,la]{$\simeq$} (2);
\draw[->] (2) to node(b)[over,la]{$\simeq$} (3);
\draw[->] (1) to node[above,la]{$\simeq$} (3);
\coordinate(a) at ($(1)!0.5!(3)$);
\cell[la,right][n][0.37]{a}{2}{$\cong$};

\draw[->,bend right=60] (1) to node(a')[below,la,pos=0.4]{$\simeq$} (2);
\draw[->,bend right=60] (2) to node(b')[below,la,pos=0.6]{$\simeq$} (3);

\cell[la,right,yshift=-4pt][n][0.44][0.18cm]{a''}{a'}{$\cong$};
\cell[la,above,xshift=8pt,yshift=-3pt][n][0.44][0.18cm]{b}{b'}{$\cong$};

\node[right of=3,xshift=.5cm](4) {$A$};
\node[la] at ($(3)!0.5!(4)-(0,.5cm)$) {$=$};
\node[below right of=4,xshift=.5cm](5) {$C$};
\node[above right of=5,xshift=.5cm](6) {$E$};
\draw[->,bend left=40] (4) to node(b)[above,la]{$\simeq$} (6);
\draw[->] (4) to node(b')[over,la]{$\simeq$} (6);
\draw[->] (4) to node[below,la,pos=0.4]{$\simeq$} (5);
\draw[->] (5) to node[below,la,pos=0.6]{$\simeq$} (6);

\cell[la,right][n][0.4]{b'}{5}{$\cong$};
\cell[la,right][n][0.6][0.18cm]{b}{b'}{$\cong$};
\end{tz}
\end{enumerate}
\end{descr}

Finally, we compute the \emph{space of $2$-morphisms} $(\ND\bbH\cA)_{1,1}$, which appears to have precisely the $2$-morphisms of $\cA$ as $0$-simplices. However, as explained above, this description is not homotopically well-behaved, since we would also need to consider adjoint equivalences in the vertical direction. 

\begin{descr}
We describe the space $(\ND\bbH\cA)_{1,1}$. 
\begin{enumerate}\addtocounter{enumi}{-1}
    \item A $0$-simplex in $(\ND\bbH\cA)_{1,1}$ is the data of a $2$-morphism in $\cA$
\begin{tz}
\node[](A) {$A$};
\node[right of=A,xshift=.5cm](B) {$B$};
\node at ($(B.east)-(0,4pt)$) {.};
\draw[->,bend left] (A.north east) to node(a)[above,la] {$f$} (B.north west);
\draw[->,bend right] (A.south east) to node(b)[below,la] {$f'$} (B.south west);
\cell[la,right]{a}{b}{$\alpha$};
\end{tz}
    \item A $1$-simplex in $(\ND\bbH\cA)_{1,1}$ is the data of four adjoint equivalences and four invertible $2$-morphisms in $\cA$ as in the following diagram.
\begin{tz}
\node[](1) {$A$};
\node[below right of=1,xshift=.5cm,yshift=.3cm](2) {$B$};
\node[above right of=1, xshift=.5cm,yshift=-.3cm](1') {$C$};
\node[right of=1,rr](3) {$D$};
\draw[->] (1) to node[above,la,pos=0.4]{$\simeq$} (1');
\draw[->] (1') to node[above,la,pos=0.6]{$g$}(3);
\cell[la,right]{1'}{2}{$\cong$};

\draw[->] (1) to node(a'')[over,la]{$f$} (2);
\draw[->] (2) to node(b)[over,la]{$\simeq$} (3);

\draw[->,bend right=70] (1) to node(a')[below,la,pos=0.42]{$f'$} (2);
\draw[->,bend right=70] (2) to node(b')[below,la,pos=0.62]{$\simeq$} (3);

\cell[la,right,yshift=-4pt][n][0.44][0.18cm]{a''}{a'}{$\alpha$};
\cell[la,above,xshift=8pt,yshift=-3pt][n][0.44][0.18cm]{b}{b'}{$\cong$};

\node[right of=3,xshift=.5cm](1) {$A$};
\node[la] at ($(3)!0.5!(1)$) {$=$};
\node[below right of=1,xshift=.5cm,,yshift=.3cm](2) {$B$};
\node[above right of=1, xshift=.5cm,,yshift=-.3cm](1') {$C$};
\node[right of=1,rr](3) {$D$};
\draw[->] (1) to node(a'')[over,la]{$\simeq$} (1');
\draw[->] (1') to node(b)[over,la]{$g'$}(3);
\cell[la,right]{1'}{2}{$\cong$};

\draw[->] (1) to node[below,la,pos=0.4]{$f'$} (2);
\draw[->] (2) to node[below,la,pos=0.6]{$\simeq$} (3);

\draw[->,bend left=70] (1) to node(a')[above,la,pos=0.4]{$\simeq$} (1');
\draw[->,bend left=70] (1') to node(b')[above,la,pos=0.6]{$g$} (3);

\cell[la,above,xshift=8pt,yshift=-3pt][n][0.55][0.18cm]{a'}{a''}{$\cong$};
\cell[la,right,yshift=-4pt][n][0.55][0.18cm]{b'}{b}{$\beta$};
\end{tz}
\item A $2$-simplex in $(\ND\bbH\cA)_{1,1}$ is the data of four invertible $2$-morphisms filling triangles satisfying relations as described in \cref{desc:2cat10} (2) and \cref{desc:hor2cat01} (2).
\end{enumerate}
\end{descr}

\bibliographystyle{plain}
\bibliography{Reference}
\end{document}